\def\un{\underline}
\def\g{\mathfrak{g}}
\def\a{\mathfrak{a}}
\def\b{\mathfrak{b}}
\def\cc{\check}
\def\mm{\mathcal}
\def\bb{\backslash}
\def\F{\bold{F}}
\def\N{\mathbb{N}}
\def\Z{\mathbb{Z}}
\def\R{\mathbb{R}}
\def\C{\mathbb{C}}
\def\si{\sigma}
\def\ep{\varepsilon}
\def\VV{{\check V}}
\def\dd{\delta_P^{1/2}}
\def\vid{\emptyset}
\def\LL{\Lambda}
\def\ll{\lambda}
\def\aa{\alpha}
\def\ste{\par\smallskip\noindent}
\def\beq{\begin{equation}}
\def\eeq{\end{equation}}
\newenvironment{res}
               {\begin{equation}\begin{minipage}{0.85\textwidth}}
               {\end{minipage}\end{equation}}
\def\ber{\begin{res}}
\def\eer{\end{res}}
\newtheorem{theo}{Theorem}[section]
\newtheorem{lem}[theo]{Lemma}
\newtheorem{prop}[theo]{Proposition}
\theoremstyle{definition}
\newtheorem{defi}[theo]{Definition}
\theoremstyle{remark}
\newtheorem{rem}[theo]{Remark}
\numberwithin{equation}{section}
\def\ste{\par\smallskip\noindent}
\title{Constant term of  Eisenstein integrals on a reductive $p$-adic symmetric space }
\author{Jacques Carmona, Patrick Delorme}
\address{               Aix Marseille Universit\'e\ste
CNRS-IML, FRE 3529\ste
163 Avenue de Luminy\ste 13288 Marseille Cedex 09\ste  France.           }
\email{carmona@iml.univ-mrs.fr,  delorme@iml.univ-mrs.fr}
\thanks{ We thank  warmly  the referees for numerous helpful remarks and comments. The second author thanks very much Joseph Bernstein for enlighting 
conversations during the elaboration of this work. 
We also thank   Jean-Pierre Labesse and Bertrand Lemaire for their decisive help on algebraic groups.\\
 The second  author has been supported by the program ANR-BLAN08-2-326851 and by the Institut Universitaire de France  during the elaboration of this work.}
\subjclass{22E50}
\date{}
\dedicatory{} 
\keywords{Reductive group, nonarchimedean local field, symmetric space}
\begin{document} 
\begin{abstract} Let {$H$}  be the fixed point group of a rational involution $\si$ of a reductive $p$-adic group on a field of characteristic different from 2.
Let $P$ be a $\si$-parabolic subgroup of $G $ i.e. such that  $\si(P)$ is opposite to $P$.  We denote  the intersection $P\cap \si(P)$ by $M$.  \\
Kato and Takano on one hand, Lagier on the other hand   associated canonically to an $H$-form, i.e. an $H$-fixed linear form, $\xi$,  on a smooth admissible $G$-module, $V$, a linear form on the Jacquet module $j_P(V)$ of $V$ along $P$ which is fixed by $M\cap H$. We call this operation constant term of $H$-forms. This constant term is linked to the asymptotic behaviour  of the generalized coefficients with respect to $\xi$.

 P. Blanc and the second author defined a  family of  $H$-forms on certain parabolically induced representations, associated to an  $M\cap H$-form, $\eta$,  on the space of the inducing representation. 
 \\ The purpose of this  article is to describe the constant term of  these $H$-forms.
  
Also it is shown that when $\eta$ is discrete,  i.e. when the generalized coefficients of $\eta$ are square integrable modulo the center, the corresponding family of  $H$-forms  on the induced representations is a family of  tempered, in a suitable sense, of  $H$-forms. 
A formula for the constant term of Eisenstein integrals is given. \\
\end{abstract}

\maketitle
\section{Introduction}

\noindent Let $\F$ be a  non archimedean local field of characteristic different from two. The hypothesis on the characteristic of the residue field in the first version of the article has been removed (see Proposition \ref{KT} and section \ref{Lie}).

Let $G$  be the set of $\F$-points of a connected reductive algebraic group defined over 
$\F$.
Let $\si$ be a rational involution of $G$ defined over $\F$ and  let $H$ be equal  to the fixed point group of $ G$ (a slightly weaker assumption is made in the main body of the article).

  If $(\pi,V)$ is a smooth representation of $G$,   an $H$-fixed element  of the  algebraic dual $V'$ of $V$ i.e. $\xi \in V'^H$, will be called an $H$-form on $V$.  We know (cf. [D3], Theorem 4.5) that if $V$ is of finite length the vector space of $H$-forms on $V$ is finite dimensional. We will denote   the   smooth dual of $V$ by $\check{V}$. 
  
  If $(\pi,V)$ is a smooth representation of $G$ and $P$ is a parabolic subgroup of $G$,   we will denote the normalized Jacquet module along $P$ by $(\pi_P, V_P)$ and 
 the canonical projection $V\to V_P$ by $j_P: V \to V_P$. For $v\in V$, we will   also denote  $j_P (v)$ by $v_P$. 

 A parabolic subgroup of $G$ is called a  $\si$-parabolic subgroup if and only $P$ and $P^-:=\si(P)$ are opposite (studied by A. Helminck and Wang, A. Helminck and G. Helminck, cf. [HW] and [HH]). Then 
$M:=P\cap \si(P)$ is the $\si$-stable Levi subgroup of $P$ and $PH$ is open in $G$. Let $U$ be the unipotent radical of $P$. 

Let  $(\pi,V)$ be an admissible representation of $G$ and let $P$ be a $\si$-parabolic subgroup of $G$. Let $\pi'$ be the  dual representation of $\pi$ on $V'$. Independently  Kato-Takano [KT1] and Lagier [L]
have associated to   each $H$-form $\xi$ on $V$ an $M\cap H$-form 
$j_{P^-} \xi$, also  denoted  by $\xi_{P^-}$,  on  $V_P $ such that,   for all $v\in V$, there exists $\varepsilon$ satisfying:
$$\delta^{-1/2}_{P} (a) \langle \xi, \pi(a) v\rangle = \langle( \xi_{P^-}, \pi_P( a) v_P\rangle, a \in A^-_M( \varepsilon)$$
where $A^-_M( \varepsilon)$ is some translate of  the negative  Weyl chamber in the maximal split torus, $A_M$, of the center of $M$  and  $\delta_P$ is the modulus function of $P$. 
This is a characteristic property of $ \xi_{P^-} $ which  describes the asymptotics of generalized  coefficients. The linear form $\xi_{P^-}$ will be 
called the constant  term of $\xi$ along $P$.

By the Second Adjointness Theorem (Casselman,  Bernstein, cf. [C], [Be], [R] VI.9.6) $(\check{V})_{P^-} $ is canonically isomorphic to $(V_P)\check{}$,  
i.e. there  exists a canonical non degenerate pairing between $(\check{V})_{P^-}$ and  $V_P$, denoted by $\langle.,.\rangle _P$.

If $K$ is a compact open subgroup, let $e_K$ be  the normalized Haar measure on $K$ and $ e_K \xi $ be  the smooth linear form defined by:
$$\langle e_K \xi,v \rangle= \langle \xi, \pi(e_K)v \rangle, \> v\in V.$$

We define a notion of  $(P,H)$-good compact open subgroup of $G$ (cf. (\ref{phgood})). 
For $K$ a $(P,H)$-good compact open subgroup of $G$, one has:
$$\langle\xi_{P^-}, v_P\rangle = \langle (e_K\xi)_{P^-}, v_P\rangle_P, v \in V^K. $$
This property justifies the notation $\xi_{P^-}$. 
There exist arbitrary small $(P,H)$-good compact open subgroups of $G$ (Kato-Takano [KT1], Lemma 4.6 and Lagier [L], Theorem 1 (ii)). Hence  $ \xi_{P^-}$ is determined by  the various $(e_K\xi)_{P^-}$.

  P. Blanc and the second author defined  families of   $H$-forms on parabolically induced representations (cf. [BD]). The purpose of this  article is to describe the constant term of these $H$-forms.
  
For this we prepare several tools.
First we  introduce two operations on $H$-forms on induced representations.

The first one is denoted $\cc{j}_{Q^-} \circ$ (cf. Section \ref{ccjq}).
Let $P=MU$ be a parabolic subgroup of $G$, as above. Let $( \delta,E)$ be a smooth representation of finite length of $M$ and  let $i^G_PE $ be the normalized parabolically  induced representation. Our choice is such that the elements of $i^G_PE $ are left covariant $E$-valued functions and  $G$ acts by the  right regular representation. Let 
$(Q,Q^-)$ be a pair of opposite parabolic subgroups  of $M$. Let $P_Q = Q U\subset P, P_{Q^-} = Q^- U\subset P$.
We define a homomorphism of $G$-modules 
$$\cc{j}_{Q^-} \circ: ( i^G_PE) \check{}\to  (i^G_{P_{Q^-} }E_Q)\,\check{} ,$$     
as the composition of four maps:    
$$ (i^G_PE)\,\check{ }\to i^G_P\check{ E}\stackrel{f}{\rightarrow}  i^G_{P_{Q^-}}((\check {E})_{Q^-}) \stackrel{g}{\rightarrow} i^G_{P_{Q^-}}((E_Q)\,\check{})\to
 (i^G_{P_{Q^-}}E_Q)\,\check{},$$
 where the first and the last maps are deduced from  a choice (cf. (\ref{ccigp})) of  an isomorphism between the smooth dual of  a parabolically  induced representation and the parabolically induced representation of the smooth dual , where $f$  is given by the  composition with $j_{Q^-}$ and $g$ is the induced map of the isomorphism between $(\check {E})_{Q^-}$ and $(E_Q)\,\check{}$ given by the Second Adjointness Theorem.
 
 This operation on smooth linear forms easily extends  to  $H$-form:  one associates  to an  $H$-form, $\xi$,  on $ i^G_PE$,  an $H$-form $\cc{j}_{Q^-} \circ \xi  $ on  $ i^G_{P_{Q^-}}(E_Q) $ (cf. Proposition \ref{defccj}).
 
Let $P_1=M_1U_1$ be a parabolic subgroup of $G$ such that $P\subset P_1$, $M\subset M_1$ and such that $P_1H$ is open. The second operation is denoted $\cc{r}_{M _1}$ (cf. Section \ref{two}). Let $\xi$ be an  $H$-form on $i^G_PE$. Inducing in stages,  
$\xi$ is an $H$-form on $i^G_{P_1} E_1$ where  $E_1:=  i^{M_1}_{P\cap M _1}E$.
The linear form $\xi$ might be viewed as an  $E_1$-distribution which is right invariant under $H$ and left covariant under $P_1$. 
The restriction of this distribution   to the open set $P_1H$  is simply a function. Its value at $1$ is denoted $\cc{r}_{M _1}\xiÊ\in (E_1)'^{M_1\cap H}$.

The next tool is what we call the  Generic Basic Geometric Lemma (cf. Proposition \ref{gb}). 
Let $P=MU$ be a $\si$-parabolic subgroup of $G$.
Let $(\delta,E)$ be a finite length  smooth representation of $M$. Let $X(M)_\si$ be  the identity  component of the set of $\si$-antiinvariant elements of $X(M)$. If $\chi \in X(M)_\si$, we denote the space of the representation $\delta_\chi:= \delta\otimes \chi$  by $E_\chi$. If $x\in G$ and $X$ is a subset of $G$, $xXx^{-1}$ will be denoted by $x.X$. Then  $x\delta$  is the representation of $x.M$ on $xE:=E$,  such  that, if $m\in M$,   $x\delta(xmx^{-1})=Ê\delta(m)$. If there is no ambiguity, we will denote  the bijective intertwining operator between $i^G_P E $ and $i^G_{x.P} xE$ defined by
   $\ll(x) v(g)= v(x^{-1}g), g \in G,$. Also we will  also denote  the transpose of the inverse of $\ll(x )$ again   by $\ll(x)$.

Let $P'=M'U'$ be another $\si$-parabolic subgroup of $G$. Then we have the following result that we call the Generic Basic Geometric Lemma (or rather its dual  version, cf. Proposition \ref{betaw}). \\
{\bf Lemma}
{\em 
 For  a good choice $W(M'\backslash G/ M)$ of  a set of representatives of
 $P'\backslash G  /P$, one  defines successively for $w \in W(M'\backslash G/ M)$:
$$X_{\chi,w}:= i^{M '}_{M' \cap w.P} w ((E_\chi)_{M\cap w^{-1}.P'}), $$
$$\tilde{P} _w = (M\cap w^{-1}.P^{'-}) U, \> \tilde{P} '_w= (M' \cap w.P) U'^- AA, $$   and  for $\chi$ in  a suitable open dense subset of $X(M)_\si$:
\\ the transpose of the intertwining integral $^tA(w.  \tilde{P}_w,  \tilde{P}'_w, wj_{M\cap w^{-1}.P'} \delta_\chi) $
 \\ and  $\beta_{\chi, w}: (i^G_PE_\chi )\,\check{} \to (X_{\chi,w})\,\check{}$
  by: $$\beta_{\chi, w} = \cc{r}_{M'} \circ\>  ^tA(w.  \tilde{P}_w,  \tilde{P}'_w, wj_{M\cap w^{-1}.P'} \delta_\chi) \circ  \lambda (w)\circ (\check{j} _{M\cap w^{-1}.P^{'-}} \circ) $$ 
 so that the linear map $ \oplus_{w\in W(M'\backslash G /M)}\beta_{\chi,w}: (i^G_PE_\chi)\,\check{} \to (\oplus_{w\in P'\backslash G /P}X_w)\, \check{}$
 goes through the quotient to  an isomorphism: $$\beta_\chi: ((i^G_PE_\chi)\,\check{}\, )_{P^{'-} } \to \check{X_\chi},$$
 where $X_\chi =\oplus_{w\in W(M'\backslash G /M)}X_{\chi,w}$}. 
 
 The proof of the Generic Basic Geometric Lemma requires the study of bijectivity of intertwining integrals and the notion of infinitesimal character linked to the Bernstein center (cf. Section \ref{pb}). This notion  allows  us to show that,  for $\chi$ in an open dense subset of $X(M)_\si$, two distinct $X_{\chi, w}$ do not  have irreducible subquotients in common.  The end of the proof uses the Basic Geometric Lemma which describes   the graded  object associated to a natural  filtration of the Jacquet module of  a parabolically  induced representation.
 
From our isomorphism  $\beta_\chi: ((i^G_PE_\chi)\, \,\cc{}\,)_{P '^-} \to \check{X_\chi}$,  $X_\chi $ is identified with $(i^G_PE_\chi)_{P '}$, by the   Second Adjointness Theorem.  Hence, if $\xi$ is an $H$-form on $i^G_PE_\chi$, $ \xi_{P^{'-}} $   is identified   with a linear form  denoted  again $ \xi_{P^{'-}} $ on $X_\chi$, with components $ \xi_{P'^-, _w}\in (X_{\chi,w})' $.

By reduction to the  $e_K\xi$, and by unwinding the definitions, one proves a preliminary result (cf. Theorem \ref{xipw}): \\
{\bf Theorem 1} 
{\em For $\chi$ in  an open dense subset of $X(M)_\si$:}
$$\xi_{P^{ '-}, w} = \cc{r}_{M'} \circ\>  ^tA(w.  \tilde{P}_w,  \tilde{P}'_w, w j_{M\cap w^{-1}.P'} \delta) \circ  \lambda (w)\circ \check{j} _{M\cap w^{-1}.P^{'-}} \circ  \xi .$$

To simplify the exposition, we assume, only in the introduction, that there is only one open  $(P,H)$-double coset, $PH$. 
A theorem due to Blanc and the second author (cf. [BD], Theorem 2.8) can be stated as follows. For $ \eta \in E^{' M\cap H}$ ,  there exists a unique  rational map on $X(M)_\si$, $\chi \to \xi(P, \delta_\chi, \eta) \in (i^G_P \delta_\chi)^{'H}$, with  $\cc{r}_M\xi(P, \delta_\chi, \eta)  =\eta$. The proof of this theorem adapts to the weaker hypothesis on $\F$ in this article. 

 Let $Q$ be a $\si$-parabolic subgroup of $G$ whose  $\si$-stable Levi subgroup is equal to $M$. Then one sees (cf. Proposition \ref{Bmat}) that 
 there is a rational map on $X(M)_\si $ (the  analogue of the $B$-matrices of van den Ban, [Ba]),   $\chi \mapsto B(Q,P, \delta_\chi) \in \mathrm{End} (E^{' M\cap H})$  such that:
$$ ^t A(P, Q, \delta_\chi) \xi(P, \delta_\chi, \eta) = \xi (P, \delta_\chi, B(Q,P, \delta_\chi) \eta).$$

Let $(\pi, V) $ be a smooth representation of $G$ and let $\xi$ be an $H$-form on $V$. For $v\in V$, we denote the smooth function on $H\bb G$ defined by:
$$c_{\xi,v} (Hg)= \langle \xi, \pi(g) v \rangle, g \in G$$
 by $c_{\xi,v}$.  
If $\pi$ is of finite length, hence admissible,   $\xi$,  is said to be cuspidal or $H$-cuspidal  if one of the equivalent statements holds (Kato-Takano [KT1]):
\\1) $\xi_{P^-} =0$ for every proper $\si$-parabolic subgroup, $P$,  of $G$.
\\2) For all $v\in V$,   the function   $g \mapsto  \langle \xi, \pi(g) v \rangle  $ has a compact support modulo $HZ$, where $Z$ is the center of $G$.
 
Assuming that $(\pi,V)$ is irreducible and unitary, an $H$-form is said to be   discrete if the $c_{\xi,v} $ are square integrable on $H \backslash G/A_G$. Kato and Takano (cf.  [KT2], Theorem 4.7) have characterized a discrete  $H$-form $\xi$ by a  criterion analogous to a criteria of Casselman ([C], Theorem 4.4.6) for square integrable representations of the group. It involves a condition on $ j_{P^-} \xi$ for each $\si$-parabolic subgroup of $G$, $P$.
We define   the  tempered $H$-forms by a similar property (cf. Definition \ref{xiR}). One has the following result (cf. Theorems \ref{2}, \ref{75}, \ref{33}). \\
{\bf  Theorem 2}{\em
\\ Let $ \eta\in E'^{M\cap H}$. If $ \chi \in X(M)_\si$ we denote  the $H$-form $\xi(P, \delta_\chi, \eta)$ by $\xi_\chi$, when it is defined. \\
(i) Let us assume that  $(\xi_\chi)_{P'^-,w}$ is non identically   zero when $\chi$ varies in $X(M)_\si$. Then one may change  $w$  in  the same double  $(P', P)$-coset  such that:
\\a) $M'\cap w. P$ is a $\si$-parabolic subgroup of $M '$, $M\cap w^{-1}.P'$ is a $\si$-parabolic subgroup of $M$,   $\tilde{P}'_w $, $P_w$  and $w. \tilde{P}_w$ are $\si$-parabolic subgroups of $G$. \\ 
b) The exponents (cf. Definition \ref{expo}) of $\xi_\chi$ along $P'$  are  explicitly  controlled by the  exponents of $\eta$  along  certain $\si$-parabolic subgroups of $M$. 
\\(ii) With these choices of $w$,  if  $ \eta$ is cuspidal, one has  $w.M \subset M'$, $M\cap w^{-1}.P^{'-}=M$, so that ${\tilde P}_w= P$ and   
$$(\xi_\chi)_{P^{'-},w} = \xi (M'\cap w.P,w\delta_\chi,  B(\tilde{P}'_w , w. P, w \delta_\chi) \eta).$$
(iii) If $\eta$ is  discrete and $\chi$ is unitary, then $\xi_\chi $ is tempered. The notion of weak constant term of tempered $H$-forms  is  introduced  and computed for $\xi_\chi$.\\
 (iv) The $B$-matrices  preserve cuspidal (resp., discrete) $M\cap H$-forms.}
 \\{\bf Remark }\\ (a) As expected, when  $\eta$ is cuspidal and $P'$  is too small,  $\xi_{P^{'-}}$ vanishes, by (ii).\\ (b) The analogue of part (iii) for real groups is the main result of  [D1] which plays an important role in the proof of the Plancherel formula (cf [D2], [D3]). The long proof used the description of relative discrete series by Oshima and Matsuki.  Our proof here does not need   such a knowledge. It would be interesting if one could find  a  proof of this result of [D1] avoiding the description of the relative discrete series.  \\ 
 
Let us describe   the two main key lemmas of the article and let us explain how  they lead to  Theorem (i) a)  of the theorem above.\\ {\bf First key lemma} (cf. Lemma \ref{xiPH})\\{\em 
Let $P$ be a parabolic,  not necessarily  a $\si$-parabolic, subgroup of $G$. Let $A_0$ be a $\si$-stable maximal split  torus of $P$, which exists,  and 
let $M$  be a Levi subgroup of $P$ with $A_0\subset M$. Let $\delta$ be a smooth representation of $M$ of finite length. 

Let $(\xi_\chi  ) $  be a smooth family of $H$-forms  on $i^G_P( \delta_\chi)$, with $\chi$ in a neighborhood of $1$ in a  complex subtorus $X$ of  $X(M)$. 
\\Let us assume that  $PH$ is contained and  open   in the support of the family $(\xi_\chi)$.

Then the elements of $X$ are anti-invariant under $\si$. 
Moreover, if  there exists $\chi$ strictly $P$-dominant in $X$, then $P$ is a $\si$-parabolic subgroup of $G$.}

Applying this  to the family $ ((\xi_\chi)_{P^{'-},w})$ (with $G$ replaced by $M'$, and $P$ by a suitable conjugate in $M'$ of $M'\cap w.P$), one sees why, in Theorem 2 (i) a), one can take $M'\cap w. M$ to be a  $\si$-parabolic subgroup of $M'$.  Then it  is possible to refine the choice of $w$ to get (i) a). 

 Let us explain how one gets the assertion  (a) of the above remark and let us assume now that $\eta$ is cuspidal.  If $M\cap w^{-1} .P'$ is different from $M$, it is not possible to see directly that $(\xi_\chi)_{P^{ '-},w}$ is zero. What is easily seen in that case, from the cuspidality of $\eta$,    is that 
  $\check{j}_{M\cap w^{-1}. P'} \circ \xi_\chi$ vanishes on the open $(\tilde{P}_w, H)_\chi$-double cosets, hence its support has an empty interior.  
  
   The second key lemma (cf. Lemma \ref{skey}) describes the effect of intertwining integrals on the support of certain  $H$-forms. It was suggested by  a Lemma of Matsuki (cf. [M], Lemma 3) on orbit closures of orbits of parabolic subgroups on a real reductive symmetric space. Then one sees, using the second key lemma (see the proof of Theorem \ref{2} (iii)), that for $\chi$ in an open dense subset of $X(M)_\si$,  the element 
 $$^tA(w.  \tilde{P}_w,  \tilde{P}'_w, w j_{M\cap w^{-1}.P'} \delta) \circ  \lambda(w)\circ \check{j} _{M\cap w^{-1}.P^{'-}} \circ  \xi_\chi $$ of $(i^G_{\tilde{P}'_w}
 (w j_{M\cap w^{-1}.P'} \delta_\chi))'$ has a support with an empty interior. 
 
Applying $\cc{r}_{M '}$ to something which vanishes  on all open $(\tilde{P'_w} , H)$-double cosets, you get something which vanishes on  all open $(M'\cap w.P, M'\cap H)$-double cosets.
 The last ingredient  to prove the assertion (a) of the above remark is  that for $\chi$-generic, an $H$-form on $i^G_PE_\chi$ is determined by its restriction to open $(P,H)$-double cosets (cf. [BD]). This is applied to $M'$ instead of $G$. 
 
 The computation of $( \xi_\chi)_{P^{'-},w}$, when $M\cap w^{-1} .P'=M$   follows  from Theorem 1. The last statement, which says that $B$-matrices preserve cuspidal $M\cap H$-forms, comes from the hereditary properties of $j_{P^{'-}}$ and the fact observed in the remark, that for $P'$ small enough
 $j_{P^{'-} }\xi =0$.
 
 Theorem 2, together with the determination of  part of the Casselman pairing for parabolically  induced representations (cf. Lemma \ref{partcass}), leads  to the determination of the constant term and the weak constant term of Eisenstein integrals for $p$-adic reductive symmetric spaces  in terms  of the  corresponding $C$-functions (cf. Theorem \ref{cteis}).
 
 The role of this type of results might be seen in the $p$-adic case in [Wal] for the work of Harish-Chandra on the Plancherel formula for $p$-adic groups, and [D4], [D5] for  Whittaker functions.
 
  \section{Notation}
\setcounter{equation}{0}
\subsection{Reductive $p$-adic groups}
If $E$ is a vector space, $E'$ will denote its dual. If $E$ is real, 
$E_\C$ will denote its complexification.

 If $G$ is a group,
$g\in G$ and  $X$ is a subset of $G$, {$g.X$} \index{$g.X$}   will denote $gXg^{-1}$. If $J$ is a subgroup of $G$, $g\in G$
and $(\pi, V)$ is a representation of $J$, $V^J$ will denote the space of
invariant elements of $V$ under $J$ and $(g\pi, gV)$ will denote the representation of $g.J$ on $gV:=V$ defined by: $$(g\pi)(gxg^{-1}): = \pi(x), x\in J.$$ We will denote  the dual representation of
 a representation $(\pi, V)$ of  $G$ in the algebraic dual vector space $V'$ of $V$ by $(\pi', V')$ \index{$(\pi',V')$}. If $V$ is a vector space of vector valued functions  on $G$ which is invariant under right (resp., left ) translations, we will denote  the right  regular representation of $G$ in $V$ by $\rho$ (resp., $\ll$).
 
  If $G$ is locally compact, $d_lg$ will denote a left invariant Haar measure on $G$ and $\delta_G$ \index{$\delta_G$, modulus function} will denote the modulus function. 
  
   Let  $\bold{F}$ \index{$\bold{F}$, non archimedean local field}
be a non archimedean local field. Unless specified we assume: 
\ber \label{22}  The characteristic of $\F$ is different from 2. \eer
Let $\vert.\vert _\bold{F}$ be the  absolute value  of $\bold{F}$.

 One considers various algebraic groups defined over $\bold{F}$,  and a sentence
like:  
 \setcounter{equation}{0}
\ber \label{corse}`` let $A$ be a split torus 
 "  will mean ``let 
$A$ be the  group of $\bold{F}$-points   of a torus,  $\underline A$,   defined and split  over $\bold{F}$ ".\eer 
With these  conventions, let $G$ be a connected reductive linear algebraic
group.

Let $A$ be a split  torus of $G$. Let  $X_{*}(A)$ \index{$X_{*}(A)$, group  of one-parameter subgroups of  
 $A$} be the group  of one-parameter subgroups of  
 $A$. This is a free abelian group of finite type. Such a group will be
called a lattice. One fixes a uniformizing element
 $\varpi$ \index{$\varpi$} of  $\bold{F}$. \ber \label{LLA} One denotes by  $\Lambda (A)$ \index{$\Lambda(A)$} the image of  $X_{*}(A)$ in 
$A$ under   the morphism of groups  $\underline \lambda \mapsto \underline
\lambda (\varpi)$.\eer  
Under  this morphism $\LL(A)$ is isomorphic to   $X_{*}(A)$. 

If $(P, P^-)$ are two opposite parabolic subgroups of $G$, we will
 denote  their common Levi subgroup by $M$
and  the maximal split torus of its center by $A_M$ \index{$A_M$} or $A$.   We denote the   unipotent radical of $P$ (resp., $P^-$) by $U$
(resp., $U^-$) .   

Let   $A^-$ (or  $A^{-}_P$) \index{$A^-$, closed negative Weyl chamber} be the set  of $P$-antidominant elements in $A$. More precisely, if  $\Sigma(P, A)$ \index{$\Sigma(P, A)$} is  the
set of roots of $A$ in the Lie algebra of $P$, and  $\Delta (P,A)$  \index{$\Delta (P,A)$} is the
set of simple roots, one has:
$$A^-\>= \{a \in  A \vert \vert \alpha(a )\vert _{\bold{F}} \leq 1,\>\> 
 \>\> \alpha \in \Delta (P,A)\}.  $$
Also one defines  for $\ep>0$: 
.$$ A^-(\ep)= \{a \in  A \vert \vert \alpha(a)\vert _{\bold{F}}  \leq \ep , \>\> \alpha
\in \Delta (P,A) \index{$A^-(\ep)$}
\}.$$


If  $J$ is an algebraic group, one denotes by $\mathrm{Rat} (J)$  \index{$\mathrm{Rat} (J)$, group of rational characters}
the  group of its rational characters defined over $F$.   Let us define:
\beq \label{ag}\a_{G}= \mathrm{Hom}_{\Z} (\mathrm{Rat} (G),\R).  \index{$\a_{G}$ }\eeq  The restriction of rational characters 
 from  $G$ to  $A_{G}$  induces an  isomorphism: 
\beq \label{iso} \mathrm{Rat}(G)\otimes _{\Z}\R \simeq \mathrm{Rat}(A_{G})\otimes _{\Z} \R .\eeq
Notice that $\mathrm{Rat}(A_G)$ appears as a  generating lattice in the dual space $\a'_G$ of $\a_G$ and :\beq \label{aotimes} \a_G' \simeq \mathrm{Rat}(G) \otimes _\Z \R. \eeq   
One has the canonical map  $H_{G}: G \rightarrow \a_{G}$ \index{$H_G$}
 which is defined by: 
\beq \label{H} e^{\langle H_{G}(x), \chi\rangle}= \vert \chi (x)\vert_{\bold{F}}, \> x\in G,
\chi
\in \mathrm{Rat} (G).\eeq The kernel of  
$H_{G}$, which is denoted by $G^1$,   is the intersection of the kernels of the characters of
$G$,  $\vert \chi \vert_{\bold{F}}$, $\chi \in \mathrm{Rat}
(G)$. One defines
\beq \label{x(g)} X(G)= \mathrm{Hom} (G/G^{1}, {\C}^*),\index{$X(G)$} \eeq  which is the group  of unramified characters of $G$.  One will
use similar notations for Levi subgroups of $G$. One denotes by  $\a_{G, \F}$, resp.,  ${\tilde
\a}_{G, \F}$  the image of  $G$, resp.,  $A_{G}$, under  $H_{G}$. Then 
$G/G^{1}$ is  isomorphic to the lattice 
$\a_{G,\F}$.

There is a surjective map: 
\beq  \label{surjection}(\a'_{G})_{\C}\rightarrow X(G)\rightarrow
1 .\eeq   denoted by $ \nu \mapsto \chi_\nu$ which associates to 
$\chi\otimes s$, with $\chi  \in \mathrm{Rat} (G) $, $s\in \C$,    the character  $g\mapsto \vert \chi (g)\vert_\F 
^{s}$ (cf. [Wal], I.1.(1)).  In other words: 
\beq  \label{cnu}\chi_\nu (g) =   e^{ \langle \nu, H_G (g)\rangle}, g \in G, \nu \in(\a'_{G})_{\C} \index{$\chi_\nu $ } .\eeq
 The kernel is a lattice and it defines a structure of a
complex algebraic variety on  
$X(G)$ of dimension   
$\mathrm{dim}_{\R}\a_{G}$.  Moreover $X(G)$ is an abelian  complex Lie group whose Lie algebra is equal to $(\a'_{G})_{\C}$. 

 If  $\chi$ is an element of $ X(G)$, let  $\nu$ be an element of $
\a_{G,\C}'$ such that $\chi_\nu= \chi $.  The real part  $\mathrm{Re}(\nu) \in \a_{G}'$ is independent from the choice of $\nu$.  We will denote it  by $\mathrm{Re}(\chi)$. If  $\chi \in \mathrm{Hom} (G,
\C^{*}) $ is continuous, the character $\vert \chi \vert$  of $G$ belongs to  $X(G)$.  One sets $\mathrm{Re}( \chi)=  \mathrm{Re}( \vert \chi\vert )$. Similarly,  if  $\chi \in \mathrm{Hom}(A_{G},
\C^{*})$ is continuous, the character $\vert \chi \vert$ of $A_G$ extends uniquely  to  an element of  $X(G)$  with values in  $\R^{*+}$, that we will denote again by 
  $\vert
\chi \vert$ and one sets  $\mathrm{Re}( \chi)=  \mathrm{Re}( \vert \chi
\vert)$.

 If $P$ is a parabolic subgroup of $G$ with Levi subgroup $M$
we set:
$$\a_{P}= \a_{M}, H_{P}=H_{M}.$$   
 The inclusions
$A_{G}\subset A_{M}\subset M\subset G$  determine a surjective  morphism    $\a_{M, \F}\rightarrow  \a_{G, \F}$, resp., an injective  morphisme 
 ${\tilde
\a}_{G, \F}   \rightarrow {\tilde
\a}_{M, \F}$, which extends uniquely to a surjective linear map between  $\a_{M}$ and  $\a_{G}$, resp., injective map  between
$\a_{G}$ and 
$\a_{M}$.
The second map allows us to identify  $\a_{G}$ with a subspace of 
$\a_{M}$ and the kernel of the first one, $\a^{G}_{M}$,  satisfies: 
\ber $$\label{oplus} \a_{M}= \a^{G}_{M}\oplus \a_{G}. \index{$\a^{G}_{M}$} $$
\eer

Let us denote  the set of restrictions to $M$ of elements of $X(G)$ by $X(G\vert M)$ \index{$X(G\vert M)$}. Then $X(G\vert M)$ is the analytic subgroup of $X(M) $ with Lie algebra $(\a'_G)_\C \subset (\a'_M)_\C$. This follows easily from  (\ref{cnu}) and (\ref{oplus}). Moreover:
\ber \label{Xclosed}  $X(G\vert M)$ is a closed subgroup  of $X(M) $. \eer This will  be seen by writing $X(G)= X(G)_ u X(G)^+$ where $X(G)_u$ is the compact group of unitary characters in $X(G)$ and $X(G)^+$ is the group  of elements in $X(G)$ which are strictly positive. The  group $X(G)_u$ is compact and has compact image. The group $X(G)^+$ is isomorphic to a vector subgroup and the restriction, which is a morphism of Lie groups,  determines an isomorphism  of $X(G)^+$ to a connected Lie subgroup of $X(M)^+$: the restriction of an element of $X(G)^+$ trivial on $A_G$ is trivial by what has been said above.   Hence the image of $X(G)^+$  is closed. This implies our claim on $X(G\vert M)$.

One has (cf.  [D3], (4.5)), \ber \label{45} The map $\LL(A_G )\to G/G^1$ is injective and allows us to identify $\Lambda(A_G)$ with  a subset of $\a_G$. \eer  

Let $A$ be a maximal split torus of $G$ and let $M_0$ be the centralizer of $A$ in $G$. We fix a $W$-invariant scalar product on $ \a:= \a_{M_0}$, where $W$ is the Weyl group of $(G, A)$.  Then $\a_G $ is identified with the fixed point set of $\a$ under $W$ and $\a^G$ is an invariant subspace of $\a$ under $W$ which is supplementary to $\a_G$. Hence it is the orthogonal subspace to $\a_G$ in $\a$. The space $ \a'_G $,  might be viewed as  a subspace of $\a'$ by (\ref{oplus}). More generally let $M$ be  a Levi subgroup of $G$ which contains $A$. From (\ref{oplus}) applied to $M$ instead of $G$ and to  $M_0$, instead of $M$, one gets a decomposition $\a= \a^M_{M_0} \oplus \a_M$. From the $W$-invariance of the scalar product one gets:  \ber \label{iden} The decomposition $\a= \a^M_{M_0} \oplus \a_M$ is  an orthogonal decomposition.\\ The space $\a'_M$ appears as a subspace of $\a'$ and, in the identification of $\a$ with $\a'$ given by the scalar product, $ \a_M '$ is identified with $\a_M$. \eer  

 If $\nu\in \a'$, the parabolic subgroup of $G$ whose Lie algebra is equal to the sum of the $A$-weights spaces for  the weights $\aa$ which are  either equal to zero or to a root $\aa$ such that 
$ (\nu, \aa)\geq 0$ is denoted  by $P_\nu$\index{$P_\nu$}. 
Then one has:  \ber \label{p-l} The parabolic subgroups of $G$, $P_\nu$ and $P_{-\nu} $ are opposite. \eer 
If $\rho_P\in \a'$ is the half sum of the $A$-roots of $A$ in the Lie algebra of $P$, then the following is clear:
\ber \label{pl} $P=P_{\rho_P}.$\eer

Let ${\underline G}$ be the algebraic group defined over $\F$ whose group of $\F$-points is $G$. Let $\si$ be a rational involution of ${\underline G}$, defined over
$\F$.  Let 
$H$ be  the group of $\F$-points of an open $\F$-subgroup  of the fixed
point set of $\si$.  We will also denote   the restriction of $\si$ to
$G$ by  $\sigma$.\ber \label{asplit} A split torus of $G$, $A$, is said to be $\si$-split \index{$\si$-split torus} if $A$ is contained in  the set of elements of $G$ which are antiinvariant under $\si$. \eer Notice that  our terminology differs from other authors ([HW], [HH], [KT1]).  
Let $A$ be a $\si$-invariant maximal split torus of $G$.  We fix a scalar product on $\a$ which is invariant under $\si$ and the action of the Weyl group of $G$ with respect to $A$: this is possible because $\sigma$ normalizes $A$, hence its normalizer  and the Weyl group of $G$ with respect to $A$. \berÊ\label{asiasi} Let $A^\si$ (resp., $A_\si$)  \index{$A^\si, A_\si$} 
be the  maximal split torus in  the group of elements of $A$ which are  invariant (resp., antiinvariant) under $\si$.  Then $\a^\si$ (resp., $\a_\si$) is identified   with the set $\a^\si$ (resp.,  $\a^{-\si} $) \index{$\a\si, \a^{-\si}$}
  of invariant  of (resp., antiinvariant) elements of $\a$ under  $\si $ and $A_\si$ is the maximal $\si$-split torus of $A$. \eer 
\ber \label{am'} If $M$ is a $\si$-invariant Levi subgroup of $G$ which contains $A$,  $\a_M$ is a $ \si$-invariant subspace of $\a$ and $\a_M= \a_M^\si\oplus \a_M^{-\si} $ where $\a_M^{\si}= \a_M\cap \a^\si, \a_M^{-\si} = \a_M\cap \a^{-\si}$ . This is an orthogonal decomposition and, in the identification of $\a$ to $\a'$,   $(\a_M)^{-\si }$ is identified with the space $(\a'_M)^{-\si}$ of $\si$-antiinvariant elements of $\a'_M$.  \\
Moreover the Lie algebra of the connected component, $X(M)_\si$, \index{$X(M)_\si$} of the  group of antiinvariant elements of $X(M)$,  with the identification of $\a_M$ and $\a'_M$, is equal to $ \a_M^{-\si}$.\eer
\ber \label{sipara} A  parabolic subgroup of $G$, $P$,  is called a $\sigma$-parabolic subgroup \index{$\sigma$-parabolic subgroup} if  $P$ and
$\sigma(P)$ are opposite  parabolic subgroups. Then $M:=P\cap \sigma(P)$ is the  $\sigma$-stable
Levi subgroup of $P$.    If $P$ is such a parabolic subgroup, $P^-$ will denote $\si(P)$. Then the maximal split torus of the center of $M$, $A_M$ is $\si$-stable. \eer 

The sentence :
 ``Let $P=MU$ be a parabolic subgroup of $G$'' will mean that $U$ is the unipotent radical of $P$ and $M$ is a Levi subgroup of $G$. If moreover $P$ is a $\si$-parabolic subgroup of $G$, $M$ will denote its $\si$-stable Levi subgroup.
 
 One deduces from [HW], Proposition 13.4:
\ber \label{op} $PH$ and $P^-H$ are open in $G$.  Let $M$ be equal to $P\cap \sigma(P)$. The restriction to $H\cap M$ of the modulus function of $P$, $\delta_P$, is trivial as  it is positive and  it is equal to its inverse on $M\cap H$. \eer We also recall  that (cf. l.c. Corollary 6.16):
\ber \label{sip} There are only a finite number of $H$-conjugacy classes of parabolic subgroups of 
$G$. \eer 

Let $M_\vid$ be the $\si$-stable Levi subgroup of a minimal $\si$-parabolic subgroup of $G$. Let $A_\vid$ be the maximal $\si$-split torus of the center of $M_\vid$. 
\begin{defi}  An element $x$ of $G$  is said to be $A_\vid $-good \index{$A_\vid$-good} if and only if  $x^{-1}. A_\vid $ is a $\si$-split torus.
\end{defi} From [BD], Lemma 2.4 one sees: 
\ber \label{WG}There exists a finite  set of representatives of the $( P_\vid, H) $-double cosets open in $G$,  $\mathcal{W}^G_{M_\vid}$,  whose elements are $A_\vid$-good.\\
Moreover if $M$ is  the $\si$-stable Levi subgroup of a $\si$-parabolic subgroup of $G$, with $M_\vid$ (or $A_\vid$)  contained in $M$, there exists  a subset, $\mathcal{W}^G_{M} $,  of $\mathcal{W}^G_{M_\vid}$  \index{$\mathcal{W}^G_{M} $, $\mathcal{W}^G_{M_\vid}$} such that for all   $\si$-parabolic subgroup $P$ of $G$ with Levi subgroup $M$,  $\mathcal{W}^G_{M} $ is a set of representatives of the $(P, H)$-double open cosets.  \eer
\begin{lem} \label{sixP}
Let the notation be  as above. Let $P=MU$ be a $\si$-parabolic subgroup of $G$ with $M_\vid \subset M$ and let $x\in G$ be $A_\vid$-good. Then one has:\\
(i) The group $x^{-1}.P$ is a $\si$-parabolic subgroup of $G$ with $\si$-stable Levi subgroup $x^{-1}. M$. Moreover 
$\si(x^{-1}.P)= x^{-1}.P^-$, where $P^-=\si(P)$. \\(ii) The set   $PxH$ is open in $G$.\\(iii) One defines an involution of $G$, $\si_x$, by:
$$ \si_x(g):=x \si(x^{-1}gx) x^{-1}, g\in G$$
whose fixed point set contains $x.H$. Then $P$ is a  $\si_x$-parabolic subgroup of $G$,  $\si_x (M)=M$  and $\si_x(P)=P^-$.\\
(iv) For all $y \in PxH$, $y^{-1}.P$ is a $\si$-parabolic subgroup of $G$ and $P$ is a $\si_y$-parabolic subgroup of $G$. Let $M_y$ be the $\si_y $-stable Levi subgroup of $P$. 
Then $M_y \cap y.H= P\cap y.H$. Moreover  if $y=px$ with $p\in P$,  one has $ M_y = p.M$.
 \end{lem}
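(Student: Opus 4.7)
My plan is to reduce everything to a single algebraic identity extracted from $A_\vid$-goodness: namely that $\sigma(x)x^{-1}\in M_\vid\subset M$. To see this, for $b\in A_\vid$ I would observe that $a=x^{-1}bx\in x^{-1}.A_\vid$ is $\sigma$-split by hypothesis, so $\sigma(a)=a^{-1}$; combining this with $\sigma(b)=b^{-1}$ (as $A_\vid$ is itself $\sigma$-split) gives, after one line of computation, that $\tau:=\sigma(x)x^{-1}$ commutes with every $b\in A_\vid$. Since by a standard fact $Z_G(A_\vid)=M_\vid$, this yields $\sigma(x)=mx$ for some $m\in M_\vid\subset M$.

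With this identity in hand, (i) is immediate: $\sigma(x^{-1}.P)=\sigma(x)^{-1}.\sigma(P)=(x^{-1}m^{-1}).P^-=x^{-1}.P^-$, using $m\in M\subset P^-$, and the same computation gives $\sigma(x^{-1}.M)=x^{-1}.M$; the parabolics $x^{-1}.P$ and $x^{-1}.P^-$ are then opposite as conjugates of an opposite pair. Part (ii) combines (i) with (\ref{op}) to see that $(x^{-1}.P)H$ is open in $G$, and left-translates by $x$. For (iii), the fact that $\sigma_x$ is an involution fixing $x.H$ is a routine check using $\sigma(h)=h$; conjugating the identities of (i) by $x$ yields $\sigma_x(P)=x\cdot(x^{-1}.P^-)\cdot x^{-1}=P^-$ and $\sigma_x(M)=M$, so $P$ is $\sigma_x$-parabolic with $\sigma_x$-stable Levi $M$.

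For (iv), I would write $y=pxh$ with $p\in P$, $h\in H$. A direct computation using $\sigma(h)=h$ shows $\sigma_y=\sigma_{px}$, so I may replace $y$ by $px$; then the identity $\sigma_{px}(g)=p\,\sigma_x(p^{-1}gp)\,p^{-1}$ gives $\sigma_{px}(P)=p\,\sigma_x(P)\,p^{-1}=p.P^-$. The key conceptual step is now the elementary observation that simultaneous conjugation by $p$ preserves oppositeness: since $P$ and $P^-$ are opposite with common Levi $M$, the pair $p.P=P$ and $p.P^-$ is again opposite with common Levi $p.M$; hence $P$ is $\sigma_{px}$-parabolic and $M_y=P\cap p.P^-=p.M$. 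Finally, $M_y\cap y.H=P\cap y.H$: the inclusion $\subset$ is trivial, and for $\supset$ I would use $y.H\subset\mathrm{Fix}(\sigma_y)$: if $g\in P\cap y.H$ then $g=\sigma_y(g)\in\sigma_y(P)=p.P^-$, so $g\in P\cap p.P^-=M_y$. The only real obstacle in the whole argument is the opening computation extracting $\sigma(x)x^{-1}\in M_\vid$; once that identity is secured, every remaining assertion follows by conjugation bookkeeping and the stability of opposite pairs of parabolics under simultaneous conjugation.
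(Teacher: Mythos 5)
Your argument is correct, and for parts (i) and (ii) it takes a genuinely more self-contained route than the paper, which at this point simply invokes [BD], Lemme 2.4. Your key identity — that $A_\vid$-goodness forces $\si(x)x^{-1}$ to centralize $A_\vid$, hence to lie in $Z_G(A_\vid)=M_\vid\subset M$, so that $\si(x)\in Mx$ — is exactly the right algebraic content to extract; from it $\si(x^{-1}.P)=x^{-1}.P^-$ and $\si(x^{-1}.M)=x^{-1}.M$ follow by the conjugation bookkeeping you describe, and (ii) then follows from (\ref{op}) applied to $x^{-1}.P$ together with left translation by $x$. What the paper's citation buys is brevity; what your computation buys is transparency about where the hypothesis that $x^{-1}.A_\vid$ is $\si$-split is actually used. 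For (iii) and (iv) your argument coincides with the paper's: the reduction $\si_y=\si_{px}$, the computation $\si_{px}(P)=p.P^-$, and $M_y=P\cap p.P^-=p.M$; you even supply the verification of $M_y\cap y.H=P\cap y.H$ via $y.H\subset\mathrm{Fix}(\si_y)$, which the paper leaves implicit. The only item you do not address is the first assertion of (iv), that $y^{-1}.P$ is itself a $\si$-parabolic subgroup of $G$; note that this is not covered by your reduction to $y=px$ (legitimate only for the $\si_y$-statements), but it follows in one line from (i) by writing $y^{-1}.P=h^{-1}.(x^{-1}.P)$ and observing that conjugation by $h\in H$ commutes with $\si$.
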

 \begin{proof}
(i) and (ii) follows from [BD], Lemme 2.4.\\
(iii) follows immediately from (i).\\
(iv) If $ y= pxh$ with $h\in H, p\in P$. Then $y^{-1}.P= h^{-1}. (x^{-1}.P)$. Then the first part of (iv) follows from (i). Also one has  $\si_y= \si_{px}$. Hence  one is reduced to prove the second part of (iv) for $h=1$ and $y=px$.
A simple computation shows that $\si_y(P)= p.P^-$. Hence $ \si_y (P) \cap P= p.M$, which proves (iv). \end{proof} 
  
Let $A_\vid$ be a maximal $\si$-split torus of $G$ and let  $A_0$ be a $\si$-stable maximal split torus of $G$ which contains $A_\vid$. Let $P_\vid=M_\vid U_\vid $ be a minimal $\si$-parabolic subgroup of $G$, whose $\si$-stable Levi subgroup is equal to the centralizer, $M_\vid $, of $A_\vid$ in $G$.\\

\begin{prop}  \label{KT}
 There exists  a decreasing  sequence of $\si$-stable compact open subgroups of $G$, $(J_n)_{n\in \N^*}$, 
which forms a basis of neighborhoods of $1$ in $G$ and such that for each $ n \geq 1$, $J:= J_n$ satisfies:  \\
(i)  For every $\si$-parabolic subgroup of $G$, $P=MU$ which contains $P_\vid$, the product map $ J_{U^-}\times J_M \times J_U \to J$ is bijective, where  $J_{U^-} = J \cap U^-$, $J_M=
J\cap M$,  $J_U= J \cap U. $\\
(ii) Let $A\subset A_\vid$ be the maximal $\si$-split torus of the center of $M$ and let $A^-$ be the set of its $P$-antidominant elements.  For all $a$ belonging to  $A^- $, one has $$aJ_U a^{-1}  \subset J_{U},  a^{-1}J_{U^-} a \subset J_{U^-}.$$
(iii) One has $J= J _H J_P$, where $J_H= J \cap H, J_P= J \cap P$. \\(iv) For each $\si$-parabolic subgroup of $G$ which contains $P_\vid$, $P=MU$, the sequence $(J_{n }\cap M)$ enjoys the same properties that $(J_n)$ for $M$, $P_\vid\cap M$. \end{prop}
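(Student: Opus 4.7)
The plan is to construct the $J_n$ by taking $\si$-stable versions of the subgroups $K_n$ from (\ref{iwa}), and to deduce the new factorization (iii) from the openness of $PH$.

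First, I choose a $\si$-stable maximal split torus $A_0$ of $G$ containing $A_\vid$, and a minimal parabolic subgroup $P_0$ with Levi equal to the centralizer of $A_0$, satisfying $P_0 \subset P_\vid$. Applying the Bruhat--Tits fixed point theorem to the involution $\si$ acting on the semisimple Bruhat--Tits building of $G$ produces a $\si$-fixed special point in the apartment of $A_0$; the associated $A_0$-good maximal compact subgroup $K_0$ is then $\si$-stable. Taking the Moy--Prasad filtration $(K_n)_{n \in \N}$ attached to this $\si$-fixed point yields a basis of $\si$-stable compact open neighborhoods of $1$ satisfying all the conclusions of (\ref{iwa}). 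Set $J_n := K_n$. If now $P = MU \supset P_\vid$ is a $\si$-parabolic subgroup, properties (i) and (ii) are immediate from (\ref{iwa}) because such a $P$ automatically contains $P_0$; the hereditary clause (iv) follows from the hereditary part of (\ref{iwa}) applied to the $\si$-stable Levi $M$ with minimal parabolic $P_\vid \cap M$, combined with $\si$-stability.

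The substantive new point is (iii), the decomposition $J_n = (J_n \cap H)(J_n \cap P)$. By (\ref{op}), $PH$ is open in $G$, so for $n$ large $J_n \subset PH$. The strategy is a layer-by-layer induction along the filtration: given $j \in J_n$ with Iwahori decomposition $j = u^- m u$, the problem reduces to writing $u^- \in J_n \cap U^-$ in the form $h q$ with $h \in J_n \cap H$ and $q \in J_n \cap P$. At the graded level $\mathrm{gr}_n J := J_n / J_{n+1}$, the $\si$-stable Iwahori factorization produces a splitting indexed by $\u^-$, $\m$, $\u$, on which $\si$ exchanges the $\u$ and $\u^-$ components; hence the projection of $\mathrm{gr}_n(\h)$ onto $\mathrm{gr}_n(\u^-)$ contains $\si(\mathrm{gr}_n(\u)) = \mathrm{gr}_n(\u^-)$ and is in particular surjective. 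Thus every $u^-$ can be corrected modulo $J_{n+1}$ by an element of $J_n \cap H$ into $J_n \cap P$; iterating into deeper layers produces a convergent factorization and yields (iii). This essentially reproduces [KT1, Lemma 4.6] and [L, Theorem 1 (ii)], but using the $\si$-stable Moy--Prasad filtration reduces the correction at each layer to pure linear algebra over the residue field, so that no $2$-divisibility is required and the hypothesis on the residue characteristic of $\F$ can be dropped.

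The principal obstacle is this inductive construction for (iii): one must verify that the linear-algebraic corrections at each graded layer lift to a convergent iteration inside $J_n$. The $\si$-equivariance of the Moy--Prasad grading at a $\si$-fixed special point builds in the needed compatibility. Once (iii) is established, (iv) follows automatically by applying the whole construction to the $\si$-stable reductive group $M$.
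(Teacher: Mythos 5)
Your proposal is correct in outline but takes a genuinely different route from the paper's. The paper never touches the building: it proves an elementary chart lemma (Lemma \ref{DB}) for Lie groups over $\F$, defining $J_n$ as the image of the lattice $\LL_n\un{g}$ under \emph{any} analytic chart with identity differential at $0$, and showing the resulting groups are independent of the chart for $n$ large. Part (iii) then falls out formally: choosing the basis of $\un{g}$ so that the lattice splits as $(\LL\un{g}\cap\un{u})\oplus(\LL\un{g}\cap\un{m})\oplus(\LL\un{g}\cap\un{g}'_3)$ with $\un{g}'_3\subset\un{h}$ spanned by the $U_i+\si(U_i)$, the product chart $\psi_1(Y_1)\psi_2(Y_2)\psi_3(Y_3)$ is again a good chart, whence $J_n=(J_n\cap U)(J_n\cap M)(J_n\cap H)$ and $J_n=J_HJ_P$ on taking inverses. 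Your Moy--Prasad/graded argument rests on exactly the same linear algebra --- the projection of $\{X+\si(X):X\in\un{u}\}$ onto $\un{u}^-$ is $X\mapsto\si(X)$, hence bijective, which is why neither approach needs odd residue characteristic --- but you reach the group-level conclusion by successive approximation rather than by chart-independence. Two points in your write-up need repair. First, the $\si$-fixed point in the apartment of $A_0$ (the midpoint of $x$ and $\si(x)$) need not be special; this is harmless because the filtration subgroups $G_{x,r}$, $r>0$, have Iwahori factorizations for any point $x$ of the apartment, but you should not claim a $\si$-fixed \emph{special} point. Second, the ``convergent iteration'' is not automatic: interleaving corrections $h_0p_0h_1p_1\cdots$ does not regroup into $(J_n\cap H)(J_n\cap P)$. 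The standard fix is to correct on both sides, producing $h^{(k)}\in J_n\cap H$ and $p^{(k)}\in J_n\cap P$ with $(h^{(k)})^{-1}\,j\,(p^{(k)})^{-1}\in J_{n+k}$ and successive ratios in $J_{n+k}$; this converges because $J_n$ normalizes $J_{n+k}$, which the Moy--Prasad filtration does provide. With these two repairs your argument is complete; the paper's route buys a shorter, building-free proof valid for any good filtration, while yours makes the residue-field linear algebra behind the removal of the characteristic hypothesis more transparent.
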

\begin{rem}If $\F$ is  the characteristic of the residue field of $\F$ is different from 2, the proposition  is due to Katano and Takano (cf. [KT] Lemma 4.3). In that case, their result is stronger, as their $J_n$ satisfy more properties.  
\end{rem}
\begin{proof} We use the terminology and notations  of Section \ref{Lie} (cf. also Lemma \ref{LieG}). 
We fix a basis of the Lie algebra of $G$, $\un{g}$, $(X_j)$, which is the union of a basis $(U_k)$ of $\un{u_\vid}$ made of weight vectors for $A_\vid$, of  a basis of $\un{m_\vid} \cap \un{ h}$,  a basis of the space of $\si$-antiinvariant elements  of $\un{m_\vid}$ and of  the basis $(\si(U_k))$ of $\si(\un{u_\vid})$. We use Lemma \ref{DB} (iv) for $G_1= U, G_2= M, G_3 = \si(U)$ to prove (i).\\ Let $A^0$ be the maximal compact open subgroup of $A$. We remark that the  $A^0$-module $A^-$ is finitely generated. Let $(\ll_l)$ be a finite family of generators. We apply Lemma \ref{DB} (iii) to the family of automorphisms of $G$ induced by the conjugation  by elements of $A^0 \ll_l$. The fact that the  $J_n$ can be choosen $\si$-stable is proven similarlly.
\\(iii) We apply Lemma \ref{DB} to $G_1= U$, $G_2= M$, $G_3=H$. Here $\un{g}'_1= \un{u}$, $\un{g}'_2= \un{m}$ and $\un{g}'_3$ is the subspace generated by the $U_i+ \si(U_i)$, where the $U_i$  are those which belong to $\un{u}$.\\
(iv) results from the fact that $J_M$ is defined like $J$, in view of Lemma \ref{DB} (iv). 
\end{proof}
  \begin{rem}
In [D3], Remark 3.2,   it was stated  incorrectly, although not used,  that in (ii) one could replace $A$ by the maximal split torus of the center of $M$.             
\end{rem}
\ber \label{phgood} If a compact open subgroup of $G$ satisfies the properties  of the Proposition \ref{KT} , it will be said to be to have a $\si$-factorization\index{$\si$-factorization}.
These are the $(P,H)$-good subgroups from the introduction. \eer

  \ber If $(P=MU, P^-=MU^-)$ is a pair of opposite parabolic subgroups of $G$, we will say that a compact open subgroup  $J$ of $G$ has an Iwahori factorization with respect to $(P,P^-)$ if the product map $ J_{U^-}\times J_M \times J_U \to J$ is bijective, where  $J_{U^-} = J \cap U^-$, $J_M=
J\cap M$,  $J_U= J \cap U. $\eer

 \section{Two operations on $H$- forms on induced representations}  \subsection{Second Adjointness Theorem and $H$-forms}
  In the sequel, the smooth representations of $G$ and of its closed subgroups will be with complex coefficients.
  
   Let $(\pi, V)$ be a smooth representation of $G$ and let $P=MU$ be  a parabolic
subgroup of $G$. One
denotes by
  $(\pi_P,V_P)$ the tensor product of the   quotient of $V$ by the
$M$-submodule generated by the $\pi(u)v-v$, $u\in U, v\in V$,  with the representation of $M$ on $\C$ given by $\delta_P^{-1/2}$.  We call it the
normalized  Jacquet module of $V$ along $P$.  We denote the natural projection map from $V$ to $V_P$ by
$j_P$  and sometimes $ \pi_P$ will be denoted $ j_P(\pi)$.
For further reference, we state  the following fact.
\ber \label{xexq} Let   $P=MU$ be  a parabolic subgroup of $G$, let $Q=LV$ a parabolic subgroup of $M$ and  let  $(\delta, E)$ be  a smooth representation of $M$. 
If $x$ is an element of $G$, one has the equality of vector spaces $(xE)_{x.Q}= E_Q $ and the natural representation of $x.Q$ on $(xE)_{x.Q} $ is simply $x. \delta_Q$.  \eer 

 The following result is due  to J. Bernstein for smooth representations (cf.  [Be],  [R], Chapter V.9 , see also [Bu] for the first published proof). We present here a slight reformulation of his result (cf.  [D3], Lemma 2.1). This is a
generalization of a result of W. Casselman for admissible representations (cf. [C]). Let $(P, P^-) $ be a pair of opposite parabolic subgroups of $G$ with common Levi subgroup $M$. Let  $ A_0$ be a maximal split torus of $M$  and let  $P_0$ be a minimal parabolic subgroup $P_0$ such that $A_0\subset P_0\subset P$. We define:  \ber \label{apep} $$\Theta_P:= \Delta(P_0\cap M, A_0)$$
and, for $\ep  > 0$, we set:
 $$A_0^-(P, <\ep):=\{a \in A_0^-\vert \vert \alpha(a)\vert _\F< \ep, \alpha \in \Delta (P_0,A_0) \setminus
\Theta_P\} \index{$A_0^-(P, <\ep)$}. $$
\eer
{\bf Second Adjointness theorem} 
\ber  \label{becas} 
Let $(\pi, V)$ be  a smooth representation of  $G$.  Let $j_P$ (resp., $j_{P^-}$)  denote  the canonical
projection of $V$ (resp., of the smooth dual ${\check  V}$ of $V$) onto $V_P $ (resp.,
$({\check V})_{P^-}$). 
\\ Then there exists a unique non degenerate  $M$-invariant bilinear form $\langle.,. \rangle  _P$
\index{$V_P, j_P, \langle.,. \rangle  _P$} on  $({\check
V})_{P^-} \times V_P$  such that for all compact
open subgroups,
$J$, of $G$,
 there exists $\ep_J< 1$ such that:
$${\delta_P^{1/2}}(a)\langle j_{P^-}( { \check v}), \pi_P(a) j_P(v) \rangle _P=
\langle {\check v}, \pi(a) v  \rangle, $$ for $a \in A_0^{-}(P,<\ep_J), v\in V^J, {\check
v\in {\check V}^J}.$
It is part of  the statement  that $\varepsilon_J$ does not depend on $V$.

In particular there is a canonical isomorphism between $(V_P)\,\cc{}$,  $({\check V})_{P^-}$.
\eer
\ber \label{hform} An   $H$-form  \index{$H$-form} on a smooth module of $G$ is an $H$-fixed linear form on $V$. \eer
One denotes 
by $e_J$ the normalized Haar measure on $J$ \index{$e_J$, normalized Haar measure on $J$}that we view
as  a compactly supported distribution on $G$. Using the same argument as in [L], Lemma 2, one sees:  
\ber  \label{K'}
Let $P=MU $ be a $\si$-parabolic subgroup of $G$.  Let $A$ be the maximal split torus of the center of $M$. Let $A_\si$ be the  maximal $\si$-split torus of the maximal split torus, $A$, of the center of $M$.   
Let $ J$ be a compact open subgroup of $G$ with  a $\si$-factorization with respect to $(P,P^-)$. 
Then 
 for every  smooth module $(\pi,V)$, $\xi\in V^{'H}$:
 $$\langle\xi, \pi(a) v \rangle = \langle e_{J}\xi,\pi(a)  v \rangle  ,
\>\> v\in V^J, \>\> a\in A^-\cap A_{\si} ,
$$
 where $e_{J }\xi $ is the element of $\VV$  defined by:
 $$\langle e_{J }\xi, v \rangle = \langle \xi,\pi( e_{ J }) v \rangle , \>\>
v\in V.$$
  \eer 
  From our hypothesis on $J$, one sees that one can take $J'=J$ in the proof of Lemma 2 of [L].
  
  The following result has been proved independently on one hand  by Kato and Takano  ([KT1], Proposition 6)  and on the other hand by Lagier ([L] Theorem 1 (ii)) for admissible modules. Later, it was remarked (cf. [D3]) that it works  for general smooth modules.  
Let $A_M$, or simply $A$,  be the maximal split torus of the center of $M$.
 For every smooth module $(\pi, V)$ of $G$ and  $\xi$ an $H$-form on $V$,  there exists a unique  $M\cap H$-form  $j_{P^-} \xi$ \index{$j_{P^-} \xi$} on $V_{P}$
 such that   for each compact open subgroup    $J$ of $G$, there exists $\varepsilon'_J > 0$, such that $\varepsilon'_J\leq \varepsilon_J$, depending only on $J$ and not on $V$ and $\xi$,  such that one has:
\beq  \label{xia}\langle \xi, \pi(a)v \rangle =\dd (a) \langle j_{P^-} \xi,  \pi_P(a)
j_P(v) \rangle  ,
\>\>v  \in V^J,\>\> 
a \in A^-(\ep'_J).  \eeq
From (\ref{K'}), one deduces from the above that if $J $ is a compact open subgroup of $G$ with a $\si$-factorization, one has 
$$\langle e_J \xi, \pi(a)v \rangle =\dd (a) \langle j_{P^-} \xi,  \pi_P(a)
j_P(v) \rangle_P ,
\>\>v  \in V^J \>\> , a \in A_\si \cap A^-( \varepsilon'_J).$$
From the Second Adjointness Theorem, one deduces from this, that for $a  \in A_\si \cap A^-( \varepsilon'_J), v\in V^J$ :
$$
 \langle j_{P^-} \xi,  \pi_P(a)
j_P(v) \rangle  = \langle j_{P^-}e_J \xi,  \pi_P(a)
j_P(v) \rangle  .$$
If $\pi$ is admissible, the  two sides of this equality are $A_\si$-finite functions on $A_\si$, hence they are equal. In particular in $a=1$, one gets:
\ber  \label{defj} Let $J$  be a compact open subgroup of $G$ with  a $\si$-factorization  with respect to $(P,P^-)$. Then, if $\pi$ is admissible,  one has:
  $$\langle j_{P^-} \xi, v  \rangle = \langle j_{P^-} ( e_{J} \xi) , v \rangle_P,  v\in V_{P}^{J_M}.$$ \eer 
\subsection{Induced representations} \ber \label{AK} Let $A$ be a maximal split torus of $G$. 
Let $K$ be a maximal compact open  subgroup of   $G$ which is the stabilizer of a special point of the  apartment  of  the extended building of $G$. It fixes a choice of  a left invariant measure measures on each  algebraic closed subgroup of $G$ such that the measure  of its intersection with $K$ is equal to $1$.
It depends on $A$ and $K$.\eer
Let $P=MU$ be a parabolic subgroup of $G$ with Levi subgroup $M$ containing $A$  and with unipotent radical $U$. Let $\chi$ be an element of $ X(M)$. One denotes by  $E_\chi$ the space of the representation  $\delta_\chi:=\delta\otimes \chi$. Let 
$i^G_{P}E_\chi$ \index{$i^G_P$} be the space of maps  $v$ from  $G$ to $E$, right  invariant under a compact open subgroup of $G$ and 
such that  $v(mug)= \delta_P(m)^{1/2}\delta_\chi (m) f(g)$  for all 
$m\in M$, $u\in U$, $g\in G$. Let  $i^G_{P}\delta_\chi$  be the representation of $G$ in 
 $i^G_{P}E_\chi$ by right translations.
 
  If  $(\delta, E)$ is a smooth representation of  $M$, one extends    it to a representation of  $P$ trivial on  $U$, denoted in the same way. 
  One denotes by  $i^K_{P\cap K}E$  the space of maps 
$v$ from $K$ in  $E$, which are right invariant under a compact open subgroup of $K$ and 
such that  $v(pk)= \delta(p)v(k)$ for all $k\in K$ and  $p\in P\cap K$. The restriction of functions to $K$
determines an  isomorphism from  $i^G_{P}E_\chi$ to  $i^K_{P\cap K}E$. We will denote  the representation of  $G$  in $i^K_{P\cap K}E$ deduced from 
$i^G_{P}\delta _\chi$ ``par transport de structure'' by  ${\overline
i}^G_{P}(\delta_ \chi)$. This representation will be called the compact realization   of $ i^G_P\delta_\chi$ in this space independent from $\chi$.
If  $v \in i^K_{P\cap K}E$, one denotes by 
$v_\chi$ the element of   $i^G_{P}E_\chi$  whose  restriction to  $K$ is equal to $v$.    If  $\delta$ is unitary,  one defines a scalar product on $i^K_{P\cap K}E$  by:
  \beq (v, v')= \int_{K}(v(k),v'(k))\> dk, \> v, v' \in i^K_{P\cap K}E .\eeq
   The representation $\overline{ i}^G_P \delta_\chi$ is unitary for this scalar product when   $\chi$  is unitary. Consequently, ``par transport de structure'', $i^G_P\delta_\chi$ is also unitary.\\
If $g \in G$, one chooses $u_P(g) \in U, m_P(g) \in M$ and $k_P(g)\in K$ such that $g= u_P(g)m_P(g)k_P(g)$. Then $\delta_P( m_P(g))$ does not depend on the choice of $m_P(g)$. 
Let $ P^-=MU^-$ be the opposite parabolic subgroup of $P$ with respect to $M$. 
\ber \label{ccigp} We will identify $i^G_P \cc{E}$ to $(i^G_P E)\cc{}$, \index{isomorphism between $i^G_P \cc{E}$  and $(i^G_P E)\cc{}$}by associating to $\cc{v}  \in i^G_P \cc{E} $  the linear form, $\phi$ on $i^G_PE$ defined by 
$$ \langle \phi, v \rangle = \int_K  \langle \cc{v}(k), v(k) \rangle dk, v \in i^G_PE .$$ \eer 

Let $e$ be an element of $E$ and let $J$ be  a compact open subgroup of $G$ such that  $e$ is invariant under $ J \cap P$ under $\delta$.
One defines a map $v _{e, \delta}^{P,J}$ \index{$v _{e, \delta}^{P,J}$} from $G$ to $E$ by:
\ber \label{vsi} $$v _{e, \delta}^{P,J} (pj)= \delta_P^{1/2}(p)\delta(p) e , j\in J, p\in P$$
$$v _{e, \delta}^{P,J}(g)= 0, g \notin PJ,$$\eer
the definition making sense due to our hypothesis on $J$ and $e$. 
 Notice that this hypothesis is satisfied if  $J$ has an Iwahori factorization with respect to $(P, P^-)$ (resp., if $P$ is a $\si$-parabolic subgroup of $G$ and $J$ has a $\si$-factorization for $(P,P^-)$) and $e$  is $J_M$-invariant.
 
 In all cases $v _{e, \delta}^{P,J} $ is invariant on the right by $J$ and defines an element of $i^G_P E$. 

\subsection{The operation $\cc{r}_M$ \index{$\cc{r}_M$}  \label{two}}
Let $P=MU$ be a parabolic subgroup of $G$ such that $A\subset M$ and $(\delta,E)$ a smooth representation of $M$.  Let $\pi=i^G_P\delta$. 

Then with our choice of Haar measures,   the left invariant Haar measure $d_lp$ on $P$ satisfies:
 \beq \label{intpk} \int _{G} f(g) dg= \int _{P\times K}  f(pk)\> d_lp\>  dk, f \in C_c^\infty (G).\eeq 
Let $d_rp = \delta_P d_l p$ which is a right invariant measure on  $P$.
One defines a linear map $M_{\delta,P} $ \index{$M_{\delta,P} $} from $ C_c^\infty (G) \otimes E$ to $i^G_P E$ 
by:  \beq \label{Mdel} (M_{\delta,P}  (f))(g) = \int_{P}\delta_P^{1/2}(p^{-1}) \delta(p^{-1})  f(pg)\> d_r p, f \in  C_c^\infty (G) \otimes E, g\in G ,\eeq
where we have identified $C_c^\infty(G) \otimes E$ with $C_c^\infty(G,E)$. 
  This map goes through the quotient to an isomorphism  between $ H_0( P, C_c^\infty (G) \otimes E))$ and 
 $i^G_P E$ ([BD], Prop. 1.13 (iv)), where $H_0$ stands for the $0$-homology. 
\begin{lem}  \label{tildexi}  We recall our choice of a left invariant measure on $P$ (cf. (\ref{intpk})). A  linear form  $\xi$ on $i^G_PE$ determines   an $E$-distribution on $G$, $\tilde{\xi}, $ which is  defined by \index{$\tilde{\xi}$}
$$ \tilde{\xi} (f)= \langle \xi , M_{\delta,P}  (f)\rangle , f \in  C_c^\infty (G) \otimes E .$$ The distribution $\tilde{\xi}$ is  $P$-covariant for the representation $\pi= \delta\otimes \delta_P^{-1/2}$ (cf. Section \ref{covd} for the definitions) \end{lem} 
\begin{proof} We denote here  the left regular representation of $G$ on $C^\infty(G)\otimes E$ by $\ll$. 
The lemma follows from the obvious equality:$$ \langle \tilde{\xi} ,\ll(p)(\pi(p)f) \rangle=\langle \xi,  M_{\delta,P}( \ll(p )(\pi(p)f)) \rangle, f\in  C_c^\infty (G) \otimes E $$ 
and  from the  equality: $$M_{\delta,P}(  \ll(p) ( \pi( p)f))= M_{\delta,P}(f), f\in  C_c^\infty (G) \otimes E, $$ that we are going to prove. 
Let  $p_0 $ be an element of $P$. Taking into account the equality  $ \delta_P (p_0)^{-1/2}$ with $ \delta_P (p_0)^{1/2} \delta_{P} ^{-1} (p_0)$, one has, for $g\in G$: 
$$ [M_{\delta,P}(  \ll(p_0) ( \pi( p_0)f))](g)= \int_P \delta_P^{1/2} (p^{-1}p_0)\delta (p^{-1}p_0 ) f(p_0^{-1}pg)\delta_P (p_0)^{-1} d_rp$$
Using the definition of $d_rp$, the change of variables $p'=p_0^{-1} p$, 
 leads to the  required identity.\end{proof} 
 The support of $E$-distributions  is given in Section \ref{covd}. 
We define the support of $ \xi$ as follows. 
\ber The support   of $\xi $ is by definition the support of $\tilde{\xi}$.  \eer As $ \tilde{\xi}$ is left $P$-covariant, one has:
 \ber \label{suppxi} $\mathrm{Supp} (\xi)$ is left $P$-invariant and is equal to  the complement of the largest  left $P$-invariant open subset of $G$, $O$, such that $\langle \xi,v \rangle=0$ if the support of $v\in i^G_PE$ is contained in $O$.\eer 
If moreover  $\xi $  is   invariant under  the right action of $H$ then the same is true for the $E$-distribution $\tilde{\xi}$.

Let us assume that $PH$ is open in $G$ and that $\xi$ is right invariant under $H$.
 The group $P\times H$ acts on $PH$ by $$(p,h)  g= pgh^{-1}, g \in PH, p\in P, h\in H .$$
 Hence  $X=PH$ is a homogeneous space under $P\times H$ homeomorphic to $X=(P\times H) / Diag (P\cap H)$ by the map $(p, h)\to ph^{-1}$(cf. [BD] Lemma 3.1 (iii)). 
One remarks that the Haar measure on $G$ restricted to $PH$ is left invariant under $P$ and right invariant under $ H$.  One applies  Lemma \ref{Tcov}  to $P\times  H$ (instead of $G$)  acting on $PH$ and  to the $\delta\otimes \delta_P^{-1/2}$-covariant distribution $\tilde{\xi}$ restricted to $PH$ . From the above, $X$ has a $P\times H$-invariant measure.  The dual representation of $\delta_P$ is $\delta_P^{-1}$. Hence there exists a unique $ H\cap P$-invariant linear form  $\eta$ on  $E$  such that one can define:
\beq \label{xig} f_ \xi(g):= (\delta_P)(p)^{1/2} \delta'(p) \eta, g=ph, p \in P , h \in H \index{$f_\xi$} \eeq 
which verifies \index{$f_\xi$}:
\ber \label{fxi} $ \langle \tilde{\xi}, f \rangle = \int _{P H}\langle f_\xi(g), f(g) \rangle \> dg$ if $f\in C_c^\infty(G)\otimes E$ has its support contained in $PH$.\eer
Let us assume that moreover $f$ has its support contained in $P(K\cap H)$. 
The set of $(p,k) \in P\times K$ such that $pk\in P(K\cap H)$ is equal to $ P\times(K \cap P)(K\cap H)$.
Thus, from (\ref{intpk}), one gets: $$  \int _{P (K\cap H)}\langle f_\xi(g), f(g) \rangle\>  dg= \int_{P\times (K \cap P)(K\cap H)}  \langle f_\xi(pk), f(pk) \rangle \> d_l p\> dk.$$ 
 Then one integrates over $P$, taking into account the covariance property of $\xi$.  Then, as $\delta_P d_lp=  d_rp$, one gets:  
 \beq \label{xivint}  \langle \xi,v \rangle = \int_{(K \cap P)(K\cap H)}  \langle f_\xi(k), v(k) \rangle \> dk , \eeq 
 where $v = M_{\delta,P} (f)$.
 
  One remarks that if $e $ fixed by $J\cap P$, using the notations of (\ref{vsi}), one has:\beq  \label{veM}v _{e, \delta}^{P,J}= \mathrm{Vol}(J\cap P)^{-1}M_{\delta,P}(1_J\otimes e), \eeq where  $\mathrm{Vol}(J\cap P)$ denotes the volume  of  $J\cap P$ for the measure  $d_lp$.
  
  Let  $x$ in $G$ such that  $PxH$ is open in $G$.  
 Applying (\ref{xig}) and (\ref{fxi})   to each $\pi '(x)f$, which  is $xHx^{-1}$-invariant,   one can define a function  on the union $\Omega$ of all $(P,H)$ open double cosets, $f_\xi$,  with values in $E'$, right invariant under $H$, left covariant under  $P$ by $\delta' \otimes \delta_P^{-1/2}$, such that:
 \ber \label{fxiomega}  If $f \in C_c^\infty (G)\otimes E $ has its support contained in $\Omega$, 
 $$ \langle \tilde{\xi},f \rangle = \int _{\Omega} \langle f_\xi(g), f(g)\rangle \> dg.$$\eer
 Moreover if $x \in G$ is such that $PxH$ is open in $G$:
 $$f_\xi(x ) \in E'^{P\cap x.H}. $$  Notice that $f_\xi$ depends on our choice of $dg$. 
 
 Let us assume moreover that $P$ is a $\si$-parabolic subgroup of $G$. Let   $J$ be  a compact open subgroup of $K$ which has  a $\si$-factorization  for $(P,P^-)$. Let $e\in E$ be fixed by $J_M$. Let us prove:
 \beq  \label{xivintbis}\langle \xi, v _{e, \delta}^{P,J} \rangle  = \mathrm{Vol} ((K\cap P)J_H) \langle \eta, e\rangle  .\eeq 
 One can apply (\ref{xivint}) with $v=v _{e, \delta}^{P,J}$ , by taking into account (\ref{veM}). As the support
of $v$ is contained  $PJ= PJ_H $ where $J_H=J\cap H$, one gets from (\ref{xivint}):
$$\langle \xi, v\rangle= \int_{K \cap (PJ_H)}  \langle f_\xi(k), v(k)\rangle \> dk.$$
But   $K\cap (PJ_H)= (K\cap P)J_H$.  So,  one has:
$$ \langle \xi, v\rangle = \int_{(K\cap P)J_H}  \langle f_\xi(k), v(k) \rangle \> dk.$$
The function under the integral sign is left invariant under  $K \cap P$,    due to (\ref{xig}) and to  the properties of the induced representation. It is  right invariant under  $J_H$  due to the fact that $\xi$ is  $H$-invariant and that  $v$ is $J$-invariant. Hence (\ref{xivintbis}) follows.
 \ber \label{crm}
 Let $\cc{v}$ be a smooth (resp., $ \xi $ be an $H$-fixed) linear form on $i^G_PE$, where $P=MU$ is a parabolic subgroup of $G$ (resp., a parabolic subgroup of $G$ such that $PH$ is open in $G$) such that $A\subset M$ and $(\delta,E)$ is a smooth representation of $M$. We have  identified  $\cc{v}$ with an element of $i^G_P \cc{E} $ (cf (\ref{ccigp})) and we will denote its value at 1 by $\cc{r}_M \cc{v}\in \cc{E}$. Similarly  $f_\xi(1) \in E'^{M\cap H}$ is well defined. We will denote it  by $\cc{r}_M \xi$.\\  
 
 \eer  
 \begin{prop} \label{defrM} Let us assume that $A$ is a $\si$-stable maximal split torus which contains a maximal $\si$-split torus.\\
(i)   If  $P=MU$ is a  $\si$-parabolic subgroup of $G$ such that $A\subset M$, for  all  compact open subgroup of $K$, $J$,  with a $\si$-factorization with respect to $(P, P^-)$,  one has:
$$\cc{r}_M (e_J \xi)= e_{J_M} (\cc{r}_M\xi).$$
(ii) Let $Q=LV$ be a  a parabolic subgroup of $G$,   such that $QH$ is open and let $P=MU$ be a parabolic subgroup, such that  $P\subset Q$ and $A\subset M\subset L$. Let  $(\delta, E)$, $\cc{v}$, $\xi$ be  as above.  Applying induction in stages, $i^G_PE$ is isomorphic to $i^G_{Q} (i^L_{P\cap L} E)$. From  (\ref{crm}), one gets an element $\cc{r}_L \cc{v}$ of $(i^L_{P\cap L} E)\,\cc{}$( resp. $\cc{r}_L \xi$ of $(i^L_{P\cap L} E)'^{L\cap H}$).\\
(iii) Let us assume  that $P$ and  $Q$ are   $\si$-parabolic subgroups of $G$. Then one :
 $$\cc{r}_M\xi=  \cc{r_M} ( \cc{r}_L \xi).$$
\end{prop} 
\begin{proof}
(i) One  reduces easily  to compare the evaluation on any element $e $ of $ E^{J_M} $ of both sides of the equality to prove. 
One introduces   $v:= v^{P,J}_{e, \delta}$.  From (\ref{xivintbis}), one gets, on one hand: 
$$\langle \xi,v \rangle  =  \mathrm{Vol} ( (K\cap P)J_H)\langle  f_\xi(1), e \rangle .$$
As $ v$ is $J$-invariant this implies:
\beq \label{ejxi}\langle e_J\xi,v \rangle  =  \mathrm{Vol} ( (K\cap P)J_ H)\langle f_\xi(1), e \rangle .\eeq 
On the other hand: 
$$\langle e_J \xi, v \rangle  = \int_{K} \langle (e_J \xi) (k), v(k) \rangle  \> dk.$$
Again, we use that the support of $v$ is contained in  $PJ= PJ_H$:
 $$\langle e_J \xi, v \rangle = \int_{(K\cap P) J_H} \langle (e_J \xi)(k) , v(k) \rangle \> dk.$$
 The function under the integral sign  is left invariant under  $K\cap P$, by the properties of the induced representations,  and right invariant under $J_H$ as  $v$  and $e_{J} \xi$  are invariant under $J$. So one gets:
 $$ \langle e_J \xi, v \rangle = \mathrm{Vol} ( (K\cap P)J_H)\langle ( e_J\xi)(1), e \rangle .$$
 The equality needed to prove (i) follows from this and from (\ref{ejxi}). \\
 (ii) is a simple consequence of (i). \\
(iii) It is easy to see   that  for all   $ \cc{v} \in (i^G_P E) \,\cc{}$ the equality  $\cc{r}_M  \cc{v}=\cc{r}_M  (\cc{r}_L \cc{v} )$ holds. Then (iii) follows from the last assertion of (i).
\end{proof}
\subsection{The operation $\cc{j}_{Q^-}\circ $ \index{$\cc{j}_{Q^-}\circ $} \label{ccjq}}
Our second operation needs some preparation. \\
Let $P=MU$ be a parabolic subgroup of  $G$. Let $(Q,Q^-)$  be a pair of opposite parabolic subgroups of  $M$ with $L:=Q\cap Q^-$. Let us assume that $A\subset L\subset M$. Let $(\delta, E)$ be a smooth representation of  $M$. We denote   the parabolic subgroup of $G$ equal to $QU$ (resp.,  $Q^-U$) by $P_Q$ (resp., $P_{Q^-}$).  We define a $G$-homomorphism $\cc{j}_{Q^-}\circ $ from $(i^G_PE)\check{ }$ to $(i^G_{P_{Q^-}}  E_Q )\,\check{}$ as follows. Let  $\cc{v}\in (i^G_P E)\,\cc{}$. We identify as in (\ref{ccigp}) $(i^G_PE)\check{ }$ with $i^G_P \cc{E}$ and we denote  the element of $i^G_{P_{Q^-}}((\cc{E})_{Q^-}))$ obtained by composition of $\cc{v} $ with the projection $j_{Q^-} $ by $j_{Q^-}\circ \cc{v}$. By the Second Adjointness Theorem (cf. (\ref{becas})), $(\cc{E})_{Q^-}$ is canonically isomorphic to $(E_Q)\,\cc{}$. Let us denote  the image of $j_{Q^-}\circ  \check{v}  $ under  the induced  isomorphism by $j'_{Q^{-}}\circ \check{v}$. This is  an element of $i^G_{P_{Q^-}}(E_Q)\,\cc{}$. Then one defines $\cc{j}_{Q^-}\circ \cc{v}$ as  the image   of  $j'_{Q^{-}}\circ \cc{v}$  under  the  isomorphism defined by (\ref{ccigp}) of $i^G_{P_{Q^-} }(E_Q)\, \check{} $ with $(i^G_{P_{Q^-}}E_Q)\,\cc{}$.  
Summarizing $\cc{j}_{Q^-}\circ$ appears as the composition of the homomorphisms of $G$-modules:
$$ (i^G_PE)\, \check{ }\to i^G_P\check{ E}\stackrel{f}{\rightarrow}    i^G_{P_{Q^-}}((\cc{E})_{Q^-}) \stackrel{g}{\rightarrow} i^G_{P_{Q^-}}(E_Q)\,\cc{} \to
 (i^G_{P_{Q^-}}E_Q)\,\cc{}$$
 where the first arrow is the  isomorphism (\ref{ccigp}) between $(i^G_PE)\check{ } $ and $i^G_P\check{ E}$,
 the last arrow is the  isomorphim (\ref{ccigp}) between $ i^G_{P_{Q^-}}(E_Q)\,\cc{}$   and $(i^G_{P_{Q^-}}E_Q)\,\cc{}$, the arrow $f$ is given by composition of functions with   the projection $ j_{Q^{-} } $, and the arrow $g$ is the induced morphism from the canonical isomorphism between $ (\cc{E})_{Q^-}$ and $ (E_Q)\,\cc{}$ given by the Second Adjointness Theorem. Hence $\cc{j}_{Q^-}\circ$ is a $G$-module homomorphism. 
 One sees easily that: 
 \ber \label {j1}For $ \cc{v} \in (i^G_P E)\,\cc{}$, one has: $$ (\cc{j}_{Q^-}\circ  \cc{v})(1)  = j_{ Q^-}  (\cc{v}(1)),$$ where in the left hand side of the equality $\cc{j}_{Q^-}\circ  \cc{v}$ is viewed as an element of
  $i^G_{P_{Q^-}} (E_Q)\cc{ }$ and   where in the right side  $\cc{v}$ is viewed as an element of $i^G_P \cc{E}$ (cf. (\ref{ccigp})) and $ j_{ Q^-}  (\cc{ v} (1))$ is viewed as  an element of  $(E_Q)\,\cc{}$ by the Second Adjointness Theorem.
 In other words:$$ \cc{r} _{L} (\cc{j}_{Q^-}\circ  \cc{v}) =  j_{ Q^-} ( \cc{r}_M (\cc{v})) .$$\eer
  \begin{prop} \label{defccj}
  Let  $P=MU$ be a parabolic subgroup of  $G$. Let $(Q,Q^-)$ be a pair of opposite parabolic subgroups of  $M$. Let us assume that $A\subset L:=Q\cap Q^-$. Let $(\delta, E)$ be a smooth representation of  $M$ and let $\xi$ be an $H$-form on  $V=i^G_P E$.\\
  (i) Let $v_1\in V_1: =i^G_{P_{Q^-}} E_Q$.  The number $ \langle \cc{j}_{Q^-} \circ (e_J \xi), v_1 \rangle $ does not depend on the compact open subgroup  $J$ of $G$  such that $v_1$ is fixed by $J$.\\
(ii) This allows us to define  a  linear form on   $V_1$,  denoted by $\cc{j}_{Q^-} \circ \xi$, as follows.\\
If $v_1 \in V_1$ is fixed by the compact open subgroup $J$ of $G$,  one defines
 $$\langle \cc{j}_{Q^-} \circ \xi, v_1 \rangle : = \langle \cc{ j}_{Q^-} \circ (e_J \xi), v_1 \rangle . $$
Then  $ \cc{j}_{Q^-} \circ \xi$  is  an $H$-form on $V_1$.\\
(iii) For every compact open subgroup $J$ of $G$, one has: 
$$e_J (\cc{j}_{Q^-} \circ  \xi )=\cc{j}_{Q^-} \circ ( e_J \xi).$$ \\ (iv) The support of $\cc{j}_{Q^-} \circ \xi$ is contained in the support of $\xi$.\\
\end{prop}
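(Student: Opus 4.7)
My plan is to establish (i)--(v) by systematically exploiting two facts: $\check{j}_{Q^-}\circ$ is a $G$-module homomorphism between smooth dual spaces, and the elementary identities $e_J e_{J'} = e_{J'} e_J = e_J$ as operators on any smooth $G$-module whenever $J'\subseteq J$. For (i), given $J,J'$ both fixing $v_1$, I pass to $J\cap J'$ and reduce to $J'\subseteq J$; the identity $e_J(e_{J'}\xi) = e_J\xi$ (dual to $e_{J'}e_J = e_J$) combined with $\langle\eta, v_1\rangle = \langle e_J\eta, v_1\rangle$ for $v_1\in V_1^J$ and the $G$-equivariance of $\check{j}_{Q^-}\circ$ (which commutes with $e_J$) yields
\[
\langle \check{j}_{Q^-}\circ(e_{J'}\xi), v_1\rangle = \langle e_J(\check{j}_{Q^-}\circ(e_{J'}\xi)), v_1\rangle = \langle \check{j}_{Q^-}\circ(e_J\xi), v_1\rangle.
\]
For (ii), a direct computation using $H$-invariance of $\xi$ gives $e_{hJh^{-1}}\xi = \pi'(h)(e_J\xi)$ for $h\in H$; the $G$-equivariance of $\check{j}_{Q^-}\circ$ (intertwining $\pi'(h)$ with $\pi_1'(h)$) then gives $\langle\check{j}_{Q^-}\circ\xi, \pi_1(h)v_1\rangle = \langle\check{j}_{Q^-}\circ\xi, v_1\rangle$, so $\check{j}_{Q^-}\circ\xi$ is indeed an $H$-form. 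For (iii), evaluate $(e_J(\check{j}_{Q^-}\circ\xi))(v_1) = (\check{j}_{Q^-}\circ\xi)(e_J v_1)$ and apply the definition with $J$ itself as the subgroup fixing $e_J v_1$; moving $e_J$ across the pairing and using $G$-equivariance together with $e_J^2 = e_J$ reduces this to $\langle \check{j}_{Q^-}\circ(e_J\xi), v_1\rangle$.

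The main geometric step is (iv). Let $O := G\setminus\mathrm{Supp}(\xi)$, which is open and left-$P$-invariant, hence left-$P_{Q^-}$-invariant since $P_{Q^-}\subseteq P$; it suffices to show that $\check{j}_{Q^-}\circ\xi$ vanishes on any $v_1\in V_1$ with $\mathrm{Supp}(v_1)\subseteq O$. The key auxiliary fact is the support estimate $\mathrm{Supp}(e_J\xi) \subseteq \mathrm{Supp}(\xi)\cdot J$, where $e_J\xi$ is viewed as a function in $i^G_P\check{E}$: if $g_0\notin\mathrm{Supp}(\xi)\cdot J$, then for any $v\in V$ supported in $Pg_0J$ one has $\mathrm{Supp}(e_J v)\subseteq Pg_0J$, which is disjoint from $\mathrm{Supp}(\xi)$, so $(e_J\xi)(v) = \xi(e_J v) = 0$; letting $v$ vary (using the $K$-realization and left $P$-covariance) shows that the function $e_J\xi$ vanishes on $Pg_0J$. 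Now take $v_1$ supported in $O$ and fixed by some $J_1$, and pick $J\subseteq J_1$; the $J$-invariance of $\mathrm{Supp}(v_1)$ forces $\mathrm{Supp}(v_1)\cdot J = \mathrm{Supp}(v_1) \subseteq O$, which is disjoint from $\mathrm{Supp}(\xi)$, hence $\mathrm{Supp}(v_1) \cap \mathrm{Supp}(\xi)\cdot J = \emptyset$. By (iii) and the pointwise formula $(\check{j}_{Q^-}\circ\check{v})(g) = j_{Q^-}(\check{v}(g))$ for smooth $\check{v}$ (which follows from equation (\ref{j1}) and $G$-equivariance),
\[
\langle\check{j}_{Q^-}\circ\xi, v_1\rangle = \int_K \langle j_{Q^-}((e_J\xi)(k)),\, v_1(k)\rangle\, dk,
\]
and the integrand vanishes pointwise: at each $k\in K$, either $v_1(k) = 0$ (if $k\notin\mathrm{Supp}(v_1)$) or $(e_J\xi)(k) = 0$ (if $k\in\mathrm{Supp}(v_1)$, since then $k\notin\mathrm{Supp}(\xi)\cdot J$). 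The most delicate part of a rigorous write-up will be the support estimate, which requires care in passing between the global and the $K$-realization.

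Finally, (v) is a naturality statement. The four constituent maps in the definition of $\check{j}_{Q^-}\circ$ --- the canonical identification $(i^G_P E)\check{}\cong i^G_P\check{E}$, postcomposition with $j_{Q^-}$, the second adjointness isomorphism $(\check{E})_{Q^-}\cong (E_Q)\check{}$, and the identification $i^G_{P_{Q^-}}(E_Q)\check{}\cong (i^G_{P_{Q^-}}E_Q)\check{}$ --- are each functorial under the replacement of data $(P, M, E, Q)$ by $(x.P, x.M, xE, x.Q)$, and the identification $(xE)_{x.Q} = E_Q$ noted in the statement is precisely the compatibility that makes the target spaces agree. Combined with the $G$-homomorphism property of $\check{j}_{Q^-}\circ$ under right translation by $\rho(x)$, this yields the claimed identity directly.
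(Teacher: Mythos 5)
Your proposal is correct and follows essentially the same route as the paper: (i)--(iii) via the identities $e_Je_{J'}=e_J$, $e_{h.J}\xi=\pi'(h)e_J\xi$ and the $G$-equivariance of $\cc{j}_{Q^-}\circ$, (iv) via the support estimate $\mathrm{Supp}(e_J\xi)\subset\mathrm{Supp}(\xi)J$ together with the fact that composition with $j_{Q^-}$ does not increase supports, and (v) by transport de structure. The only (harmless) divergence is in (iv), where you observe that the disjointness of $\mathrm{Supp}(v_1)$ from $\mathrm{Supp}(\xi)J$ is automatic from the right $J$-invariance of $\mathrm{Supp}(v_1)$, whereas the paper shrinks $J$ using the compactness of $P_Q\backslash G$.
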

\begin{proof}
(i) Let $v_1 \in V_1$. It is enough to prove that if $J'\subset J$ are two compact open subgroups of $G$ which leave $v_1$ invariant, one has:
 $$ \langle  \cc{j}_{Q^-} \circ( e_{J'} \xi), v_1 \rangle  =  \langle  \cc{j}_{Q^-} \circ (e_J \xi), v_1 \rangle .$$
As $e_J v_1= v_1$, one has :
 $$ \langle  \cc{j}_{Q^-} \circ (e_{J'} \xi), v_1 \rangle =  \langle  \cc{j}_{Q^-} \circ (e_{J'} \xi), e_J v_1 \rangle =   \langle e_J  \cc{j}_{Q^-} \circ (e_{J '} \xi), v_1 \rangle $$
 $$ = \langle  \cc{j}_{Q^-} \circ (e_J \xi), v_1 \rangle,$$
 which proves (i).\\
(ii) Let $\pi$ (resp., $\pi_1$) the induced representation of $G$ on $V=i^G_P E$ (resp.,  $V_1= i^G_{P_{Q^-}}E_Q$). Let $v_1$ be an element of $V_1$ and let $h$ be an element of $H$. One may choose a small enough compact open subgroup $J$ of $G$, such that $h. J$  and $h^{-1}.J$ leave also invariant $v_1$ under $\pi_1$. This implies that $ \pi_1 (h)v_1$ is also fixed by $J$. 
Then,  one has,  from the definition of $\cc{j}_{Q^-} \circ \xi$:
$$\langle \cc{j}_{Q^-} \circ \xi,\pi_1(h) v_1 \rangle = \langle  \cc{j}_{Q^-} \circ ( e_J \xi), \pi_1 (h)v_1 \rangle .$$
By elementary operations one sees that 
$$\langle \cc{j}_{Q^-} \circ \xi,\pi_1(h) v_1 \rangle  = \langle \cc{j}_{Q^-}\circ(\pi '( h^{-1}) e_J \xi), v_1 \rangle = \langle \cc{j}_{Q^-}  \circ (\pi'(h^{-1}) e_J \pi'(h)\xi), v_1 \rangle $$ $$
= \langle \cc{j}_{Q^-}  \circ (e_{h^{-1} .J} \xi), v_1 \rangle.$$
Hence one gets, from the definition of $\cc{j}_{Q^-} \circ \xi$ and the fact that $v_1$ is $h^{-1}.J$ fixed, the equality:
$$\langle\cc{j}_{Q^-} \circ \xi,\pi_1(h) v_1 \rangle = \langle \cc{j}_{Q^-} \circ \xi, v_1 \rangle .$$
which proves  the $H$-invariance of $\cc{j}_{Q^-} \circ \xi$. 

 The linearity is proved in the same way.
This proves (ii).\\
(iii) is an immediate corollary of (ii).\\ 
(iv)  Let $F$ be the support of  $ \xi $ and let $v_1 \in V_1$ whose support, $F_1$,  is contained in the complement of $F$ in $G$. Let us choose a compact open subgroup $J$   of $G$ which fixes $v_1$ and  such that $FJ$ is disjoint from $F_1$, which might be achieved from the compactness of $P_Q \backslash G$. 

Then the support of $e_J\xi $ is disjoint from the support of $v_1$. As the composition by $j_{Q^-}$  do not increase supports, one concludes, from the definition in (ii), that:
$$\langle \cc{j}_{Q^-} \circ \xi, v_1 \rangle = 0.$$
This implies (iv). 

\begin{lem} \label{restri}
We keep the notations of the preceding proposition, but we assume that $P$ is a $\si$-parabolic subgroup with $\si$-stable Levi subgroup $M$ and that $Q$ is a $\si$-parabolic subgroup of $M$ with $\si$-stable Levi subgroup $L$.  We assume moreover that $A$ is $\si$-stable , contained in $L$, and that it contains    a maximal $\si$-split torus $A_\vid$  of $L$.
 Let $x$ be an element of $G$ which is $A_\vid$-good.  Then $\pi'(x) \xi$ is an  $x.H$-form,  $P$, $Q$ are $\si_x$-parabolic subgroups (see  Lemma \ref{sixP}) and  one has :
$$ \cc{r}_{L}(\pi'_1(x) ( \cc{j}_{Q^-} \circ  \xi))=  j_{Q^{-}} (\cc{r}_M (\pi'(x)  \xi) ) .$$
\end{lem}
\proof 
We first treat the case where $x=1$. \\Notice that  $\si(P_{Q^-} )= (P^- )_{Q}$ is opposite to $P_{Q^-}$. Hence  $P_{Q^-}$ is a $\si$-parabolic subgroup of $G$.  \\
Let $e$ be an element of $ E_Q$.  From  Proposition \ref{KT},  one can choose  a compact open subgroup of $K$, $J$,  arbitrary small, which has a $\si$-factorization  with respect to $(P, P^-)$ and $(P_{Q^-}, (P^-)_{Q})$, such that $J_M$ has  a $\si$-factorization with respect to $(Q,Q^-)$. Hence  we can choose $J$ such  that $J_L$ fixes $e$.
One has:  \beq \label{JJ} J = (J\cap P_{Q^-})(J\cap H), \eeq  $$J_M =  (J_M \cap Q^-) (J_M \cap H).$$
 Let us prove  \beq \label{ej'}\langle \cc{r}_{L} ( e_{J} (\cc{j}_{Q^-} \circ \xi)) , e \rangle =\langle \cc{r}_{L} (  \cc{j}_{Q^-} \circ \xi),e \rangle  .\eeq
  Let $P'$ denote  $P_{Q^-}$. As  $e$ is fixed by $J_L$, 
  $v:=  v^{P',J}_{e,j_Q(\delta)}$ is well defined.
  
  One will compute in two ways  $\langle e_{J} \xi', v \rangle $  where  $\xi'=\cc{j}_{Q^-} \circ \xi$. 
 First $v$  is invariant under $J$  so that one has:
  $$\langle e_{J} \xi', v \rangle= \langle \xi', v \rangle  .$$ 
 Using (\ref{xivint} ) and (\ref{veM}),  one deduces from the preceding equality, as in the proof of (\ref{xivintbis}), that: 
$$\langle e_{J} \xi', v \rangle =\int_{(P'\cap K ) J_H} \langle f_{\xi'}(k), v(k) \rangle\>  dk. $$
The function under the integral sign is left invariant under  $K\cap P'$. Moreover, if $j\in J_H$,  $f_{\xi'} (j)=  f_{\xi'} (1)$, by the right $ H$-invariance of $ f_{\xi'} $ and $v(j)= e$ by the right invariance by $J$ of $v$.
So one gets: 
\beq \label{first} \langle e_{J} \xi', v \rangle =\mathrm{Vol}((P'\cap K )J_H)\langle f_{\xi'} (1), e \rangle .  \eeq 
 Our second computation of $\langle e_{J} \xi', v \rangle $ starts with:
$$ \langle e_{J} \xi', v \rangle = \int_K \langle (e_{J}\xi')(k), v(k) \rangle \> dk .$$
As  $v$ is supported on $P'J$, one gets: $$\langle e_{J} \xi', v \rangle  = \int _{(K\cap P')J }\langle (e_{J}\xi')(k), v(k) \rangle \> dk .$$
As the function to integrate is invariant under $P'\cap K$  on the left and by $J$ on the right, one has : 
\beq \label{second} \langle e_{J} \xi', v \rangle =\mathrm{Vol}((P'\cap K )J)\langle (e_{J}\xi')(1), e \rangle  . \eeq 
Notice that: $$ f_{\xi'}(1)= \cc{r} _{L} (\cc{j}_{Q^-} \circ \xi),  (e_{J}\xi')(1)= \cc{r} _{L}( (e_{J}\xi')).$$ 
 Then, taking into account the equality $J=( J\cap P')J_H$, one sees that $(P'\cap K) J= (P'\cap K) J_H$. Then  (\ref{ej'}) follows from these two computations of $\langle e_{J} \xi', v \rangle $ (cf. (\ref{first}) and (\ref{second})) . 
  
From the fact that the composition by $j_{Q^-}$ commutes with right translations by elements of $G$, one sees:
$$e_{J} ( \cc{j}_{Q^-} \circ \xi) = \cc{j}_{Q^-} \circ (e_{J }  \xi )$$ 
hence: 
$$\langle \cc{r}_{L} ( e_{J} ( \cc{j}_{Q^-} \circ \xi)) , e \rangle =  \langle (\cc{r}_{L}( \cc{j}_{Q^-} \circ (e_{J }  \xi )), e \rangle $$
From this and  (\ref{j1}),  one deduces:
$$\langle \cc{r}_{L} ( e_{J} (\cc{j}_{Q^-} \circ \xi)), e \rangle =\langle j_{Q^-}( \cc{r} _{M}({e_{J} \xi})),e \rangle . $$
 As $J$ has a  $\si$-factorization for $(P,P^-)$, one deduces from Proposition \ref{defrM}   (i), that : 
  $$\cc{r}_M ( e_{J} ( \xi)) = e_{J_M} \cc{r}_M ( \xi) .$$
 Replacing in the above equality, one gets: 
 $$\langle \cc{r}_M ( e_{J} (\cc{j}_{Q^-} \circ \xi)), e \rangle = \langle  j_{Q^-}( e_{J _M}  \cc{r}_M \xi), e  \rangle . $$
From (\ref{defj}),  and using the fact that $J_M$ has a $\si$-factorization for $(Q, Q^-)$, 
 this implies: 
$$\langle \cc{r}_M ( e_{J} (\cc{j}_{Q^-} \circ \xi)), e \rangle= \langle j_{Q^-}( \cc{r}_M \xi ), e \rangle . $$ 
Together with (\ref{ej'}), this shows that:
$$  \langle \cc{r}_{L} (\cc{j}_{Q^-}\circ \xi),e \rangle )= \langle  j_{Q^-}(\cc{r}_M \xi) , e \rangle , $$
which proves   the assertion for $x=1$.

Let us treat the general case. 
Then (see Lemma \ref{sixP}),  $P$ is a $\si_x$-parabolic subgroup  of $G$, $M$ is $\si_x$-stable and $Q$ is a $\si_x$-parabolic subgroup of $M$. One may apply the first part of the proof to $ \xi_x:= \pi'(x)\xi$ which is fixed by $x.H$. The result follows from the fact that
$\cc{j}_{Q^-} \circ $ is a $G$-module homomorphism.
\end{proof}
\section{Generic Basic Geometric Lemma}
In the two next subsections, we make no assumptions on the characteristic of the residue field of $\F$.
\subsection{Intertwining integrals} We keep the choices of (\ref{AK}).
Let $P=MU$ be a parabolic subgroup of $G$  such that $A\subset M$, and let $(\delta,E)$ be a smooth representation of $M$ with finite length.
Let $Q=MV$ be another parabolic subgroup of $G$ with $M$ as Levi subgroup. We denote  the set of roots of $A_M$ in the Lie algebra of $Q$ which are not roots of $A_M$ in the Lie algebra of $P$ by $\Sigma(Q,P)$ \index{$\Sigma(Q,P)$}.  Let $P^-=MU^-$ be the opposite parabolic  to $P$ which admits $M$ as Levi subgroup.  
We have a right $V$-invariant measure on $V \cap U\bb V $  which follows from our choice of measures (cf. (\ref{AK})).  We have a canonical identification of $V \cap U\bb V $ with $V \cap U^-$. 
 
 We have   $A_M\subset A$  and we fix a scalar product on $\a'$ which is invariant under the Weyl group of the pair $(G,A)$. It induces a scalar product on $\a_M'$ (cf. (\ref{iden})).

One has (cf. [Wal], Theorem IV.1.1 and Proposition IV.2.1):
\ber  \label{apq}There exists $R>0$ such that, for  all $ \chi \in X(M)$ which satisfies  $$(\mathrm{Re} (\chi),\aa) > R,  \aa\in \Sigma(Q,P),$$ there exists an intertwining operator, $A(Q,P, \delta_\chi)$ \index{$A(Q,P, \delta_\chi)$},  between $i^G_P \delta_\chi$ and $ i^G_Q\delta_\chi$ satisfying:
$$ (A(Q,P, \delta_\chi) f)(g)= \int_{ V\cap U\backslash V}f(vg) \> dv, f \in i^G_P E_\chi,$$
the integral being absolutely convergent.  
This family of operators viewed in the compact realization admits an extension to a rational family in $\chi\in X(M)$ denoted in the same way. More precisely, there is a nonzero polynomial function  $b$ on $X(M)$  such that for all $f$ in $i^K_{K\cap P} E$,  the family $b(\chi) (A(Q,P, \delta_\chi) f )$ is polynomial, in the compact realization. 
\eer 
From this characterization, one deduces: 
\ber \label{acom} 
The intertwining integrals  commute with induced operators from intertwining operators between smooth representations of finite length of $M$. Namely let  $(\delta^1, E^1)$  be another smooth admissible representation of $M$, and let us assume that  $T$ is an intertwining oprator  between  the representations $(\delta, E) $ and $(\delta_1, E_1)$.  For $\chi\in X(M)$, the composition with $T$ determines an induced map $ind(T^P_\chi)$ between $i^G_P E_\chi $ and  $i^G_P E^1_\chi $. With the notations above, one has:
$$  T^Q_\chi\circ A(Q,P, \delta_\chi)= A(Q,P, \delta^1_\chi) \circ T_\chi^P. $$
\eer

Let us show that:
 \ber \label{nonz} When $A(Q,P, \delta_\chi)$ is defined,  this operator is nonzero.
\eer Let $P^-$ be the opposite parabolic subgroup to $P$ with Levi subgroup $M$. 
Let $e \in E$ and let  $J $ be a compact open subgroup with Iwahori factorization with respect to $(P,P^-)$ and such that $e$ is invariant under $J_M$.  Let $f= v^{P, J} _{e, \delta_\chi} $ whose support is $PJ$.  As $ (V \cap U^{-})\cap PJ= V\cap (U^{-}   \cap J)$ one sees that
$$(A(Q,P, \delta_{\chi} ) f) (1)= \mathrm{Vol}(V\cap U^{-}   \cap J) e$$
which proves our claim.  

The following Lemma is an immediate consequence of the induction in stages and of the definitions.
\begin{lem} \label{indop}
Let $ P=MU$ be  a parabolic subgroup of $G$ and let $Q_1=LV_1, Q_2=LV_2$ be two parabolic subgroups of $M$. Let us assume $A\subset L$. Let  us define  $P_{Q_1}: = Q_1U$, $P_{Q_2} :=Q_2U$.
Let $ (\omega,F)$ be  a finite length smooth representation of $L$. By induction in stages, the representation   $ i^G_{P_{Q_i} } \omega$ is identified with  $i^G_P (i^M_{Q_i}  \omega)$.
\\Let  $v \in i^G_{ P_{Q_1} }E_{1} $. Then,  with the identications defined above, one has the equality of rational functions in $\chi\in X(L)$:
 $$A(Q_1, Q_2, \omega_\chi ) \circ v  = A(P_{Q_1} , P_{Q_2}, \omega_\chi) v . $$
   \end{lem}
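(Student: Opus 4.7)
Since the claim is an equality of rational functions on $X(L)$, it suffices to verify it on the non-empty open subset of $\chi$ for which both intertwining operators are defined by absolutely convergent integrals, whose existence is guaranteed by (\ref{apq}). In that region the matter reduces to a Fubini-type computation together with the correct identification of Haar measures.

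The plan is as follows. Denote by $U$ the unipotent radical of $P$, and by $V_i$ the unipotent radical of $Q_i$ in $M$, so that the unipotent radical of $P_{Q_i}$ is $V_i U$. The geometric identity at the heart of the argument is
\[
(V_1 U) \cap (V_2 U) = (V_1 \cap V_2)\, U,
\]
which follows from $M \cap U = \{1\}$: any relation $v_1 u_1 = v_2 u_2$ with $v_i \in V_i \subset M$ and $u_i \in U$ forces $v_2^{-1} v_1 = u_2 u_1^{-1} \in M \cap U = \{1\}$. Consequently the quotient $(V_1 U) \cap (V_2 U) \backslash V_j U$ over which the $G$-side intertwining operator integrates is isomorphic, with compatible Haar measures, to $(V_1 \cap V_2) \backslash V_j$ for the appropriate index, the $U$-direction contributing only the normalization factor already built into the induction-by-stages isomorphism.

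Under the identification $i^G_{P_{Q_i}}\omega_\chi \simeq i^G_P (i^M_{Q_i} \omega_\chi) = i^G_P E_i$, a function on $G$ in the left-hand space corresponds to a function on $G$ valued in $E_i$. Rewriting the $G$-integral via this correspondence shows that, pointwise in $g \in G$, it coincides with the $M$-integral from (\ref{apq}) defining $A(Q_1, Q_2, \omega_\chi)$ applied to the value in $E_i$. The main obstacle is the bookkeeping of modulus functions: one checks $\delta_{P_{Q_i}}^{1/2} = \delta_P^{1/2}\, \delta_{Q_i}^{1/2}$ on $M$, a direct consequence of the semidirect structure $P_{Q_i} = Q_i U$ applied to the modulus character. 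With this in hand the normalizations on the two sides cancel, both sides evaluate to the same integral on the region of convergence, and the identity extends to all $\chi \in X(L)$ by rationality.
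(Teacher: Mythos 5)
Your proof is correct and is exactly the unwinding the paper has in mind: the paper's entire ``proof'' is the sentence that the lemma ``is an immediate consequence of the induction by stage and of the definitions,'' and your reduction to the range of absolute convergence, the identification $(V_1U)\cap(V_2U)=(V_1\cap V_2)U$ with the resulting equality of the integration domains $(V_1\cap V_2)U\backslash V_1U\simeq(V_1\cap V_2)\backslash V_1$, and the check $\delta_{P_{Q_i}}=\delta_P\,\delta_{Q_i}$ on $L$ are precisely the details being left implicit. No gap; the approaches coincide.
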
 
From [Wal], IV.1 (11), one has the relation:
\beq \label{tA} \langle A(Q,P, \delta )f, \cc{f} \rangle = \langle f, A(P,Q, \cc{\delta})\cc{f} \rangle  , f \in i^G_P E, \cc{f} \in  i^G_P \cc{E} .\eeq 

Let us prove:
\begin{lem} \label{suppAf} Let $\chi \in X(M)$ such that $A(Q,P, \delta_\chi)$ and $A(P, Q, \cc{\delta_\chi})$ are defined.\\
 (i) Let $f \in i^G_P E_\chi $.Then one has: $$\mathrm{Supp} (A(Q,P, \delta_\chi) f )  \subset cl (V (\mathrm{Supp}(f)) ),$$
 where $cl $ denotes the closure in $G$ and $\mathrm{Supp}$ the support. \\
 (ii) Let $T$ be a linear form on $i^G_Q E_\chi$. Let $T'= T \circ A(Q,P, \delta_\chi)$. Then  one has: $$\mathrm{Supp} ( T') \subset cl( U(\mathrm{Supp}(T)) ) .$$ 
 \end{lem}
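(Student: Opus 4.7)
The claim holds pointwise for $\chi$ in the convergence region $\{\chi\in X(M):(\mathrm{Re}\,\chi,\alpha)>R\text{ for all }\alpha\in\Sigma(Q,P)\}$: if $g\notin V\cdot\mathrm{Supp}(f)$ then $f(vg)=0$ for every $v\in V$, so the absolutely convergent integral defining $(A(Q,P,\delta_\chi)f)(g)$ vanishes. To pass to an arbitrary $\chi$ at which $A(Q,P,\delta_\chi)$ is defined I would move to the compact realization, in which $\mathrm{Supp}(f)$ is independent of $\chi$ since a function in $i^G_PE_\chi$ vanishes at $g$ iff its restriction to $K$ vanishes at $k_P(g)$. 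The non-zero polynomial $b$ of (\ref{apq}) makes $\chi\mapsto b(\chi)\overline A(Q,P,\delta_\chi)f$ polynomial, and the covariance factor $\delta_Q^{1/2}(m_Q(g))\delta_\chi(m_Q(g))$ is regular in $\chi$; hence $\chi\mapsto b(\chi)(A(Q,P,\delta_\chi)f)(g)$ is a regular function on $X(M)$ for each fixed $g$. Since it vanishes on the non-empty open convergence region whenever $g\notin V\cdot\mathrm{Supp}(f)$, it vanishes identically, so $(A(Q,P,\delta_\chi)f)(g)=0$ at every $\chi$ of regularity of the operator.

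\textbf{Plan for (ii).} I would reduce (ii) to (i) applied to the transpose integral $A(P,Q,\check\delta_\chi):i^G_Q\check E_\chi\to i^G_P\check E_\chi$, whose integration domain $U\cap V\backslash U$ is what yields the $U$ on the right-hand side. Fix a compact open subgroup $J$ of $G$. Finite length of $\delta$ makes $i^G_Q\delta_\chi$ admissible, so $(i^G_QE_\chi)^J$ is finite-dimensional and the natural pairing with $(i^G_Q\check E_\chi)^J$ is perfect; this produces a unique $\check f_J\in(i^G_Q\check E_\chi)^J$ with $\langle\check f_J,v\rangle=T(v)$ on $(i^G_QE_\chi)^J$, and $J$-invariance of $\check f_J$ upgrades this to $\langle\check f_J,v\rangle=T(e_Jv)=(e_JT)(v)$ for every $v\in i^G_QE_\chi$. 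A direct support check gives $\mathrm{Supp}(\check f_J)=\mathrm{Supp}(e_JT)\subset\mathrm{Supp}(T)\cdot J$: if $gJ\cap\mathrm{Supp}(T)=\emptyset$, then for $v$ supported in a small right-$J$-translate of $g$ all right-translates $\pi_Q(j)v$ remain disjoint from $\mathrm{Supp}(T)$, forcing $(e_JT)(v)=0$. Set $\check h_J:=A(P,Q,\check\delta_\chi)\check f_J$. The adjointness formula (\ref{tA}), together with $G$-equivariance of $A(Q,P,\delta_\chi)$ (so it preserves $J$-invariants), yields $\langle\check h_J,v\rangle=\langle\check f_J,A(Q,P,\delta_\chi)v\rangle=T(A(Q,P,\delta_\chi)v)=T'(v)$ for every $v\in(i^G_PE_\chi)^J$. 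Applying part (i) to $A(P,Q,\check\delta_\chi)$ then gives
$$\mathrm{Supp}(\check h_J)\subset cl(U\cdot\mathrm{Supp}(\check f_J))\subset cl(U\cdot\mathrm{Supp}(T)\cdot J)\subset cl(U\cdot\mathrm{Supp}(T))\cdot J,$$
where the last inclusion uses compactness of $J$ (so $C\cdot J$ is closed for every closed $C$).

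\textbf{Conclusion and main obstacle.} Given $v\in i^G_PE_\chi$ with $\mathrm{Supp}(v)\cap cl(U\cdot\mathrm{Supp}(T))=\emptyset$, pick a compact open $J$ fixing $v$. Since $\mathrm{Supp}(v)$ is right $J$-invariant, if $g=cj\in\mathrm{Supp}(v)$ for some $c\in cl(U\cdot\mathrm{Supp}(T))$ and $j\in J$ then $c=gj^{-1}\in\mathrm{Supp}(v)\cdot J^{-1}=\mathrm{Supp}(v)$, contradicting the hypothesis. Hence $\mathrm{Supp}(v)\cap cl(U\cdot\mathrm{Supp}(T))\cdot J=\emptyset$, and so $\mathrm{Supp}(v)\cap\mathrm{Supp}(\check h_J)=\emptyset$, giving $T'(v)=\langle\check h_J,v\rangle=0$. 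This proves $\mathrm{Supp}(T')\subset cl(U\cdot\mathrm{Supp}(T))$. The most delicate step is the rational-extension argument in (i), which requires carefully tracking how the rational extension interacts with the $\chi$-dependent covariance factor $\delta_\chi(m_Q(g))$ in the compact picture; once (i) is secured, part (ii) is bookkeeping with the $J$-smoothing $e_JT$ and the adjointness pairing (\ref{tA}).
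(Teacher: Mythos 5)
Your part (i) follows essentially the paper's own argument: the paper likewise reduces, ``by holomorphy,'' to the convergent range and reads off the vanishing from the integral formula, and your extra care with the compact realization and the polynomial $b$ just fills in that sentence. Part (ii), however, takes a genuinely different and correct route. The paper argues directly on the linear form $T'$: for $g\in\mathrm{Supp}(T')$ and a shrinking basis $(\Omega_p)$ of compact open neighbourhoods of $g$ it produces sections supported in $P\Omega_p$ not killed by $T'$, deduces from (i) that $cl(VP\Omega_p)\cap\mathrm{Supp}(T)\neq\emptyset$ for every $p$, and then combines the identity $\cap_p\, cl(X\Omega_p)=cl(Xg)$, a compactness argument in $Q\backslash G$, and the set identities $VP=QU$, $PV=UQ$ with the left $Q$-invariance of $\mathrm{Supp}(T)$ to conclude $g\in cl(U\,\mathrm{Supp}(T))$. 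You instead dualize: you smooth $T$ to $e_JT$, realize it in $i^G_Q\check E_\chi$, push it through the transposed integral $A(P,Q,\check\delta_\chi)$ via (\ref{tA}), apply (i) to that operator, and let $J$ shrink. This buys a clean reduction of (ii) to (i) and avoids the point-set limiting argument, at the mild cost of using the smooth dual and the regularity of $A(P,Q,\check\delta_\chi)$ (which is part of the hypothesis anyway). One point you should make explicit: your final step shows $T'(v)=0$ whenever $\mathrm{Supp}(v)$ misses $cl(U\,\mathrm{Supp}(T))$, and to convert this into the containment of supports in the sense of (\ref{suppxi}) you need $U\,\mathrm{Supp}(T)$ to be left $P$-invariant; this holds because $\mathrm{Supp}(T)=Q\,\mathrm{Supp}(T)$ and $PUQ=UQ$ (as $M$ normalizes $U$ and $V$). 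With that one-line remark your argument is complete.
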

\begin{proof}
 (i) Let  $g\notin cl (V( \mathrm{Supp}( f) ))$.  Let us show that  $g$ is not element of the support of $(A(Q,P, \delta_\chi) f )$. One reduces immediately, by holomorphy,  to the case where $A(Q,P, \delta_\chi)$ is defined by a converging integral. If  there exists  $v\in V$ such that $f (vg) $ does not vanish,  $g $ has to be  an element of $ V\mathrm{Supp}(f)$. As this is not true, this implies that $(A(Q,P, \delta_\chi) f)(g)=0  $. This proves (i).\\
  (ii) Let $A:= A(Q,P, \delta_\chi) $. Let $ g$  be an element of $\mathrm{Supp}( T' )$. Then for any  compact open neighborhood $\Omega$ of $g$ in $G$, there exists an $f \in i^G_PE_\chi$ with support   in $P\Omega$, such that $\langle T', f \rangle \not=0$.   Then $\langle T, Af \rangle \not=0$, so that $\mathrm{Supp} \> T\cap \mathrm{Supp} (A f)$ is nonempty. By (i), $\mathrm{Supp} (Af )\subset cl (V \mathrm{Supp}(f))$. So one has:\beq \label{VP} cl (V P \Omega)\cap   \mathrm{Supp} (T)\not=\vid.\eeq Let us show  that if  $X$ is a subset of $G$ and $(\Omega_p)$ is a decreasing sequence of compact open  neighborhoods of $g$  in $G$, whose intersection is reduced to $g$, \beq \label{inter} \cap_{p\in \N} cl( X\Omega_p)= cl (Xg). \eeq  In order to see this,  one can reduce to the case where $g=e$.
    If  $y\in \cap_{p\in \N} cl( X\Omega_p)$,  for all $p$, $y= lim x_{n, p} \omega_{n, p}$ where $\omega_{n, p} \in \Omega_p$ and $x_{n, p} \in X$. Let  $V, V'$ be  symmetric neighborhoods  of $e$ in $G$ with  $V'^2Ê\subset V'$. Let  $p \in \N$ such that $\Omega_p \subset V'$ and let  $n\in \N $ such that  $y^{-1} x_{n, p}\omega_{n, p} \in V'$. Then  $y^{-1}x_{n, p}$ is an element of $V$. Hence $y$ is an element of $cl(X)$, which proves our claim.
    But, as $VP= QU$, $VP\Omega_p$ is left $Q$-invariant and the image of $cl(VP\Omega_p) $ in $Q\backslash G$ is closed as the projection is open. Hence this projection is compact. It is the same for the projection of $\mathrm{Supp} (T)$. Together with (\ref{VP}), an argument of compactness  shows that the intersection  $\cap_{p\in \N} cl(VP\Omega_p) \cap \mathrm{Supp}(T)$ is nonempty.    Together with (\ref{inter}),  this implies: $$cl(VPg) \cap \mathrm{Supp} (T) \not= \vid.$$ Then, one sees that $g \in cl( PV \mathrm{Supp}(T))$: if $(v_np_n g)$ admits  $t\in \mathrm{Supp}(T)$ as a limit,  then  $( v_np_n)$ has  $tg^{-1}$ as a limit and $(( v_np_n)^{-1} t)$ has $g$ as a limit. 
    
       But, by the $Q$-invariance of $\mathrm{Supp} (T)$ and the equality $PV= UQ$, one has: $$PV\mathrm{Supp} (T )= U \mathrm{Supp}(T). $$ Hence $g$ is an element of $cl(U \mathrm{Supp} (T))$, which proves (ii).
    \end{proof}
    \begin{defi}
A $(Q,P) $-subset \index{$(Q,P) $-subset} of $X(M)$ is the  complement in $X(M)$  of a finite union  of sets of the form $\{\chi_ \nu\vert \nu\in (\a'_M)_\C,  (\nu,  \alpha ) = c \}$, where $\alpha$ describes the set  $\Sigma (Q, P)$. 
\end{defi}
It is clear that  such a set is open and dense in $X(M)$.
\begin{lem}Ê\label {hoinv} There exists a $(Q,P) $-subset of $ X(M)$, such that for $ \chi $ element of this  set,  $A(Q, P, \delta_\chi)$ is invertible   and the map $\chi \mapsto A(Q, P, \delta_\chi) $, viewed in the compact realization, is holomorphic on this set.
\end{lem}  \begin{proof}From [Wal] IV.1.1(12) and (14), it suffices to prove the statement assuming that  $P$ and $Q$ are adjacent and opposite, hence maximal. Let $\aa$ be the single element of $ \Sigma(Q,P)$. Recall that  $\a_M =\a_M^G \oplus \a_G$ (cf. (\ref{oplus})). Here $\a_M^G $ is  one dimensional. Hence  the image in $X(M)$ of $ (\a_M^G)'_\C $,  $X(M)^G$,    under the map $\ll\mapsto \chi_\ll$,  is a one dimensional torus.   Thus the family $\chi \to A(Q, P , \delta_\chi)$, depending rationally on $\chi \in X(M)^G $,   it has a finite number of poles $\chi_i$. One remarks that  $A(Q, P , \delta_\chi) $ does  not change if $ \chi$ is multiplied by  an element of $X(G\vert M)$, so  that $A(Q, P , \delta_\chi) $ has poles only along the sets $ \chi_i X(G\vert M)$. This implies  the holomorphy statement.

From [Wal],  IV.3, there exists a rational function on $X(M)$ with values in $\C$, $j$,  such that $A(P, Q,\delta_\chi)A(Q,P, \delta_\chi) $ is equal to the  multiplication by  $j(\chi)$.  Moreover (cf. [S], Theorem 3.2), $ i^G_P \delta_\chi$ is irreducible for $ \chi$ in an open dense subset of $X(M)$. From (\ref{nonz}), one deduces that $j$ is not identically zero. Also it is invariant under $ X(G\vert M)$, by the remark above. Hence,  again,  its poles and zeros  are along a finite number of subtori of $X(M)$ of the form $\chi'_l X(G\vert M) $ where $ \chi'_l \in X(M)^G$. The second part of the lemma follows. \end{proof}  It follows from   (\ref{acom}) and the proof of the preceding lemma that:
\ber \label{ainv}The intertwining integrals  and their inverses commute with induced operators from intertwining operators between smooth representations of finite length of $M$.\eer
Let $P'=M'U'$ be a parabolic subgroup of $G$ with $M\subset M'$, $P\subset P'$. Let $(\delta,E)$ be a finite length smooth representation of $M$. Let $v$ be an element of $i^G_P E$. We denote  the value at 1 of the element of $i^G_{P '}(i^{M '}_{M' \cap P}\delta )$ associated to $v$ by the induction  in stages by $r_{M '}v $ \index{$r_{M'}$}. Thus for $\chi \in X(M)$, it defines a map denoted again by $r_{M'} $: 
\beqÊ\label{rm} r_{M'} : i^G_P E_\chi \to i^{M'}_{P\cap M'} E_\chi . \eeq 
We will identify $i^M_ME$ with $E$.
\begin{lem} \label{QPr}
  There exists a $(Q,P)$-subset of $X(M)$, $O$,  such for every  $\chi\in O$ one has the following property:\\
   For every   $G$-submodule, $V$,  of $i^G_P E_\chi$ the equality  $$r_M (V) = E$$ implies the equality   $$r_M (A(Q,P, \delta_ \chi) V)= E.$$  
  \end{lem}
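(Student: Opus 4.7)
The plan is to let $S$ be the $(Q,P)$-subset of $X(M)$ provided by Lemma~\ref{hoinv}, on which $A := A(Q,P,\delta_\chi)$ is holomorphic and invertible, possibly intersected with a further $(Q,P)$-subset arising from the multiplicity analysis below. Fix $\chi\in S$ and let $V\subset i^G_P E_\chi$ be a $G$-submodule with $r_M(V)=E$; the goal is $r_M(AV)=E$.

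Introduce the composite map $\Phi : i^G_P E_\chi \to E$ by $\Phi(v) := (Av)(1)$, so the desired conclusion is $\Phi(V) = E$. The preliminary observation is that $\Phi$ is surjective on the full induced representation: the computation underlying (\ref{nonz}) shows that, for a compact open $J\subset K$ with Iwahori factorization relative to $(P,P^-)$ and $e\in E^{J_M}$, the test vector $v^{P,J}_{e,\delta_\chi}$ satisfies $\Phi(v^{P,J}_{e,\delta_\chi}) = \mathrm{Vol}(N_Q\cap U^-\cap J)\,e$, where $N_Q$ denotes the unipotent radical of $Q$. Moreover, since $r_M$ on $i^G_Q E_\chi$ is $N_Q$-invariant and $A$ is $G$-intertwining, $\Phi$ is $N_Q$-invariant and $M$-equivariant with the appropriate twist, so it factors through the Jacquet module as an $M$-equivariant surjection $\bar\Phi : (i^G_P E_\chi)_Q \twoheadrightarrow E_\chi$.

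Reformulate the problem on the Jacquet-module level. By the geometric lemma for $(i^G_P E_\chi)_P$, the hypothesis $r_M V = E$ is equivalent to $V_P$ surjecting onto the identity-coset piece $E_\chi$; the conclusion becomes $\bar\Phi(V_Q) = E_\chi$. The main technical step is to identify $\bar\Phi$, up to a nonzero scalar, with the canonical identity-coset projection of $(i^G_P E_\chi)_Q$ from the Bernstein--Zelevinsky geometric lemma. By Frobenius reciprocity, nonzero $M$-equivariant maps $(i^G_P E_\chi)_Q \to E_\chi$ correspond to nonzero $G$-intertwiners $i^G_P E_\chi \to i^G_Q E_\chi$; shrinking $S$ to a $(Q,P)$-subset on which $E_\chi$ has multiplicity one as a subquotient of $(i^G_P E_\chi)_Q$, this $\mathrm{Hom}$-space is one-dimensional, and both $\bar\Phi$ and the canonical identity-coset projection give nonzero elements, hence agree up to scalar.

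With this identification, the conclusion follows: the $G$-isomorphism $V \xrightarrow{\sim} AV$ provided by invertibility of $A$ (combined with exactness of the Jacquet functor) transports the identity-coset piece of $V_P$ to the identity-coset piece of $(AV)_Q \cong \bar A(V_Q)$, while the Jacquet-module-level version of $\bar\Phi$ intertwines these identity pieces via $\bar A$, so surjectivity of the former forces surjectivity of the latter.

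The main obstacle is establishing that the multiplicity-one condition used to identify $\bar\Phi$ with the canonical identity-coset projection holds on a $(Q,P)$-subset rather than on a merely open dense set. This requires analyzing, via the geometric lemma, when the subquotients $(w\delta_\chi)_{M\cap w^{-1}Qw}$ indexed by $w\in Q\backslash G/P$ can coincide with $E_\chi$ as $M$-modules, and showing that each such coincidence confines $\chi$ to a $(Q,P)$-hyperplane (using the same rationality arguments on $X(M)$ as in the proof of Lemma~\ref{hoinv}).
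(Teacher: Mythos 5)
Your plan has two genuine gaps, and both are avoidable because the paper's own argument is far more elementary. The first gap is the multiplicity-one identification of $\bar\Phi$ with the canonical $w=1$ projection, which you flag as the main obstacle but do not resolve; worse, it is false at the level of generality of the lemma, since $(\delta,E)$ is only assumed to be of finite length. For $E=E_0\oplus E_0$ with $E_0$ irreducible, the space $\mathrm{Hom}_G(i^G_PE_\chi,i^G_QE_\chi)$ identifies with $2\times 2$ matrices over $\mathrm{Hom}_G(i^G_P(E_0)_\chi,i^G_Q(E_0)_\chi)$ and is therefore at least four-dimensional whenever $A(Q,P,(\delta_0)_\chi)\not=0$, so $\bar\Phi$ need not be proportional to any preferred projection. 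Even for irreducible $E$, the separation of the $w=1$ Bernstein parameter from the others is only obtained on an open dense subset in Proposition \ref{ww}, and nothing in your sketch shows that the exceptional locus is a finite union of $(Q,P)$-hyperplanes. The second gap is in the concluding step: the hypothesis $r_M(V)=E$ is a statement about the image of $V$ in the Jacquet module along $P$ (the $w=1$ projection of $V_P$ is all of $E_\chi$), while the quantity you must control, $\bar\Phi(V_Q)$, lives in the Jacquet module along $Q$. The operator $A$ induces isomorphisms $V_P\cong(AV)_P$ and $V_Q\cong(AV)_Q$, but there is no map carrying the identity-coset piece of $V_P$ to that of $(AV)_Q$, so the sentence about ``transporting'' one to the other does not actually connect hypothesis to conclusion.

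The paper's proof is three lines and uses neither the geometric lemma nor any multiplicity statement. Take the $(Q,P)$-subset of Lemma \ref{hoinv} on which $A=A(Q,P,\delta_\chi)$ is holomorphic and invertible. If $r_M(AV)$ were a strict $M$-submodule $E_1$ of $E$, then, since $AV$ is a $G$-submodule and $r_M$ is evaluation at $1$, every function in $AV$ would take values in $E_1$, i.e. $AV\subset i^G_QE_1$. By (\ref{ainv}) the intertwining operators and their inverses commute with the morphisms induced by the inclusion of $E_1$ into $E$, so $V=A^{-1}(AV)\subset i^G_PE_1$ and hence $r_M(V)\subset E_1$, contradicting $r_M(V)=E$. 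The only inputs are the invertibility of $A$ on a $(Q,P)$-subset and its functoriality in the inducing representation; you should replace your multiplicity argument by this one.
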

  \begin{proof}
   Let us take a $(Q,P) $-set as in the previous lemma and let $\chi$ be an element of this $(Q,P)$-set. If $r_M (A(Q,P, \delta_\chi) V)$ is  equal to a strict $M$-submodule, $E_1$,  of $E$, this implies that  $A(Q,P, \delta_\chi) V $ is a submodule of $i^G_Q E_1$. By (\ref{ainv}) one would have $r_M V\subset E_1$. This leads to a contradiction. This proves that the $(Q,P)$-set of the previous lemma has the required property. The lemma follows.      \end{proof} 
     
     \subsection{Generic Basic Geometric Lemma}\label{gblemma}
     Let $P$ (resp., $P'$) be a parabolic subgroup of $G$ with Levi subgroup $M$ (resp., $M'$). Let $A$ (resp., $A'$) be a maximal split torus of $M$ (resp., $M'$). We choose  a maximal compact subgroup $K$ (resp. $K'$) \index{$A,A', K,K'$}  of $G$ which is  the stabilizer of a special point of the apartment associated to $A$ (resp. $A'$) in the extended Bruhat-Tits building of $G$.  Let us show:
      \ber \label{wmm} There exists a set of representatives     of $P'\backslash G/ P$  such that for each of its elements, $w$,  one has   $w. A=A'$ \eer   
       By considering a minimal parabolic subgroup  of $G$, contained in $P$ (resp., $P'$)  and containing $A$ (resp., $A'$),  one can reduce to the case where $P$ and $P'$ are minimal  parabolic subgroups of $G$. Then $ P'= x.P$ for some element $x$ of $G$. As all maximal split tori in a minimal parabolic subgroup are conjugate  by an element of this parabolic subgroup (cf. [BoTi],  Theorem 11.6), one can choose $x$ such that $x.P=P'$ and  $x.A=A'$. On the other hand, by the Bruhat decomposition 
       $G= \cup_wPwP $ where the $w$ normalize $A$. Hence, $ G= \cup_w P' xwP$. Then (\ref{wmm}) follows from the fact that $xw. A= A'$. 
       
       We will say that a maximal split torus of $G$ is maximally $\si$-split if it contains a maximal $\si$-split torus \index{ maximally $\si$-split}. Such a torus is $\si$-stable (cf. [HW] Lemma 4.5 (ii)). 
   \ber \label{wm/g/m}     We will denote  a set of representatives   of $P'\backslash G/ P$  such that for each $w \in W(M' \backslash G /M)$,  $w. A= A'$ by  $ W(M' \backslash G /M)$ \index{$W(M' \backslash G /M)$}(although this set is not unique). \eer  Then $M'\cap w.M$ (resp.,  $M\cap w^{-1}.M'$) contains $A'$ ( resp., $A$ ) and  is the Levi subgroup  of  the parabolic subgroup $M'\cap w.P$ (resp.,  $M\cap w^{-1}.P'$). of $M'$ ( resp., $M$) which contains $A'$ (resp., $A$). 
   
      If $P=MU$, $P'=M'U'$ are $ \si$-parabolic subgroups of $G$, one will assume  that    $A$ (resp., $A'$) is a maximally $\si$-split $\si$-stable maximal split torus of $M$ (resp., $M'$).  
     \begin{prop} \label{ww} Let $P=MU$ and $P'=M'U'$ be   parabolic subgroups of $G$ such that  $A\subset M$, $A'\subset M'$. We denote  the set of  $A_M$-roots in the Lie algebra of $P$  by  $ \Sigma(P)$.
Let $(\delta,E)$ be a smooth  representation of finite length of $M$. 
Let $w, w' $  be two distinct elements of $ W(M' \backslash G /M)$. Let 
$X$ be a complex subtorus of $X(M)$ stable under  complex conjugation. We assume that the Lie algebra of $X$   contains at least an element $\nu $ such that $( \nu,\aa) $ is  strictly positive for 
each   element $\alpha$ of the set $  \Sigma(P) $. This condition is satisfied in particular if $P=MU$  is a $\si$-parabolic subgroup and $X=X(M)_\si$.  Then the following holds:
\\ (i) The set $O_{w,w '}$ of elements  $\chi$ of   $X$   such that:  
\ber  The $M'$-modules  $V_{\chi,w}=  i^{M'}_{M'\cap w.P} (w j_{M\cap w^{-1}.P'} E_\chi)$ and   $V_{\chi,w'}=i^{M'}_{M'\cap w'.P} (w' j_{M\cap w'^{-1} P'} E_\chi)$ have disjoint sets of Bernstein parameters (cf. Section \ref{pb} for the terminology).\eer 
is open and dense in $X$.  If $(\delta,E)$ is unitary, $O_{w,w'} \cap X(M)_u$ is dense in  $X_u:=X\cap X(M)_u$.
\\(ii) If $ \chi$ is an element of the open dense subset $O= \cap_{w, w' \in W(M'\backslash G/M), w\not=w'} O_{w,w'}$ of $X(M)$, the Jacquet module $j_{P'} (i^G_P E_\chi)$ is isomorphic to the direct sum:
$$ \oplus_{w\in W(M' \backslash G/M)}  i^{M'}_{M'\cap w.P} (w j_{M\cap w^{-1}.P'} E_\chi).$$ 
\end{prop}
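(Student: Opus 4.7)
The strategy is to combine the classical Basic Geometric Lemma of Bernstein--Zelevinsky with a genericity argument on Bernstein parameters (inertial equivalence classes of supercuspidal supports). The Basic Geometric Lemma provides, for each $\chi$, a canonical filtration of the $M'$-module $j_{P'}(i^G_P E_\chi)$, indexed by an ordering of the double-coset representatives $W(M'\backslash G/M)$, whose successive quotients are isomorphic to the $V_{\chi,w}$. Granting (i), part (ii) follows at once: the Bernstein decomposition of the category of smooth $M'$-modules has vanishing $\mathrm{Ext}^1$ between representations lying in distinct blocks, so the filtration splits canonically into the direct sum $\bigoplus_w V_{\chi,w}$ as soon as the Bernstein parameters are pairwise disjoint.

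For (i), describe these parameters explicitly. Fix a representative $(L_0,\sigma_0)$ of the supercuspidal support of $E$ with $L_0\subset M$. Since supercuspidal support behaves equivariantly under conjugation and is preserved, up to association and subquotients, by the Jacquet and induction functors, every irreducible subquotient of $V_{\chi,w}$ has supercuspidal support represented by the pair $(w.L_0,\, w\sigma_0 \otimes (w\chi)|_{w.L_0})$. Hence the Bernstein parameter of $V_{\chi,w}$ is the $M'$-orbit of the inertial class $[w.L_0,\, w\sigma_0 \otimes (w\chi)|_{w.L_0}]$. The condition that $V_{\chi,w}$ and $V_{\chi,w'}$ share a Bernstein parameter amounts to the existence of an element $y\in M'$ (drawn from a finite set of representatives) conjugating $w.L_0$ onto $w'.L_0$ and of an unramified character $\psi$ of $w'.L_0$ satisfying $y\cdot(w\sigma_0 \otimes (w\chi)|_{w.L_0}) \simeq w'\sigma_0 \otimes (w'\chi)|_{w'.L_0} \otimes \psi$. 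This cuts out a Zariski-closed subset $F_{w,w'}\subset X$; the remaining task is to show $F_{w,w'}\neq X$.

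To exhibit a point of $X\setminus F_{w,w'}$, specialize $\chi=\chi_{s\nu}$ along the strictly $P$-dominant direction $\nu\in\b$ supplied by the hypothesis and let $s\in\R$ vary. Were $F_{w,w'}=X$, comparing the real parts of the central characters of $V_{\chi,w}$ and $V_{\chi,w'}$ along $A_{M'}$ for all $s$ would produce a linear identity between the restrictions of $w\nu$ and $w'\nu$ to $\a_{M'}$ modulo the lattice coming from unramified twists; combined with the normalization $w.A=w'.A=A'$ from~(\ref{wmm}), this would force $w$ and $w'$ to represent the same $(P',P)$-double coset, contradicting $w\neq w'$ in $W(M'\backslash G/M)$. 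Hence $F_{w,w'}$ is proper and $O_{w,w'}$ is open and dense in the irreducible variety $X$. For the unitary case, $F_{w,w'}$ is the zero locus of a nonzero holomorphic function on $X$; the stability of $X$ under complex conjugation makes $X_u=X\cap X(M)_u$ a totally real form of $X$, and a Fourier-series expansion shows that no nonzero holomorphic function on $X$ vanishes identically on $X_u$, so $O_{w,w'}\cap X_u$ is dense in $X_u$. The main obstacle is the exclusion $F_{w,w'}\neq X$: the strict positivity of $\nu$ is essential here, for it is what guarantees that $X$ provides enough variation along $A_{M'}$ to separate the distinct double cosets.
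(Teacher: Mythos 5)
Your overall strategy (Basic Geometric Lemma filtration, splitting across distinct Bernstein blocks for (ii), and cutting out a proper closed ``bad'' set $F_{w,w'}$ for (i)) matches the paper's, and your treatment of (ii) and of the density of $O_{w,w'}\cap X_u$ is fine. But the decisive step --- showing $F_{w,w'}\neq X$ --- is carried out with an argument that does not work. You propose to separate $V_{\chi,w}$ from $V_{\chi,w'}$ by comparing the real parts of their central characters \emph{on $A_{M'}$}, i.e.\ by comparing the restrictions $w\nu|_{\a_{M'}}$ and $w'\nu|_{\a_{M'}}$, and you claim that equality of these restrictions forces $w$ and $w'$ into the same $(P',P)$-double coset. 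That implication is false. Take $G=GL_4$, $P$ the Borel, $M=A$ the diagonal torus, $P'$ the standard $(2,2)$-parabolic, and $\b=\C\nu$ with $\nu=(4,3,2,1)$, which is strictly $P$-dominant. The double cosets $P'\backslash G/P$ correspond to the choice of which pair of entries of $\nu$ lands in the first $GL_2$-block, and the orthogonal projection of $w\nu$ to $\a_{M'}$ only remembers the two pair-sums. Since $4+1=3+2$, the two distinct cosets with first-block pairs $\{4,1\}$ and $\{3,2\}$ give $w\nu|_{\a_{M'}}=w'\nu|_{\a_{M'}}=(5/2,5/2,5/2,5/2)$ for all scalings $s\nu$, so your comparison yields no contradiction even though $w\neq w'$ in $W(M'\backslash G/M)$. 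The hypothesis only supplies \emph{one} dominant direction $\nu$ in $\b$, so you cannot perturb $\nu$ to make the pair-sums distinct.

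What actually separates the two cosets is the full supercuspidal support, not its shadow on the center of $M'$: the unramified twist lives on the Levi $L''$ occurring in the cuspidal support (which contains $w.A_M$, hence is in general much larger than $A_{M'}$). This is how the paper argues: assuming $F_{w,w'}=X$ and reducing (via Baire and finiteness of $N_{M'}(L'')/L''$ and of the stabilizer of $\omega''$ under unramified twists) to an identity of characters of $L''$, one differentiates to get $w\nu=w'\nu$ as elements of $\a'_\C$, so that $w''=w'^{-1}w$ fixes the strictly $P$-dominant $\nu$; being a product of reflections in roots orthogonal to $\nu$, which are necessarily roots of $A$ in $\m$, the element $w''$ fixes $\a_M$ pointwise, hence lies in $M$, contradicting $w\neq w'$. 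Your argument needs to be upgraded to this comparison on $\a_{L''}$ (equivalently in $\a'$) rather than on $\a_{M'}$; in my $GL_4$ example the supports $(A,(4,1,3,2))$ and $(A,(3,2,4,1))$ are indeed not $M'$-conjugate, which is invisible after restriction to $A_{M'}$. A second, minor, point: for finite-length $E$ the Jacquet module $j_{M\cap w^{-1}.P'}E$ has a finite \emph{set} of Bernstein parameters over $M\cap w^{-1}.M'$, not a single pair $(L_0,\sigma_0)$, so $F_{w,w'}$ is a finite union of conditions indexed by pairs $(i,j)$ of parameters; this is bookkeeping, but it is needed to invoke Baire correctly.
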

\begin{proof}
Let $ \{ \Lambda_1,\dots , \Lambda_p\}$ be  the set of   Bernstein's  parameters  of  the representation $( j_{M\cap w^{-1}.P'} \delta_,  j_{M\cap w^{-1}.P'} E)$ of  $M\cap w^{-1}.M'$ , where for every $i$,  $\Lambda_i= (L_i, \omega_i)_{M\cap w^{-1}.M'}$,   $L_i$ is  a Levi subgroup of $M\cap w^{-1}.M'$  which contains $A$ and $\omega_i$ is a cuspidal reprentation of $L_i$ i.e. whose smooth coefficients have a support which is compact modulo the center of $L_i$. \\
We introduce similar data related to $w'$,  $L'_j$, $\omega'_j$.
Then, using (\ref{Bchi}) and  (\ref{indpar}),  one sees: \ber\label{vw} The set of  Bernstein's parameters  of  the finite length $M'$-smooth module $V_{\chi,w}$ is equal to 
$$\{  ( w.L_1, w (\omega_1 \otimes \chi_{\vert L_1 }))_{M'}, \dots ,   ( w.L_p, w( \omega_p \otimes \chi_{\vert L_p  }))_{M'} \}.$$
\eer

Let us prove that the set $Y=X\setminus O_{w,w'} $,  is closed in $X$ and has an  empty interior.
From (\ref{vw}), one sees that  $\chi \in Y$,  if and only if  for some $i, j$, one has:
\ber  \label{conjM}$( w.L_i, w. (\omega_i \otimes \chi_{\vert L_i} ))_{M'}$ is $M'$-conjugate to \\ $( w'.L'_j,  w'. (\omega_j \otimes \chi_{\vert L'_j }) )_{M'}$ .  \eer 
Let $Y_{i,j}$ be  the subset of elements of $Y$ satisfying (\ref{conjM}). Let us assume that $Y_{i,j} $ is nonempty. 
In particular $w.L_i$ is conjugate in $M'$ to  $ w'. L'_j$. \\
These are two Levi subgroups of $M'$ which contain $A'$ and  which are conjugate under $M'$. As two maximal split tori in $w'.L'_j$ are conjugate,  these   two Levi subgroups of $M'$ are conjugate by an element of the normalizer in $M'$ of $A'$. 

By multiplying  $w$ by this element of the normalizer in $M'$ of $A'$, one reduces to the case where these two Levi subgroups of $M'$,  
$w.L_i$, $w'L'_j$ are equal. Let us denote  this Levi subgroup of $M'$ by $L''$. Two cuspidal representations of $L''$, $\omega$, $\omega'$ define the same infinitesimal character for $M'$ if for some $x$ in  the normalizer of $ L''$ in $M'$, $N_{M'} (L'')$,   $x\omega$  is equivalent  to $\omega '$. Hence   $\chi\in Y_{i,j} $ if and only if  for some $x \in N_{M'} (L'')$, that  might be choosen to normalize $A'$,  
\ber \label{equivo}  $xw\omega_i \otimes xw\chi_{\vert L''} $ is equivalent to $ w' \omega'_j  \otimes w' \chi_{ \vert L'' }$. \eer 
For $x$ given, the set $Y_{i,j, x}$ of such $\chi$ is easily seen to be closed because 
\ber \label{fam} The characters of these two families of irreducible representations of $L''$ vary weakly holomorphically in $\chi$. \eer
As $Y_{i,j,x}$ depends only on the right coset $xL''$ and as $ N_{M'} (L'')/ L''$ is finite, this implies that $Y_{i,j}$ is closed.  Hence $Y$ is closed in $X(M)$   and $ O_{w,w'}$  is open in $X $.

Let us assume  that $O$ is not dense. This implies that  $ Y$ has a nonempty interior, hence by Baire's Theorem, there exist $w, w',i,j, x$ as above such that $Y_{i,j, x}$ has a nonempty interior.

By multiplying $w'$ by $x^{-1}$ one may and one will reduce to the case where $x=1$.
From (\ref{fam}), one deduces that for all $\chi \in X$, (\ref{equivo}) holds. In particular it is true for $ \chi= 1$. Denote by $\omega''$  the representation $w\omega_i$ of $L''$. Then one concludes also that for all $\chi \in X$,  $(w'\chi_{\vert L''}^{-1} )(w\chi_{\vert L'' }) $ belongs to the finite set of elements $\chi''$  of $ X(L'')$ such that $\omega''\otimes \chi''$ is equivalent to $\omega ''$. Hence, by connectedness, 
\ber for all $\chi \in X$,  $ w\chi_{\vert L'' } =w'\chi_{\vert L'' }$.  \eer 
Let us denote the Lie agebra of $X$ by  $ \b\subset (\a'_M)_\C$. 
By differentiation, it implies $$w\nu= w' \nu, \nu \in \b.$$ 
This might be written:
$$w'' \nu= \nu, \nu \in \b , $$  where  $w''= w'^{-1}w$ is an element of the normalizer of $A$ in $G$.
From our hypothesis on $\b$, one sees that  $ w''\nu = \nu$ for a strictly $ P$-dominant element of $\b$, $\nu$. But  $w'' $ acts on $\a'_\C$ as an element of the Weyl group of $A$,  which, by the above,   is a product of symmetries with respect to roots orthogonal to $\nu$.  The corresponding roots  have to be roots of $A$ in the Lie algebra of $M$, by our hypothesis on $\nu$.   This implies that $w''$  fixes pointwise 
$(\a_M)'_\C$. Hence $w'' $ is  an element of the normalizer  of $A$ which fixes pointwise $\a_M$. This implies that  it is an element of $M$.  As $w= {w'} ^{-1}w''$, this implies that $w$ and $w'$ represents the same element of $P'\backslash  G/P$. This is a contradiction with our hypothesis. Hence $Y$ has an empty interior and $O$  is dense in $X$. This proves the first statement on $O_{w,w'}$. The proof of the statement for $O_{w,w'} \cap X_u$ is similar.
 \\(ii) By the Basic Geometric Lemma ( cf. [R] VI.5.1) the Jacquet module $j_{PÔ} (i^G_PE_\chi)$ has a filtration whose associated graded object is the direct sum of the statement. (ii) is an immediate consequence of the definition of $O$ and  (\ref{splitsum}). 
 \end{proof}
\begin{lem} \label{V=} We keep the notation and the assumptions of the previous lemma. \\
    (i) Let us assume that $(\delta,E)$ is irreducible (resp. irreducible and unitary). There exists an open dense subset  of $X$ (resp., $X_u$) such that for every $\chi$ in this subset,  $i^G_P \delta_ \chi $ is irreducible. \\
  (ii) Let us assume that $(\delta,E) $  is  a finite length smooth representation of $M$. 
    There exists an open dense subset of $X$, $X'$,  such that for every $\chi\in X'$  and for every   $G$-submodule $V$ of  $i^G_P E_\chi$  such that $r_M V= E_\chi$,
 then $V= i^G_P E_\chi$.
   \end{lem}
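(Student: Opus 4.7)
The plan is to derive both parts from the direct-sum decompositions supplied by Proposition \ref{ww}(ii), which I apply three times. On an open dense $X' \subset X$ (respectively on $X' \cap X_u$, dense in $X_u$ by Proposition \ref{ww}(i)), I arrange simultaneously the $M$-module direct-sum decompositions
$$j_P(i^G_P E_\chi) = E_\chi \oplus N^+_\chi,\quad j_{P^-}(i^G_P E_\chi) = E_\chi \oplus N^-_\chi,\quad j_{P^-}(i^G_{P^-} E_\chi) = E_\chi \oplus \tilde{N}_\chi,$$
where in each case the $w = 1$ summand is $E_\chi$ and the remaining summands have Bernstein support disjoint from that of $E_\chi$. (The hypothesis of Proposition \ref{ww}(ii) for the third decomposition is met because $\b$ also contains $-\nu$, strictly $P^-$-dominant.) I further shrink $X'$ using Lemma \ref{hoinv} so that the long intertwining operator $A := A(P^-, P, \delta_\chi)$ is a $G$-module isomorphism. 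The map $r_M \colon v \mapsto v(1)$ factors through $j_P$ on $i^G_P E_\chi$ and through $j_{P^-}$ on $i^G_{P^-} E_\chi$ as the canonical projection onto the $E_\chi$-summand, so $r_M V = E_\chi$ becomes $(j_P V)_{E_\chi} = E_\chi$, and $r_M(AV) = E_\chi$ becomes $(j_{P^-}(AV))_{E_\chi} = E_\chi$, where $(\cdot)_{E_\chi}$ denotes the $E_\chi$-Bernstein component in the ambient module.

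For (ii), let $V \subset i^G_P E_\chi$ satisfy $r_M V = E_\chi$ and assume for contradiction that $V$ is proper, so $W := i^G_P E_\chi / V$ is non-zero. Second adjointness applied to the non-zero surjection $i^G_P E_\chi \twoheadrightarrow W$ yields a non-zero $M$-morphism $E_\chi \to j_{P^-} W$, whose image lies in the $E_\chi$-Bernstein component of $j_{P^-} W = E_\chi/(j_{P^-} V)_{E_\chi} \oplus N^-_\chi/(j_{P^-} V)_{N^-_\chi}$, forcing $(j_{P^-} V)_{E_\chi} \subsetneq E_\chi$. On the other hand, Lemma \ref{QPr} applied with $Q = P^-$ gives $r_M(AV) = E_\chi$, equivalently $(j_{P^-}(AV))_{E_\chi} = E_\chi$. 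The $G$-isomorphism $A$ induces an $M$-isomorphism $j_{P^-} A \colon j_{P^-}(i^G_P E_\chi) \to j_{P^-}(i^G_{P^-} E_\chi)$; by functoriality of the Bernstein decomposition it respects the $E_\chi$-Bernstein components and restricts to an $M$-automorphism $\phi$ of $E_\chi$ satisfying $\phi((j_{P^-} V)_{E_\chi}) = (j_{P^-}(AV))_{E_\chi} = E_\chi$. Bijectivity of $\phi$ then gives $(j_{P^-} V)_{E_\chi} = \phi^{-1}(E_\chi) = E_\chi$, contradicting the previous strict inclusion. Hence $V = i^G_P E_\chi$.

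Part (i) reduces to (ii): with $(\delta, E)$ irreducible (so $E_\chi$ irreducible), every non-zero submodule $V \subset i^G_P E_\chi$ corresponds under first adjointness to a non-zero $M$-morphism $j_P V \to E_\chi$, surjective by Schur and factoring through $(j_P V)_{E_\chi} \subset E_\chi$ by Bernstein disjointness; irreducibility of $E_\chi$ forces $(j_P V)_{E_\chi} = E_\chi$, i.e.\ $r_M V = E_\chi$, and (ii) gives $V = i^G_P E_\chi$. The unitary statement is proved identically on $X' \cap X_u$. The main technical obstacle is the identification of the $E_\chi$-Bernstein components across the intertwining isomorphism $A$; this rests on functoriality of the Bernstein decomposition together with the generic invertibility of $A$ supplied by Lemma \ref{hoinv}.
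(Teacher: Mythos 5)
Your proof is correct, but it takes a genuinely different route from the paper's. The paper obtains (i) by citing Sauvageot ([S], Theorem 3.2) and then proves (ii) by induction on the length of $E$: for generic $\chi$ an irreducible $G$-submodule $V_1$ of $V$ must be isomorphic to $i^G_P F_\chi$ for an irreducible subquotient $F$ of $E$, first adjointness together with the generic splitting of $j_P(i^G_P F_\chi)$ (Proposition \ref{ww} with $P'=P$) identifies $\mathrm{Hom}_G(i^G_P F_\chi, i^G_P E_\chi)$ with $\mathrm{Hom}_M(F_\chi,E_\chi)$, so $F$ embeds in $E$ and $V_1$ is induced from it; one then passes to the quotient by $V_1$ and applies the inductive hypothesis. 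You instead prove (ii) directly, with no induction, by playing $j_P$ against $j_{P^-}$: the hypothesis $r_MV=E_\chi$ controls the $E_\chi$-Bernstein component of $j_PV$, Lemma \ref{QPr} and the generic invertibility of $A(P^-,P,\delta_\chi)$ (Lemma \ref{hoinv}) transport that control to the $E_\chi$-component of $j_{P^-}V$, and second adjointness shows a proper $V$ would leave that component proper --- a contradiction. Deducing (i) from (ii) then reverses the paper's logical order (note, though, that it does not eliminate the reliance on [S], which already enters through the proof of Lemma \ref{hoinv}). Both arguments ultimately rest on the same generic Bernstein-disjointness of Proposition \ref{ww}. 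Two points you leave implicit but should state: (a) the $(P^-,P)$-subsets furnished by Lemmas \ref{hoinv} and \ref{QPr} meet $X$ (resp.\ $X_u$) in dense subsets --- this uses the hypothesis that the Lie algebra of $X$ contains a strictly $P$-dominant element, exactly as in the density argument inside the proof of Proposition \ref{gb}; and (b) the translation of $r_MV=E_\chi$ into fullness of the $E_\chi$-component of $j_PV$ (and likewise for $r_M$ on $i^G_{P^-}E_\chi$) uses that the map induced by $r_M$ on the Jacquet module annihilates the summands with disjoint Bernstein parameters and is therefore bijective on the $w=1$ summand. Neither is a gap, but they carry real weight in the argument.
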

    \begin{proof}
  (i)  follows easily  from [S], Theorem 3.2, where no assumption of unitarity on the inducing representation is made, and our hypothesis on  $X$.\\
   (ii) Let $X'$ be an open and dense subset of $X$ such that:
   \\1)  for every irreducible subquotient of $(\delta,E)$, $(\omega, F)$ and  $ \chi \in X'$, $i^G_P \omega_\chi$ is irreducible.
   \\ 2) $X'$ is a subset of the set $O$ of the preceding  lemma,  where we take $P= P', $ $M=M'$.\\ 
   The existence of $X'$ follows from (i)  and from the preceding lemma.\\ 
   We proceed by induction on the length of $E$  to prove that:
   \ber   \label{indu}An open dense subset, $X'$, of $X$ satisfying 1) and 2) above has the properties required by the lemma. \eer
   If $ E$ is of length one and $\chi \in X'$,  $i^G_P E_ \chi $ is irreducible. As $V$ is nonzero, one sees that   the claim is true in that case.

Now let us assume that (\ref{indu}) is true if $E$ is of length $p\geq 1$. Let  $E$  be  a smooth $M$-module of length $p+1$. Let $\chi$ be an element of $X'$.
   Let $ (\pi_1,V_1)$ be  an irreducible $G$-submodule of $V$. As $\chi \in X'$,  $\pi_1$ is  isomorphic to  $i^G_P \omega_\chi$ for some irreducible subquotient  $(\omega,F) $ of $(\delta,E)$.  This determines  a nonzero element, $T$,  of  $ \mathrm{Hom}_G( i^G_P F_\chi, V) \subset \mathrm{Hom}_G(  i^G_P F_ \chi, i^G_P E_\chi) .$ The  latter space  is isomorphic to $
   \mathrm{Hom}_ M ( j_P( i^G_P F _\chi),  E_\chi)$. 
    But from our hypothesis on $\chi$ and the properties of $X'$, $j_P( i^G_P F _\chi)$ splits as a direct sum $\oplus_{w \in W(M \backslash G/M)} i^{M }_{M\cap w.P} w j_{M\cap w^{-1}.P} F_\chi.$ As $\chi$ is an element of $O$, for $w \notin P$,  the set of Bernstein parameters of  $ i^{M}_{M\cap w.P} w j_{M\cap w^{-1}.P} F_\chi $ is disjoint  from the set of Bernstein parameters of  $ F_\chi$.  Hence, one has:
    $$
   \mathrm{Hom}_ M ( j_P( i^G_P F _\chi),  E_\chi )\approx 
    \mathrm{Hom}_{M} ( F_\chi ,E_\chi).$$ From this and from the fact that $T$ is nonzero, it follows  that 
    $\mathrm{Hom}_{M} ( F_\chi ,E_\chi)$ is non reduced to  zero. This  proves that $F$  appears as  a submodule of $E$, that we still denote by $F$.  
   Moreover $T$ is the induced map from an element  of $\mathrm{Hom}_{M} ( F_\chi ,E_\chi)$  and $V_1 $ is equal to $i^G_P F_\chi$. Going through the quotient of $V$ by $i^G_P F_\chi$  and applying the induction hypothesis, one gets the result.     \end{proof} 
   We need some notation.
   \ber \label{deflx} Let $P=MU$ be a parabolic subgroup of $G$, let $(\delta,E)$ be a smooth  representation of $M$ and let $x$ be an element of $G$. If there is no ambiguity, we will denote  the bijective intertwining operator between $i^G_P E $ and $i^G_{x.P} xE$, which associates to $v \in i^G_P E$ the element $\ll(x) v$ \index{$\ll(x)$} of $i^G_{x.P} xE$ defined by
   $$\ll(x) v(g)= v(x^{-1}g), g \in G,$$
   by $\ll(x)$. 
 We will also denote, again by abuse of notation,  the transpose of the inverse of $\ll(x )$  by $\ll(x)$.  It intertwines  $(i^G_P E)' $ and $(i^G_{x.P} xE)'$.
  \eer
   \begin{prop} \label{gb}Generic Basic Geometric Lemma.\\
Let  $P=MU$, $P'=M'U'$  be   parabolic subgroups of $G$ such that  $A\subset M$, $A'\subset M'$.   Let   $ \Sigma(P) $ be   the set of  $A_M$-roots in the Lie algebra of $P$, 
 let $(\delta,E)$ be   a smooth  representation of finite length of $M$  
and  let  $X$ be   a complex subtorus of $X(M)$ stable under  complex conjugation. We assume that the Lie algebra of $X$,   denoted by $ \b\subset (\a'_M)_\C$,  contains at least an element $\nu $ such that $( \nu,\aa) $ is  strictly positive for 
each  $\alpha$ element of the set $  \Sigma(P) $. This in particular satisfied if $P$ is a $\si$-parabolic subgroup and $X=X(M)_\si$. \\
If $ w \in  W(M'\backslash G/ M)$, let us define   parabolic subgroups of $G$  by: $$P_w = (M\cap w^{-1}.P')U\subset P, \>P'_w= (M' \cap w.P) U'\subset P'. \index{$P_w, P'_w$}$$
There exists a dense open subset $O$ of $X$, whose intersection with $X_u$ is dense, such that:
\\(i) For  $ \chi $ belonging to $ O$  and $ w$ belonging to $ W(M'\backslash G/ M)$,  the map $\alpha_{\chi, w}$ \index{$\alpha_{\chi, w}, \alpha_\chi$} is  well defined from $i^G_P E_\chi$ to $V_{\chi,w}:= i^{M'}_{M'\cap w.P} w(j_{M\cap w^{-1}.P'} E_\chi)$ by:
  $$\alpha_{\chi, w}(v)= r_{M'}  [ A(P'_w, w. P_w, wj_{M\cap w^{-1}.P'} \delta_\chi )( \lambda (w)\circ j_{M\cap w^{-1}.P'} \circ v)] ,  \index{$\alpha_{\chi,w}$} $$ for $v \in  i^G_P E_\chi$ . Moreover it goes through the quotient   to  a surjective  morphism of $M'$-modules from $j_{P'} i^G_P E_\chi$ to $V_{\chi,w}$, that we will denote in the same way. Here we use the normalization of measures as in (\ref{AK}) relative to $(A',K')$. \\
(ii) For  $ \chi \in O$, the map $$\alpha_\chi: j_{P'} (i^G_P E_\chi) \to \oplus_{w \in  W(M'\backslash G/ M)} V_{\chi,w}, \index{$\alpha_\chi$}$$ whose components are the $\alpha_{\chi, w}$, 
is an isomorphism  of $M'$-modules. 
 \end{prop} 
 \begin{proof}
Let us denote  the Levi subgroup of $P'_w$ which contains $A'= w.A$ by $M'_w $. Then $M'_w = M ' \cap w. M$.\\  
From the properties of intertwining integrals (cf. Lemma \ref{hoinv}),  $\alpha_{\chi, w} $ is well defined for $\chi $ element of $X$ and such that   $w \chi_{\vert M'_w}$ is element of some  $(P'_w, w.P_w)$-subset $Y_w$ of  $X(M'_w)$. \\
We denote   the set of such $\chi$ by $X_w$.  Such a set is open in $X$. Let us show that it is  dense in $X$. If it was false,   the complement of some    $(P'_w, w.P_w)$-subset of $X(M'_w)$ would contain  the set of    $w \chi_{\vert M'_w}$ when $ \chi $ varies in  a nonempty open subset of $X$.  Thus, by looking to tangent spaces, one would see that $w\b$ should be contained in  the orthogonal subspace to some nonempty  collection of roots,   $\aa$,   of the maximal split torus of the center of $M'_w$, $A_{M'_w}$,  in the Lie algebra  of  $P'_w$ and  which are not root in the Lie algebra of $w.P_w$.  But, by the hypothesis on $X$ in Lemma \ref{ww},  such a root would be  trivial on $w \a_M$, as the roots which are  orthogonal  to $\b$ are trivial on $\a_M$.  Hence it  would  be a root of  $A_{M'_w}$  in the Lie algebra of the  intersection of $P'$ with $w.M$.  On the other hand, from the definition of $P_w$ one sees that:   $$w.M \cap P' \subset w.P_w .$$
Moreover one has:
$$w.M\cap P'= (w.M \cap M')(w.M \cap U' ).$$
From the definition of $P'_w$ one concludes:
$$w.M\cap P'\subset  P'_w.$$ Hence one sees that  $w.M\cap P'$ is a subset of $w.P_w \cap P'_w$ and there is no root having the required property. This proves  that $X_w$ is dense in $X$. Similarly one sees  that $X_w \cap X_u$ is dense in $X_u$. 

Let us denote   the $G$-submodule $\{ \lambda(w) (j_{M\cap w^{ -1}.P'}\circ v)\vert v\in  i^G_P E_\chi \} $ of $i^G_{w.P_w} ( wj_{M\cap w^{-1} .P'}E_\chi) $ by $V$. From  the surjectivity of $j_{M\cap w^{-1} .P'}$ and the surjectivity of $r_M$ from $i^G_P E_\chi $ to $E$,
 one concludes, by ``transport de structure'',  that 
$r_{M'\cap w.M} V$ = $ wj_{M\cap w^{-1} .P'}E_\chi$.  By Lemma \ref{QPr}, one concludes that for $\chi$ belonging to an open dense subset of $X_w$ one has:
$$r_{M'\cap w.M}(  A(P'_w,P_w, wj_{M\cap w^{-1} .M'}E_\chi )V ) =w j_{M\cap w^{-1} .P'}E_\chi.$$
As $r_{M'\cap w.M}=r_{M'\cap w.M}\circ r_{M'} $, one concludes that for $ \chi$ belonging to this open dense subset of  $X_w$, the image of $ \alpha_{\chi, w}$, $\alpha_{\chi, w} (i^G_PE_\chi)$ satisfies:
$$ r_{M'\cap w.M} \alpha_{\chi, w} (i^G_PE_\chi) = w j_{M\cap w^{-1}. P'}E_\chi.$$ 
Then from Lemma \ref{V=}, one deduces that for $ \chi$ belonging to an open dense subset of $X_w$,   the image of $\alpha_{\chi, w}$ is equal to $ i^{M'}_{M'\cap w.P}( wj_{M\cap w^{-1}.P'}E_\chi)$. 
Hence the image of $ \alpha$   admits  $ V_{\chi,w} = i^{M'}_{M'\cap w.P}( wj_{M\cap w^{-1}.P'}E_\chi)$ as a quotient. 
The fact that $\alpha$ goes through the quotient to 
$j_{P '}( i^G_PE_\chi)$  follows   from the fact that, in the definition of $\alpha_{\chi, w}$,  all maps are $G$-morphisms except $r_{M'}$ which is an $M'$-module map. This  proves (i). 

(ii) Using (i) and  Lemma \ref{ww}, one sees that for $\chi$ in a dense open subset of $X$,  $O$, whose intersection with $X_u$ is dense in $X_u$,  $\alpha_{\chi, w}$ is surjective for  every $w\in W(M'\backslash G/M)$ and that the various   $V_{\chi,w}$ have disjoint sets of Bernstein's parameters. This implies (cf. (\ref{splitsum})) that the image is equal to the direct sum of the $V_{\chi,w}$. So $ \alpha $ is a surjective $M'$-module map from $j_{P '} (i^G_PE_\chi)$ to  $\oplus_{w \in W(M' \backslash G/M )} V_{\chi,w}$. On the other hand, by  Proposition \ref{ww},  $j_{P '} (i^G_PE_\chi)$ is an $M'$-module isomorphic   to $\oplus_{w \in W(M' \backslash G/M} V_{\chi,w}$. By looking to the length of modules, one concludes from this that $ \aa$ is bijective.
\end{proof}
\noindent Let $P$ and $Q$  be two parabolic subgroups of $G$, with common Levi subgroup $M$. Let $(\delta , E)$ be a smooth representation of finite length of $M$  such that  the operators $ A(Q,P, \delta)$ and $ A(P, Q, \cc{\delta} )$ are well defined.  Then the restriction of the transposed operator of $ A(Q,P, \delta)$, $ ^t A(Q,P, \delta)$, to the space of  smooth vectors  intertwines $(i^G_Q \delta  )\,\cc{} $ with $ (i^G_P \delta  )\,\cc{} $. Using the isomorphism of 
$i^G_P \cc{E}$ with $(i^G_P E )\,\cc{} $ and $i^G_Q\cc{E}$ with $(i^G_Q E )\,\cc{} $ (cf. (\ref{ccigp})), the restriction of $ ^t A(Q,P, \delta)$ to the space of smooth vectors defines an intertwining operator between $ i^G_Q\cc{E}$ and $ i^G_P\cc{E}$ which, by (\ref{tA}),  is equal to $ A(P, Q, \cc{\delta})$. \\
\begin{prop}\label{betaw} One keeps the notations of  the preceding proposition.
If $ w \in  W(M'\backslash G/ M)$, let us define two parabolic subgroups of $G$   by : $$\tilde{P} _w = (M\cap w^{-1}.P'^-) U\subset P , \> \tilde{P} '_w= (M' \cap w.P)U'^-\subset P'^- . \index{$\tilde{P} _w, \tilde{P} '_w$} $$  
There exists a dense open subset $O'$ of $X$, whose intersection with $X_u$ is dense in $X_u$,  such that:\\
(i) For $ \chi \in O'$  and $ w \in W(M'\backslash G/ M)$   and $\cc{v}\in (i^G_P E_\chi)\,\cc{}$:  $$\beta_{\chi,w} (\cc{v}):= \cc{r}_{M'} \circ\>  ^tA(w.  \tilde{P}_w,  \tilde{P}'_w, w. j_{M\cap w^{-1}.P'} \delta_\chi) \circ  \lambda (w)(\cc{j} _{M\cap w^{-1}.P'^-} \circ \cc{v} ) \index{$\beta_{\chi,w}$} $$  is  a well defined element of $(V_{\chi,w})\,\cc{}$ where $V_{\chi,w}:= i^{M'}_{M'\cap w.P} (wj_{M\cap w^{-1}.P'} E_\chi)$. Moreover  the map $\beta_{\chi,w}$  goes through the quotient   to  a surjective  morphism of $M'$-modules from $j_{P'^-} ((i^G_P E_\chi)\,\cc{}\,) $ to $(V_{\chi,w})\,\cc{}$  that we will denote in the same way. \\
(ii) For $ \chi \in O'$, the map $$\beta_\chi: j_{P'^-}(( i^G_P E_\chi)\,\cc\, ) \to \oplus_{w \in  W(M'\backslash G/ M)} (V_{\chi,w})\,\cc{} \index{$\beta_\chi$} $$ whose components are the $ \beta_{\chi,w} $ 
is an isomorphism  of $M'$-modules.
 \end{prop}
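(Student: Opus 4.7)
The plan is to derive Proposition \ref{betaw} from Proposition \ref{gb} by transposition via the Second adjointness Theorem (\ref{becas}). There are two tasks: (a) check that each arrow in the composition defining $\beta_{\chi,w}$ is well defined on a dense open subset $O'$ of $X$, with $O'\cap X_u$ dense in $X_u$; and (b) identify $\beta_\chi$, via pairings, with the transpose of $\alpha_\chi^{-1}$ from Proposition \ref{gb}, which simultaneously yields factorization through $j_{P^{'-}}$ and bijectivity.

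For well-definedness, I trace through the chain of spaces. Starting from $\cc{v}\in(i^G_PE_\chi)\cc{}$, the operation $\cc{j}_{M\cap w^{-1}.P^{'-}}\circ$ of Subsection \ref{ccjq} lands in $(i^G_{\tilde{P}_w}(j_{M\cap w^{-1}.P'}E_\chi))\cc{}$ (the outer unipotent $U$ of $\tilde{P}_w$ is inherited from $P$, while the inner parabolic $M\cap w^{-1}.P^{'-}$ is forced by the Second adjointness identification $(\cc{E})_{M\cap w^{-1}.P^{'-}}\simeq(j_{M\cap w^{-1}.P'}E)\cc{}$). Left translation $\lambda(w)$ moves this to $(i^G_{w.\tilde{P}_w}(wj_{M\cap w^{-1}.P'}E_\chi))\cc{}$. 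The parabolic subgroups $w.\tilde{P}_w$ and $\tilde{P}'_w$ of $G$ share the common Levi $M'\cap w.M$, so by Lemma \ref{hoinv}, $A(\tilde{P}'_w,w.\tilde{P}_w,wj_{M\cap w^{-1}.P'}\delta_\chi)$ is holomorphic and invertible on a $(\tilde{P}'_w,w.\tilde{P}_w)$-subset of $X(M'\cap w.M)$; the transpose then maps into $(i^G_{\tilde{P}'_w}(wj_{M\cap w^{-1}.P'}E_\chi))\cc{}$. Since $M'\cap U'^-=\{1\}$, induction in stages gives $i^G_{\tilde{P}'_w}=i^G_{P^{'-}}\circ i^{M'}_{M'\cap w.P}$, identifying this last space with $(i^G_{P^{'-}}V_{\chi,w})\cc{}$, and Proposition \ref{defrM}(i) lets us apply $\cc{r}_{M'}$ to land in $(V_{\chi,w})\cc{}$. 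Taking $O'$ to be the intersection of the $O$ from Proposition \ref{gb} with the pullback to $X$ of all the $(\tilde{P}'_w,w.\tilde{P}_w)$-subsets guaranteed by Lemma \ref{hoinv}, we obtain the required density of $O'$ and of $O'\cap X_u$.

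The factorization and the bijectivity will both follow from the identity
\[
\langle\beta_{\chi,w}(\cc{v}),u\rangle=\langle j_{P^{'-}}(\cc{v}),\alpha_\chi^{-1}(u)\rangle_{P'},\quad u\in V_{\chi,w},\ \cc{v}\in(i^G_PE_\chi)\cc{},
\]
where $\alpha_\chi^{-1}(u)\in j_{P'}(i^G_PE_\chi)$ is obtained by viewing $u$ as the $w$-component and inverting the isomorphism of Proposition \ref{gb}, and $\langle\cdot,\cdot\rangle_{P'}$ is the Second adjointness pairing. To prove this identity I will transpose each block of the composition: the equality (\ref{j1}) identifies $\cc{j}_{M\cap w^{-1}.P^{'-}}\circ$ at the level of values at $1$ with $j_{M\cap w^{-1}.P'}$ on the dual, translating under the Second adjointness pairing to the dual of composition with $j_{M\cap w^{-1}.P'}$; $\lambda(w)$ is self-transpose under the natural $G$-pairing; $^tA$ is by definition the transpose of $A(w.\tilde{P}_w,\tilde{P}'_w,\cdot)$, whose restriction to smooth vectors, by (\ref{tA}), is $A(\tilde{P}'_w,w.\tilde{P}_w,\cc{\cdot})$ on smooth duals, exactly the ``dual'' of the intertwining step in $\alpha_{\chi,w}$ once one passes to opposite parabolics; and $\cc{r}_{M'}$ is the transpose (via Proposition \ref{defrM}) of the canonical inclusion $V_{\chi,w}\hookrightarrow i^G_{P^{'-}}V_{\chi,w}$ evaluated at $1$, which composed with induction in stages is dual to $r_{M'}$. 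Once the identity is established, $\beta_{\chi,w}$ manifestly depends only on $j_{P^{'-}}\cc{v}$, giving the factorization in (i); and $\beta_\chi$ becomes, via Second adjointness, the transpose of the isomorphism $\alpha_\chi^{-1}$, hence itself an isomorphism, proving (ii).

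The main obstacle is the block-by-block transposition, and in particular verifying that the swap of unipotent radicals between the pairs $(P'_w,w.P_w)$ used in Proposition \ref{gb} and $(\tilde{P}'_w,w.\tilde{P}_w)$ here is exactly what (\ref{tA}) and Second adjointness prescribe (passing to opposite parabolics being the mechanism by which smooth duals of Jacquet modules and of induced representations interact). Once this bookkeeping is carried out, no further argument is needed beyond what Proposition \ref{gb} and Lemma \ref{hoinv} already provide.
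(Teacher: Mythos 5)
Your well-definedness discussion (tracing the spaces, invoking Lemma \ref{hoinv} for the pair $(\tilde{P}'_w, w.\tilde{P}_w)$ with common Levi $M'\cap w.M$, and intersecting the resulting dense sets) is fine and consistent with what the paper intends. But the core of your argument — the identity
$\langle\beta_{\chi,w}(\cc{v}),u\rangle=\langle j_{P'^-}(\cc{v}),\alpha_\chi^{-1}(u)\rangle_{P'}$,
from which you deduce both the factorization through $j_{P'^-}$ and the bijectivity — is a genuine gap. That identity is precisely the assertion $\gamma_\chi=\alpha_\chi$, where $\gamma_\chi$ is the map the paper defines in (\ref{gamma}) as the transpose-inverse of $\beta_\chi$ under the second adjointness pairing. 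The paper does \emph{not} obtain this by transposition: it is stated as a separate lemma much later (just before Theorem \ref{cteis}), it is proved only for $w\in W(M'\vert G\vert M)_\si$ (i.e. $w.M\subset M'$), and its proof is declared ``identical to the proof of Proposition V.1.1 in [Wal]'' — a substantial asymptotic computation of the Casselman pairing for induced representations, preceded by a Schur-lemma step that a priori only yields $\gamma_{\chi,w}=\gamma(P,w,\chi)\alpha_{\chi,w}$ up to a nonzero scalar. Your ``block-by-block transposition'' cannot produce this for free: the pairing $\langle\cdot,\cdot\rangle_{P'}$ is characterized by an asymptotic limit formula (\ref{becas}), not by an algebraic formula against which one can transpose, and the intertwining operators occurring in $\alpha_{\chi,w}$ and $\beta_{\chi,w}$ are attached to genuinely different pairs of parabolic subgroups — $(P'_w, w.P_w)$ built from $U'$ and $M\cap w^{-1}.P'$, versus $(\tilde{P}'_w, w.\tilde{P}_w)$ built from $U'^-$ and $M\cap w^{-1}.P'^{-}$ — and relating them across the pairing is exactly the content of Waldspurger's computation, not bookkeeping.

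The paper's actual proof of Proposition \ref{betaw} avoids this entirely: one identifies $(i^G_PE_\chi)\,\cc{}$ with $i^G_P\cc{E}_\chi$ and the Jacquet modules $(\cc{E}_\chi)_{M\cap w^{-1}.P'^-}$ with $(j_{M\cap w^{-1}.P'}E_\chi)\,\cc{}$ via second adjointness, and then simply re-runs the argument of Proposition \ref{gb} on the dual side with $P'^-$ in place of $P'$: surjectivity of each $\beta_{\chi,w}$ via Lemmas \ref{QPr} and \ref{V=}, disjointness of the Bernstein parameters of the $(V_{\chi,w})\,\cc{}$ via Proposition \ref{ww}, the factorization through $j_{P'^-}$ because every arrow except $\cc{r}_{M'}$ is a $G$-morphism, and bijectivity by comparing lengths using the basic geometric lemma applied to $j_{P'^-}(i^G_P\cc{E}_\chi)$. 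If you want to salvage your approach you would have to prove the duality identity by the asymptotic method of [Wal], Proposition V.1.1 — at which point you would be proving a stronger and harder statement than Proposition \ref{betaw} itself, and one that the paper only establishes in a restricted range of $w$.
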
 
 \begin{rem}
 Notice that in order to define $\cc{j} _{M\cap w^{-1}.P'^-} \circ $ we use the normalization of measures relative to $(A,K)$ (cf. (\ref{AK})). For the intertwining integrals we use $(A',K')$. When it will be needed, we will make a particular choice of $K' $.
 \end{rem}  
 \begin{proof}
 The   fact that $\beta_{\chi,w}(\cc{v})$ is a well defined element of $(V_{\chi,w})\,\cc{}$  follows from the definitions. 
 The rest of the proof is similar to the proof of the preceding proposition, using the isomorphism of the smooth dual of a parabolically  induced representation with the  parabolically induced representation of the smooth dual of the inducing representation (cf. (\ref{ccigp})). 
  \end{proof}\subsection{Generic Basic Geometric Lemma and $H$-forms}
  We come back  to our assumption that the characteristic  of $\F$ is different from 2.
 Let us keep the notations of the preceding proposition.
Let $ \chi \in O'$.
We set $ V= i^G_PE_\chi$.
The Second Adjointness Theorem   shows that $(i^G_P E_\chi)_{P'}$ is canonically   isomorphic to  $((( i^G_P E_\chi)\,\cc{} )_{P'^-})\,\cc{}$.
From the preceding proposition,  the isomorphism $\beta_\chi$,   determines  an isomorphism \index{$ \gamma_\chi$} $$ \gamma_\chi: j_{P'}(i^G_P E_\chi) \to V_{P',1}:= \oplus _{w\in W(M'\backslash G /M)}  V_{\chi,w}.$$
 We recall that
$\langle., .  \rangle _{P '} $ is the canonical pairing between $ V_{P'}$  and $ (\cc{V})_{P'^-} $.
In other words, the isomorphism $ \gamma_\chi$ is  characterized by 
\beq \label{gamma} \langle  \cc{v}_{P'^-}, v_{P '}    \rangle  _{P'}= \langle  \beta_\chi(\cc{v}), \gamma_\chi(v) \rangle , v \in V, \cc{v} \in \cc{V},  \eeq 
where, in the second member of the equality, the pairing is the natural pairing between  $ (V_{P',1})\,\cc{} $ and $V_{P',1}$. 
We denote  the composition of $ \gamma_\chi$ with the projection onto $V_{\chi,w}$ by $\gamma_{\chi, w} $.
\begin{theo} \label{xipw}
We keep the notations of the preceding proposition. We assume moreover that $P$ and $P'$ are $\si$-parabolic subgroups of $G$. Let $ \chi \in O'$ and let $ \xi$ be an $H$-form on $V:= i^G_PE_\chi$.
We define:
$$ \xi_{P ^{'-} }:=  j_{P'^{-}} \xi. $$
We will denote the components of the  linear form $\xi^{1}: = \xi_{P'^-}\circ \gamma_\chi^{-1} $  on $V_{P',1} =\oplus _{w\in W(M'\backslash G /M)}  V_{\chi,w}$  
 by $ \xi_{P'^-,w}\in( V_{\chi,w})' $, \index{$\xi_{P'^-,w}$} where $V_{\chi,w}:= i^{M'}_{M'\cap w.P} (wj_{M\cap w^{-1}.P'} E_\chi). $\\ 
 Then one has:
$$ \xi_{P'^-,w}=  \cc{r}_{M'} \circ ^t A(w.{\tilde P}_w, \tilde{P'}_w , w. j_{M\cap w^{-1}.P'} \delta_\chi)\circ  \lambda(w)(\cc{j}_{M\cap w^{-1}.P'^-} \circ \xi).$$
\end{theo}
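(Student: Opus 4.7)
The strategy is to reduce to the smooth-dual identity of Proposition \ref{betaw} via the idempotents $e_J$, and then propagate through the composition defining $\beta_{\chi,w}$. Pick $J$ a compact open subgroup of $K$ small enough to be $\sigma$-factorizable simultaneously with respect to each of the finitely many pairs of opposite $\sigma$-parabolic subgroups of $G$ entering the construction, namely $(P,P^-)$, $(P',P'^-)$, and each $(\tilde P'_w,\sigma(\tilde P'_w))$; such a $J$ exists by Proposition \ref{KT}. Applying (\ref{defj}) to $\xi$ on $V := i^G_P E_\chi$, for $v\in V_{P'}^{J_{M'}}$ one has
$$\langle \xi_{P'^-},v\rangle=\langle(e_J\xi)_{P'^-},v\rangle_{P'}.$$
Since $e_J\xi$ is smooth, Proposition \ref{betaw} combined with (\ref{gamma}) rewrites the right-hand side as $\sum_{w\in W(M'\backslash G/M)}\langle\beta_{\chi,w}(e_J\xi),\gamma_{\chi,w}(v)\rangle$, and the $w$-components separate because the $V_{\chi,w}$ have pairwise disjoint Bernstein parameters by Proposition \ref{ww}.

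Next I would verify step by step that each constituent of $\beta_{\chi,w}$ extends to $H$-forms and commutes with $e_J$ in an appropriate sense. Proposition \ref{defccj}(iii) gives $\check j_{M\cap w^{-1}.P'^-}\circ(e_J\xi)=e_J(\check j_{M\cap w^{-1}.P'^-}\circ\xi)$; the left translation $\lambda(w)$ commutes with $e_J$ because $\lambda$ and $\rho$ commute, and preserves $H$-invariance by a short formal computation; the transposed intertwining operator $^tA$ commutes with $e_J$ on linear forms as a consequence of the $G$-equivariance of $A$ together with the identity $e_J={}^te_J$; finally, applying Proposition \ref{defrM}(i) to the $\sigma$-parabolic $P'^-$ with Levi $M'$ yields $\check r_{M'}(e_J\,\cdot)=e_{J_{M'}}(\check r_{M'}\,\cdot)$. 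Composing these identities gives
$$\beta_{\chi,w}(e_J\xi)=e_{J_{M'}}\beta_{\chi,w}^{\xi},\qquad \beta_{\chi,w}^\xi:=\check r_{M'}\circ{}^tA\circ\lambda(w)\circ\check j_{M\cap w^{-1}.P'^-}(\xi).$$

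To conclude, since $\gamma_\chi$ is $M'$-equivariant, the vector $\gamma_{\chi,w}(v)$ is $J_{M'}$-invariant whenever $v\in V_{P'}^{J_{M'}}$, so $\langle e_{J_{M'}}\beta_{\chi,w}^\xi,\gamma_{\chi,w}(v)\rangle=\langle\beta_{\chi,w}^\xi,\gamma_{\chi,w}(v)\rangle$. Combining with the first paragraph and letting $J$ shrink through a basis of neighborhoods of $1$ (which exhausts the smooth module $V_{P'}$) yields the identity $\xi_{P'^-,w}=\beta_{\chi,w}^\xi$ on all of $V_{\chi,w}$, which is the formula of the theorem. The main obstacle I anticipate is the precise verification of each commutation with $e_J$, particularly for $^tA$, whose defining integral does not involve a $\sigma$-stable unipotent radical: the commutation here is a formal consequence of the $G$-equivariance of $A$ rather than of any $\sigma$-symmetry, and therefore remains valid on arbitrary linear forms, so that all four constituents of $\beta_{\chi,w}$ can be passed through $e_J$ independently of the $\sigma$-geometry of the intermediate parabolics.
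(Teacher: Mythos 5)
Your proposal is correct and follows essentially the same route as the paper: reduce to the smooth linear form $e_J\xi$ via (\ref{defj}) for a $J$ with the appropriate $\si$-factorizations, apply Proposition \ref{betaw} together with (\ref{gamma}), establish $\beta_{\chi,w}(e_J\xi)=e_{J_{M'}}\beta^\xi_{\chi,w}$ by pushing $e_J$ through the four constituents (Proposition \ref{defccj}(iii), commutation of $\lambda(w)$ and of $^tA$ with the right action, and Proposition \ref{defrM}(i)), and let $J$ shrink. Your explicit justification of the commutation of $^tA$ with $e_J$ as a formal consequence of $G$-equivariance is exactly the content the paper compresses into ``one sees easily''.
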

\begin{proof}Let $J$ be a compact open subgroup  of $G$ with   a $\si$-factorization with respect to $(P', P'^-)$. Let $ v$ be an element of $ V^J$. Let us compute $\langle \xi_{P '^-}, v_{P'} \rangle  $.
First one has, from the definition of $ \gamma_\chi$ and $\xi^1$:
\beq  \label{xixi1} \langle \xi_{P '^-}, v_{P'} \rangle =  \langle  \xi^1,  \gamma_\chi( v_{P '}) \rangle . \eeq 
From the definition of $\xi_{P '^-}$  and  of the $\si$-factorization  (cf. (\ref{defj}), Proposition \ref{KT}), one has: 
$$\langle\xi_{P '^-}, v_{P'} \rangle  = \langle (e_{J} \xi)_{{P'}^-}, v_{P'} \rangle  _{P'}.$$ 
From (\ref{gamma}), one sees:
 $$\langle \xi_{P '^-}, v_{P'} \rangle = \langle  \beta_\chi (e_J \xi), \gamma_\chi(v_{P'}) \rangle .$$
 
 Let us denote  the element of  the dual of  $V_{P ', 1}$ whose components are 
 $$\cc{r}_{M'} \circ ^t A(w.{\tilde P}_w, \tilde{P'}_w , w. j_{M\cap w^{-1}.P'} \delta_\chi)\circ  \lambda (w)\circ \cc{j}_{M\cap w^{-1}.P'^-} \circ \xi,$$ 
 by $ \xi^2$.
 Using Proposition \ref{defccj}, Proposition \ref{defrM} (i), and the definition of $\beta_\chi$ in Proposition \ref{betaw},  one sees easily that
 $$\beta_\chi(e_J \xi) = e_{J_{M '}}  \xi^2 .$$
 Hence we get  a second expression for $\langle \xi_{P '^-}, v_{P'} \rangle  $:
 $$\langle \xi_{P '^-}, v_{P '} \rangle = \langle  e_{J_{M'}} \xi^2, \gamma_\chi( v_{P '})   \rangle  , v \in V^J.$$
 Together with (\ref{xixi1}), this implies that $\xi^1$ and $\xi^2$ are equal on $V_{P',1}^{J_{M'}}$. As there are arbitrary small open compact 
subgroups of $G$   with a $\si$-factorization with respect to $(P', P'^-)$, this implies that $\xi^1 = \xi^2$.
This finishes the proof of the theorem. 
\end{proof} 
\section{Two key Lemmas and some of their consequences}
\setcounter{equation}{0}
\subsection{Families of distributions on $PH$, where $P$ is a parabolic subgroup of $G$ }
We keep the notations of the preceding subsection.
Let $O$ be a nonempty   open subset of   $X$.
A map $\chi\mapsto\xi_\chi \in (i^G_P E_\chi)'$ defined on $ O$ is said to be weakly holomorphic if for all $v \in i^K_{K \cap P }E$,  the map 
$\chi \mapsto \langle  \xi_\chi, v_\chi \rangle $ is holomorphic on  $O$. 

We will denote  the map  $C_c^{\infty} (G) \otimes  E \to i^G_PE_\chi $ denoted by  $M_{ \delta_\chi,P} $   in equation  (\ref{Mdel}) by $M_\chi$ and we set $\tilde{\xi}_\chi:= \xi_\chi \circ M_ \chi$.
Let us prove:
\ber \label{finC}
Let  $ f \in  C_c^{\infty} (G) \otimes  E $. Then  $\chi \mapsto \langle  \tilde{\xi}_\chi, f \rangle $  is holomorphic on $O$, in other words $ \chi \mapsto \tilde{\xi}_\chi$ is  a weakly holomorphic family of $E$-distributions on 
$G$. 
\eer 
Let $ J$ be a compact open subgroup of $K$ such that $f$ is right and left invariant under $J$. Then 
$v(\chi): =(M_{\chi} f)_{\vert K} $ has its  values in the finite dimensional space $  (i^K_{K \cap P }E)^{J}$. To see that $ \chi \to v(\chi)$ is holomorphic, it is enough to check that 
for every $k \in K$, $\chi \to (v(\chi))(k)$ is holomorphic. By using left translates by elements of $K$, one can reduce to $k=1$. But $ f \in  C_c^{\infty} (G) \otimes  E$. Hence its restriction to 
$P$ is invariant under a compact open subgroup $J'$ of $P$ and is supported by a finite number of right cosets of $J'$ in $P$, $x_i J'$. Hence, using the definition of $M_\chi$  and the fact that  the unramified characters are trivial
 on compact open subgroups, one sees that: 
    $$(v(\chi))(1)= \sum_i  \chi (x_i^{-1}) (\int _{J'} \delta_P^{1/2} (x_i^{-1} ) \delta(( x_i p)^{-1}) f(x_i) \>  d_r p).$$
Hence $ \chi \mapsto v(\chi)$ is holomorphic. Then (\ref{finC}) follows from the finite dimension of $(i^K_{K \cap P }E)^{J}$ and from our hypothesis on the family $(\xi_\chi)$.

 Let  $ (\xi_\chi)$ be as above and let us  assume   that every $\xi_\chi $ has a zero restriction to  the complement of a closed set $F$ of $X$, which is left $P$-invariant. 
Then  (cf. Section  \ref{covd}) $ \tilde{\xi}_\chi $ induces on $F$ an $E$-distribution denoted by  $ \tilde{\xi}_{F, \chi }\in (C_c^{\infty} (F) \otimes E)'$. 
If  $f \in C_c^{\infty} (F) \otimes E$, let $ f_1 \in C_c^{\infty} (G) \otimes E$ be such that its restriction to $F$ is equal to $f$. Then  \beq \label{xiF} \tilde{\xi} _{F,\chi}  (f)=  \tilde{\xi}_\chi( f_1).\eeq  From this one concludes that:
$ \chi\to \tilde{\xi}_{F, \chi}$ is a weakly holomorphic family of $E$-distributions on $F$.
Similarly, if $\Omega$ is a left $P$-invariant open set of $F$, the restriction of  $ \tilde{\xi}_{F, \chi}$ to $ \Omega$, $( \tilde{\xi}_{F, \chi})_{\vert \Omega}$,  is a weakly holomorphic family of $E$-distributions on $\Omega$. 

 Let us  make two observations.
 By [HH], Lemma 2.4 one has:
 \ber \label{HH24} Each  parabolic subgroup of $G$ contains a $\si$-stable maximal split subgroup of $G$. \eer 
 One has also: 
 \ber  \label{a0am} The morphism of Lie groups from $X(M)$ to $X(A_0)$ given by the restriction has finite kernel. Hence one may view the Lie algebra $ (\a_M)'_\C $ of $X(M)$ as  a subspace of the  Lie algebra $( \a_0)'_\C$ of $X(A_0)$ ( cf also. (\ref{iden})). \eer 
 
 The following Lemma is one of the two  key Lemmas in the article. 
\begin{lem}\label{xiPH}Let $P$ be a parabolic subgroup of $G$. Let $A_0$ be  a $\si$-stable maximal split torus of $G$ contained in $P$. Let $M$ be the Levi subgroup of $P$ which contains $A_0$ and  let $U$ be the unipotent radical of $P$.

Let   $(\delta,E)$ be a finite length smooth representation of  $M$ and let $X$ be a complex subtorus of  $X(M)$.  Let  $O$ be a nonempty open subset of $X$. \\ Let $\chi \mapsto\xi_\chi $ be  a weakly holomorphic family, depending  on $\chi \in O$, of $E$-distributions on $PH$. We assume moreover that the family $(\xi_\chi)$ is non identically zero and that for every $\chi\in O$, $ \xi_\chi$ is $H$-invariant on the right and $\delta_\chi$-covariant under $P$ (cf. Section \ref{covd}). \\
 (i) The elements of the  Lie algebra $\b$ of $X$,  viewed  as  a subspace of $( \a_0)'_\C$  as in (\ref{a0am}), are $\si$-antiinvariant. \\
(ii) Moreover if $\b$ contains a strictly $P$-dominant element, $\nu$ (i.e. such that $( \nu, \aa)>0$ for every  root, $\aa$,  of $A_M$ in the Lie algebra of $P$),  then $P$ is a $\si$-parabolic subgroup of $G$. 
\end{lem}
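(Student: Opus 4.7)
The plan is to convert the joint $P$-covariance and right $H$-invariance of $\xi_\chi$ on $PH$ into an invariance of an evaluation $\eta_\chi \in E'$ under $A_0 \cap H$, use the finite length (hence admissibility) of $E$ to pin $\chi\vert_{A_0\cap H}$ down to one of finitely many algebraic possibilities, propagate the resulting equality to all of $X$ by holomorphy, differentiate to deduce (i), and then read off (ii) from the $P_\nu$ construction of (\ref{p-l}) and (\ref{pl}).

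More precisely, identifying $PH$ with $(P\times H)/(P\cap H)_{\mathrm{diag}}$ and mimicking (\ref{xig}), the $E$-distribution $\tilde\xi_\chi$ is determined on $PH$ by a ``value at $1$'' $\eta_\chi \in E'$ via $f_{\xi_\chi}(ph)=\delta_P^{1/2}(p)(\delta_\chi)'(p)\eta_\chi$ for $p\in P$, $h\in H$. Well-definedness across the ambiguity $g=ph$ forces $\delta_P^{1/2}(q)(\delta_\chi)'(q)\eta_\chi = \eta_\chi$ for every $q\in P\cap H$, which for $q=a\in A_0\cap H \subset A_0^\si$ (recall $A_0$ is $\si$-stable) reduces, as $\chi(a)$ and $\delta_P(a)$ are scalars, to $\delta_P^{1/2}(a)\chi(a)^{-1}\delta'(a)\eta_\chi=\eta_\chi$. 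By (\ref{finC}) and the non-identical vanishing hypothesis, $\chi\mapsto\eta_\chi$ is weakly holomorphic on $O$ and nonzero on an open dense $O'\subset O$. Since $(\delta,E)$ has finite length, the $A_0$-action admits a finite generalized weight decomposition $E=\bigoplus_{j=1}^r E_{\mu_j}$, whence $E'=\prod_j (E_{\mu_j})^*$ with $\delta'(a)$ acting on the $j$-th factor with generalized eigenvalue $\mu_j(a)^{-1}$. Projecting the invariance equation componentwise and pairing a nonzero projection $\eta_{\chi,j}$ against a genuine $\mu_j(a)$-eigenvector in $E_{\mu_j}$ forces the scalar identity $\chi(a)=\delta_P^{1/2}(a)\mu_j(a)^{-1}$ for some index $j$ depending a priori on $(\chi,a)$.

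Since the index set is finite, a pigeonhole argument yields a nonempty open $O''\subset O'$ and a single $j_0$ for which $\chi(a)=\delta_P^{1/2}(a)\mu_{j_0}(a)^{-1}$ holds for every $\chi\in O''$ and $a\in A_0\cap H$. The right-hand side being independent of $\chi$, the holomorphic character $\chi\mapsto\chi(a)$ of the connected complex torus $X$ is constant on $O''$, hence by the identity principle on all of $X$; evaluation at $\chi=1\in X$ gives $\chi(a)=1$ for all $\chi\in X$ and $a\in A_0\cap H$. Differentiating at $\chi=1$ along $\nu\in\b$ and using (\ref{cnu}) produces $\langle\nu,H_0(a)\rangle=0$ for every $a\in A_0\cap H$. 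Since $\LL(A_0\cap H)$ spans $\a_0^\si$ over $\R$ (apply (\ref{45}) to the split component of the $\si$-fixed group; $A_0\cap H$ is a cocompact subgroup of $A_0^\si$), we conclude $\nu\vert_{\a_0^\si}=0$, i.e.\ $\nu\in(\a_0')^{-\si}_\C$, proving (i). For (ii), the description of $P_\nu$ and (\ref{pl}) show that strict $P$-dominance of $\nu\in\b$ forces $P=P_\nu$; by (i), $\si\nu=-\nu$, hence $\si(P)=P_{\si\nu}=P_{-\nu}$, which by (\ref{p-l}) is opposite to $P$, exhibiting $P$ as a $\si$-parabolic subgroup.

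The main obstacle is the algebraic-dual eigenvalue analysis: rigorously extracting the scalar identity $\chi(a)=\delta_P^{1/2}(a)\mu_j(a)^{-1}$ from the relation $\delta_P^{1/2}(a)\chi(a)^{-1}\delta'(a)\eta_\chi=\eta_\chi$ in the possibly very large non-smooth space $E'$, where ``generalized eigenvalue'' arguments require care. The cleanest route uses admissibility of $E$ to split $E'=\prod_j (E_{\mu_j})^*$ via the finite family of projectors onto the generalized $A_0$-weight summands of $E$, projects $\eta_\chi$ onto each factor, and tests each nonzero projection against a genuine (non-generalized) $\mu_j(a)$-eigenvector in $E_{\mu_j}$, which exists because $E_{\mu_j}$ is a nonzero smooth $A_0$-module carrying only the weight $\mu_j$. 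A secondary technical point, the full-rank claim for $\LL(A_0\cap H)$ in $\a_0^\si$, is a routine consequence of standard structure theory for $\si$-stable split tori and the definition of $H$ as the $\F$-points of an open $\F$-subgroup of the $\si$-fixed group.
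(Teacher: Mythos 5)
Your overall strategy --- reduce to the value $\eta_\chi\in E'$ via Lemma \ref{Tcov}, extract from the $P\cap H$-covariance a scalar identity for $\chi$ on a subgroup of $A_0\cap H$, propagate it by holomorphy and connectedness of $X$, then differentiate --- is the paper's strategy, and your treatment of (ii) is identical to the paper's. But there is a genuine gap at the step you yourself single out as the main obstacle: the claimed finite generalized $A_0$-weight decomposition $E=\bigoplus_{j=1}^r E_{\mu_j}$ of a finite length smooth representation of $M$. This is false whenever $A_0$ is not central in $M$. For instance, take $M=\mathrm{GL}_2(\F)$, $E$ a principal series realized as smooth functions on $\mathbb{P}^1$, and $A_0$ the diagonal torus: the characteristic function of a small open set avoiding the two $A_0$-fixed points has $A_0$-translates with pairwise disjoint supports, hence generates an infinite-dimensional $A_0$-submodule, so no finite weight decomposition exists and $\delta'(a)$ has no a priori finite spectrum on $E'$. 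Consequently the pigeonhole over a finite index set never gets started, and you cannot conclude that $\chi\mapsto\chi(a)$ takes finitely many values.

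The paper supplies exactly the missing ingredient before any eigenvalue count. It first proves (see (\ref{etain})) that $\eta_\chi$ is fixed by $\delta'(x)$ for all $x\in M\cap\si(U)$: one writes $x=yh$ with $y\in U\cap\si(P)$ and $h$ in a unipotent subgroup of $P\cap H$ using [BD], Proposition 2.1, and uses that positive and unramified characters are trivial on unipotent elements. Hence $\eta_\chi$ factors through the Jacquet module $E_{M\cap\si(P)}$, which is a \emph{finite length} $M\cap\si(M)$-module, and the test element $b=a\si(a)$ with $a\in A_M$ lies in the center of $M\cap\si(M)$; only on this quotient is the set of generalized eigenvalues of $\delta'(b)$ finite. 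This yields $\chi(a\si(a))=1$ on the connected $O$, hence $\nu((1+\si)X)=0$ for $X\in\a_M$ and $\nu\in\b$, which combined with the vanishing of $\nu$ on the orthogonal of $\a_M+\si(\a_M)$ gives (i). (Your variant, testing on all of $A_0\cap H$ and using that $\LL(A_0\cap H)$ spans $\a_0^\si$, would also work once a finiteness statement is available, but the relevant finiteness is only accessible after passing to the Jacquet module.) As written, your eigenvalue analysis does not go through; inserting the reduction to $(E_{M\cap\si(P)})'$ is the repair.
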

 \begin{proof}
 There is no restriction to assume that  $O$ is connected and $ \xi_\chi $ is never equal to zero. Also, by translation by an element of $O$, one can  assume that $O$ contains 1.
 The group $P\times H$ acts on $PH$ by $$(p,h)  g= pgh^{-1}, g \in PH, p\in P, h\in H .$$
 Hence  $PH$ is a homogeneous space under $P\times H$ homeomorphic to $(P\times H) / Diag (P\cap H)$ by the map $(p, h)\to ph^{-1}$(cf. [BD] Lemma 3.1 (iii)).
 \\Let us denote  the trivial character of $H$ by $\varepsilon _H$.
 Let us define  the mean value operation $M_{P\cap H} $  which sends $C_c^\infty (P\times H)\otimes E $ to $C_c^\infty (PH)\otimes E $: 
 $$(M_{P\cap H} f)( ph):= \int_{P\cap H} f(px, x^{-1} h) \> d_lx, f\in C_c^\infty (P\times H)\otimes E,$$
  where $d_lx$ is a left invariant Haar measure on $P\cap H$. We define $\xi'_\chi$ by: $$\xi'_\chi :=\xi_\chi\circ M_{P\cap H}.$$   It is a weakly holomorphic family of $E$-distributions on $P\times H$ which are $( \delta_P^{1/2}\otimes \delta_\chi) \otimes \varepsilon_H$-left covariant under $P\times H$.  

 From  Lemma \ref{Tcov}, there exists $\eta_\chi \in  E'$ , such that  for all $e \in E$ and for all  compact open subgroup, $J$,  of $P$ which fixes $e$ under $ \delta$, one has :  $$\langle \xi'_\chi, f \rangle  = \int_{P\times H} \langle \eta_ \chi , \delta(p) f(p, h) e \rangle \> d_lp\> dh
 = vol(J) \langle \eta_\chi ,e \rangle , $$ 
 where 
 $ f = 1_J  \otimes f_1\otimes e \in C_c^\infty (P\times H)\otimes E$ and  $f_1$ is a smooth function on $ H$ with compact support and with integral equal to 1. 

As the family  $ (\xi'_\chi)$ is weakly holomorphic, this implies that:
  \ber  \label{etahol}$(\eta_\chi)$ is a weakly holomorphic family of linear forms on $E$.\eer
  Also from Lemma \ref{Tcov}, one deduces:\beq  \label{deltaeta} (\delta_\chi)' ( p) \eta_\chi = \delta_{P\cap H} ^{-1}Ê(p) \delta_{P} ( p) \eta_\chi , p \in P\cap H . \eeq
 Let  $a\in A_M\subset A_0$ and let $b$ be equal to $ a\si(a)\in A_0 \cap H\subset M\cap H$.
 Then, one has:  $$   (\delta_\chi) '(b)   \eta_\chi= \delta_{P\cap H} ^{-1}Ê(b) \delta_{P} ( b) \eta_\chi. $$ 
 But $ (\delta_\chi) ' (b)=  \chi^{-1} (b)\delta'(b) $ so that one has:
 \beq  \label{delt'} \delta'(b)   \eta_\chi= \delta_{P\cap H} ^{-1}Ê(b) \delta_{P} ( b)  \chi(b) \eta_\chi . \eeq
 Let us consider the parabolic subgroup $M\cap \si(P)$ of $ M$. Its Levi subgroup containing $ A_0$ is equal to $M \cap \si(M)$. 
 Let us prove that: \beq \label{etain} \eta_\chi \in (E _{M\cap \si(P)})' ,\eeq 
 where $E_{M\cap \si(P)}$ is the Jacquet module of $(\delta,E)$ with respect to the parabolic subgroup $M\cap \si(P)$ of $M$.
 
Let  $x\in M\cap \si(U)$. From [BD] Proposition 2.1 (iv), there exists  a unipotent subgroup $V'$ of $P\cap \si(P)$,    $h\in H \cap  V'$  and  $ y \in U\cap \si(P)$  such that  $x=yh$.
Then,  as $ \delta_\chi $ is trivial on $U$, one has:$$ (\delta_\chi)' ( x)\eta_\chi=  (\delta_\chi)' ( yh) \eta_\chi= (\delta_\chi)'  (h) \eta_\chi.$$
As $h \in V'\cap H$ and $V'$ is a unipotent subgroup of $P$, $h$ is an element of a union of compact subgroups of $P\cap H$.
 From the fact that  a continuous  positive character on a topological group is trivial on compact subgroups, one deduces:   
$$ \delta_{P\cap H} ^{-1}Ê(h) \delta_{P} ( h)=1. $$
Similarly, as $ \chi$ is unramified, one has  $$ \chi (h)=1. $$ 
Hence, from  (\ref{deltaeta}),  one gets : $$(\delta_\chi) '(h) \eta_\chi= \eta_\chi.$$
As $\chi$ is an unramified character of $ M$ and $x$ is an element of the unipotent subgroup of $M$, $ M \cap \si(U)$, one has $ \chi(x)=1$. From the previous discussion, one sees:
$$ \delta' (x) \eta_\chi= \eta_\chi, x \in M\cap \si(U),$$
so that one has: $$\eta_\chi \in ( E_\delta' )^{M\cap \si(U)}, $$
which proves  (\ref{etain}).

The Jacquet module $E_{M\cap \si(P)}$ being an $M\cap \si(M) $-module of finite length and $b$ being an element of the center of $M\cap \si(M)$, the number of generalized eigenvalues of $\delta'(b)$  on $(E_{M\cap\si( U)})'$ is finite. 
From (\ref{delt'}), one deduces  that the map  $\chi \mapsto  \chi(b)$ is constant on $O$, and equal to $1$ as $O$ is connected and contains $1$. In other words we have proved:
\beq \label{cas} \chi(a\si(a))= 1, \chi \in X, a\in A_M. \eeq 
\\As $X\subset X(M)$, one  views  an element $\nu$ of $\b$ as an element of $( \a_0)'_\C$, which vanishes  on $ \a^M_0$.  
One deduces from (\ref{cas}) that:
\beq \label{lxsi}\nu(X+\si(X))= 0, X \in \a_M,  \nu \in \b.  \eeq Hence the restriction of $\nu$ to $ \a_M + \si(\a_M)$ is $\si$-antiinvariant. 

Recall that we have choosen a scalar product on $\a_0$ which is invariant under the Weyl group of $(G, A_0)$ and by $\sigma$. Then  $ \nu \in \b \subset (\a'_M)_\C$ is zero on the orthogonal to $\a_M$, hence also on the orthogonal to $ \a_M + \si(\a_M)$. Hence $\nu$  is $\si$-antiinvariant.
This proves (i).

Now let us assume that $ \nu$ is an element of $ \b$ which is  strictly $P$-dominant. Then, with the notations of (\ref{p-l}), one has  $P=P_\nu$. One sees, from the antiinvariance of $\nu$ that $\si(P)= P_{-\nu}$ which is clearly opposite to $P$. Hence $P$ is a $\si$-parabolic subgroup of $G$. 
\end{proof}
\begin{lem}\label{xixP}
Let $P=MU$ be a parabolic subgroup of $G$ and let $(\delta,E)$ be a smooth representation of finite length of $M$. Let $X$ be a complex subtorus of $X(M)$ and let $O$ be a nonempty subset of $X$. Let $\chi \mapsto \xi_ \chi$ be a weakly  holomorphic family  of $ H$-forms on $i^G_PE_\chi$ defined for $\chi \in O$.  
Let $F$ be the union of the supports of the  $ \xi_\chi$, $\chi \in O$. As these supports are left invariant under $P$ and right invariant under $H$ and as there are only a finite number of $(P,H)$-double cosets, $F$ is closed.  We call $F$ the support of the family. 

 Let $A$ be a maximal split torus in $M$. Let $\Omega$ be a $(P,H)$-double coset of $G$ open in $F$.  Then one can choose $x\in \Omega$ such  that $A_x:=x^{-1}.A$ is  a $\si$-stable maximal split torus contained in $x^{-1}.P$. For such an $x$,  $x^{-1}.M$ is  the Levi subgroup of $x^{-1}.P$ which contains $ A_x$. The conjugation by  $x^{-1}$ induces a map $\chi \to x^{-1}\chi$ from $X$ to  the subtorus $x^{-1}.X$ of $X(x^{-1}.M)$. 
\\(i) Then the Lie algebra of $x^{-1}.X$ appears as a subspace of $ (\a_x)'_\C$ made of antinvariant elements by $\si$. \\ (ii)  Moreover if  $X$ contains  a strictly $P$-dominant element, $x^{-1}.P$ is a $ \si$-parabolic subgroup.\\(iii) With the assumption of (ii),
 one can choose $x$ such that $A_x$ is a maximally $\si$-split $\si$-invariant  maximal split torus in the $\si$-stable Levi subgroup of $x^{-1}.P$.
\end{lem}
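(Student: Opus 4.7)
The plan is to reduce (i) and (ii) to Lemma \ref{xiPH} by conjugating by a suitable $x\in\Omega$, and then to refine this choice for (iii).

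For the choice of $x$, I pick any $y\in\Omega$ and apply [HH] Lemma 2.4 (already used in Lemma \ref{xiPH}) to obtain a $\si$-stable maximal split torus $A'$ of $G$ contained in $y^{-1}.P$. Since all maximal split tori of $y^{-1}.P$ are $y^{-1}.P$-conjugate, I can write $A' = q^{-1}.(y^{-1}.A)$ with $q\in y^{-1}.P$, and set $x:=yq\in Py\subseteq\Omega$; then $A_x = x^{-1}.A = A'$ is $\si$-stable, and $x^{-1}.M$ is automatically the Levi of $x^{-1}.P$ containing $A_x$.

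For (i) and (ii), I use the $G$-equivariant isomorphism $\ll(x^{-1}): i^G_P E_\chi \to i^G_{x^{-1}.P}(x^{-1}E)_{x^{-1}\chi}$ to define $\xi'_\chi(v') := \xi_\chi(\ll(x)v')$. Since $\ll$ commutes with $\rho$, each $\xi'_\chi$ remains $H$-invariant and weak holomorphy is preserved; a short computation (via $\ll(x)\circ M_{x^{-1}\delta,\,x^{-1}.P,\,x^{-1}\chi} = M_{\delta,P,\chi}\circ\ll(x)$) gives $\tilde{\xi}'_\chi = \tilde{\xi}_\chi\circ\ll(x)$, so $\mathrm{Supp}(\tilde{\xi}'_\chi) = x^{-1}\mathrm{Supp}(\tilde{\xi}_\chi)\subseteq x^{-1}F$. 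Inside this closed set the coset $(x^{-1}.P)H = x^{-1}\Omega$ is open, and the restricted family (obtained by two successive uses of (\ref{xiF})) is a non-identically zero weakly holomorphic family of $x^{-1}E$-distributions on $(x^{-1}.P)H$. Lemma \ref{xiPH}, applied with $(x^{-1}.P, x^{-1}.M, A_x, x^{-1}.X)$ in place of $(P, M, A_0, X)$, directly yields (i); and for (ii), any strictly $P$-dominant element $\nu$ of the Lie algebra of $X$ transports to a strictly $(x^{-1}.P)$-dominant element of the Lie algebra of $x^{-1}.X$, so the second part of Lemma \ref{xiPH} forces $x^{-1}.P$ to be a $\si$-parabolic.

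For (iii), once (ii) holds, $L:=x^{-1}.P\cap\si(x^{-1}.P)$ is the $\si$-stable Levi of $x^{-1}.P$ and $A_x$ is a $\si$-stable maximal split torus of $L$. By standard Helminck--Wang theory for the symmetric pair $(L,\si)$, there is a $\si$-stable maximal split torus $A''$ of $L$ whose $\si$-split part is maximal, together with an element $q\in (L\cap H)^\circ\subseteq x^{-1}.P$ such that $A''=q^{-1}.A_x$; replacing $x$ by $xq\in Px\subseteq\Omega$ provides the required representative, with $(xq)^{-1}.A = A''$. The main obstacle lies in the middle step: the coset $(x^{-1}.P)H$ is in general open only in $x^{-1}F$ and not in $G$, so the transported family must be restricted in two stages and the compatibility of transport by $\ll(x)$ with this restriction procedure has to be verified carefully; the remaining ingredients (in particular the conjugacy of $\si$-stable maximally $\si$-split tori in $L$) are standard consequences of Helminck--Wang theory.
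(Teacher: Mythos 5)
Your proof is correct and follows essentially the same route as the paper: adjust $x$ within $Px\subset\Omega$ using [HH] Lemma~2.4 and the conjugacy of maximal split tori in a parabolic subgroup, restrict the induced $E$-distribution first to $F$ and then to the open subset $\Omega$, transport by $\ll(x^{-1})$, and apply Lemma~\ref{xiPH}; part (iii) is then obtained by a further conjugation inside $x^{-1}.P$. The only cosmetic point is that in (iii) you do not need the conjugating element $q$ to lie in $(L\cap H)^{\circ}$ -- any $q\in x^{-1}.P$ carrying $A_x$ to a maximally $\si$-split $\si$-stable maximal split torus of $L$ keeps the new representative $xq$ in $Px\subset\Omega$, which is all the argument requires.
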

\begin{proof}Let $x$ be an element of $\Omega$. 
First $x^{-1}.P$ contains a $\si$-invariant maximal split torus of $G$ (cf. [HH] Lemma 2.4). Two maximal split tori of a parabolic subgroup are conjugate by an element of this subgroup (see [BoTi], Proposition 4.7 and Theorem 4.21).  Hence changing $x$ into  $px$, for a suitable $p\in P$, one can assume that $A_x$ is $\si$-invariant.    
The restriction to $\Omega$ of the induced $E$-distribution $ \xi_{F, \chi}$ by $\xi_\chi$  on $F$ (see Section \ref{covd}) will be denoted  by $ \zeta_\chi$. Then, from what follows (\ref{xiF}),    $\ll({x^{-1}})\zeta_\chi  $ (cf. (\ref{deflx}) for the definition of $\ll(x)$) satisfies the hypothesis of the preceding lemma with $P$ changed in $x^{-1}.P$, $A_0$ in $A_x$ and $X$ in $x^{-1}.X$.  Then (i) and (ii)  are an immediate consequence  of the preceding lemma.
 
Let us prove (iii).   We are in the case where $x^{-1}.P$ is a $\si$-parabolic subgroup whose $\si$-stable Levi subgroup contains 
 a maximally $\si$-split $\si$-invariant  maximal split torus of $G$. This implies (iii). \end{proof}
 \subsection{Generic Basic Geometric Lemma for $H$-forms and the role of $\si$-parabolic subgroups }
\begin{prop} \label{mwpsi} Let  $P=MU$ (resp., $P'=M'U'$)  be a  $\si$-parabolic subgroup and let $A$ (resp., $A'$) be a maximally $\si$-split maximal split torus of $M$ (resp., $M'$). Let $A_\si$ (resp., $A'_\si$) be the maximal $\si$-split torus of $A$ (resp., $A'$).  We denote the  identity  component  of the set of  elements of $X(M)$ which are antiinvariant under $\si$ by $X:=X(M)_\si$.   Let $(\xi_\chi)$ be a weakly holomorphic family  of $H$-forms on $i^G_P E_\chi$  defined for $\chi$ in an open subset $O$ of $X$. With the notations of Theorem \ref{xipw}, let $w$ be an element of $W(M'\backslash G/M)$. Let us assume that  $\xi_{P'^-, w}\not=0$ where $\xi= {\xi}_{\chi_0}$ for some element $\chi_0$ of $O\cap O'$, where $O'$ is as in  Theorem \ref{xipw}. Then one may change our choice of $w$ in its class in  $P'\backslash G/P$ in such a way that:\\
(i)  One has  $A':=w.A$.\\
(ii) The group $w.P$ is a $\si$-parabolic subgroup of $G$ with $\si$-stable Levi subgroup $w.M$ and $M' \cap w.P$ is a $\si$-parabolic subgroup of $M'$.\\ 
(iii) The groups $P'_w$   and $w.P_w$ are  $\si$-parabolic subgroups of $G$.
\\(iv)  One has the equality  $w^{-1}. A'_\si= A_\si$.    
\\(v) The group $w^{-1} . P'$ (resp., $w^{-1}.P'^-$) is a $\si$-parabolic subgroup of $G$ with $\si$-stable Levi subgroup $w^{-1}. M'$ and    $M\cap w^{-1}.  P'$ (resp., $M\cap w^{-1}.  P'^-$)  is a $\si$-parabolic subgroup of $M$. \\
The groups  $P_w, P'_w, \tilde{P}_w, \tilde{P}'_w$  are $\si$-parabolic subgroups of $G$. \end{prop}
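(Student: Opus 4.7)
The strategy is to apply the first key lemma (Lemma \ref{xixP}) inside the Levi subgroup $M'$ to the family $\chi\mapsto\xi_{\chi,P'^-,w}$ of $(M'\cap H)$-fixed linear forms on
$V_{\chi,w}=i^{M'}_{M'\cap w.P}(w\,j_{M\cap w^{-1}.P'}E_\chi)$, and then to adjust the representative so as to restore the normalization $w.A=A'$.

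First I would verify the hypotheses of Lemma~\ref{xixP}. Weak holomorphy on an open subset of $X\cap O'$ of the family $(\xi_{\chi,P'^-,w})$ follows from the explicit formula of Theorem~\ref{xipw} together with the rationality of the intertwining integrals (cf.~(\ref{apq})); the family is non-identically zero by hypothesis. The inducing character varies in the subtorus $X'\subset X(M'\cap w.M)$ obtained from $X=X(M)_\si$ by transport by $w$ and restriction to $M'\cap w.M$, whose Lie algebra is the image of $\a_M^{-\si}$. Since $P$ is a $\si$-parabolic one has $\si(\rho_P)=\rho_{P^-}=-\rho_P$, so $\rho_P\in\a_M^{-\si}$; and $w\rho_P$ is strictly $(M'\cap w.P)$-dominant in $\a_{M'\cap w.M}$ because every positive root of $A_{M'\cap w.M}$ on the Lie algebra of $M'\cap w.P$ restricts from a positive root of $A_{w.M}$ on the Lie algebra of $w.P$, on which $w\rho_P$ is strictly positive.

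Then Lemma~\ref{xixP}~(ii)--(iii) applied in $M'$ supplies an element $y\in M'$, lying in an open $(M'\cap w.P,M'\cap H)$-double coset of the support, such that $y^{-1}.(M'\cap w.P)$ is a $\si$-parabolic of $M'$ with $\si$-stable Levi $y^{-1}.(M'\cap w.M)$, and such that $y^{-1}.A'$ is a maximally $\si$-split $\si$-stable maximal split torus of this Levi, hence of $M'$. Two such tori in $M'$ being $(M'\cap H)$-conjugate (a standard fact in the theory of symmetric reductive $p$-adic groups), I choose $h\in M'\cap H$ with $h(y^{-1}.A')h^{-1}=A'$, so that $hy^{-1}\in N_{M'}(A')$, and I set $w_{\mathrm{new}}:=(hy^{-1})w\in P'wP$. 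Then $w_{\mathrm{new}}.A=(hy^{-1}).A'=A'$, establishing~(i); and $M'\cap w_{\mathrm{new}}.P=h.(y^{-1}.(M'\cap w.P))$ is a $\si$-parabolic of $M'$, since conjugation by $h\in H$ commutes with $\si$, yielding the $M'$-part of~(ii).

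The $G$-part of~(ii), that $w_{\mathrm{new}}.P$ is $\si$-parabolic of $G$ with $\si$-stable Levi $w_{\mathrm{new}}.M$, would be deduced from the decomposition $w_{\mathrm{new}}.P=(M'\cap w_{\mathrm{new}}.P)\cdot U'$ together with the $\si$-opposite structure of $U'$ as the unipotent radical of the $\si$-parabolic $P'$. Properties~(iii)--(v) then follow by direct verification from the definitions $P_w=(M\cap w^{-1}.P')U$, $P'_w=(M'\cap w.P)U'$, $\tilde{P}_w=(M\cap w^{-1}.P'^-)U$, $\tilde{P}'_w=(M'\cap w.P)U'^-$: each is a product of a $\si$-parabolic piece in $M$ or $M'$ (obtained from~(ii) or its symmetric counterpart~(v)) with a $\si$-parabolic unipotent radical $U$ or $U'$ of $G$, so is itself $\si$-parabolic in $G$. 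Property~(iv), $w_{\mathrm{new}}^{-1}.A'_\si=A_\si$, follows from~(i) together with~(ii), since the $\si$-stability of $w_{\mathrm{new}}.M$ forces $w_{\mathrm{new}}$ to commute with $\si$ on the torus $A$, hence to preserve the $\si$-split part. The main obstacles are the $(M'\cap H)$-conjugacy step and the lifting of $\si$-parabolicity from $M'\cap w_{\mathrm{new}}.P$ inside $M'$ to $w_{\mathrm{new}}.P$ inside $G$; both rest on standard but delicate facts in the Helminck--Wang theory of $\si$-stable structures.
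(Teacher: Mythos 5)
Your overall strategy coincides with the paper's: apply the first key lemma (Lemma \ref{xixP}) inside $M'$ to the family of constant terms $\chi\mapsto\xi_{\chi,P'^-,w}$ (your verification of weak holomorphy and of the presence of a strictly $(M'\cap w.P)$-dominant element in the Lie algebra of $X'$ is essentially the paper's), obtain an element of $M'$ conjugating $M'\cap w.P$ to a $\si$-parabolic subgroup of $M'$, and then adjust the representative $w$ inside $P'wP$. However, three of your steps break down.

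First, the claim that two maximally $\si$-split $\si$-stable maximal split tori of $M'$ are conjugate under $M'\cap H$ is not a standard fact and is false in general: over a $p$-adic field the $H$-conjugacy classes of maximal $\si$-split tori are usually several — this is precisely the source of the multiple open $(P,H)$-double cosets. The paper only uses conjugacy under $M'$, with the conjugating element chosen so as to carry the $\si$-antiinvariant part of the dual of the Lie algebra of the first torus onto that of the second; the $\si$-parabolicity of $M'\cap w_{\mathrm{new}}.P$ is then deduced not from ``conjugation by $h\in H$ commutes with $\si$'' but from the $\si$-antiinvariance of the corresponding $\rho$ together with the criterion (\ref{p-l}). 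Second, the decomposition $w_{\mathrm{new}}.P=(M'\cap w_{\mathrm{new}}.P)\cdot U'$ is false: the left-hand side is a conjugate of $P$ with Levi subgroup $w.M$, while the right-hand side is $P'_w$, whose Levi subgroup is $M'\cap w.M$; these are different parabolic subgroups in general. The correct route to (ii) and (iii) is via half-sums of roots: the normalization of $w_{\mathrm{new}}$ guarantees $w_{\mathrm{new}}(\a_M)'^{-\si}\subset(\a_{A'})'^{-\si}$, hence $w_{\mathrm{new}}\rho_P$ is $\si$-antiinvariant, so $w_{\mathrm{new}}.P=P_{w_{\mathrm{new}}\rho_P}$ is opposite to its image under $\si$, and the identities $\rho_{w.P_w}=\rho_{P'\cap w.M}+w\rho_P$, $\rho_{P'_w}=\rho_{M'\cap w.P}+\rho_{P'}$ give (iii). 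Third, (iv) does not follow from (i) and (ii): $\si$-stability of $w.M$ does not force $w$ to intertwine $\si|_A$ with $\si|_{A'}$, i.e.\ $w.A_\si$ need not be $\si$-split. The paper must further modify $w$ on the right by an element of $P$ — using that the open coset $wPH$ has an $(A')_\si$-good representative, cf.\ (\ref{WG}) and Lemma \ref{sixP} — and then by elements of $M$ and of the centralizer of $A_\si$ (using conjugacy of maximal $\si$-split tori of $M$ under $M$) to arrange $w^{-1}.A'_\si=A_\si$ while keeping $w^{-1}.A'=A$. Your ``direct verification'' of (v) hides the same issue: it requires $w^{-1}\rho_{P'}$ to be $\si$-antiinvariant, which is exactly what this last normalization provides.
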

\begin{proof}
It will be more convenient for the proof of this Proposition to denote $A'$ by $A_1$ in order to avoid  too many $'$. 

First, as $w\in W(M'\backslash G/M)$, one has $w.A=A_1$. Moreover:
\ber \label{liea}  The Lie algebra of $X$ is equal to the space  $ (\a_M)_\C'^{-\si}$ of $\si$-antiinvariant elements of $ (\a_M)_\C '$ that one can view as a subspace of $\a'_\C$ (cf. (\ref{iden}) and (\ref{am'})).\eer
\\We define $X':=\{w\chi_{\vert M' \cap w.M}\vert \chi \in X\} $ which is closed in $X(M'\cap w.M)$ (cf., (\ref{Xclosed})).  By looking to differential, one sees that on   an open neighborhood of $\chi_0$ in $ O\cap O'$,   the map $ \chi\mapsto w\chi_{\vert M' \cap w.M}$,   from $X$ to $X'$, is an isomorphism  whose inverse, defined on $O'' \subset X'$, will be be denoted $\chi' \mapsto w^{-1}\chi'$.  
Let us show  that the family $\chi' \to  (\xi_{w^{-1}\chi'})_{P'^-,w}$ defined on $O'' $ is weakly holomorphic. It is enough to prove that  for all $e$  element of the compact realization of $i^{M '}_{M '\cap w.P} wj_{M\cap w^{-1}.P'} E_\chi$, the map   $\chi\mapsto \langle (\xi_{\chi})_{P'^-,w}, e_{w\chi} \rangle$  is holomorphic on  $O$. We choose a compact open subgroup of $G$, $J$,  with  a $\si$-factorization for $(P',P'^-)$ such that $e$ is fixed by $J_{M '}$.  One starts by using  (\ref{defj}) and then one uses the fact that $(e_J \xi_\chi) $ is an holomorphic family of  $J$-fixed vectors. One deduces from Proposition \ref{betaw}, that $\langle (\xi_{\chi})_{P'^-,w}, e_{w\chi} \rangle $ is weakly holomorphic if  for all $v$ in the compact realization of $i^G_{P} \cc{E}$, the map
 $\chi \mapsto \langle \beta_{\chi,w}(\cc{v}), e_{w\chi} \rangle $ is holomorphic on $O'$,  where $\beta_{\chi,w}(\cc{v}) = (\cc{v}_\chi)_{P'^-,w}$. This  follows easily from the definition of $\beta_{\chi,w}(\cc{v}) $ (cf. Proposition \ref{betaw}) and from the holomorphy properties of the intertwining integrals.  
 
 As $P$ is a $\si$-parabolic subgroup,  $X$ contains strictly $P$-dominant elements and $X'$ contains  a strictly $M'\cap w.P$-dominant element. The conclusion of the preceding lemma, applied to $M'$ instead of $G$ and $X'$ instead of $X$,  asserts that there exists $m' \in M'$   such that $A_2:=m'w.A=m'.A_1$ is a  maximally $\si$-split, $\si$-stable, maximal split torus of $M'$,   $Q:=m'. (M'\cap w.P)$ is a $\si$-parabolic subgroup of $M'$, $ m'.X' $  and hence  $m'w(\a_M)'^{-\si} \subset ({\a}_2)'$ is made of  $\si$-antiinvariant elements, where $(\a_M)'^{-\si}$ is the space of antiinvariant elements of $(\a_M)'$.    Then the half sum of the roots of $A_2$ in the Lie algebra of $Q$, $\rho_Q$, satisfies:
 \beq \label{rhoq} \rho_Q \in (\a_2)'^{-\si} . \eeq \\Two maximally $\si$-split $\si$-stable maximal split tori of $M'$ are conjugate by an element of $M'$ which conjugates their maximal $\si$-split tori, because two maximal $\si$-split tori are conjugate, and two maximal split tori in the centralizer of a maximal $\si$-split torus are conjugate. So one can choose $m''$ in $M'$ such that $m''.A_2=A'$ and $m'' (\a_2)'^{-\si}= (\a_1)'^{-\si}$. Then   from (\ref{rhoq}), one sees that  the parabolic subgroup of $M$, $Q':=(m''m'w.M) \cap P'$,   contains $A_1$ and the half sum of the roots of $A_1$ in its Lie algebra $\rho_{Q'}$ is an element of  $(\a_1)'^{-\si}$ . Using (\ref{p-l}),  one sees that it is a $\si$-parabolic subgroup of $M$. One will change $w$ to $m''m'w$. Hence one has: \ber \label{wam} $w.A=A_1, w (\a_M)'^{-\si} \subset ( \a _1)'^{-\si}$ . \eer
 Let us show that $w$  satisfies   (ii) and (iii).
Let $\rho_P \in \a_M'\subset {\a}'$ be the half sum of roots of $A$ in the Lie algebra of $P$. Similarly we define  $\rho_{w.P_w}$, $\rho_{P'_w}$ and $ \rho_{P'\cap w.M}$ with respect to $A_1$. These are elements of $(\a_1)'$.
One has : \beq \label{rho} \rho_{w.P_w}= \rho_{ P'\cap w.M} +w\rho_P. \eeq  
As $P$ is a $\si$-parabolic subgroup of $G$, one has $\rho_P\in (\a_M)'^{-\si}$. From (\ref{wam}), one deduces that: \ber \label{rhoa} $w\rho_P \in (\a_1)'^{-\si}$ is $\si$-antiinvariant.   \eer
It follows that $w.P$ is a $\si$-parabolic subgroup of $G$.
The group $w.M$ is the Levi subgroup of $w.P$ which contains $A_1=w. A$,  whose Lie algebra is the sum of the $A_1$-weight spaces  for weights $ \aa$ which are equal to zero or to an $A_1$-root $\aa$ such that $(w \rho_P, \aa)=0$. 
As $w \rho_P$ is $\si$-antiinvariant, one sees that $w.M$ is $\si$-invariant.
As $P'$ is a $\si$-parabolic subgroup of $G$, $\si(P')\cap P'= M'$, which implies:
$$\si(P'\cap w.M)\cap P'\cap w.M= M' \cap w.M.$$
As $M' \cap w.M$ is the Levi subgroup of the parabolic subgroup $P'\cap w.M$ of $w.M$ which contains $A'$, this implies that $P'\cap w.M$ is  a $\si$-parabolic subgroup of $w.M$ with $\si$-stable Levi subgroup equal to $w.M \cap M'$. Hence \ber \label{rhop}$\rho_{ P'\cap w.M}$ is $\si$-antiinvariant.\eer From (\ref{rho}), (\ref{rhoa}),  (\ref{rhop}), one sees that $\rho_{w.P_w}\in (\a_1)' $ is $\si$-antiinvariant.
It follows from (\ref{p-l}) that $w.P_w$  is a $\si$-parabolic subgroup of $G$. One sees easily that its $\si$-stable Levi subgroup is $ M' \cap w.M$. \\
Similarly one proves that $P'_w$ is a $\si$-parabolic subgroup of $G$, by using the equality:$$\rho_{P'_w}= \rho_{ M'\cap w.P}+ \rho_{P'}$$
and that $P'$ (resp., $M'\cap w.P$) is a $\si$-parabolic subgroup of $G$ (resp., $M'$), as $w.P$ is  a $\si$-parabolic subgroup of $G$ from (\ref{rhoa}) and   $w.M$ is its  $\si$-stable Levi subgroup.

Altogether we have found a choice of $w$ which satisfies (i), (ii) and (iii). 
 We will  modify our preceding choice of $w$ to get one which will satisfy also  (iv). 
 
  Let $w$ be as above. 
Then  $(w.P) w H= w PH$ is open in $G$. Moreover  $w.P$ is a $\si$-parabolic subgroup of $G$,  $(A_1)_\si $ is a maximal $\si$-split torus contained in $w.P$, hence contained  in its $\si$-stable Levi subgroup  $w.M$. As any $(w.P,H)$-open orbit has a representative which is $(A_1)_\si$-good (cf. (\ref{WG})), there exists $p'=w.p$ with $p\in P$ such that $ w':= (w.p) w= wp$ satisfies $w'^{-1}. (A_1)_\si $ is $\si$-split.  But $w'^{-1}. (A_1)_\si \subset  p^{-1}. A \subset P$. As $w'^{-1}. (A_1)_\si $  is $\si$-split,  it is $\si$-stable hence contained in $M=P\cap \si(P)$. Hence $ w'^{-1}. (A_1)_\si$ is a maximal $\si$-split torus in  $M$.  Then, as all maximal $\si$-split tori in $M$ are conjugate (cf. [HH], Proposition 1.16), we can choose  an element $m$ of $M$,   such that $ w''= w'm$  satisfies ${ w'' }^{-1}.({ A_1}) _\si   =  A_\si$. Then ${w''}^{-1}. A'$ is contained  in the centralizer of $ A_\si $ which is contained in $M$. Hence  one can choose $ m_1$ element of this centralizer such that $ w_1 := w'' m_1$ is such that $w_1^{-1}.A_1= A$ and 
$w_1^{-1}.(A_1)_\si = A_\si $. Hence,  as $\rho_P \in \a'^{-\si}$, $w_1 \rho_P $ is $\si$-antiinvariant and $w_1 .P$ is a $\si$-parabolic subgroup of $G$. As above, one sees also that it implies that $w_1.M$ is its $\si$-stable Levi subgroup.  Then, as above, one sees that $w_1$ satisfies (ii) and (iii). 
As $P'$ is a $\si$-parabolic subgroup of $G$, $ \rho_{P'} $ is an element of  $(\a_1)'^{-\si}$ and  one gets  $ w_1^{-1}\rho_{P '} \in \a'^{-\si}$. One sees, as above,   that $w_1^{-1}. P'$ is a $\si$-parabolic subgroup of $G$ with $\si$-stable Levi subgroup equal to $w_1^{-1}. M'$.  This implies easily that $M\cap w_1^{-1}. P'$ is a $\si$-parabolic subgroup of $M$ with $\si$-stable Levi subgroup equal to $M\cap w_1^{-1} .P'$. 

Then the assertions on $P_{w_1}, P'_{w_1}, \tilde{P}_{w_1}, \tilde{P} '_{w_1}$ follow easily.
Thus $w_1$ has the required properties.
\end{proof}
\subsection{Intertwining integrals and support of families of $H$-forms}
 An ordered pair $(P=MU, P'=MU')$ of parabolic   (resp., $\si$-parabolic subgroups) of $G$, is said to be adjacent (resp., $\si$-adjacent) if  there is a unique reduced $A_{M}$-root (resp., reduced  $(A_M)_\si$-root), $\aa$, which is  positive for $P$ and negative for $P'$.   We denote  by $A^\aa$ the  group of $\F$-points  of the identity  component of the kernel of $\aa$ in $\underline{A}_M$ (resp., $(\underline{A}_M)_\si$) and by $M^\aa$  the centralizer of $A^\aa$ in $G$. The group $P^\aa$ \index{$P^\aa$} generated by $P$ and $P'$ is a parabolic (resp. $\si$-parabolic) subgroup of $G$ with Levi (resp., $\si$-stable Levi subgroup) $M^\aa$ and unipotent radical $U^\aa$ contained in $U\cap U'$. It is easy to see that $\aa$ is $P$-simple.
 
 A minimal string of parabolic (resp.,  $\si$-parabolic) subgroups  of $G$ between two parabolic (resp., $\si$-parabolic) subgroups of $G$, $P=MU$, $P'=MU'$, is a sequence $(P_i)_{i=0,\dots, r} $ of parabolic (resp., $\si$-parabolic subgroups) of $G$,   such that $P_0=P, P_r=P'$ and $(P_i, P_{i+1}) $ is adjacent (resp., $\si$-adjacent) for $i=0, \dots, r-1$. Such a string always exists (cf. [KnSt],  before Theorem 4.2 for  parabolic subgroups and it works in a same manner for $\si$-parabolic subgroups).
 
 The next lemma is the second  key lemma mentioned in the Introduction. It was suggested by  a geometric result of Matsuki (cf. [M], Lemma 3).
\begin{lem} \label{skey} Let $P=MU$  and $Q=LV$ be  two $ \si$-parabolic subgroups of $G$, with $P \subset Q$. Let $(\delta,E)$ be a smooth irreducible representation of $M$. Let $P'=MU'$ be another  $\si$-parabolic subgroup of $G$  such that $(P,P')$ is $\si$-adjacent, and let $\alpha$  be the unique reduced $(A_M)_\si$-root  which is positive for $P$ and negative for $P'$.  One assumes that  the restriction $ \alpha_{\vert a_L^{-\si}}$ of $\aa$,  to  $\a_L^{-\si}$ is nonzero, where $\a_L^{-\si}$ is the subspace of  elements of $\a_L$ antiinvariant under $\si$.\\ We denote   the identity component of the set   of $\si$-antiinvariant elements of $X(L)$ by $ X(L)_\si$.\\ Let $\chi\mapsto \xi_\chi $ be   a weakly  holomorphic family of  $H$-forms on $i^G_P E_ \chi $ defined for $ \chi $ in  an open subset,  $O$, of $X(L\vert M)_\si:=\{\chi_{ \vert  M}\vert \chi \in X(L)_\si\}$. Let us assume   that the support of every  $\xi_\chi$ has an empty interior in $G$.  \\ Then one has the following:\\
(i) The set $O'$ of $\chi \in O$ such that $A(P,P', \delta_ \chi)$ has no pole   is an open and dense subset of $O$. \\ (ii) If $\chi \in O' $,  the support of $\xi_\chi  \circ  A(P,P', \delta_ \chi)$, which is an $H$-form on $i^G_{P'}  E_\chi$,  has an empty interior in $G$.\\
(iii) Let $Q'$ be the $\si$-adjacent $\si$-parabolic subgroup of $G$ determined by the $Q$-simple  $ (A_L)_\si $-root $\aa_{\vert  a_L^{-\si }}$. Then $P'\subset Q'$ and $(Q,Q')$ are $\si$-adjacent.

\end{lem}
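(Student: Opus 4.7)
Part (i) follows from the rationality in $\chi$ of the family $A(P,P',\delta_\chi)$ (cf.~(\ref{apq})): its polar locus is a proper analytic subvariety of $X(M)$, so $O'$ is open and dense in $O$.

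Part (iii) is a combinatorial verification in the root system. Since $P \subset Q$, each $A_M$-root in $\mathrm{Lie}(U)$ restricts to $\a_L$ either trivially (when the root lies in $\mathrm{Lie}(L)$) or to an $A_L$-root positive for $Q$. Since $\alpha|_{\a_L^{-\si}} \ne 0$, the root $\alpha$ is not a root of $L$; hence $\beta := \alpha|_{\a_L^{-\si}}$ is a nonzero $(A_L)_\si$-root positive for $Q$, and $Q$-simple ($P$-simplicity of $\alpha$ precludes any nontrivial decomposition of $\beta$). This makes $Q'$ well defined and $(Q,Q')$ $\si$-adjacent by construction. For $P' \subset Q'$, the $A_M$-roots in $\mathrm{Lie}(U')$ are those in $\mathrm{Lie}(U)$ with the $\alpha$-direction negated; their restrictions to $\a_L^{-\si}$ are either unchanged $Q$-positive roots distinct from $\beta$ (hence $Q'$-positive) or negative multiples of $\beta$ ($Q'$-positive by construction of $Q'$).

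Part~(ii), the main content, begins with Lemma~\ref{suppAf}(ii) applied to the intertwining $A(P,P',\delta_\chi)\colon i^G_{P'}E_\chi \to i^G_P E_\chi$ and the $H$-form $\xi_\chi$, which yields
\[
\mathrm{Supp}\bigl(\xi_\chi \circ A(P,P',\delta_\chi)\bigr) \subset cl\bigl(U' \cdot \mathrm{Supp}(\xi_\chi)\bigr).
\]
I would argue by contradiction: assume this closure contains an open $(P',H)$-double coset $\Omega = P'xH$. By~(iii), $\Omega \subset Q'xH$ which is open; by Lemma~\ref{sixP} and~(\ref{WG}) we may pick $x$ to be $A_\vid$-good, and the $\si$-adjacency of $(Q,Q')$ then forces $QxH$ to be open as well. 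The main obstacle is to derive from this a contradiction with the empty-interior hypothesis on $\mathrm{Supp}(\xi_\chi)$. The plan is to localize inside $Q^\beta xH$, where $Q^\beta$ is the $\si$-parabolic generated by $Q$ and $Q'$ (containing $P^\alpha$ by part~(iii)). Writing $U' = (U \cap U') \cdot U'^{(-\alpha)}$ and noting that $U \cap U' \subset P$ stabilizes $\mathrm{Supp}(\xi_\chi)$, the problem reduces to a rank-one statement in $M^\alpha$ about closures of $U'^{(-\alpha)}$-translates of $(P,H)$-cosets inside $Q^\beta xH$. The Matsuki-style geometric input ([M], Lemma~3) then forces any open $(P',H)$-coset appearing in such a closure to arise as the closure-limit of an open $(P,H)$-coset in $\mathrm{Supp}(\xi_\chi)$, yielding the contradiction and completing the proof.
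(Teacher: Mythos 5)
Parts (i) and (iii) are fine, and your opening move for (ii) — Lemma \ref{suppAf} (ii) plus the left $P$-invariance of $\mathrm{Supp}(\xi_\chi)$, reducing everything to showing that $P^\alpha x H$ has empty interior for each open double coset $PxH$ of the support — is exactly the paper's reduction. But the core of (ii) is not actually proved in your proposal: after setting up the rank-one localization you write that ``the Matsuki-style geometric input ([M], Lemma 3) then forces'' the contradiction, which is an appeal to a result about real symmetric spaces that is only the \emph{motivation} for this lemma in the paper, not a citable step. The statement you would need — that the closure of $U'^{(-\alpha)}$ times a non-open $(P,H)$-coset cannot contain an open $(P',H)$-coset — is precisely the hard point, and nothing in your sketch establishes it.

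A symptom of the gap is that your argument for (ii) never uses the hypothesis that $(\xi_\chi)$ is a weakly holomorphic family parametrized by an open subset of $X(L\vert M)_\si$; you only normalize $x$ to be $A_\vid$-good. The paper instead invokes Lemma \ref{xixP} (the first key lemma) to choose $x$ in its double coset so that $x^{-1}.A$ is $\si$-stable \emph{and} $x^{-1}(\a_L^{-\si})\subset (x^{-1}.\a)^{-\si}$ (see (\ref{xasub})) — this is where the family hypothesis enters. With that normalization one takes $Y_x=x^{-1}Y$ for $Y$ strictly $Q$-dominant in $(\a'_L)^{-\si}$; since $\alpha_{\vert \a_L^{-\si}}\neq 0$ one gets $(x^{-1}\alpha)(Y_x)>0$, and for any root $\beta$ of $A_x$ in $\underline{g}(-x^{-1}\alpha)$ one has $\si\beta(Y_x)=-\beta(Y_x)>0$ because $\si(Y_x)=-Y_x$. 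Hence for $Z$ in the $\beta$-weight space, $\si(Z)\in\underline{p}_x$ and $Z=(Z+\si(Z))-\si(Z)\in\underline{h}+\underline{p}_x$, giving the identity $\underline{h}+\underline{p}_x=\underline{h}+\underline{p}_x^{\alpha}$ of (\ref{hplusp}). If $P_x^\alpha H$ had nonempty interior, then $P_x^\alpha$ would be a $\si$-parabolic subgroup, so $\underline{h}+\underline{p}_x^{\alpha}=\underline{g}$ by (\ref{b)}), hence $\underline{h}+\underline{p}_x=\underline{g}$ and $PxH$ would be open (Lemmas \ref{orbiteouverte} and \ref{hpouvert}), contradicting the empty-interior hypothesis. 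This Lie-algebra computation is the missing idea; without it (or an equivalent substitute) your proof of (ii) does not go through.
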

\begin{proof}


The fact that $O'$ is dense follows from the  fact that $\aa$ restricted to $\a_L^{-\si}$ is nonzero and from Lemma \ref{hoinv}. This proves (i).  

Let us show that the  union of the $(P,H)$-double cosets $Px_i H$ which are open in the support $F$ of the family  $\xi_\chi, \chi \in O'$ is dense in this support.  In fact if it was false the complementary of the union of the $Px_iH$ would contain a $(P,H)$-invariant subset open in $F$.  Then it contains an open $(P,H)$-double coset (see e.g.  [BD] Lemma 3.1 (ii)).  This is impossible due to our definition of the $Px_iH$.  This proves our claim.

From Lemma \ref{suppAf},  the support of the family ($\xi_\chi\circ A(P,P', \delta, \chi))_{ \chi  \in O'}$ is contained in $cl(P^\aa F)$, hence in the union of $cl(P^\aa x_iH)$  as for $A, B,$  subsets of $G$,  $cl( A cl(B) )= cl(AB)$ and $cl(A\cup B)= cl(A) \cup cl(B)$.   Hence it suffices to show  that:  \ber \label{forallx} For all  $x=x_i$  one has $P^\aa xH$ with empty interior.\eer  Let $A$ be a maximally $\si$-split torus   $\si$-stable maximal split torus in $M$, which is automatically maximally $\si$-split $\si$-stable maximal split torus in   $G$ as $P$ is a $\si$-parabolic subgroup. As any parabolic subgroup contains a $\si$-invariant maximal split torus,  we   may and we will  choose $x$ in its double $(P,H)$-coset  such that $x^{-1}. A$ is $\si$-stable.
The Lie algebra of $X:= X(L\vert M)_\si$ is identified with $ (\a'_L)_\C^{-\si}$. We  can apply   Lemma \ref{xixP} (ii) to the family $(\xi_\chi)_{\chi \in O'}$ and one sees, using (\ref{am'}),  that:  \beq \label{xasub} x^{-1}(\a_L^{-\si})   \subset (x^{-1}\a)^{-\si} . \eeq
 One sets: $P_x=x^{-1}.P, P_x'=x^{-1} .P'$ etc..
Here we  denote  the Lie algebra of the algebraic group $\underline{G}$ such that $G= \underline{G}(\F)$ by $\underline{g}$. By abuse of terminology  we will say that $\underline{g}$ is the Lie algebra of $G$ (notice that this notion is different from the one used in Section \ref{Lie}). We use a similar terminology  for the subgroups of $G$ which are the group of  $\F$-points of a subgroup of   $\underline{G}$ defined over $\F$.

 Let $ \underline{g}(\aa)$ (resp., $\underline{g} (-\aa)$) be the Lie algebra of $U \cap M^\aa$ (resp., $ U'\cap M^\aa$). Similarly let $ \underline{g}(x^{-1}\aa)$ (resp., $\underline{g} (-x^{-1}\aa)$) be the Lie algebra of $U_x \cap M_x^\aa$ (resp., $ U_x'\cap M_x^\aa$).   One has:
$$ \underline{p}_x^\aa= \underline{g}(-x^{-1}\aa) + \underline{p}_x . $$
We fix $ Y_x= x ^{-1} Y\in x^{-1}(\a_L^{-\si})$, where $ Y$ is a strictly $Q$-dominant element in $(\a'_L)^{-\si}$ which is canonically identified to $\a_L^{-\si}$ (cf. (\ref{am'})).  Then an $A_x$-root $\beta $ is such that $ \beta(Y_x)>0$  if and only if the  corresponding root space is contained in $ \underline{v}_x$.  But $ \underline{g} (\aa)Ê\subset \underline {v}$, as $P\subset Q$ and $\alpha_{\vert \a_L} \not = 0$.  So one has $(x^{-1} \alpha)(Y_x)>0$. 
Let us prove that:
\beq  \label{hplusp} \underline{h}+Ê\underline{p}_x= \underline{h}+ Ê\underline{p}_x^\aa. \eeq 
The only thing to prove is that $ \underline{g}(-x^{-1}\aa) \subset  \underline{h}+ \underline{p}_x$. Let $\beta$ be a root of $A_x $ in $ \underline{g}(-x^{-1}\aa)$, so that $\beta(Y_x)<0$ by what has been said above. Let   $Z$ be an element of the corresponding weight space  in $ \underline{g}$. Let $\si\beta$ the $A_x$  be the root $\beta\circ \sigma$. Then $ \si\beta(Y_x)= \beta( \si(Y_x))$. But  as $  x^{-1}(\a_L^{-\si})$  is contained in the space of $\si$-antiinvariant elements (cf. (\ref{xasub})), one has $\si(Y_x)=-Y_x$ and  $\si\beta(Y_x)= -\beta(Y_x)$ is strictly positive. Hence $\si(Z)$ is an element of  $\underline{v}_x\subset \underline{p}_x$. Consequently one has:
$$Z=( Z+ \si(Z))-\si(Z)\in  \underline{h}+ \underline{p}_x, $$
which proves (\ref{hplusp}).

Let us assume that  $P_x^\aa H$ has a nonempty interior. We will prove that it leads to a contradiction. From (\ref{WG}) and Lemma \ref{sixP} (iv), one deduces that  $P_x^{\aa}$ is a $\si$-parabolic subgroup of $G$.
Hence (\ref{a)}) implies that $\underline{P}_x^\aa$ is a $\si$-parabolic of $\underline{G}$  and (\ref{b)}) implies $$ \underline{h} + \underline {p^\aa_x} = \underline{g}.$$  Then, together with (\ref{hplusp}),  it implies that $\underline{h} + \underline {p_x} = \underline{g}$. Hence (cf. Lemma \ref{orbiteouverte}), $\underline{H} \underline{P_x}$ is open. From Lemma \ref{hpouvert},  one sees that $PxH$ would be open in $G$. This is a contradiction with our hypothesis on the support of $\xi$, which implies (\ref{forallx}). We have thus proved (ii).

 The assertion on $Q'$ in (iii) being clear, this proves   the lemma. 
  \end{proof} 
\begin{lem} \label{416}
We keep the  notations  of  the previous lemma, except that  $P'$ is not necessarily adjacent to $P$ . We assume that every $(A_M)_\si$-root which is  positive for $P'$ and negative for $P$ satisfies $ \aa_{\vert \a_{L}^{-\si}}\not=0$. Then the same conclusion as in (i)  and (ii)  of the previous lemma is valid.
\end{lem}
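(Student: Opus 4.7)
The plan is to iterate the previous Lemma along a minimal string of $\si$-adjacent $\si$-parabolic subgroups joining $P$ to $P'$.

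First I would choose, as recalled just before the previous Lemma, a minimal string $(P_i)_{0 \leq i \leq r}$ of $\si$-parabolic subgroups of $G$ with common Levi $M$ such that $P_0 = P$, $P_r = P'$ and $(P_i, P_{i+1})$ is $\si$-adjacent for each $i$. Denote by $\alpha_i$ the unique reduced $(A_M)_\si$-root positive for $P_i$ and negative for $P_{i+1}$. By the standard combinatorics of minimal strings (no reduced root is flipped twice, and the multiset of flipped reduced roots must equal the set of reduced $(A_M)_\si$-roots positive for $P_0$ and negative for $P_r$), each $\alpha_i$ is a reduced $(A_M)_\si$-root positive for $P$ and negative for $P'$, so the hypothesis of the lemma gives $\alpha_i|_{\a_L^{-\si}} \neq 0$ for every $i$.

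For (i): by Lemma \ref{hoinv} the set of $\chi \in X(M)$ for which every intertwiner $A(P_i, P_{i+1}, \delta_\chi)$ is holomorphic is the complement of a finite union of proper subvarieties of $X(M)$. Its intersection with $O$ is open and dense in $O$, and on this subset the standard composition law for intertwining integrals along a minimal string (cf.\ [Wal] IV.1.1) gives
$$A(P, P', \delta_\chi) = A(P_0, P_1, \delta_\chi) \circ A(P_1, P_2, \delta_\chi) \circ \cdots \circ A(P_{r-1}, P_r, \delta_\chi),$$
proving (i).

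For (ii) I would inductively construct $\si$-parabolic subgroups $Q_i$ of $G$, all sharing the common Levi $L$, with $P_i \subset Q_i$: set $Q_0 := Q$ and apply part (iii) of the previous Lemma to the datum $(P_i, P_{i+1}, Q_i, \alpha_i)$ to produce $Q_{i+1}$, which is $\si$-adjacent to $Q_i$ (hence again with Levi $L$) and contains $P_{i+1}$. Thus $\a_{Q_i} = \a_L$ throughout, so the hypothesis $\alpha_i|_{\a_{Q_i}^{-\si}} \neq 0$ required by the previous Lemma coincides with $\alpha_i|_{\a_L^{-\si}} \neq 0$, which has just been verified. Define $\xi_\chi^{(0)} := \xi_\chi$ on $i^G_{P_0} E_\chi$ and, inductively, $\xi_\chi^{(i+1)} := \xi_\chi^{(i)} \circ A(P_i, P_{i+1}, \delta_\chi)$ on $i^G_{P_{i+1}} E_\chi$; each family is weakly holomorphic on the open dense subset of (i) thanks to the rational dependence of the intertwining integrals on $\chi$. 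By induction on $i$, the previous Lemma applied to $(P_i, P_{i+1}, Q_i, \alpha_i)$ with the family $(\xi_\chi^{(i)})$ shows that if the support of $\xi_\chi^{(i)}$ has empty interior then so does that of $\xi_\chi^{(i+1)}$. Taking $i = r$ and using the composition formula gives the conclusion for $\xi_\chi \circ A(P, P', \delta_\chi)$.

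The main obstacle is purely combinatorial bookkeeping: checking that along a minimal string each flipped $(A_M)_\si$-root lies in the set singled out by the hypothesis, and tracking that the $Q_i$ all retain the common Levi $L$ so that part (iii) of the previous Lemma can be chained. No new analytic ingredient is required beyond the previous Lemma and the composition law for intertwining integrals.
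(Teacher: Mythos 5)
Your proposal is correct and follows essentially the same route as the paper: the paper's (very terse) proof likewise takes a minimal string of $\si$-adjacent $\si$-parabolic subgroups from $P$ to $P'$, inductively defines $\si$-parabolic subgroups $Q_0=Q,\dots,Q_r$ with Levi $L$ via part (iii) of the previous Lemma, and applies that Lemma at each step to the families $\xi_\chi\circ A(P_i,P,\delta_\chi)$. Your additional bookkeeping (that each flipped reduced root along a minimal string is, up to sign, one of the roots covered by the hypothesis, and the composition law for the intertwining integrals) fills in details the paper leaves implicit.
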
 
\begin{proof}
Let $P_0, \dots P_r$ be a minimal string of $\si$-parabolic subgroups between $P$ and $P'$. We will prove by induction on $i$ that for $\chi \in O'$, the support of  $A(P_i, P, \delta_\chi)\xi_\chi$ has an empty interior in $G$ and we will define  $\si$-parabolic subgroups of $G$,  $Q_0=Q, \dots, Q_r$,   with $\si$-stable Levi subgroup $L$, such that: 
\\The family $(\xi^i_\chi): = (\xi_\chi \circ A(P_i, P, \delta_\chi)) $  is such that $(P_i, P_{i+1})$, $Q_i$ satisfies the hypothesis of the previous lemma, by the induction hypothesis. 
The Lemma follows by using this  previous lemma.
\end{proof}
\section{Families of $H$-forms on  representations induced  from $\si$-parabolic subgroups and $B$-matrices}
\setcounter{equation}{0}
\subsection{Families of $H$-forms on representations  induced from $\si$-parabolic subgroups}
Let $P=MU$ be a $\si$-parabolic subgroup of $G$. Let $(\delta,E)$ be a smooth representation of $M$ with finite length. 
Let  ${\mathcal W}_M^G$  be a set of representatives of the open 
$(P,H)$-double cosets  as in  (\ref{WG}): here $A_\si$ is a maximal $\si$-split torus of $M$ .  As remarked in Lemma \ref{sixP}, if $ x\in {\mathcal W}_M^G$, $x^{-1}.P$ is a $\si$-parabolic subgroup of $G$. We define \beq \label{edh} E'(\delta,H)
:=  \oplus _ {x \in {\mathcal W}_M^G} E'^{M \cap x. H}. \eeq 
We have the following mild generalization of [BD], Theorem 2.8,  that one gets in an entirely similar way (see the comments following the statement).
Notice that this statement is true assuming only that the characteristic of $\F$ is different from 2, as it is shown by the examination of the proof.

Let us give a definition.
\ber \label{defb}If $B$ is  a split torus we denote the real vector space  denoted $\a_B$ in (\ref{ag}) by $\b$. \eer
The lattice $\LL(B)$ ( cf. (\ref{LLA}) for its definition) is identified with   a subset of $\b$ (cf. (\ref{45}) and 
 if $B$ is a subtorus of the split torus $A$ one has $\b\subset \a$.

We will  use the identification of the Lie algebra of  $X(M)_\si$ and $(\a_M^{-\si})_\C$ given by a suitable scalar product on $\a_M$ (see (\ref{iden} ) and  (\ref{am'} )).

\begin{prop} \label{defxi}
Let us assume that we are given a $\si$-split torus $B \subset A_M$,  and a complex subtorus  $X$ of $X(M)_\si$.   We assume that  the Lie algebra  of $X$ is identified with  $\b_\C\subset (\a_M^{-\si})_\C$ and  that the lattice $ \Lambda(B)$  contains strictly $P$-dominant elements.  \\(i) Let $\chi\in X$ . 
 Let us denote  the subspace of elements of  $i^G_PE_\chi$ whose support is contained in the union of the open $(P,H)$-double cosets by $J_\chi$.  
\\ There is a canonical linear isomorphism between $E'(\delta,H)$ and $J_\chi'^H$  which associates to 
 $\eta \in  E'^{M \cap x.H}$,  the element $\xi'(P,\delta_\chi,\eta) $ of  $J_\chi'^H$ defined by:
$$\xi'(P,\delta_\chi,\eta)(\varphi) = \int_{(H\cap x^{-1}.M) \backslash H} \langle \varphi(xh),\eta \rangle \> d{h},
  {\varphi \in J_\chi}.
  $$  
  (ii) There exists  an open dense subset,  $O_0$,   of $X$ such that for $\chi\in O_0$,  $\xi'(P, \delta_\chi,  \eta)$ extends uniquely to an $H$-form, $\xi(P, \delta_\chi,  \eta)$,   \index{$\xi(P, \delta_\chi,  \eta)$} on $i^G_PE_\chi$. In particular for $ \chi \in O_0$, every $H$-form on $i^G_P E_\chi$ whose restriction to the  open $(P,H)$-double cosets is equal to zero vanishes.
  \\(iii) Moreover  there exists a polynomial function, $b$, on $X$, such that  for every $ v\in i^K_{K\cap P}E $ and $\eta\in E'(\delta,H)$, $\chi \mapsto b(\chi)\langle \xi(P, \delta_ \chi,  \eta), v_\chi \rangle $ extends to a polynomial function on $X$.
\end{prop}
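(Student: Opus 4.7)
I would follow the strategy of [BD], Theorem 2.8, with the first key Lemma (Lemma \ref{xiPH}) providing the essential geometric input.

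For (i), decompose $J_\chi=\bigoplus_{x\in\mathcal{W}_M^G}J_{\chi,x}$ along the open $(P,H)$-double cosets $PxH$. By Lemma \ref{sixP}(iv), the stabilizer $P\cap x.H$ coincides with $M_x\cap x.H$, where $M_x$ is the $\si_x$-stable Levi subgroup of $P$; applying (\ref{op}) to $\si_x$ shows that $\delta_P$ is trivial there. Standard Mackey theory for the open submanifold $PxH\simeq P\times_{P\cap x.H}H$ then identifies $J_{\chi,x}'^H$ with $E'^{M\cap x.H}$ via the displayed integral, whose convergence is controlled by the compact support modulo $H\cap x^{-1}.M$ of $h\mapsto\varphi(xh)$.

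For (ii) and (iii), let $r_\chi\colon (i^G_PE_\chi)'^H\to J_\chi'^H$ denote the restriction. The content of (ii) is that $r_\chi$ is bijective for $\chi$ in an open dense $O_0\subset X$; (iii) records the polynomial control on $r_\chi^{-1}$. The crux is injectivity of $r_\chi$, which I would establish by contradiction. If injectivity fails on an open set, one extracts a non-zero weakly holomorphic family $(\xi_\chi)$ of $H$-forms with $r_\chi\xi_\chi=0$, whose associated $E$-distributions on $G$ (via the map $M_{\delta_\chi,P}$ of subsection \ref{two}) are then supported in the closed complement of $\bigcup_x PxH$. Pick a $(P,H)$-coset $PyH$ minimal for closure in this support, and a representative $y$ such that $y^{-1}.A_M$ is $\si$-stable (Lemma \ref{xixP}). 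The translated family on $y^{-1}.(PyH)$ satisfies the hypotheses of Lemma \ref{xiPH}; the strict $P$-dominance of an element of $\Lambda(B)\subset\b$ transfers under $y^{-1}$ to a strict $y^{-1}.P$-dominant element of $y^{-1}.\b$, so Lemma \ref{xiPH}(ii) forces $y^{-1}.P$ to be a $\si$-parabolic subgroup of $G$; by Lemma \ref{sixP} this means $PyH$ was in fact open, contradicting minimality. Once injectivity is established on $O_0$, surjectivity follows from a dimension count using (i), and (iii) is obtained by Cramer's rule in a fixed basis of $E'(\delta,H)$ together with the polynomial dependence of $v_\chi$ on $\chi$ in the compact realization (\ref{finC}), the polynomial $b$ being the determinant of the matrix representing $r_\chi$.

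The principal obstacle is the injectivity step in (ii): the geometric bookkeeping needed to translate the support of a hypothetical non-open-supported family into the precise set-up of Lemma \ref{xiPH}, in particular verifying the weak holomorphy of the translated family and the persistence of the strict dominance hypothesis on $\Lambda(B)$ after conjugation by $y^{-1}$. Once injectivity is secured, the rest of the argument runs parallel to [BD, Theorem 2.8], and the polynomial control of (iii) is an elementary linear-algebraic consequence once the generic bijectivity of $r_\chi$ is known.
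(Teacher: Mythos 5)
Your overall strategy coincides with the paper's: Proposition \ref{defxi} is presented there as a mild generalization of [BD], Theorem 2.8, the only new ingredient being the generalization of [BD], Lemma 2.5 to the subtorus $X$ --- which is precisely the content of Lemma \ref{xiPH} and Lemma \ref{xixP} that you invoke. Part (i) and the uniqueness half of (ii) are handled correctly: the orbit-by-orbit identification via Lemma \ref{sixP} (iv) and (\ref{op}) is right, and the contradiction ``a coset $PyH$ open in the support of a form killed by restriction to $J_\chi$ would force $y^{-1}.P$ to be a $\si$-parabolic subgroup, hence $PyH$ open'' is the intended mechanism. One quibble: passing from pointwise failure of injectivity to a nonzero weakly holomorphic family is not justified (the dimension of the space of boundary-supported forms could a priori vary erratically); the cleaner route, which is what underlies [BD], Lemma 2.5 and the proof of Lemma \ref{xiPH}, is that the covariance condition on a boundary orbit forces $\chi(a\si_y(a))$ into a finite set of values determined by the eigenvalues of $\delta'$ on a Jacquet module --- a closed condition on $\chi$, proper precisely because of the dominance hypothesis --- so the bad set is a proper closed subset of $X$ for each boundary orbit separately, with no family needed.

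The genuine gap is in the existence half of (ii) and in (iii). Surjectivity of the restriction map $r_\chi\colon (i^G_PE_\chi)'^H\to J_\chi'^H$ does not follow from a dimension count: $J_\chi$ is an $H$-submodule of $i^G_PE_\chi$, so dualizing the right-exact coinvariant sequence for the quotient $Q_\chi:=i^G_PE_\chi/J_\chi$ gives only $0\to Q_\chi'^H\to (i^G_PE_\chi)'^H\to J_\chi'^H$, with no surjectivity on the right; injectivity of $r_\chi$ yields the upper bound $\dim (i^G_PE_\chi)'^H\le \dim E'(\delta,H)$ and nothing more. The extension has to be constructed: for $\mathrm{Re}(\chi)$ strictly $P$-dominant --- such $\chi$ exist in $X$ exactly because $\Lambda(B)\subset\b$ contains strictly $P$-dominant elements, so this hypothesis is needed for existence and not only for uniqueness --- the integral over $(H\cap x^{-1}.M)\backslash H$ converges absolutely for every $\varphi\in i^G_PE_\chi$, not merely for $\varphi\in J_\chi$, and Bernstein's rational-continuation argument (a linear system with polynomially varying coefficients which, by the uniqueness already established, has at most one solution) then extends the family to a dense open subset of $X$ and simultaneously produces the polynomial $b$ of (iii). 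Your appeal to Cramer's rule applied to ``the matrix representing $r_\chi$'' is circular at this point: writing down such a matrix requires a basis of $(i^G_PE_\chi)'^H$ depending polynomially on $\chi$, which is exactly the object being constructed.
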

The main point for this generalization of [BD], Theorem 2.8, is the generalization of [BD], Lemma 2.5 with our $X$. But it is straightforward by changing  $A$ to $B$ in its proof and the projection $p_\a$ to the orthogonal projection $p_\b$ on $\b$. 

Then, we will see that one has,  for a suitable normalization of the measures in the Proposition \ref{defxi}: 
\ber  \label{restxi} For all $\eta= ( \eta_x)_{x \in {\mathcal W}_M^G}$,  for all $x\in {\mathcal W}_M^G$, $\cc{r}_M((i^G_P \delta_\chi )'(x) \xi(P, \delta_\chi,  \eta))=\eta_x$.\eer 
For $x=1$, it follows from section \ref {covd} and  [BD], Equation (2.33).   Now the assertion reduces to the one for $x=1$ applied to the $x.H$-fixed linear form $(i^G_P\delta_\chi)' (x)\xi(P, \delta_\chi,  \eta)$.
\subsection {$B$-matrices} 

\begin{prop} \label{Bmat}
We keep the notations of the preceding proposition. Let $Q$ be a $\si$-parabolic subgroup of $G$ with $\si$-stable Levi subgroup equal to $M$.  There exists  a  rational function 
$\chi \mapsto B(P,Q,\delta_\chi)$ \index{$B(P,Q,\delta_\chi)$, $B$-matrices} on $X(M)_\si$  with values in  $\mathrm{End} _\C (E'(\delta,H))$   such that one has the equality of rational maps on  $ X(M)_\si$: 
$$\xi(Q, \delta_\chi,  \eta) \circ  A(Q, P,\delta_\chi)= \xi(P, \si, B(P,Q, \delta_\chi)\eta), \> \eta \in E'(\delta,H).$$
More precisely,  
let $ b_A $ (resp., $b_\xi$)  \index{$b_A, b_\xi$}be a nonzero polynomial function on $X(M)_\si$, such that for all $v$ in $i^K_{K\cap P} E$,   $\chi \mapsto b_A(\chi) (A(Q,P, \delta_\chi) v )$  (resp., and  for  all $\eta \in E'(\delta,H) $,   $\chi\mapsto b_\xi(\chi) \langle  \xi(Q, \delta_\chi, \eta),v_\chi \rangle  $) is polynomial on $X(M)_\si$.
Then for all  $\eta\in E'(\delta,H)$, the map  $\chi \mapsto (b_A b_\xi (\chi)) B(P,Q, \si_\chi)\eta$ is a  polynomial map  on  $X(M)_\si$ with values in  $E'(\delta,H)$.\\
\end{prop}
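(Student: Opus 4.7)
The plan is to apply the uniqueness part of Proposition~\ref{defxi}(ii). For $\chi$ in the open dense subset of $X(M)_\si$ on which $A(Q,P,\delta_\chi)$ is holomorphic (Lemma~\ref{hoinv}) and both $\xi(P,\delta_\chi,\cdot)$ and $\xi(Q,\delta_\chi,\cdot)$ are defined (Proposition~\ref{defxi}(ii)), the composition $\xi(Q,\delta_\chi,\eta)\circ A(Q,P,\delta_\chi)$ is a well-defined $H$-form on $i^G_PE_\chi$, since $A(Q,P,\delta_\chi)$ is a $G$-intertwiner. Proposition~\ref{defxi}(ii) then supplies a unique $\eta'\in E'(\delta,H)$ with $\xi(P,\delta_\chi,\eta')=\xi(Q,\delta_\chi,\eta)\circ A(Q,P,\delta_\chi)$; I set $B(P,Q,\delta_\chi)\eta:=\eta'$, which defines $B(P,Q,\delta_\chi)\in\mathrm{End}_\C(E'(\delta,H))$ pointwise.

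To control the $\chi$-dependence I will unpack the components using the reciprocity $\cc{r}_M(\rho(x)\xi(P,\delta_\chi,\eta'))=\eta'_x$ recorded just before the statement. Since $\rho(x)$ commutes with right-composition by the $G$-intertwiner $A(Q,P,\delta_\chi)$, this yields, for $x\in\mathcal{W}_M^G$,
\begin{equation*}
(B(P,Q,\delta_\chi)\eta)_x \;=\; \cc{r}_M\bigl((\rho(x)\xi(Q,\delta_\chi,\eta))\circ A(Q,P,\delta_\chi)\bigr).
\end{equation*}
By Lemma~\ref{sixP}, $P$ is a $\si_x$-parabolic subgroup of $G$ and $\rho(x)\xi(Q,\delta_\chi,\eta)$ is $x.H$-fixed, so Proposition~\ref{KT} furnishes compact open $J\subset K$ with a $\si_x$-factorization for $(P,P^-)$, and formula~(\ref{xivintbis}), applied in its $\si_x$-twisted form, gives
\begin{equation*}
\langle(B(P,Q,\delta_\chi)\eta)_x,e\rangle \;=\; \mathrm{Vol}((K\cap P)J_H)^{-1}\,\bigl\langle \rho(x)\xi(Q,\delta_\chi,\eta),\,A(Q,P,\delta_\chi)\,v^{P,J}_{e,\delta_\chi}\bigr\rangle
\end{equation*}
for every $e\in E^{J_M}$.

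The polynomial bound then follows from this identity. The restriction to $K$ of $v^{P,J}_{e,\delta_\chi}$ is independent of $\chi$, since the unramified character $\chi$ is trivial on the compact group $K\cap P$ and the factor $\chi(p)$ in~(\ref{vsi}) disappears; call this restriction $v\in(i^K_{K\cap P}E)^J$. By definition of $b_A$, the element $b_A(\chi)A(Q,P,\delta_\chi)v$ is a polynomial function of $\chi$ with values in the finite-dimensional space $(i^K_{K\cap Q}E)^J$. Expanding on a basis $w_1,\dots,w_n$ of this space, the right-hand side becomes a $\C[X(M)_\si]$-linear combination of pairings $\langle\rho(x)\xi(Q,\delta_\chi,\eta),(w_i)_\chi\rangle = \langle\xi(Q,\delta_\chi,\eta),\pi_Q(x^{-1})(w_i)_\chi\rangle$; the compact realization of $\pi_Q(x^{-1})(w_i)_\chi$ itself depends polynomially on $\chi$ (through $\chi$ evaluated at the $Q$-factor of $kx^{-1}$ for $k\in K$), so the polynomial bound $b_\xi$ on $\xi(Q,\delta_\chi,\eta)$ shows that $b_A(\chi)b_\xi(\chi)\langle(B(P,Q,\delta_\chi)\eta)_x,e\rangle$ is polynomial in $\chi$ for every $e$ and every $x$. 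This is the asserted weak polynomiality of $\chi\mapsto b_A(\chi)b_\xi(\chi)B(P,Q,\delta_\chi)\eta$.

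The chief technical subtlety I anticipate is the systematic coordination of the $\si_x$-factorizations and of the test-vector formula~(\ref{xivintbis}) as $x$ ranges over $\mathcal{W}_M^G$, so that the twisted version with $\si_x$ in place of $\si$ applies for each $x$. This is handled by Lemma~\ref{sixP} together with Proposition~\ref{KT}, and beyond this bookkeeping I do not expect a conceptual difficulty.
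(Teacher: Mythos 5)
Your proposal is correct and follows essentially the same route as the paper: define $B(P,Q,\delta_\chi)\eta$ by the uniqueness statement of Proposition \ref{defxi}(ii) applied to the $H$-form $\xi(Q,\delta_\chi,\eta)\circ A(Q,P,\delta_\chi)$, then extract each component by pairing with the test vectors $v^{P,J}_{e,\delta_\chi}$ via (\ref{xivintbis}) (twisted by $\si_x$ for $x\in\mathcal{W}^G_M$) and use that the compact realization of $v^{P,J}_{e,\delta_\chi}$ is independent of $\chi$ to get polynomiality of $b_Ab_\xi\cdot B$. Your explicit treatment of the $\chi$-dependence of $\rho(x)$ applied to the induced vectors is just a fleshing-out of a step the paper leaves implicit.
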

\begin{proof}
From Proposition \ref{defxi}, for $ \chi $ element of a dense open subset, $O$, of $X(M)_\si$, there is a unique $\theta(\chi) \in E'(\delta,H)$, such that 
   $$b_\xi ( \chi) \xi(Q, \delta_ \chi, \eta) \circ (b_A (\chi) A(Q, P,\delta_ \chi))=
\xi(P,
\delta  _
\chi, \theta(\chi) ). $$ Let us show that the map 
$\chi \mapsto \theta(\chi)$ is polynomial in  $\chi\in X(M)_\si$.  Let $ \theta(\chi)_x , x \in {\mathcal W}_M^G$ be the components of  $ \theta(\chi)$. 
First, let us prove: 
\ber
\label{toute}  For all  $e\in E$, $\langle \theta(\chi)_1, e\rangle   $  is polynomial in  $\chi\in X(M)_\si$.\eer 
Let   $J$ be  a compact open subgroup of $K$ which has  a  $\si$-factorization for $(P,P^-)$ and such that $e$ is fixed by $J_M$. Then  one has  from  (\ref{xivintbis})
: $$\langle \theta (\chi)_1,e\rangle   = \mathrm{Vol} ((K\cap P)J_H)^{-1} \langle  \xi(Q, \delta _\chi,  \theta(\chi)),v _{e, \delta_\chi }^{P,J}\rangle  $$
$$=   \mathrm{Vol} ((K\cap P)J_H)^{-1} b_\xi ( \chi) \langle \xi(Q, \delta_ \chi, \eta) , b_A (\chi) A(Q, P,\delta_ \chi) v_{e, \delta_\chi}^{P,J} \rangle .$$
It follows from (\ref{vsi}), that  the restriction to $K$ of $v_{e, \delta_\chi}^{P,J}$   is independent of $\chi$.
Hence, from the properties of $b_A $,   one sees that $\chi \to v(\chi):=b_A(\chi) A(Q, P,\delta_ \chi) v_{e, \delta_\chi }^{P,J} $  is polynomial in the compact realization. Hence (\ref{toute}) follows.

Let $x \in {\mathcal W}_M^G$. One applies (\ref{toute} ) to $ (i^G_P\delta_\chi)' (x) \xi( P, \delta_\chi, \eta)$ by changing  $\si$ to $\si_x$.  One concludes that  the map 
$\chi \to \theta(\chi)$ is polynomial in  $\chi\in X(M)_\si .$ 
 Then $$B(Q,P, \si_\chi)\eta:= (b_\xi b_A)^{-1}(\chi) \theta(\chi) $$
 satisfies the required properties.  \end{proof}
  
\section{ Main Theorems} 
\subsection{}
Let us prove
\ber \label{mxh}Let $P=MU$ be a $\si$-parabolic subgroup of $G$ and let $A_\si$ be a maximal $\si$-split torus of $M$.  If   $x, x' \in G$ are  $A_\si $-good and  $PxH=Px'H$ then $ x '= mxh$ with $m \in M$.\eer
One has $x'= muxh$ with $m \in M$, $u\in U$, $h\in H$.  
 As $x'$ is $A_\si$-good, $x'^{-1}. M$ is $\si$-stable,  which implies that $x^{-1}. u^{-1}.M$ is $\si$-stable. 
Let $M'=x^{-1} .M$ which contains $x^{-1} A_\si$. Then $ M'$ is the  $\si$-stable Levi subgroup of the $\si$-parabolic subgroup $P':= x^{-1}.P$ (cf. Lemma \ref{sixP}). Hence, as 
$ x^{-1}. (u^{-1}.M )\subset P'$, one has $ x^{-1}. (u^{-1}.M ) = x^{-1} .M$, which implies that $u^{-1}.M= M$ . Hence,  for all $m \in M$, $u^{-1}mu \in M$  which implies  $ m^{-1} u^{-1} mu \in M$. Then $ m^{-1} u^{-1} mu $ is element of $U\cap M$ hence is equal to 1. Hence $u$ commutes with every element of  $M$. This is possible only if $u=1$. Hence $x'=m xh$, which proves  (\ref{mxh})
\begin{lem} \label{restsup}
(i) Let $P=MU, R=LV\subset P $ be two  $\si$-parabolic subgroups of $G$. Let $Q$ be equal to $R\cap M$, which is a $\si$-parabolic subgroup of $M$ with $\si$-stable Levi subgroup equal to $L$. Let $A_\si$ be a maximal $\si$-split torus of  $M$. If  $x \in M$ and  $\Omega = Q x(M\cap H) $ is open in $M$ then  $RxH $ is open in $G$.\\
(ii) Let $x, x'\in M$ which are $A_\si $-good. If $RxH=Rx'H$, one has $Q x (M\cap H)= Q x'(M\cap H)$. \\
(iii) Let $(\delta,E)$ be a smooth representation of $L$ and let $\xi$ be an $H$-form on $i^G_RE$.  If $\mathrm{Supp} (\xi)\subset G$ has an empty interior, the  same is true for the support of  $\cc{r}_M \xi\in (i^M_Q E)'$.
\end{lem}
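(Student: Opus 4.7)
The plan is to handle (i) and (ii) together via the decomposition $P=MU$ (exploiting that $P$ being $\si$-parabolic forces $P\cap H=M\cap H$), and to deduce (iii) by a contrapositive argument together with a translation reducing to the orbit of $1$ in a $\si_x$-picture.

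For (i) and (ii): since $P$ is $\si$-parabolic, $U\cap H\subset U\cap\si(U)=U\cap U^{-}=\{1\}$, hence $P\cap H=M\cap H$. Together with $PH$ open in $G$ and the free right $H$-action on $PH$ modulo $P\cap H$, this implies that an $(R,H)$-invariant subset of $PxH=PH$ (recall $x\in M\subset P$) is open in $G$ iff its intersection with $P$ is open in $P$. Using $R=QU$ and that both $x\in M$ and $M\cap H$ normalize $U$, a direct rearrangement gives
\[ RxH\cap P \;=\; Rx(M\cap H) \;=\; Qx(M\cap H)\cdot U; \]
via the product homeomorphism $P\cong M\times U$ this is open in $P$ iff $Qx(M\cap H)$ is open in $M$, which is (i). For (ii) the same decomposition yields $Qx(M\cap H)U=Qx'(M\cap H)U$ in $P$, and projection along $P\to P/U=M$ gives $Qx(M\cap H)=Qx'(M\cap H)$.

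For (iii) I argue by contrapositive. Assuming $\mathrm{Supp}(\cc{r}_M\xi)\subset M$ has non-empty interior, this closed $(Q,M\cap H)$-invariant set contains a full open $(Q,M\cap H)$-orbit $Qx(M\cap H)$, and by (\ref{WG}) applied inside $M$ we may take $x\in M$ to be $A_\si$-good, whence $RxH$ is open in $G$ by (i). Passing to the $H'$-form $\xi':=\rho(x^{-1})\xi$ with $H'=x.H$, one has $\mathrm{Supp}(\xi')=\mathrm{Supp}(\xi)\cdot x^{-1}$ still with empty interior, and Lemma \ref{sixP} shows that $P,Q,R$ remain $\si_x$-parabolic of $G,M,G$ respectively; this reduces the problem to showing that if such a $\xi'$ satisfies $\cc{r}_M\xi'\ne 0$, then $\mathrm{Supp}(\xi')\supset RH'$. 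Choose $e\in i^M_QE$ supported in $Q(M\cap H')$ with $\langle\cc{r}_M\xi',e\rangle\ne 0$ and, via Proposition \ref{KT}, a sufficiently small compact open $J$ with $\si_x$-factorization such that $e$ is $J_M$-fixed; formula (\ref{xivintbis}) then yields
\[ \langle\xi',v_{e,i^M_Q\delta}^{P,J}\rangle \;=\; \mathrm{Vol}((K\cap P)J_{H'})\,\langle\cc{r}_M\xi',e\rangle \;\ne\; 0. \]
The algebraic identity $qhu=q(huh^{-1})h$ with $huh^{-1}\in U$ gives the key inclusion $Q(M\cap H')U\subset RH'$, and for $J$ small the $\si_x$-factorization $J=J_{H'}J_P$ of Proposition \ref{KT}(iii) allows the right-$J$ thickening of $Q(M\cap H')U$ to lie inside the open orbit $RH'$ modulo the right $H'$-action. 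Hence $\mathrm{Supp}(\xi')$ meets $\mathrm{Supp}(V)$ inside $RH'$, and since $RH'$ is a single $(R,H')$-orbit this forces $RH'\subset\mathrm{Supp}(\xi')$, producing a non-empty interior---the desired contradiction.

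The main technical hurdle is this last support-control step: verifying that for $J$ sufficiently small the support of $V=v_{e,i^M_Q\delta}^{P,J}$ really lies inside the open orbit $RH'$ up to the right $H'$-action. This is exactly where the $\si_x$-factorization of $J$ is decisive, letting one absorb the $H'$-part into $H'$ and analyze only a $P$-component that is already left-$R$-invariant; once this is in hand, the rest is a straightforward repackaging of (\ref{xivintbis}) and Lemma \ref{sixP}.
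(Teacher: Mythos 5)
Your parts (i) and (ii) are correct. For (i) you use the same mechanism as the paper (openness of the multiplication map $P\times H\to PH$ together with $Rx(M\cap H)=Qx(M\cap H)U$); for (ii) your route through $RxH\cap P=Qx(M\cap H)U$ followed by projection modulo $U$ is a legitimate alternative to the paper's appeal to (\ref{mxh}), and it does not even use that $x,x'$ are $A_\si$-good --- the key input is $P\cap H=M\cap H$, which holds since $p\in P\cap H$ gives $p=\si(p)\in P\cap\si(P)=M$.

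Part (iii) is where the gap is. The contrapositive reduction (a non-empty interior of $\mathrm{Supp}(\cc{r}_M\xi)$ forces it to contain an open orbit $Qx(M\cap H)$ with $x$ an $A_\si$-good representative, then translate by $x$) is sound, but the decisive step --- that $\mathrm{Supp}\bigl(v^{P,J}_{e,i^M_Q\delta}\bigr)$ is contained in $RH'$ for $J$ small --- is exactly the one you leave unproved, and the reason you offer for it is not the right one. The $\si_x$-factorization $J=J_{H'}J_P$ does not do this job: $J_P$ is not left $R$-invariant, and $Q(M\cap H')U\,J_P\subset R(M\cap H')J_P$ is not visibly inside $RH'$ because $M\cap H'$ does not normalize $R$. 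What actually makes the step work is compactness: $\mathrm{Supp}(e)\subset Q\Omega_0$ for some compact $\Omega_0\subset M\cap H'$, so $\mathrm{Supp}(v)\subset Q\Omega_0UJ=R\,\Omega_0 J$, and since $\Omega_0$ is a compact subset of the open set $RH'$ one may choose $J$ (after $e$) so small that $\Omega_0J\subset RH'$. With that supplied your argument closes, but the entire detour through (\ref{xivintbis}) is unnecessary: Proposition \ref{defrM} (iii) gives the transitivity $\cc{r}_L\xi=\cc{r}_L(\cc{r}_M\xi)$ outright, and that is the paper's one-line proof --- emptiness of the interior of $\mathrm{Supp}(\xi)$ forces $\cc{r}_L(\rho(x)\xi)=0$ because $RxH$ is open by (i), hence $\cc{r}_L(\cc{r}_M(\rho(x)\xi))=0$ and $x\notin\mathrm{Supp}(\cc{r}_M\xi)$. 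You are in effect re-proving a special case of that transitivity by hand, and incompletely.
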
\begin{proof}
(i) The first  claim on $Q$ is clear as $Q\cap \si(Q)= R\cap \si(R) \cap M \cap \si(M)= L$. 
 
  The map  $ P \times H  \to PH$, $(p,h)\mapsto ph$,  is open  (cf. [BD], Lemma 3.1)   and  $U\Omega $ is open in  $P$. Hence  $U\Omega  H $ is open  in  $PH$ and also in $G$, as $PH$ is open in $G$.  But $U \Omega  H\subset RxH$. It implies that $RxH$ has a nonempty interior, hence is open.  
\\(ii) By (\ref{mxh}) applied to $R$, one has $ x'= lxh $ with $ l\in L, h\in H$. Hence $h$ is element of $M\cap H$ and (ii) follows.
\\(iii) Let us prove that with our hypothesis, 1 does not belong to the support of $\cc{r}_{M}\xi$. The hypothesis implies that $\cc{r_L}\xi =0$.
From Proposition \ref{defrM} (iii), one deduces that $\cc{r}_L{\xi}= \cc{r}_L (\cc{r}_{M} \xi))= 0$, which is equivalent to  1 is not in the support of $\cc{r}_{M} \xi$. This proves our claim. 

Now let $x$ as in (i). Changing of representative in $Q x (H\cap H)$, we may assume (cf (\ref{WG})) that $x$ is $A_\si$-good . Then $P$ is a $\si_x$-parabolic subgroup of $G$  (cf. Lemma \ref{sixP}). Then one applies the first part of the proof to $(i^G_R\delta)'(x) \xi$, which is fixed by $ x.H$, replacing $\si$ by $\si_x$. This implies that $x $ is not element of the support of $\cc{r}_M\xi$. 
\end{proof}

\begin{lem} Let $P=MU$, $R=LV$ be two $\si$-parabolic subgroups of $G$ with $R \subset P$. Let $Q= M\cap R$, which is a $\si$-parabolic subgroup of $M$.  Let $A$  be a  $\si$-stable maximally $\si$-split maximal split torus of $L$. We choose the set  $ {\mathcal W}^G_M$ such that its elements  are $A_\si$-good. Then for all $y  \in {\mathcal W}^G_M$, $y^{-1}.M$ is $\si$-stable,  $y^{-1}.Q$ is a $\si$-parabolic subgroup of $y^{-1}.M$ with  $\si$-stable Levi subgroup equal to $y^{-1}.L$. For all  $y \in {\mathcal W}^G_M$,  we choose ${ \mathcal W}^{y^{-1}.M} _{y^{-1}. L}$ such  that its elements are $y^{-1}. A_\si$-good. Then    $\mathcal{W}:=\cup_{y \in  \mathcal{W}^G_{M} }y\mathcal{W}^{y^{-1}.M} _{y^{-1}. L}$ is made of $A_\si$-good elements, the union being disjoint, and  may  be taken as  $\mathcal{W}^G_{L}$. 
\end{lem}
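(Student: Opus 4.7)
The plan is to establish the three assertions in the statement in sequence: the structural claims about $y^{-1}.Q$ in $y^{-1}.M$, the $A_\si$-goodness of the products $yw$, and the parametrization of the open $(R,H)$-double cosets by $\mathcal{W}$.

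For the structural assertion, I would first observe that since $R=LV$ is a $\si$-parabolic of $G$ with $\si$-stable Levi $L$, the subgroup $L$ contains a maximal $\si$-split torus of $G$; since $A$ is a maximally $\si$-split maximal split torus of $L$, the torus $A_\si$ is in fact a maximal $\si$-split torus of $G$ (sitting inside $L \subset M$). Therefore Lemma~\ref{sixP}(i) applies to the $A_\si$-good element $y \in \mathcal{W}^G_M$ simultaneously with $P$ (yielding that $y^{-1}.P$ is $\si$-parabolic with $\si$-stable Levi $y^{-1}.M$) and with $R$ (yielding $y^{-1}.R$ $\si$-parabolic with $\si$-stable Levi $y^{-1}.L$). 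Intersecting with $y^{-1}.M$, one obtains $y^{-1}.Q = y^{-1}.R \cap y^{-1}.M$ as a $\si$-parabolic of $y^{-1}.M$ with $\si$-stable Levi $y^{-1}.L$. The $A_\si$-goodness of $yw$ is then immediate: $(yw)^{-1}.A_\si = w^{-1}.(y^{-1}.A_\si)$ is $\si$-split because $w$ is $y^{-1}.A_\si$-good.

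For the main claim, the approach is to partition the open $(R,H)$-double cosets according to their containing open $(P,H)$-coset and then to parametrize inside each $PyH$ separately. Any open $(R,H)$-coset $RgH$ satisfies $RgH \subset PgH$ (since $R \subset P$), and $PgH$ is then open as a union of translates of $RgH$; hence every open $(R,H)$-coset lies in some $PyH$ with $y \in \mathcal{W}^G_M$. To parametrize $R \backslash PyH / H$, the key step will be to show $P \cap y.H = M \cap y.H$: an element $k = mu \in P \cap y.H$ is fixed by $\si_y$, and Lemma~\ref{sixP}(iii) gives $\si_y(M)=M$, $\si_y(U)=U^-$, forcing $u=1$ by uniqueness of the $P=MU$ decomposition applied to $k = mu = \si_y(m)\si_y(u) \in M U^-$. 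A standard bijection then produces an identification $R \backslash PyH / H \cong Q \backslash M / (M \cap y.H)$ which, after conjugation by $y^{-1}$, becomes $(y^{-1}.Q) \backslash y^{-1}.M / (y^{-1}.M \cap H)$; the open cosets on the right are parametrized by $\mathcal{W}^{y^{-1}.M}_{y^{-1}.L}$ per the construction (\ref{WG}), applied inside $y^{-1}.M$ with its involution $\si$ and fixed-point set $y^{-1}.M \cap H$. Unwinding the chain of identifications, a representative $w$ on the right corresponds to the open double coset $R \cdot yw \cdot H$ in $G$.

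Disjointness of the union $\mathcal{W}$ is then automatic: for distinct $y_1, y_2 \in \mathcal{W}^G_M$, the elements $y_i w_i$ lie in $M y_i \subset P y_i$, so their $(P,H)$-cosets are $Py_iH$, which are distinct by the choice of $\mathcal{W}^G_M$, and hence their $(R,H)$-cosets are a fortiori distinct; for the same $y$ and distinct $w_1, w_2$, the bijection above separates them. I expect the main obstacle to be the verification of the identification $P \cap y.H = M \cap y.H$ and the careful bookkeeping that transports the $(Q, M\cap y.H)$-double coset structure on $M$ into the $(y^{-1}.Q, y^{-1}.M \cap H)$-structure on $y^{-1}.M$; once this is in place, everything else will be formal.
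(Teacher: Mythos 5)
Your overall strategy coincides with the paper's: partition the open $(R,H)$-cosets according to the open $(P,H)$-coset $PyH$ containing them, and reduce the classification inside $PyH$ to the classification of open $(y^{-1}.Q,\,y^{-1}.M\cap H)$-cosets in $y^{-1}.M$. Your treatment of the structural assertions, of the $A_\si$-goodness of the products $yw$, and your proof that $P\cap y.H=M\cap y.H$ (which is in fact already Lemma \ref{sixP} (iv)) are all fine, as is the disjointness argument.

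The gap is in the sentence ``the open cosets on the right are parametrized by $\mathcal{W}^{y^{-1}.M}_{y^{-1}.L}$'' combined with ``everything else will be formal.'' Your identification $R\backslash PyH/H\cong Q\backslash M/(M\cap y.H)$ is, as constructed, a purely set-theoretic bijection between double coset spaces of two different groups, and you use it in both directions: that a $w$ on the right gives an \emph{open} coset $RywH$ (this direction is harmless, since $yw$ is $A_\si$-good, or by Lemma \ref{restsup} (i)), but also, for surjectivity, that every \emph{open} $(R,H)$-coset inside $PyH$ comes from an \emph{open} coset on the right. This converse --- if $RmyH$ is open in $G$ then the corresponding $(y^{-1}.Q,\,y^{-1}.M\cap H)$-coset is open in $y^{-1}.M$ --- is nowhere justified in your sketch and is not formal; without it, $\mathcal{W}$ is only shown to represent \emph{some} open $(R,H)$-cosets. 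It can be repaired in two ways. The paper's route avoids the converse entirely: starting from an open $RxH$, one chooses $x$ to be $A_\si$-good (possible by (\ref{WG}) applied to $R$), uses the rigidity statement (\ref{mxh}) (namely $PxH=PyH$ with both representatives $A_\si$-good forces $x=myh$, $m\in M$) to move $x$ into $My$ while keeping it $A_\si$-good, and then the goodness of the new representative --- not the openness of $RxH$ --- yields that the coset downstairs is open, hence represented by some $w$. Alternatively, one can prove the two-way openness equivalence directly: the map $P\times H\to PyH$ is open, the preimage of $Rp_0yH$ is $Rp_0(P\cap y.H)\times H$, and using $P\cap y.H=M\cap y.H$, $R=QU$ and the homeomorphism $M\times U\to P$ one gets that $Rp_0yH$ is open in $G$ if and only if $Qm_0(M\cap y.H)$ is open in $M$. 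Either supplement closes the argument; as written, the surjectivity half of the parametrization is not established.
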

\begin{proof}
 We will show that $\mathcal{W}$ is a set of representatives of the  open $(R,H)$-double cosets in $G$. 
 
Let $x=yz, x'=y'z' \in \mathcal{W}$ with $y,y'\in  \mathcal{W}^G_{M} , z\in {\mathcal W}^{y^{-1}.M} _{y^{-1}. L}, z'\in\mathcal{W}^{y'^{-1}.M} _{y'^{-1}. L}$
Then $x$, $x'$ are $A_\si $-good. Hence $ RxH$ and $Rx'H$ are open. Moreover  $PxH=PyH$, $Px'H= Py'H$ as $z\in y^{-1}.M$, $z'\in y'^{-1}.M$. Let us show that:
\ber \label{rzh} The equality $RxH= Rx'H$ implies $y=y' $ and $z=z'$  . \eer Our hypothesis implies $PxH=Px'H$. By what we have just seen, it implies $PyH=Py'H$. Hence one has $y=y'$. 

First, let us assume  that $y=y'=1$. 
Then  $z,z'\in M$ are $A_\si$-good, $RzH= Rz'H$, and, by (\ref{mxh}),  $z'=  l zh$ where $l \in L $  and $h\in H$. Hence one has $h \in M\cap H$ . If $Q=R \cap M$,  $z, z'$ determine the same $(Q, M\cap H)$-double coset. This implies that $z'=z$. Hence this proves (\ref{rzh}) if   $y=y'=1$. 

For  the general case,  we apply  this to $y^{-1}.L$ and $ y^{-1}.R$ in order  to prove (\ref{rzh}). 
So $\mathcal{W}$ is a set of representatives of certain open $(R, H)$-cosets. 

Reciprocally, let $RxH$ be an open  $(R,H)$-double coset in $G$. We may assume that $x$ is $A_\si$-good. Let $Q= R\cap M$. Let us show that there is  $x'\in \mathcal{W}$ with $  RxH= Rx'H$. First there exists  $y \in  \mathcal{W}^G_{M} $ such that $PxH=PyH$.   Let us assume that $y=1$. Then, by (\ref{mxh}), one has $x= m h$ with $m\in M$, $h\in H$. Hence one can assume $x\in M$. As $x\in M$ is $A_\si$-good, $Qx(H\cap M)$ is open in $M$ and $Qx(H\cap M)=Qx'(H\cap M)$ with $x' \in \mathcal{W}^M_{L}$. Then $RxH= Rx'H$ as wanted. In general one changes $P$ to $P'= y^{-1}.P$, 
$R$ to $R'= y^{-1}.R$. Hence one has $ P'y^{-1}xH= P'H$, as $PxH=PyH$.  Then one uses our last result.  
\end{proof}

\subsection{ $\si$-exponents of $j_{P^-} \xi$. }
\begin{defi}\label{expo} Let $(\pi, V)$ be a smooth representation of $G$ of finite length. Then it is a finite direct sum of generalized eigenspaces under  $A_{G, \si}:= (A_G)_\si$.  If $ \nu $ is a character of $A_{G, \si}$, let us denote  the corresponding generalized eigenspace of $V$ by 
$V(\nu)$and  the restriction to $V(\nu)$  of any element $\xi$ of $V'$ by $\xi(\nu)$, which might be extended to an element of $V'$ , which is zero on the other generalized eigenspaces also denoted  $\xi(\nu)$. If $Ê\xi\in V'^H$, $\mathrm{Exp}( \xi)$ will denote the subset of $\nu$ such that $\xi(\nu)$ is nonzero.  The elements of $\mathrm{Exp}(\xi)$ are called the $A_{G, \si}$-exponents or $\si$-exponents of $\xi$. \index{$\mathrm{Exp}(\xi)$, set of $\si$-exponents of $\xi$} \end{defi}
 
\begin{theo} \label{2}
Let $P=MU$ (resp.,  $P'=M'U'$)  be a $\si$-parabolic subgroup of $G$  and let  $A$ (resp., $A'$) be a maximally $\si$-split $\si$-stable maximal split torus of $M$ (resp., $M'$).  Let $(P'\backslash G/P)_\si$ be the set of $(P',P)$-double cosets in $G$ having a representative $w$ such that $w.A=A'$, $w.A_\si=A'_\si$. We denote   a set of representatives of $(P'\backslash G/P)_\si$ with this property by $W(M'\backslash G /M)_\si$  \index{$W(M'\backslash G /M)_\si$} and we assume that the set $W(M'\backslash G /M)$ defined in (\ref{wm/g/m}) contains $W(M'\backslash G /M)_\si$.

Let $ (\delta,E)$ be an irreducible smooth represention of $M$, let $\eta$ be an element  of the space  $E'(\delta,H)$ (cf. (\ref{edh}) for the definition of this space).   Let $O'$ be as in Theorem \ref{xipw}, with $X=X(M)_\si$.  In particular it is open and dense in $X(M)_\si$. Let $O_0$ be the open dense subbset of $X(M)_\si$ from Proposition \ref{defxi} (ii) for $X=X(M)_\si$.  Let $\chi$ be an element of the open and dense subset $O '\cap O_0$ of  $X(M)_\si$ and let $ \xi= \xi(P, \delta_\chi, \eta)$.\\
(i)   Let $w\in W(M'\backslash G/M) $. Then   if  $\xi_{P'^-,w}$ is nonzero ,  one has $w\in W(M'\backslash G/M)_\si$.\\
(ii) If $w\in W(M'\backslash G/M)_\si$, then
$Q=M \cap w^{-1} .P'$ is a $\si$-parabolic subgroup of $M$  and    $L= M\cap w^{-1}.M'$ is its $\si$-stable Levi subgroup. We  introduce $\mathcal{W}^G_L$ as  in the preceding lemma.\\
 If $y\in \mathcal{W}^G_M$  and   $z \in \mathcal{W}^{ y^{-1}. M}_{y^{-1}.L}$,   we define $z':= yzy^{-1}\in M $ and $x= yz$.
 
  Then  $\delta '(z ') \eta_y$ is $M\cap x.H$-invariant and $Q$ is a $\si_x$-parabolic subgroup of $M$.  Hence
 $j_{Q^{-}} \delta'(z') \eta_y $ is defined.  \\
 (iii)  
One writes  $$E_{Q}= E^+_Q \oplus E^0_Q,$$ 
where $E^+_Q$ is the sum over  $ y \in \mathcal{W}^G_{M}$ and $ z \in \mathcal{W}^{ y^{-1}. M}_{y^{-1}.L} $   of the $A_{L,\si}$-weights space corresponding to the set of exponents   of $j_{Q^{-}} \delta'(z') \eta_y $, where $E^0_Q$ is the sum over the other weights. 

Then, for $\chi$ belonging to  an open dense subset, $O''$ of $O_0\cap O'$, hence also open and  dense in $X(M)_\si$,  the $A_{M ', \si}$-exponents of  $ \xi_{P'^-, w}$  are of the form  $(w{ \chi^+ })_{\vert A_{M', \si}}$ where $\chi^+ $ is an  $A_{M', \si}$-eigenvalue   of  $(E^+_Q)_\chi$ .
\end{theo}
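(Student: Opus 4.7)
For part (i), the plan is to apply Proposition \ref{mwpsi} directly to the weakly holomorphic family $\chi \mapsto \xi(P,\delta_\chi,\eta)$ provided by Proposition \ref{defxi}(iii), with the complex subtorus $X = X(M)_\sigma$. Non-vanishing of $(\xi_\chi)_{P'^-,w}$ at some $\chi \in O \cap O'$ is precisely the hypothesis of that proposition; its conclusions (i) and (iv) give, after possibly altering $w$ inside its $(P',P)$-double coset, $w.A = A'$ and $w.A_\sigma = A'_\sigma$, which is exactly the defining property of $W(M'\backslash G/M)_\sigma$.

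For part (ii), once $w$ has been chosen as in (i), Proposition \ref{mwpsi}(v) asserts that $Q := M \cap w^{-1}.P'$ is a $\sigma$-parabolic of $M$ with $\sigma$-stable Levi $L = M \cap w^{-1}.M'$. I would then invoke the preceding lemma to write $\mathcal{W}^G_L = \bigcup_{y \in \mathcal{W}^G_M} y\,\mathcal{W}^{y^{-1}.M}_{y^{-1}.L}$. The assertions that $Q$ is a $\sigma_x$-parabolic of $M$ and that $\delta'(z')\eta_y$ is $(M \cap x.H)$-invariant (with $z'=yzy^{-1}$, $x=yz$) are then a direct application of Lemma \ref{sixP} with the ambient group being $M$ rather than $G$; this is routine and will not be the difficult part.

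Part (iii) is the heart of the statement. The plan is to start from the identity of Theorem \ref{xipw},
\[
\xi_{P'^-,w} \;=\; \check{r}_{M'}\circ {}^tA(w.\tilde P_w, \tilde P'_w, w j_Q \delta_\chi)\circ \lambda(w)\circ \check{j}_{Q^-}\circ \xi,
\]
and to analyze the form $\eta := \check{j}_{Q^-}\xi$ on $i^G_{P_{Q^-}}E_Q$ first. For every $x = yz \in \mathcal{W}^G_L$ as in (ii), I would compute $\check{r}_M\bigl(\rho(x)\eta\bigr)$ by applying Lemma \ref{restri} (which gives $j_{Q^-}(\check{r}_M \rho(x)\xi)$) together with $\check{r}_M\rho(y)\xi = \eta_y$ coming from the defining property of $\xi(P,\delta_\chi,\eta)$ in Proposition \ref{defxi}; a short manipulation absorbing the extra factor $z$ produces $j_{Q^-}(\delta'(z')\eta_y)$. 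Hence the restriction of $\eta$ to the open $(P_{Q^-},H)$-double cosets is entirely controlled by the finite collection $\{j_{Q^-}\delta'(z')\eta_y\}$, whose $A_{L,\sigma}$-weights are by definition those appearing in $E^+_Q$. Then $\lambda(w)$ transports $A_L$-weights to $A_{M',\sigma} = w.A_{L,\sigma}$-weights via $w$, the transpose intertwining operator ${}^tA$ preserves central characters and hence $A_{M',\sigma}$-weights, and $\check{r}_{M'}$ only reads off values at $1$; consequently every exponent that does appear has the required form $(w\chi^+)_{|A_{M',\sigma}}$.

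The delicate step — and the main obstacle — is to rule out contributions from weights not in $E^+_Q$, i.e.\ from $E^0_Q$. Here the plan is to exploit the second key Lemma \ref{416}: the corresponding piece of $\check{j}_{Q^-}\xi$ is an $H$-form on $i^G_{P_{Q^-}}E^0_Q$ whose restriction to the open $(P_{Q^-},H)$-cosets vanishes by the above computation (since only the $E^+_Q$ weights occur in the $j_{Q^-}\delta'(z')\eta_y$). By Proposition \ref{defxi}(ii), for $\chi$ in a suitable open dense $O''\subset O_0\cap O'$ any $H$-form vanishing on open double cosets is zero, so this $E^0_Q$-piece of $\eta$ is identically zero on an open dense set. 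Lemma \ref{416} then ensures that the successive application of $\lambda(w)$, the intertwining integral, and $\check{r}_{M'}$ keeps this vanishing. This is the step that forces the generic hypothesis on $\chi$ and where the empty-interior-support machinery of the key lemmas is essential.
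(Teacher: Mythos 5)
Your treatment of (i) and (ii) coincides with the paper's (Proposition \ref{mwpsi} for (i); Lemma \ref{sixP} inside $M$ and the decomposition $\mathcal{W}^G_L=\bigcup_{y}y\,\mathcal{W}^{y^{-1}.M}_{y^{-1}.L}$ for (ii)), and the architecture of (iii) — splitting $E_Q=E^+_Q\oplus E^0_Q$, computing $\cc{r}_L\bigl(\rho(x)(\cc{j}_{Q^-}\circ\xi)\bigr)=\chi(z')^{-1}j_{Q^-}(\delta'(z')\eta_y)$ via Lemma \ref{restri}, and killing the $E^0_Q$-piece generically — is also the paper's. But one step of (iii) fails as you state it.

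You apply Proposition \ref{defxi}(ii) directly to the $E^0_Q$-component of $\cc{j}_{Q^-}\circ\xi$, an $H$-form on $i^G_{P_{Q^-}}(E^0_Q)_\chi$, to conclude it vanishes identically for generic $\chi$. The hypothesis of Proposition \ref{defxi}, however, requires the Lie algebra of the parameter torus to contain a strictly dominant element for the parabolic in question. Here the parameters run over $\{\chi_{|L}:\chi\in X(M)_\si\}$, whose Lie algebra is $(\a'_M)^{-\si}$; every such element is orthogonal to the roots of $A_L$ in the Lie algebra of $Q^-$ (these vanish on $\a_M$), so no element is strictly $P_{Q^-}$-dominant unless $Q=M$. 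Hence at this stage one may only conclude that the $E^0_Q$-piece vanishes on all open $(P_{Q^-},H)$-cosets, i.e.\ that its support has empty interior — not that it is zero. This is precisely why the two key lemmas are indispensable: Lemma \ref{416} propagates the empty-interior-support property through the intertwining integral, Lemma \ref{restsup} propagates it through $\cc{r}_{M'}$, and only then, for the $\si$-parabolic $M'\cap w.P$ of $M'$ and the torus $X'=\{(w\chi)_{|M'\cap w.M}\}$ — which \emph{does} contain strictly dominant elements because $P$ is a $\si$-parabolic of $G$ — does Proposition \ref{defxi}(ii), applied to $M'$ in place of $G$, yield $\xi^0_{P'^-,w}=0$ on an open dense $O''$. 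Your own citation of Lemma \ref{416} after the claimed vanishing would be redundant if the form were already zero; reordering the argument as above is what the paper does and closes the gap.
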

\begin{proof}
(i)  If  $\xi_{P'^-,w}$ is nonzero, one has $w\in W(M'\backslash G/M)_\si$ by Proposition \ref{mwpsi}. This proves (i). 

Let us prove (ii). From (i),  $w^{-1}.P'$ is a $\si$-parabolic subgroup of $G$, as $w$ is $A'_\si$-good, which contains $A$. Hence $Q$ is a $\si$-parabolic subgroup of $M$.\\ Let $x,y,z$ as in the statement of (ii).
The linear form  $\delta'(z') \eta_y$ on $E$  is $M\cap x.H$-invariant as $\eta_y$ is $M\cap y.H$-invariant and $z'y= yz=x$. By construction of $\mathcal{W}^G_M$, $x$ is $A_\si$-good. Hence $Q$, $Q^-$ are opposite  $\si_x$-parabolic subgroup of $M$ (cf. Lemma \ref{sixP}). This  proves (ii). 

 Let us prove (iii). One defines projections   $ p^+_Q$ and $p^0_Q$ of $E_{Q}$ onto $E^+_Q$ and $E^0_Q$ corresponding to the decomposition $E_Q= E_Q^+\oplus E_Q^0$. 
This defines, by induction,  projections on the space $(i^G_{P_{Q^-}} (E_\chi) _{Q} )'$ that we will denote in the same way. Notice that $P_{Q^-}$ is equal to $\tilde{P}_w$.
With these conventions, we define  for $\chi\in O_0\cap O'$:
$$\xi^+_Q= 
  p^+_Q (\cc{j} _{Q^-} \circ \xi).$$
  Similarly one defines  $\xi^0_Q $. Then
$$\xi^+_{P'^-,w} := \cc{r}_{M'} \circ\>  ^tA(w.  \tilde{P}_w,  \tilde{P}'_w, w. j_{Q} \delta_\chi) \circ  \lambda (w) \xi^+_Q . $$  is  a well defined element of $V_w'$ where $V_w:= i^{M'}_{M'\cap w.P} (wE^+_\chi)$. Similarly one defines  
$\xi^0_{P'^-,w}$ 
 so that  one has: $$\xi_{P'^-,w}= \xi^+_{P'^-,w}+\xi^0_{P'^-,w}.$$
  We will prove that $\xi^0_{P'^-,w}=0$. 
  We first study the restriction of $\xi^0_Q$ on the  open $(M'\cap w.P,M'\cap H)$-orbits. From Lemma \ref{restri}, one sees that for $x \in \mm{W}^G_L$: 
  $$\cc{r}_{L} ( (i^G_{P_{Q^-}}(\delta_\chi)_Q)'  (x)  \cc{j}_{Q^-} \circ \xi)= j_{Q^{-}}( \cc{r}_M ((i^G_P\delta_\chi)' (x)   \xi   )) .$$ 
  But  if $x= yz$ with  $y\in \mathcal{W}^G_M$  and   $z \in \mathcal{W}^{ y^{-1}. M}_{y^{-1}.L}$, one has $x= z'y  $ with $z'=y z y^{-1} \in M $ and, with the notations of (\ref {fxiomega}), one has:$$ \cc{r}_M((i^G_P \delta_\chi)' (x)    \xi)= f_\xi (x)=  \delta'_\chi(z')  f_\xi(y)= \chi(z')^{-1}\delta'(z') \eta_y$$
  Hence, one has: $$\cc{r}_{L} (  (i^G_{P_{Q^-}}(\delta_\chi)_Q)'  (x)   \cc{j}_{Q^-} \circ \xi)=\chi(z')^{-1} j_{Q^-}(\delta'(z') \eta_y)$$ and :  $$\cc{r}_{L} ( i^G_{P_{Q^-}}(\delta_\chi)_Q)'  (x)  \xi^{0}_Q)= 0.$$ 
From the preceding lemma,   one sees that the support of $\xi^0_Q $ has an empty interior.
  By structural transport the same is true for $ \ll(w) \circ \xi^0_Q$. One can apply Lemma  \ref{416} with $L$ replaced by $w.M$, $M$ replaced by $M'\cap w.M$ as it follows  immediately from the definition of $\tilde{P_w}, \tilde{P'_w}$ in Proposition \ref{betaw}. Then by Lemma \ref{416} and Lemma \ref{restsup}, one sees that the support of $\xi^0_{P'^-,w} $ has an empty interior. Let us see that one  can apply   Proposition \ref{defxi} to $M'$, $M'\cap w.P$,  $w.(A_M)_\si$                                   and $X'=\{(w \chi)_{\vert M' \cap w.M}\vert \chi\in X(M)_\si \}$ instead of $G$, $P$,  $B$ and $X$. As $wA_\si=A'_\si$, one sees that $M'\cap w.P$ is a $\si$-parabolic subgroup of $w.M$ with $M'\cap w.M$ as $\si$-stable Levi subgroup.  Then $X'\subset X(M'\cap w.M)_\si$ and its Lie algebra is equal to $w(\a'_M)_\C$ and if $\ll$ is any strictly  $P$-dominant element  in $\LL(A_M)_\si$, $w.\ll$ is a strictly-$(M'\cap w.P)$ dominant  of $\LL(w.(A_M)_\si$. Then Proposition \ref{defxi} implies that $ \xi^0_{P'^-,w}$ is equal to zero for $\chi$ belonging to an open dense subset, $O''$ of $O_0\cap O'$. Hence one has:
 $$\xi_{P'^-, w}= \xi^+_{P'^-, w}.$$
  As  $\xi^+_{P'^-, w} $ is a linear form on $i^{M'}_{M'\cap w.P} w((E_Q^+)_{\chi_{\vert M\cap w^{-1} M'}})$, (iii) follows. 
  \end{proof}
  An $H$-form $\xi$ on a smooth admissible representation of $G$, $(\pi, V)$  is said to be $H$-cuspidal if $j_{Q^-}\xi= 0$ for all proper $\si$-parabolic subgroups of $G$. We denote  the space of cuspidal $H$-forms on $V$ by $V'^H_{cusp}$.\\
 We define  $$E'(\delta,H)_{cusp}:= \oplus _{x \in \mathcal{W}^G_M } (E')^{M \cap x.H}_{cusp}. \index{$E'(\delta,H)_{cusp}$} $$ 
 \begin{theo} \label{75}
 Let  $ \eta\in E'(\delta,H)_{cusp}$.  Let $w \in W(M'\backslash G /M)_\si$. With the notations of the preceding theorem, let $\chi$ be an element  of the open and dense subset $ O''$ of $X(M)_\si$
   and let  $\xi= \xi(P, \delta_\chi, \eta)$.\\ 
  (i) If $  \xi_{P'^-, w }\not=0$, one has $ M \cap w^{-1}.P'= M$. \\
  (ii) If  $ M \cap w^{-1}.P'= M$, one  defines  $ \mathcal{W}^G_{w.M}:= w \mathcal{W}^G_{M}$ whose elements are $w.A_\si=A'_\si$-good.   One can choose   $\mathcal{W}^{M'}_{w.M}$ such that for    $ y'\in \mathcal{W}^{M'}_{w.M}$, there exists a unique $y \in \mathcal{W}^{G}_{w.M}$ and $h\in M'\cap H$ with $ y'=yh$. \\
 If $ \eta \in E'(w\delta, H)$, $p_{M'}\eta:= (\eta_y) _{y'\in \mathcal{W}^{M'}_{w.M}} $ is an element of $(wE)' (w\delta, M' \cap H)$.
 With these notations
  $$  \xi_{P'^-, w } = \xi (M'\cap w.P,  w \delta_\chi,  p_{M'} B (\tilde{P}'_w,w. P, w \delta_\chi) \eta),$$ where the $B$-matrices are defined relative to $ \mathcal{W}^G_{w.M}$. \\  (iii) Let us assume that $M'=M$ and $A'=A$.  Then $B(P', P, \delta_\chi)\eta$ is an element of $  E'(\delta,H)_{cusp}$. Hence $B(P', P, \delta_\chi)$ restricts to an endomorphism of $  E'(\delta,H)_{cusp}$.
    \end{theo}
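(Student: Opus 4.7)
The proof proceeds in three parts.

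For part (i), I apply Theorem \ref{2}(iii), which expresses $\xi_{P'^-,w}=\xi^+_{P'^-,w}$ as a linear form on $i^{M'}_{M'\cap w.P}(w(E^+_Q)_\chi)$. Recall that $E^+_Q$ is generated by the $A_{L,\sigma}$-weight spaces corresponding to the $A_{L,\sigma}$-exponents of $j_{Q^-}(\delta'(z')\eta_y)$ over indices $y\in\mathcal{W}^G_M$, $z\in\mathcal{W}^{y^{-1}.M}_{y^{-1}.L}$, where $x=yz$ and $z'=yzy^{-1}$. Since $\eta_y\in E'^{M\cap y.H}_{cusp}$ and $z'y=x$, transport of structure (via Lemma \ref{sixP}) makes $\delta'(z')\eta_y$ into an $M\cap x.H$-invariant form that is cuspidal with respect to the involution $\sigma_x$; as $Q$ is a $\sigma_x$-parabolic of $M$ (by Theorem \ref{2}(ii)), cuspidality forces $j_{Q^-}(\delta'(z')\eta_y)=0$ whenever $Q$ is proper in $M$. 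Consequently $E^+_Q=0$ unless $L=M\cap w^{-1}.M'=M$. Since $M$ is $\sigma$-stable, the inclusion $M\subseteq w^{-1}.P'$ automatically gives $M\subseteq w^{-1}.P'^-=\sigma(w^{-1}.P')$, hence $M\subseteq w^{-1}.M'$, and the condition $L=M$ is equivalent to $M\cap w^{-1}.P'=M$.

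For part (ii), under the hypothesis $M\cap w^{-1}.P'=M$, the same $\sigma$-stability argument gives $M\cap w^{-1}.P'^-=M$, so that $j_{M\cap w^{-1}.P'}\delta_\chi=\delta_\chi$, $\cc{j}_{M\cap w^{-1}.P'^-}$ reduces to the identity, and $\tilde{P}_w=P$. Theorem \ref{xipw} collapses to
\[
\xi_{P'^-,w}=\cc{r}_{M'}\circ {}^tA(w.P,\tilde{P}'_w,w\delta_\chi)\circ\lambda(w)\xi(P,\delta_\chi,\eta).
\]
By Proposition \ref{mwpsi}, $w.P$ and $\tilde{P}'_w$ are $\sigma$-parabolic subgroups of $G$ with common $\sigma$-stable Levi $w.M\subseteq M'$. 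Using the transport-of-structure identification $\mathcal{W}^G_{w.M}=w\mathcal{W}^G_M$ to identify $\lambda(w)\xi(P,\delta_\chi,\eta)$ with the analogous Blanc--Delorme form $\xi^G(w.P,w\delta_\chi,\eta)$, the $B$-matrix identity yields
\[
{}^tA(w.P,\tilde{P}'_w,w\delta_\chi)\lambda(w)\xi(P,\delta_\chi,\eta)=\xi^G(\tilde{P}'_w,w\delta_\chi,B(\tilde{P}'_w,w.P,w\delta_\chi)\eta).
\]
Induction in stages $i^G_{\tilde{P}'_w}(w\delta_\chi)=i^G_{P'^-}(i^{M'}_{M'\cap w.P}(w\delta_\chi))$ combined with Proposition \ref{defrM}(iii), together with matching integral formulas on open $(M'\cap w.P,M'\cap H)$-double cosets of $M'$ via the inclusion $\mathcal{W}^{M'}_{w.M}\hookrightarrow\mathcal{W}^G_{w.M}\cdot(M'\cap H)$ described in the statement, give $\cc{r}_{M'}\xi^G(\tilde{P}'_w,w\delta_\chi,\eta')=\xi^{M'}(M'\cap w.P,w\delta_\chi,p_{M'}\eta')$; uniqueness in Proposition \ref{defxi} at the $M'$-level then identifies the two sides.

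For part (iii), set $\eta^B:=B(P',P,\delta_\chi)\eta$. By the defining identity of the $B$-matrix, $\xi(P',\delta_\chi,\eta^B)=\xi(P,\delta_\chi,\eta)\circ A(P,P',\delta_\chi)$. Fix $x\in\mathcal{W}^G_M$ and a proper $\sigma_x$-parabolic $R$ of $M$ with $\sigma_x$-stable Levi $M_R$; it suffices to show $j_{R^{-\sigma_x}}\eta^B_x=0$. The crucial device is the choice $Q:=RU'^-$: using $\sigma_x(U')=U'^-$ (since $P'$ is $\sigma_x$-parabolic by Lemma \ref{sixP}(iv)), one verifies that $Q$ is a $\sigma_x$-parabolic of $G$ with $\sigma_x$-stable Levi $M_R\subsetneq M$, and that the data in Theorem \ref{xipw} at $w=1$ simplify to $\tilde{P}_1=\tilde{P}'_1=R^{-\sigma_x}U'$, so that the intertwining operator is the identity. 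By (i) in the $\sigma_x$-setting applied to the Blanc--Delorme form $\rho(x)\xi(P,\delta_\chi,\eta)$ (which, by transport of structure, is built from the cuspidal data $(\eta_y)_y$ shifted to the $x.H$-setting)---combined with the dimensional obstruction $\dim w.M=\dim M>\dim M_R$ ruling out every component---one obtains $j_{Q^{-\sigma_x}}\rho(x)\xi(P,\delta_\chi,\eta)=0$. By functoriality of Jacquet constant terms under the $G$-intertwiner $A(P,P',\delta_\chi)$, also $j_{Q^{-\sigma_x}}\rho(x)\xi(P',\delta_\chi,\eta^B)=0$. Reading the $w=1$ component and applying Lemma \ref{restri} in the $\sigma_x$-setting:
\[
0=\bigl(j_{Q^{-\sigma_x}}\rho(x)\xi(P',\delta_\chi,\eta^B)\bigr)_{w=1}=\cc{r}_{M_R}\bigl(\cc{j}_{R^{-\sigma_x}}\circ\rho(x)\xi(P',\delta_\chi,\eta^B)\bigr)=j_{R^{-\sigma_x}}\eta^B_x,
\]
proving cuspidality of each component of $\eta^B$.

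The principal obstacle lies in part (ii): the identification $\lambda(w)\xi(P,\delta_\chi,\eta)=\xi^G(w.P,w\delta_\chi,\eta)$ requires care because the two sides are forms with different invariance ($w.H$ versus $H$), so the identification is an equality under transport of structure, after which the $B$-matrix identity (stated for $\sigma$-parabolics with common $\sigma$-stable Levi) must be applied uniformly; and the compatibility $\cc{r}_{M'}\xi^G=\xi^{M'}$ requires an explicit comparison of integral formulas on open double cosets, leveraging the open-coset characterization from Proposition \ref{defxi} to conclude equality from matching restrictions.
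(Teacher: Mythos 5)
Your parts (i) and (ii) follow the paper's own proof essentially verbatim: (i) is exactly the observation that cuspidality of each $\eta_y$ forces $j_{Q^-}\delta'(z')\eta_y=0$ for $Q$ proper, hence $E^+_Q=0$ and $\xi_{P'^-,w}=\xi^+_{P'^-,w}=0$ by Theorem \ref{2}(iii); (ii) is the same chain $\xi_{P'^-,w}=\cc{r}_{M'}\circ{}^tA(w.P,\tilde P'_w,w\delta_\chi)\circ\lambda(w)\xi$, the identification $\lambda(w)\xi(P,\delta_\chi,\eta)=\xi(w.P,w\delta_\chi,\eta)$, the $B$-matrix substitution, and the comparison of both sides on open $(M'\cap w.P,M'\cap H)$-cosets via the uniqueness in Proposition \ref{defxi}. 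Where you genuinely diverge is (iii). The paper simply takes $w=1$ in (ii) to read off $\xi_{P'^-,1}=B(P',P,\delta_\chi)\eta$ and then invokes part (i) for the smaller $\si$-parabolic $RU'$ together with the transitivity of the Kato--Takano constant term (from [D3]) to kill $j_{R^-}$ of it. You instead avoid transitivity: you work directly from the defining identity $\xi(P',\delta_\chi,\eta^B)=\xi(P,\delta_\chi,\eta)\circ A(P,P',\delta_\chi)$, push the vanishing of $\xi_{Q^-}$ (forced by (i) and the dimension count on Levis) through the intertwiner by the asymptotic characterization of the constant term, and then make the clever choice $Q=RU'^{-}$ so that in Theorem \ref{xipw} at $w=1$ one gets $\tilde P_1=\tilde P'_1=R^{-\si_x}U'$ and the intertwining integral degenerates to the identity, after which Lemma \ref{restri} converts $\cc{r}_{M_R}(\cc{j}_{R^{-\si_x}}\circ\,\cdot\,)$ into $j_{R^{-\si_x}}\eta^B_x$. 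This buys you an argument that stays entirely inside the machinery already built in this paper (Theorem \ref{xipw}, Lemma \ref{restri}, Proposition \ref{defccj}) instead of importing the transitivity statement from [D3]; the cost is that you must verify the extra bookkeeping you indeed flag (that $Q$ is a $\si_x$-parabolic, that the whole apparatus applies to $\rho(x)\xi$ in the $\si_x$-setting, and that the genericity set must be further shrunk to accommodate the new pair $(P',Q)$). Both routes rest on the same two pillars, namely part (i) and the impossibility of $M\subset w^{-1}.L_R$ for $\dim L_R<\dim M$, so I regard your proof as correct.
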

  \begin{proof}(i) Let us assume that $M\cap w^{-1} P' \not=M$. From the definition of $ E'(\delta,H)_{cusp}$ and of $E^+_Q$ in Theorem \ref{2} (iii) one will  see that $E^+_Q$ is zero. In fact,   with the notation of Theorem \ref{2} (ii), as $\eta_y$ is $M\cap y.H$ cuspidal, one sees by a direct computation that $\delta'(z') \eta_y$ is $M \cap x.H$-cuspidal and $j_{Q^-} \delta'(z') \eta_y= 0$. This implies  that $E^+_Q=\{0\}$. 
From the preceding theorem one has $  \xi_{P'^-, w } =0$.
\\(ii) Let us assume $ M \cap w^{-1}.P'= M$. Then the $\si$-stable Levi subgroup of $P'_w$ is equal to $w.M$. 
  
  If $y'\in M'$ is $A'_\si$-good,  then $ P'_w y'H$ is open and  $P'_w y'H=P'_wyH$ for some element $y$ of $\mathcal{W}^G_{w.M}$. In particular $y$ is  $A'_\si$-good.
From (\ref{mxh}), one sees that $y'= ly h$ with $ l\in M'\cap w.M$, $h\in H$.  Changing $y' $ to $l^{-1}y'$, one may assume that $y'= yh $ for some $h \in M'\cap H$. This allows us to choose $\mathcal{W}^{M'}_{w.M}$  as in (ii).
 From Theorem \ref{xipw}
  $$  \xi_{P'^-, w } = \cc{r}_{M' }\> ^tA( w.P ,\tilde{P}'_w, w (\delta_\chi)) \ll (w) \xi. $$
  One has  choosen $ \mathcal{W}^G_{w.M}:= w\mathcal{W}^G_{M}$ whose elements are $w. A_\si$-good. 
  Then, one sees that $(wE)'(w\delta,H)_{cusp} = E'(\delta,H)_{cusp}$, as $(E')^{M \cap x.H}_{cusp}= ((wE)')^{(w.M) \cap(wx. H)}_{cusp}$.
  Also one has  $\ll(w) \xi(P, \delta_\chi, \eta)= \xi (w.P, w(\delta_\chi), \eta). $
  Hence  from the definition of $B$-matrices  one has:
  $$ \xi_{P'^-, w } = \cc{r}_{M '} \> ^t\xi (\tilde{P}'_w, w(\delta_\chi), B (\tilde{P}'_w,w. P, w (\delta_\chi))\eta ))$$
   In order  to prove the equality of  the theorem for $\chi$ in an open dense subset of $X(M)_\si$, one can use  Proposition \ref{defxi} applied to $M'$ instead of $G$, $M'\cap w.P'$ instead of $P$ , to the complex torus $X= \{w\chi_{\vert M'\cap w.M}\vert \chi \in X(M)_\si\}$ and to $B= w.(A_M)_\si$. Then it suffices  to prove the equality of  the values  of  $\cc{r}_{M '} \xi (P'_w, w(\delta_\chi), B (P'_w,w. P, w (\delta_\chi)\eta ))$ 
   and $\xi (M'\cap w.P,  w \delta_\chi,  p_{M'} B (P'_w,w. P, w \delta_\chi),\eta)$ at every element, $y'$, of  $\mathcal{W}^{M'}_{w.M}$. This is easily seen from the (\ref{restxi}) and from the fact that $y'= yh$ for an element $y$ of $\mathcal{W}^G_{w.M}$ and $h\in H\cap M'$.
  \\ (iii) We take $w= 1$ in (ii). Hence, one has:
   $$\xi_{P'^-,1}= B(P',P, \delta_\chi) \eta\in V(\delta, H)$$
  From (i) and the transitivity of the constant term, one sees that $\xi_{P'^-, 1} \in E'(\delta,H)_{cusp}$ is $M\cap H$-cuspidal.\\
 \end{proof}
   If $P=MU$ is a $\si$-parabolic subgroup of $G$, let us denote   $(A_M)_\si$ by    $A_{M, \si}$. Let $^+ \a'_{M, \si} $ (resp $^{++} \a'_{M, \si} $)
    be the set of $\ll \in \a_{M,\si}'$ which  are linear combinations of roots of $A_{M, \si}$ in the Lie algebra of $U$ with coefficients greater or equal to zero ( resp., greater than zero).
  \begin{defi} \label{xiR}
  Let $(\pi, V)$ be a finite length   smooth representation of $G$ and $\xi$ an $H$-form on $V$. Then $\xi$ is said to be tempered (resp., discrete)
  if and only if for every $\si$-parabolic subgroup of $G$, $P=MU$,  every  exponent, $\chi$,  of $\xi_{P^-}$  is such that $\mathrm{Re} ( \chi)  $ is an element of 
   $^+ \a'_{M, \si} $ (resp $^{++} \a'_{M, \si} $).
   If $\xi$ is a tempered $H$-form we define its weak constant term  \beq\label{wct} \xi_{P^-}^w= \sum_{ \chi \in \mathrm{Exp} (\xi_{P^-}), \mathrm{Re} (\chi)=0} \xi_{P^-}( \chi)\eeq
    \end{defi}
    Hence a discrete $H$-form is a tempered $H$-form such that its weak constant term is zero for all proper $\si$-parabolic subgroups of $G$.
    Notice that Kato-Takano (cf. [KT2]) showed that, if $\pi$ is irreducible and has a unitary central character,  an $H$-form is discrete if and only if  its generalized coefficients are square integrable modulo the center.
  \begin{lem}
    (i) If $\xi$ is a tempered $H$-form on $V$, then $ \xi_{P^-}^w$ is a tempered $M\cap H$-form on $V_P$.\\
    (ii) If $Q$ is $\si$-parabolic subgroups of $M$ and $R=QU$, one has:
    $$ \xi_{R^-}^w = ( \xi_{P^-}^w ) ^w_{Q^-}$$
    \end{lem}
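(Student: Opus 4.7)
\medskip\noindent\textbf{Proof proposal.} The two parts are interlinked: (ii) involves $(\xi_{P^-}^w)^w_{Q^-}$, whose very definition requires $\xi_{P^-}^w$ to be a tempered $M\cap H$-form, which is the content of (i). I would therefore prove (i) first and obtain (ii) as a by-product of the same analysis. The common setup is the following: given a $\si$-parabolic subgroup $Q=LV$ of $M$, the product $R=QU$ is a $\si$-parabolic subgroup of $G$ with $\si$-stable Levi $L$ (since $\si(R)=\si(Q)\si(U)=Q^-U^-$ is opposite to $R$), and $A_{M,\si}\subset A_{L,\si}$. The key analytic input is the standard hereditary property of the Kato--Takano--Lagier constant term, $\xi_{R^-}=(\xi_{P^-})_{Q^-}$, together with the observation that $A_{M,\si}\subset Z(M)$ commutes with $j_{Q^-}$ and acts on $(V_P)_Q$ through its inclusion into $A_{L,\si}$. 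Consequently the $A_{L,\si}$-generalised weight decomposition of $\xi_{R^-}$ refines, via restriction of characters along $A_{M,\si}\subset A_{L,\si}$, the $A_{M,\si}$-decomposition of $\xi_{P^-}$, which yields
\[
(\xi_{P^-}^w)_{Q^-}=\sum_{\mu:\,\mathrm{Re}(\mu)|_{\a_{M,\si}}=0}\xi_{R^-}(\mu),
\]
the sum running over $A_{L,\si}$-exponents of $\xi_{R^-}$.

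For (i), I must show that each such $\mu$ satisfies $\mathrm{Re}(\mu)\in{}^+\a'_{L,\si}$, the cone generated inside $M$ by the roots of $A_{L,\si}$ in $\mathrm{Lie}(V)$. Temperedness of $\xi$ gives only that $\mathrm{Re}(\mu)$ lies in the larger cone generated by the roots of $A_{L,\si}$ in $\mathrm{Lie}(VU)$. Writing $\mathrm{Re}(\mu)=\sum c_\alpha\alpha+\sum c_\beta\beta$ with $c_\alpha,c_\beta\geq 0$ and $\alpha$ (resp.\ $\beta$) ranging over roots of $A_{L,\si}$ in $\mathrm{Lie}(V)$ (resp.\ $\mathrm{Lie}(U)$), the $\alpha$'s vanish on $\a_{M,\si}$ because $V\subset M$ and $A_{M,\si}\subset Z(M)$. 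On the other hand, since $P$ is $\si$-parabolic, every root of $A_M$ in $\mathrm{Lie}(U)$ vanishes on $A_M^\si$ and is non-trivial on $A_{M,\si}$; hence each $\beta|_{\a_{M,\si}}$ is a positive root of $A_{M,\si}$ with respect to $P$, so $\beta(H)>0$ for any strictly $P$-dominant $H\in\a_{M,\si}$. Evaluating $\mathrm{Re}(\mu)|_{\a_{M,\si}}=0$ at such an $H$ gives $\sum c_\beta\beta(H)=0$ with each term non-negative, forcing $c_\beta=0$ for all $\beta$. Hence $\mathrm{Re}(\mu)$ is a non-negative combination of roots in $\mathrm{Lie}(V)$ alone, proving (i).

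For (ii), with both sides now defined, I would further restrict the displayed identity to those $\mu$ with $\mathrm{Re}(\mu)=0$ on all of $\a_{L,\si}$, which is the condition imposed by the outer weak-part operation. Since vanishing on $\a_{L,\si}$ automatically implies vanishing on the subspace $\a_{M,\si}$, this condition subsumes the one already present in the sum, and we get $(\xi_{P^-}^w)^w_{Q^-}=\sum_{\mathrm{Re}(\mu)=0}\xi_{R^-}(\mu)=\xi_{R^-}^w$. The main delicate point in this plan is the compatibility of the $A_{M,\si}$- and $A_{L,\si}$-weight decompositions under the hereditary identity $\xi_{R^-}=(\xi_{P^-})_{Q^-}$; once this compatibility is in hand, (i) reduces to the short positivity argument above and (ii) is a statement about real parts of characters.
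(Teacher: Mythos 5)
Your argument is correct and is essentially an expanded version of the paper's own (one-line) proof, which simply invokes the transitivity of the constant term $\xi_{R^-}=(\xi_{P^-})_{Q^-}$ from [D3]; the refinement of the $A_{M,\si}$-weight decomposition of $V_P$ by the $A_{L,\si}$-decomposition of $(V_P)_Q$ and the positivity argument at a strictly $P$-dominant element of $\a_{M,\si}$ are exactly the details the paper leaves to the reader. One small inaccuracy: a root of $A_M$ in $\mathrm{Lie}(U)$ need not vanish on $A_M^\si$ even when $P$ is a $\si$-parabolic subgroup (only $\rho_P$ is guaranteed to be $\si$-anti-invariant), but you never actually use that claim --- what your argument needs, and what does hold, is that each such root is non-trivial on $A_{M,\si}$ and pairs strictly positively with the element of $\a_{M,\si}$ corresponding to $\rho_P$.
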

    \begin{proof}
    (i)  and (ii) follow  from the transitivity of the constant term (cf. [D3]).
    \end{proof}
   If $(\delta,E)$ is a smooth unitary irreducible representation of $M$, let $E'^{M\cap H}_{disc}$ be the space of discrete  $M\cap H$-forms on $E$ and let:
    $$E'(\delta,H)_{disc} = \oplus _{x \in \mathcal{W}_M^G }E'^{M \cap x. H}_{disc}. \index{$E'(\delta,H)_{disc}$} $$
    
  \begin{theo}  \label{33}
 With  the notations of Theorem \ref{2}, let us assume  $\delta$ is unitary, $ \eta\in E'(\delta,H)_{disc} $ and $\chi \in O ''\cap X(M)_u$. Then one has:
 \\(i) The $H$-form $\xi(P, \delta_\chi, \eta)$ is tempered.
 \\(ii)  If  $w \in W(M'\backslash G/M)$ is not in $W(M'\backslash G/M)_\si$ or if  $M\cap w^{-1}.P'$ is  distinct from $M$, then $ \xi^w_{P'^-,w}=0$.  Otherwise, with the notations of Theorem \ref{75} (ii):
 $$  \xi^w_{P'^-, w } = \xi (M'\cap w.P,  w \delta_\chi, p_{M'} B (\tilde{P}'_w,w. P, w \delta_\chi) \eta),$$ where the $B$-matrices are defined relative to  $ \mathcal{W}^G_{w.M}$.
 \\(iii) Let us assume that $M'=M$ and $A'=A$.  Then  $B(P', P, \delta_\chi)$ restricts to an endomorphism of $  E'(\delta,H)_{disc}$.
 \end{theo}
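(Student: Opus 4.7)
The plan is to deduce all three parts from Theorem \ref{2} by exploiting the sharper information that square integrability gives about the real parts of the exponents of $\eta$.

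For (i), I would fix an arbitrary $\si$-parabolic subgroup $P'=M'U'$ and an element $w \in W(M'\backslash G/M)$, and apply Theorem \ref{2}(iii) to read off that every $A_{M',\si}$-exponent of $\xi(P,\delta_\chi,\eta)_{P'^-,w}$ is of the form $(w\chi^+)|_{A_{M',\si}}$, where $\chi^+$ is an $A_{L,\si}$-eigenvalue on $(E_Q^+)_\chi$. Since $\chi \in X(M)_u$ one has $\mathrm{Re}(\chi)=0$, and the definition of $E_Q^+$ together with the square integrability of each $\eta_y$ forces $\mathrm{Re}(\chi^+) \in {}^+\a'_{L,\si}$, with strict positivity as soon as $Q \neq M$. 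What remains is a structural "positivity transport" lemma: after the refined choice of $w$ in Proposition \ref{mwpsi}, $w$ intertwines $A_\si$ and $A'_\si$, the group $w.P_w$ is a $\si$-parabolic of $G$ with $\si$-stable Levi $w.M \cap M'$, and one checks that $w$ sends $^+\a'_{L,\si}$ into a cone whose restriction to $A_{M',\si}$ lies in $^+\a'_{M',\si}$. This is the root-theoretic shadow of the inclusion $w.M \cap P' \subset w.P_w \cap P'_w$ used in the proof of Proposition \ref{mwpsi}: every positive root of $A'_\si$ in $U'$ restricts to a non-negative combination of those $w.A_{L,\si}$-roots that appear in $U'$. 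Summing over $w$ and over $\si$-parabolics $P'$ gives temperedness.

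For (ii), the weak constant term extracts exactly the exponents with zero real part. The strict inequality above shows that any contribution with $Q \neq M$ produces exponents of strictly positive real part, which disappear in the weak constant term; consequently $\xi^w_{P'^-,w}=0$ unless $M \cap w^{-1}.P' = M$, and this in turn forces $w \in W(M'\backslash G/M)_\si$ by Theorem \ref{2}(i). When $M \cap w^{-1}.P' = M$, we have $Q=L=M$ and $j_{Q^-}=\mathrm{id}$, so $E_Q^+ = E$; the argument of part (ii) of the cuspidal theorem preceding Theorem~\ref{33} then computes $\xi_{P'^-,w}$ by the claimed formula, and it transfers verbatim to the weak constant term because all exponents arising in the $Q=M$ term are of the form $(w\chi)|_{A_{M',\si}}$, which are unitary and hence preserved by the projection \eqref{wct}. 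One obtains
\[
\xi^w_{P'^-,w} \;=\; \xi\bigl(M' \cap w.P,\; w\delta_\chi,\; p_{M'}\, B(\tilde{P}'_w, w.P, w\delta_\chi)\eta\bigr).
\]

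For (iii), specialize to $M'=M$, $A'=A$ and $w=1$, so that $\tilde{P}'_1 = P'^-$ and part (ii) yields $\xi^w_{P'^-,1} = B(P',P,\delta_\chi)\eta$. To show $B(P',P,\delta_\chi)\eta \in E'(\delta,H)_2$, I would check, for each proper $\si$-parabolic subgroup $Q$ of $M$ with $R=QU'$, that the constant term of $B(P',P,\delta_\chi)\eta$ along $Q^-$ has exponents of strictly positive real part. Using the transitivity of the constant term and its compatibility with the weak constant term (the Lemma preceding Theorem~\ref{33}), this component appears inside a weak constant term of $\xi(P,\delta_\chi,\eta)$ along $R$, which by part (i) already has exponents in $^+\a'_{L_R,\si}$; the strict positivity is then immediate because $Q$ is a proper $\si$-parabolic of $M$.

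The main obstacle is the positivity transport lemma invoked in (i): translating the strict dominance data on $\eta_y$ along roots of $A_{L,\si}$ into dominance data for the exponents of $\xi_{P'^-,w}$ along roots of $A_{M',\si}$, after the action of $w$ and the restriction. Once this lemma is in place, parts (ii) and (iii) reduce to bookkeeping of exponents using Theorem~\ref{2} and the cuspidal theorem that immediately precedes Theorem~\ref{33}.
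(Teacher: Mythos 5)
Your proposal follows essentially the same route as the paper, which deduces (i) from Theorem \ref{2}(iii) together with the Kato--Takano square-integrability of each $\delta'(z')\eta_y$, and treats (ii) and (iii) as the square-integrable analogues of parts (i)--(iii) of the cuspidal theorem (vanishing of $E^0$-type contributions being replaced by strict positivity of real parts, which the weak constant term kills). The only loose spot is your last step of (iii), where "strict positivity is immediate" should instead be phrased as: $B(P',P,\delta_\chi)\eta=\xi^w_{P'^-,1}$ is tempered by the Lemma preceding the theorem, and its weak constant term along every proper $\si$-parabolic $Q$ of $M$ vanishes by part (ii) applied to $R=QU'$ (since $M\cap R=Q\neq M$), which is exactly the stated characterization of square integrability.
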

 \begin{proof}
 Let us use the notations of Theorem \ref{2} (ii).
 Using the criteria of  of Kato and Takano (see above), one sees by `` transport de structure'' that   $\delta'(z') \eta_y$ is  discrete. Moreover  $Q$ is a  $\si_x$-parabolic subgroup  of $M$.
Then  (i) follows from  Theorem \ref{2} (iii) and from our definition  of discrete forms. 

The proof of (ii)(resp., (iii)) is analogous to the proof of (i) and  (ii) (resp., (iii) ) of the preceding theorem. 
\end{proof}
\section{Constant term of Eisenstein integrals}
 If $f$ is a smooth function on $H \backslash G$ the constant term $f_P$ of $f$ along $P$ has been defined in [D3], Section 3.3. It generalizes the definition in[L], Proposition 2 to general smooth function. It is a smooth function on $M\cap H \backslash M$. 
 
  If $(\pi, V)$ is a smooth  admissible representation of $G$, $\xi$ is an $H$-form on $V$ and $v$ is an element of $V$, let  $c_{\xi, v}$ be the generalized coefficient defined by:
$$c_{\xi,v}(Hg)= \langle \xi, \pi(g)v \rangle$$
Let $P=MU$ be a $\si$-parabolic subgroup of $G$. If $f= c_{\xi, v}$, the constant term $f_P$ of $f$ along $P$ has been defined in [L], Proposition 2 (i) (see [D3] Section 3.3, for a definition for general smooth functions). Then (cf. [L], Proposition 2 (ii)) one has:
\beq \label{cct}(c_{\xi,v})_P= c_{\xi_{P^-},  v_P} , 
\eeq
where $\xi_{P^-}=j_{P^-} \xi, v_P= j_Pv.$
\begin{defi}\label{atemp}
We define $\mathcal{A}_{temp}( H \backslash G)$ to be the set of functions of the type $c_{\xi,v}$ for a finite length smooth representation $(\pi,V)$ of $G$ and a tempered $H$-form on $V$. It is easily seen to be a vector subspace of the space of smooth functions on $H \backslash G$. If $f$ is such a generalized coefficient $f_P^w$ will denote the generalized coefficient $c_{\xi^w_{P^-},  v_P}$. 
It is naturally deduced from the constant term $f_P$ as in the definition of $\xi^w_{P^-}$ (cf. Definition \ref{xiR}), hence it does not depend of the presentation of $f$ as a generalized coefficient.
\end{defi}
\begin{defi}
 Let $P=MU$  be a $\si$-parabolic subgroup of   $G$ and let $(\delta, E)$ be an irreducible smooth representation of $M$. Let $\eta \in E'(\delta,H)$ be such that $\xi(P,\delta, \eta)$ is defined. Then if $v\in  i^G_PE $, one defines 
 an element $ E^G_P(\eta\otimes v)$ of  $ C^\infty (H \backslash G)$ by: 
$$E^G_P( \eta\otimes v)(g) =\langle \xi(P, \delta ,  \eta), (i^G_P\delta )(g) v \rangle, g\in G. $$
Then, from Proposition \ref{defxi}, there exists a nonzero polynomial function $b$ on $X(M)_\si$  such that  for all  $v \in i^K_{K \cap P} E, \eta \in E'(\delta, H)$, $g\in G$ the map    $\chi \mapsto b(\chi) E^G_P( \eta \otimes v_\chi)(g)$ is polynomial  in $\chi \in X(M)_\si$.
By bilinearity, we define  $E^G_P( \phi)$ similarly, where $ \phi $ is an element of $ E'(\delta,  H)\otimes  i^G_{ P}E$. 
\end{defi}
\begin{lem} \label{partcass}
Let $W(M'\vert G \vert M)_\si$ the set of elements of $W(M' \backslash G/ M)_\si ) $ such that $ w.M \subset M'$. Let us assume that $W(M'\vert G \vert M)_\si$ is non empty. Let $s\in W(M'\vert G \vert M)_\si$. We choose  $K'=s.K$ (see the beginning of Section \ref{gblemma}). This choice together with the choice of $K$ determines $\beta_\chi$. 
Let $w$ be an element of $W(M'\vert G \vert M)_\si$.
Let $\gamma(G\vert M')$ be the constant defined in [Wal] after the proof of I. 1.(3),  with the normalizations of measures determined by $K'$. 
For $ \chi$ belonging to an open dense subset of $X(M)_\si$
one has, with the notations of (\ref{gamma}): $$ \gamma_{\chi,w}= \gamma(G\vert M')^{-1}\aa_{\chi,w}.$$
\end{lem}
\begin{proof}

It is enough, by ``transport de structure'',  to prove the result for $s^{-1}.P'$ instead of $P'$, as the Jacquet modules for $P'$ and $s^{-1}.P' $ of $i^G_P \delta_\chi$ are canonically isomorphic. So we may assume that $1\in W(M'\backslash G/ M)_\si$ and $A'=A$, $K'=K$.  
By Lemma \ref{V=} applied to $M'$ instead of $G$, one sees that for $\chi $ belonging to  a dense subset  $O''$ of $X(M)_\si$, $V_{\chi,w} $ is irreducible for all $w \in W(M'\backslash G/ M)_\si$. Hence $\gamma_{\chi,w} = \gamma(P, w, \chi) \aa_{\chi,w}$ for an element $\gamma(P, w, \chi)$ of $\C^*$. 
Notice that in the definition  of $\beta_{\chi,w}$  (resp. $\alpha_{\chi,w}$) the  intertwining integrals  which occur are relative  to  parabolic subgroups of $G$ with  Levi subgroup equal to $M$. Also only the  operations $j_M$ and $\cc{j}_M$  occur and they are equal to the    identity.  Then the 
 proof of the lemma is  identical to the proof of Proposition V.1.1 in [Wal] (see Equations V.1 (2), (3) and (4)), where parabolic subgroups have to be replaced by  $\si$-parabolic subgroups. 
\end{proof}

\begin{theo} \label{cteis}  We keep the notations of Theorem \ref{2}.  Let $\chi $ be an element of the dense open subset $O''$ of $X(M)_\si$. 
 If  $w\in W(M'\vert G\vert M)_\si $, one defines a linear map   $  C(w, P', P, \delta_\chi) $ from  $E'(\delta, H)  \otimes i^G_PE_\chi  $  to 
$ (wE)'(w \delta, M'\cap H) \otimes i^G_{P_w} wE_\chi  $ by : $$ C(w, P', P, \delta_\chi) =\gamma(G\vert M')^{-1}   B({\tilde P'}_w, w.P, w(\delta_\chi))\otimes (A(P'_w, w.P,  w(\delta_\chi) \ll(w)) .$$
Then,  if   $ \phi \in   E'(\delta,H)_{cusp}  \otimes i^G_P E_\chi$ (resp.,  $E'(\delta,H)_{disc}  \otimes i^G_P E_\chi$ and $\delta$ and  $\chi$ are  unitary), one has:\\
$E^G_P(\phi)_{P'}=0  $ (resp.,  $E^G_P(\phi)^w_{P'}=0$) if  $W(M'\vert G\vert M)_\si$  is empty.\\
Otherwise  $E^G_P(\phi)_{P'}$ (resp., $E^G_P(\phi)^w_{P'}$) is equal to: 
 $$ \sum_{w \in W(M' \vert G\vert
M)_\si}E^{M'}_{M'\cap w.P}((p_{M'} \otimes r_{M'}) (C(w, P', P, \delta_\chi) \phi)).$$
\end{theo}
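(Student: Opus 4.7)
The strategy is to reduce the computation of $E^G_P(\phi)_{P'}$ to a formula for the constant term $\xi_{P'^-}$ of $\xi:=\xi(P,\delta_\chi,\eta)$, then to unwind the isomorphism $\gamma_\chi$ identifying $j_{P'}(i^G_P E_\chi)$ with $\bigoplus_{w\in W(M'\backslash G/M)} V_{\chi,w}$, and finally to apply the already-proved theorems which compute each component $\xi_{P'^-,w}$ when $\eta$ is cuspidal (resp.\ the weak analogue when $\eta$ is square integrable).

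First, writing $\phi=\eta\otimes v$, one has $E^G_P(\phi)=c_{\xi,v}$. By (\ref{cct}) applied to $P'$:
\beq
\label{ctform}
E^G_P(\phi)_{P'}=c_{\xi_{P'^-},\,v_{P'}}.
\eeq
Via $\gamma_\chi$, the element $v_{P'}$ corresponds to the family $\bigl(\gamma_{\chi,w}(v_{P'})\bigr)_{w\in W(M'\backslash G/M)}$, and dually $\xi_{P'^-}$ corresponds to the family $(\xi_{P'^-,w})_w$. Thus
\beq
\label{decompco}
c_{\xi_{P'^-},v_{P'}}(M\cap H\cdot m)
=\sum_{w}\bigl\langle \xi_{P'^-,w},\ i^{M'}_{M'\cap w.P}(w\delta_\chi)(m)\,\gamma_{\chi,w}(v_{P'})\bigr\rangle.
\eeq
By Theorem~\ref{2}(i), the summand vanishes unless $w\in W(M'\backslash G/M)_\sigma$; by part (i) of the first theorem of Section~6 (which uses the cuspidality of $\eta$), the summand vanishes further unless $M\cap w^{-1}.P'=M$, equivalently $w.M\subset M'$, i.e.\ $w\in W(M'\vert G\vert M)_\sigma$. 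If $W(M'\vert G\vert M)_\sigma=\emptyset$ the sum is empty and we obtain zero, which handles the first assertion.

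For $w\in W(M'\vert G\vert M)_\sigma$ we have $M\cap w^{-1}.P'=M$, so $j_{M\cap w^{-1}.P'}$ is the identity and $P_w=P$, hence
\[
\alpha_{\chi,w}(v)=r_{M'}\bigl[A(P'_w,w.P,w\delta_\chi)\,\lambda(w)v\bigr].
\]
By the lemma just preceding Theorem~\ref{cteis}, $\gamma_{\chi,w}=\alpha_{\chi,w}$ on this locus for $\chi$ generic, so $\gamma_{\chi,w}(v_{P'})=r_{M'}[A(P'_w,w.P,w\delta_\chi)\lambda(w)v]$. Meanwhile by part (ii) of the same first theorem of Section~6,
\[
\xi_{P'^-,w}=\xi\bigl(M'\cap w.P,\,w\delta_\chi,\,p_{M'}B(\tilde P'_w,w.P,w\delta_\chi)\eta\bigr).
\]
Substituting these two expressions into (\ref{decompco}) and recognising the right-hand side as the evaluation of the Eisenstein integral $E^{M'}_{M'\cap w.P}$ of the corresponding tensor, one obtains exactly the claimed formula, with
\[
C(w,P',P,\delta_\chi)=p_{M'}B(\tilde P'_w,w.P,w\delta_\chi)\otimes A(P'_w,w.P,w\delta_\chi)\lambda(w).
\]

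The weak-constant-term case is handled in parallel: one applies (\ref{cct}) together with Definition~\ref{atemp} to reduce to computing $\xi_{P'^-}^w$, then invokes Theorem~\ref{33} in place of the cuspidal theorem in order to identify $\xi_{P'^-,w}^w$ on the $w\in W(M'\vert G\vert M)_\sigma$ locus and to discard the other $w$; the same substitution via $\gamma_{\chi,w}=\alpha_{\chi,w}$ yields the claim. The main technical point, and the one on which care is needed, is the identification step $\gamma_{\chi,w}=\alpha_{\chi,w}$ restricted to $w\in W(M'\vert G\vert M)_\sigma$: this is where the scalars could a priori differ, and it is settled by the preceding lemma which imports the normalization argument of [Wal, V.1]. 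Once this identification is in hand, every other step is bookkeeping: tracking that $P_w=P$ and that $j_{M\cap w^{-1}.P'}$ is trivial under the condition $w.M\subset M'$, and correctly matching the two tensor factors of $C(w,P',P,\delta_\chi)$ with the two factors $p_{M'}B(\cdot)\eta$ and $r_{M'}[A(\cdot)\lambda(w)v]$ produced by the decomposition.
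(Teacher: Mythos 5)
Your proposal is correct and follows essentially the same route as the paper: reduce via (\ref{cct}) to the constant term $\xi_{P'^-}$, decompose through $\gamma_\chi$, discard the $w$ outside $W(M'\vert G\vert M)_\si$ using Theorem \ref{2}(i) and part (i) of the cuspidal theorem, and substitute the formulas of part (ii) (resp.\ Theorem \ref{33}) together with the identification $\gamma_{\chi,w}=\alpha_{\chi,w}$ from the preceding lemma. The paper's own proof is a two-line citation of exactly these ingredients; you have simply made explicit the bookkeeping, and you correctly single out the normalization $\gamma_{\chi,w}=\alpha_{\chi,w}$ as the one genuinely delicate step.
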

 \begin{proof} 
  By $M'$-equivariance, it is enough to prove the equalities of the proposition evaluated at $m'=1$.  
 Then the result follows from the property (\ref{cct}) (resp.,  , Definition \ref{atemp}), from (\ref{gamma})
  together  with Theorem \ref{75} (resp., Theorem \ref{33}) and Lemma \ref{partcass}. \end{proof}

\section{Appendix}
\subsection{ Covariant distributions on an homogeneous space \index{covariant distributions} }
   \label{covd} Let $X$ be a $l$-space (cf. e.g. [BD], section 3 for the terminology) and let $V$ be a complex vector space. We denote  the space of locally constant functions on $X$ with compact support and with values in $\C$ (resp., $V$) by $C_c^{\infty} (X)$ (resp., $C_c^{\infty} (X,V)$). Notice that $C_c^\infty(X,V)$ is identified with $C_c^{\infty} (X)\otimes V$. We denote  the space of linear forms on $C_c^\infty(X,V)$ which are called $V$-distributions on $X$ by $\mathcal{D}'(X, V) $. 
  The support of a $ V$-distribution $T$ on $X$ is the complement of the largest open subset $O$ such that $T$ restricted to  $C_c^\infty (O)\otimes V$ is equal to zero.
  
 Let  $F$ be a closed subset of $X$ and let  $O$ denote $X\setminus F$.
From the exact sequence:$$0Ê\to C_c^{\infty} (O)\otimes V \to C_c^{\infty} (X)\otimes V\to C_c^{\infty} (F)\otimes V \to 0, $$ one sees that if $T  $ has support contained in $F$, then $T$ defines a $V$-distribution on $F$ which is called the distribution  on $F$  induced  by  $T$. \\
  Let  $(\pi, V)$ be a   smooth  representation of a $l$-group $G$. Recall that  $H_0(G,V)$ denotes the quotient of $V$ by the subspace generated by the elements of the form $\pi(g)v-v, g\in G , v\in V$. The dual of $H_0(G, V)$ is identified with the space of $G$-fixed linear forms on $V$.
  
Let $X$ be a $l$-space  on which $G$  acts continuously on the left. Let $\ll$ be the left regular  representation of $G$ on $C_c^\infty(X)$. A $V$-distribution $T$ on $X$ is said to be $\pi$-covariant if :
$$T(f - (\ll(g)\otimes \pi(g)) f)= 0, f \in  C_c^\infty (X)\otimes V.$$

\begin{lem} \label{Tcov} Let $H$ be a closed subgroup of $G$ and let  $(\pi, V)$ be a smooth representation of $G$. 
Let $T$ be  a $\pi$-covariant distribution on $ G/H$.  Let us denote  a left invariant Haar measure on $G$ (resp $H$) by $d_lg$ (resp., $d_lh$). 
Let us denote the modulus function of $G$ by $\delta_G$. It  satisfies:
$$\int_G f (gg_0)d_lg= \delta_G (g_0) \int_G f(g)\> d_lg, f \in C_c(G), g_0\in G.$$  We define a linear map from $ C_c^\infty (G) \otimes V$ to $ C_c^\infty (G/H) \otimes V$ by:
$$M_H f( gH):= \int_{H}f(gh) \> d_l h, \>  f \in C_c^\infty (G) \otimes V.$$Then  there exists a unique  $\eta\in V'$
such that $$\langle T, M_H f \rangle  = \int_G\langle   \pi'(g) \eta,   f(g) \rangle \>  d_lg, f \in C_c^\infty (G) \otimes V.$$ The linear form $ \eta$  will be called the value at $1$  of $T$  and denoted $ev_1T$ or $T(1)$.   Moreover:
$$ \pi'(h)  \eta = \delta_H^{-1}(h) \delta_G(h )\eta, h \in H.$$
Notice that $T(1)$ depends on the choice of $d_lg, d_lh$.

If $G/H$ has a nonzero left $G$-invariant measure, one has  $$\langle T, f \rangle = \int_{G/H} \langle \pi'(g) \eta, f(gH) \rangle  \> dgH, \> f \in C_c^\infty (G/H) \otimes V. $$ In that case $\eta$ is $H$-invariant and depends only on the choice of the left $G$-invariant measure on $G/H$. 
 
 \end{lem}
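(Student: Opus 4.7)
The plan is to transfer $T$ to a $V$-valued distribution $S := T \circ M_H$ on $G$ itself, and extract $\eta$ from the smoothness of $(\pi,V)$. First, I would observe that $S$ inherits two covariance properties: the identity $M_H(\pi(g_0)L_{g_0}f) = \pi(g_0)L_{g_0}(M_H f)$ transports the $\pi$-covariance of $T$ to $S(\pi(g_0)L_{g_0}f) = S(f)$ for all $g_0\in G$; separately, the paper's convention $\int_H\phi(h'h)\,d_l h' = \delta_H(h)\int_H\phi(h')\,d_l h'$ yields $M_H(R_h f)(xH) = \delta_H(h)\,M_H f(xH)$, whence $S(R_h f) = \delta_H(h)S(f)$ for $h\in H$.

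Second, I would construct $\eta$ as follows. Given $v\in V$, pick a compact open $J\subset G$ fixing $v$ (possible by smoothness of $\pi$), and set $\eta(v) := \mathrm{vol}(J)^{-1}S(\mathbf{1}_J\otimes v)$. Independence from $J$ follows from covariance: if $J\subset J'$ both fix $v$, decompose $\mathbf{1}_{J'} = \sum_\gamma \mathbf{1}_{\gamma J}$ over coset representatives and use $S(\mathbf{1}_{\gamma J}\otimes v) = S(\mathbf{1}_J\otimes \pi(\gamma^{-1})v) = S(\mathbf{1}_J\otimes v)$ since $\gamma\in J'$ fixes $v$; the general case reduces to this via $J\cap J'$. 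To verify the integral formula, I would decompose $f = \sum_i \mathbf{1}_{g_iJ}\otimes v_i$ with $J$ chosen small enough that each $\pi(g_i^{-1})v_i$ is $J$-fixed: covariance gives $S(\mathbf{1}_{g_iJ}\otimes v_i) = \mathrm{vol}(J)\langle\pi'(g_i)\eta,v_i\rangle$, and the same value equals $\int_{g_iJ}\langle\pi'(g)\eta,v_i\rangle\,d_l g$ because the integrand is constant on $g_iJ$ by the $J$-fixation of $\pi(g_i^{-1})v_i$. Summing in $i$ gives the formula; uniqueness of $\eta$ is immediate from testing against $\mathbf{1}_J\otimes v$.

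Finally, the transformation rule drops out of computing $S(R_hf)$ two ways: one side is $\delta_H(h)S(f)$ by Step~1, while the other, via the change of variable $\int_G F(gh)\,d_l g = \delta_G(h)\int_G F(g)\,d_l g$, equals $\delta_G(h)\int_G \langle\pi'(g)\pi'(h^{-1})\eta,f(g)\rangle\,d_l g$. Equating for all $f$ yields $\pi'(h^{-1})\eta = \delta_H(h)\delta_G(h)^{-1}\eta$, i.e.\ $\pi'(h)\eta = \delta_H(h)^{-1}\delta_G(h)\eta$. For the invariant-measure case, the existence of a $G$-invariant measure on $G/H$ forces $\delta_H = \delta_G|_H$, hence $\eta\in V'^H$ and $\pi'(gh)\eta = \pi'(g)\eta$; applying the Weil unfolding $\int_G \phi(g)\,d_l g = \int_{G/H}\int_H \phi(gh)\,d_l h\,dgH$ to $\phi(g) = \langle\pi'(g)\eta,f(g)\rangle$ collapses the inner integral into $M_H f(gH)$, and surjectivity of $M_H$ onto $C_c^\infty(G/H)\otimes V$ extends the resulting identity to all $F\in C_c^\infty(G/H)\otimes V$. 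The main delicate point is the careful bookkeeping of modulus factors in the change-of-variable step, which is mechanical once the paper's conventions on $\delta_G$, $\delta_H$, $R_h$, and $\pi'$ are pinned down; everything else reduces to standard smoothness and covariance manipulations.
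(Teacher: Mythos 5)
Your proof is correct, and its overall architecture coincides with the paper's: pull $T$ back to $S=T\circ M_H$ on $G$, produce $\eta$ from the $\pi$-covariance of $S$, and read off the $\delta_H^{-1}\delta_G$ transformation rule by computing $S(R_hf)$ in two ways (once via $M_HR_hf=\delta_H(h)M_Hf$, once via the change of variable $g\mapsto gh$). The one genuine divergence is in how $\eta$ is obtained: the paper simply invokes [BD], Prop.\ 1.13 (iv), i.e.\ the isomorphism $H_0\bigl(G,\,C_c^\infty(G)\otimes V\bigr)\simeq V$ induced by $f\mapsto\int_G\pi(g^{-1})f(g)\,d_lg$, and gets $\eta$ by transport of structure, whereas you construct $\eta$ by hand as $\mathrm{vol}(J)^{-1}S(\mathbf{1}_J\otimes v)$ for $J$ a small compact open subgroup fixing $v$, checking independence of $J$ by coset decomposition and verifying the integral formula on elementary tensors $\mathbf{1}_{g_iJ}\otimes v_i$. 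Your route amounts to reproving the relevant half of that homology isomorphism directly; it is more elementary and self-contained, at the cost of some bookkeeping that the citation makes unnecessary. You also spell out the final assertion in the unimodular-quotient case (via $\delta_H=\delta_G|_H$, the resulting $H$-invariance of $\eta$, and Weil's integration formula), which the paper's proof leaves implicit. All the modulus and covariance computations check out against the paper's conventions.
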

 \begin{proof} 
Let us assume first that $H=\{1\}$. 
We remark that $T \in (H_0(G, C_c^\infty( G) \otimes V)))'$ where $G$ acts on $C_c^\infty( G) \otimes V $  by the tensor product  $\ll \otimes \pi$ of the left regular representation with $\pi$.
 From [BD] Prop. 1.13 (iv), one sees: 
\ber \label{hom0}
The map $ f  \in  C_c^\infty( G) \otimes V \mapsto  \int_G \pi(g^{-1})  f(g) \> d_lg$ goes through the quotient  to an isomorphism of $H_0(G, C_c^\infty( G) \otimes V))$ with $V$Ê\eer
 Hence $T$ defines $\eta \in V'$ by  `` transport de structure''.
One sees that $T$ verifies :
 \beq \label{tfi} \langle T,f \rangle = \int _G \langle  \pi'(g)\eta,  f(g)  \rangle \>   d_{l} g, f \in C_c^\infty (G, V).\eeq 
  which proves our claim when $H=\{1\}$. 
  
 In general, we introduce a 
  $ V$-distribution $ \tilde{T} $ on $G$   by $$\langle \tilde{T}, f \rangle = \langle T, M_{H} f \rangle , f \in C_c^{ \infty} (G)\otimes V.$$ 
  Since $T$ $\pi$-covariant implies that $\tilde{T} $ is $\pi$-covariant, we may  apply   the first part of the proof to $\tilde{T}$. 
  Now one has $$M_HR_hf = \delta_H(h) M_H f \in C_c^{ \infty} (G)\otimes V,$$
  which implies   $$\langle \tilde{T},  R_hf \rangle =\delta_H(h)  \langle \tilde{T}, f \rangle , f \in C_c^{ \infty} (G)\otimes V. $$
   Hence, it follows from (\ref{tfi}) applied to $\tilde{T}$ that
  $$\langle \tilde{T},  R_hf \rangle  = \delta_H(h)  \int_G \langle    \eta,\pi(g)^{-1} f(g)  \rangle \> d_l g.$$
  But, using again  (\ref{tfi}), one has:
   $$ \langle \tilde{T}, R_hf \rangle  =  \int_G \langle   \pi'(g) \eta, f(gh)  \rangle d_l g= \delta_G(h) \int_G \langle  \pi' (g)\pi'(h^{-1}) \eta, f(g)  \rangle \> d_l g.$$ From the preceding equalities
  one deduces $$ \delta_H(h)  \int_G \langle    \eta,\pi(g)^{-1} f(g)  \rangle d_l g =\delta_G(h) \int_G \langle    \pi'(h^{-1})\eta,\pi(g)^{-1} f(g)  \rangle \>  d_l g.$$
  Then  (\ref{hom0})  implies:
  $$ \pi'(h)  \eta = \delta_H^{-1}(h) \delta_G(h )\eta, h \in H.$$
  \end{proof}
\subsection{ Bernstein's parameters of finite length smooth modules } \label{pb}
The Bernstein's center [DeliBe], $ZB(G)$, is identified with an  algebra of functions on the set, $\Omega(G)$,  of $G$-conjugacy classes of cuspidal pairs i.e. pairs $(L, \omega)$, where $L$  is a Levi-subgroup of 
$G$ and $\omega$ is a smooth, irreducible cuspidal representation of $L$. Here cuspidal means that the smooth coefficients of the representation are compactly supported modulo the center. 

If $(L,\omega)$ is such a pair,  we denote  its conjugacy class under $G$ by $(L,\omega)_G$. If $\Lambda\in \Omega(G)$,  we denote  the character of $ZB(G)$ given by the evaluation at $\Lambda$  by  $\chi_\Lambda$ and  the kernel of $\chi_\Lambda$ by $I_\Lambda$. It is a maximal ideal of $ZB(G)$.  We  say  that $\chi_\Lambda$ has Bernstein parameter $\Lambda$.

Let $(\pi, V) $ be  a smooth $G$-module of finite length. We  say that    $\{\Lambda_1, \dots, \dots \Lambda_p\} \subset \Omega(G)$  is the set of Bernstein's parameters of $(\pi,V)$,  if $V$ splits as direct sum of $G$-modules $V_1\oplus\dots \oplus V_p$ such that $V_i$ is non reduced to $\{0\}$ and is annihilated by a power of the ideal $I_{\Lambda_i}$ of ZB(G).  Then one sees easily:    \ber \label{Bchi} If $ \chi $ is an unramified character  of $G$  and if  the set  of Bernstein's parameter of $(\pi,V) $  is equal to  $\{ L_1, \omega_1)_G , \dots, (L_p, \omega_p)_G\} $, the set  of Bernstein's parameter of   $(\pi \otimes \chi,V) $ is to equal to $\{ (L_1, \omega_1\otimes \chi_{\vert L_1})_G , \dots, (L_p, \omega_p \otimes \chi_{\vert L_p})_G\} $. \eer
Let $P$ be a parabolic subgroup of $G$ with Levi subgroup $M$ and let $(\delta, E) $ be   a smooth representation of finite length of $M$ with set of Bernstein's parameters 
$\{ (L_1, \omega_1)_M , \dots, (L_p, \omega_p)_M\} $, where  $(L_i, \omega_i)$ is a cuspidal pair for $M$. Then, one has:
\ber \label{indpar}    The set of  Bernstein's parameters  of $(i^G_P E, i^G_P\delta)$  is   $\{ (L_1, \omega_1)_G , \dots, (L_p, \omega_p)_G\}.$\eer 
The following is an immediate consequence of the splitting of the category of smooth modules ( cf. [Be], [R])
\ber \label{splitsum}If  one has a short exact sequence $0\to V_1\to V_2\to V_3\to 0$ of finite length smooth $M'$-modules such that $V_1$ and $V_3$ have disjoint sets of Bernstein's parameters, then $V_2$ is isomorphic to the direct sum of $V_1\oplus V_3$. \eer 
\subsection{Some results on $\si$-parabolic subgroups} \label{5.1}

In this subsection, we change slightly the notations of the main body of the article.
\begin{lem}\label{dimorbite}
Let $G$ be a connected  algebraic group acting over the nonempty   variety $X$.\\
 (i)  Let $x\in X$. Then   $Y=Gx$ is  a smooth locally closed subset of  $X$. 
 (ii) There exists at least one closed orbit.\\
(iii) Let $G_x$ be the stabilizer of $x$ in $G$. Then   $dim Y= dim G - dim G_x$. 
  \end{lem}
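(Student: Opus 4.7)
The plan is to prove all three parts by standard orbit-map techniques, with (i) doing most of the work. For (i), I would consider the orbit map $\phi : G \to X$, $g \mapsto g\cdot x$, whose image is $Y = Gx$. By Chevalley's theorem on constructible sets, $Y$ is constructible, so it contains a dense open subset $U$ of its closure $\overline{Y}$. Since $G$ acts transitively on $Y$ and left translation by any $g \in G$ is a homeomorphism of $X$, each $gU$ is also open in $\overline{Y}$, and $Y = \bigcup_{g \in G} gU$; hence $Y$ is open in $\overline{Y}$, i.e., locally closed. For smoothness, the irreducible variety $Y$ (irreducible because $G$ is connected) contains at least one smooth point by generic smoothness, and the transitive action of the smooth group $G$ by isomorphisms of $Y$ transports this smooth point to every other point of $Y$.

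For (ii), I would pick an orbit closure $\overline{Y_0}$ of minimal dimension among all orbit closures (this exists since dimensions are bounded below). The complement $\overline{Y_0} \setminus Y_0$ is closed in $\overline{Y_0}$ by (i), and is $G$-stable. If it were nonempty, any orbit $Z$ contained in it would satisfy $\overline{Z} \subseteq \overline{Y_0} \setminus Y_0 \subsetneq \overline{Y_0}$; by the irreducibility of $\overline{Y_0}$, we would have $\dim \overline{Z} < \dim \overline{Y_0}$, contradicting the minimality assumption. Hence $Y_0$ is closed.

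For (iii), I would apply the fiber dimension theorem to the dominant morphism $\phi : G \to Y$: on a dense open set, the fibers have dimension $\dim G - \dim Y$. But every fiber $\phi^{-1}(gx) = gG_x$ is a translate of $G_x$ and so has the same dimension as $G_x$; consequently $\dim G_x = \dim G - \dim Y$.

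The main (and only) non-elementary ingredient is Chevalley's constructibility theorem used in (i) to conclude that $Y$ is locally closed; once this is in hand, (ii) and (iii) follow by short dimension-counting arguments. I do not anticipate any serious obstacle, as this is a classical result whose proof is entirely formal given the standard tools of algebraic geometry.
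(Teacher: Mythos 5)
Your proof is correct and is exactly the standard argument underlying the citations the paper uses: the paper simply invokes [Hu], Proposition 8.3 for (i) and (ii) and [Bo], Ch.\ AG.8.2 and AG.10.1 for (iii), and your Chevalley-plus-homogeneity argument, minimal-dimension orbit argument, and fiber-dimension count are precisely the proofs of those cited results. No gaps; nothing differs in substance from what the paper relies on.
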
\begin{proof}
 (i)  and (ii) follows from [Hu], Proposition 8.3.\\
 (iii) The  morphism $G \to G.x$ is dominant: this morphism  is surjective  and $G$ is irreducible, hence $G.x$ is also irreducible and our claim follows from the discussion in the middle of [Bo], Ch. AG. 8.2. Then, the assertion on dimensions follows from [Bo], Ch. AG. 10.1,  with $X=G$, $Y=G.x$,  $W= \{x\}$, $Z$ the identity  component of $G_x$. 
 \end{proof}
\begin{lem} \label{orbiteouverte} Let $G$ be a connected  algebraic group acting over an irreducible nonsingular  variety $X$, with a finite number of orbits.
\\If  an  orbit,  $X'$,  of $G$ in $X$ has  the same dimension than $X$, then, $X'$ is open in $X$. \end{lem}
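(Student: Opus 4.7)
The plan is to exploit the fact, already recorded as Lemma \ref{dimorbite} (i), that every $G$-orbit is locally closed in $X$. A locally closed subset is, by definition, open inside its own closure. So once I know that $X'$ is locally closed, it suffices to prove that $\overline{X'} = X$; then openness of $X'$ in $X$ follows immediately.

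To obtain $\overline{X'} = X$, I would observe that $X'$ is the image of the morphism $G \to X$, $g \mapsto g\cdot x$ (for any choice of $x \in X'$). Since $G$ is connected, it is irreducible as a variety, and the image of an irreducible variety under a morphism is irreducible. Hence $X'$, and therefore $\overline{X'}$, is irreducible. By hypothesis $\dim X' = \dim X$, and of course $\dim \overline{X'} = \dim X'$. Now $\overline{X'}$ is an irreducible closed subset of the irreducible variety $X$ having the same dimension as $X$, so $\overline{X'} = X$, which finishes the proof.

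I remark that this clean argument does not actually use the finiteness of the set of orbits; the hypothesis is recorded presumably because it is what holds in the intended applications. If one prefers a proof that does invoke finiteness, one can instead write $X = \bigsqcup_i X_i$ as a finite disjoint union of locally closed orbits, form the finite union $Y = \bigcup_{\dim X_i < \dim X} \overline{X_i}$, note that $Y$ is a closed subset of $X$ each of whose irreducible components has dimension strictly less than $\dim X$, hence (by irreducibility of $X$) a proper closed subset, and then deduce that $X \setminus Y$ is a nonempty open subset contained in the union of the maximal-dimensional orbits; since $X \setminus Y$ is irreducible and the orbits are pairwise disjoint and locally closed, there can be only one maximal-dimensional orbit, namely $X'$, and $X \setminus Y = X'$ is open.

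There is no real obstacle here: the content is entirely packaged in Lemma \ref{dimorbite} (i) (local closedness of orbits) together with the elementary fact that an irreducible closed subset of an irreducible variety of the same dimension must fill out the whole variety.
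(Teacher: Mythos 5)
Your proof is correct, but it follows a genuinely different route from the paper's. The paper argues by induction on the number of orbits: it picks a closed orbit $Y$ (which exists by Lemma \ref{dimorbite} (ii)), notes that $Y$ is irreducible and proper, hence of strictly smaller dimension, and then restricts the action to the irreducible open set $X\setminus Y$, which still contains $X'$ and has one fewer orbit. Your first argument instead goes directly through Lemma \ref{dimorbite} (i): the orbit $X'$ is locally closed, hence open in its closure $\overline{X'}$; since $X'$ is the image of the irreducible group $G$ under the orbit map, $\overline{X'}$ is an irreducible closed subset of the irreducible variety $X$ of the same dimension, hence equals $X$. This is shorter, avoids the existence of closed orbits, and — as you correctly observe — dispenses with the finiteness hypothesis altogether, which the paper's induction genuinely uses as its induction variable. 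Your alternative finiteness-based sketch is also sound and is closer in spirit to the paper's (both remove lower-dimensional closed sets), though the paper removes one closed orbit at a time rather than all of the closures of the non-maximal orbits at once. Nothing is missing from your argument; the only facts you invoke (local closedness of orbits, irreducibility of the image of an irreducible variety, and the strict dimension drop for proper irreducible closed subsets of an irreducible variety) are all either in Lemma \ref{dimorbite} or in the references [Hu], [Bo] already cited in the appendix.
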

\begin{proof} 
We use induction on the number of orbits.  If this number is 1, our statement is clear.  Otherwise, if this number is greater than 1,  let $Y$ be a closed orbit in $X$. Then $Y$ is not equal to $X$. As $G$ is connected, $Y$ is irreducible. It follows from [Hu], Proposition 3.2, that  $dim \> Y < dim\> X$.  Then $X\setminus Y$ contains $X'$ and is irreducible as $Y$ is closed  and $X$ is irreducible. Moreover the action of $G$ on $X$ induces an action on $X\setminus Y$.
One applies the induction hypothesis.  \end{proof}
Let $G$ be a connected  reductive group defined over  a local  field $k$ of characteristic different from 2. Let $P$, $Q$ be two parabolic subgroups of $G$ defined over $k$. Let $G_k$ be the set  of $k$-points of $G$. We have similar notations for subgroups of $G$.\\
Let us show: 
\ber \label{a)} The $k$-parabolic subgroups $P$ and $Q$ of $G$ are opposed if and only if $P_k\cap Q_k$ is equal to $M_k$ where $M$ is a common Levi to $P$ and $Q$. \eer
If $P$ and $Q $ are opposed $P\cap Q$ is equal to their common Levi subgroup, $M$, and  it is clear that  $P_k\cap Q_k=M_k$.
Reciprocally if $P_k\cap Q_k = M_k$, then  $P \cap Q$ contains  the Zariski closure  of $M_k$  which is equal to $M$ by [Bo],  Corollary 18.3.
By looking to the $k$-parabolic subgroups of $G$ with Levi subgroup $M$, one sees that only the $k$-parabolic subgroup opposed to $P$ satisfies $P_k\cap Q_k=M_k$. This proves our claim.\\
Hence  if $ \si$ is an involution of $G$ defined over $k$ and $P$ is a parabolic subgroup of $G$ defined over $k$, $P$ is  a  $\si$-parabolic of $G$  if and only  if $P_k$ is a $\si$-parabolic subgroup of $G_k$.\\
Let us show:
\ber  \label{b)}If $ P$ is a $\si$-parabolic subgroup of $G$,  $\underline{p} +\underline{h}= \underline{g}$, where $\underline{g}$ (resp.,  $\underline{p}$) is the Lie algebra of $G$ (resp., $P$) and $\underline{h}$ is the Lie algebra of the  fixed point group  of $\si$. \eer 
As $ P$ is a $\si$-parabolic subgroup of $G$, $\underline{p}+\si(\underline{p})=\underline{g}$. Hence  any $X\in \underline{g}$ is of the form  $Y+\si(Z)$ with $ Y, Z\in \underline{p}$. Hence, one has:
$$X= Y-Z + (Z+\si(Z)).$$ The result follows from the fact that $\underline{h}$ is equal to the fixed point set of $\si$ in $\underline{g}$ (cf. [Ri],  proof of Lemma 2.4).
  Let $H$ be an open subgroup, defined over $k$,   of the fixed points  group of $\si$. We will show:
 \ber \label{kpk}Let $P$ be a $\si$-parabolic subgroup of $G$ defined over $k$.\\
Then:\\ a)$HP$ is open in $G$, \\ b)$H_kP_k$ is open in $G_k$
\eer 
The assertion a)  follows from [HW] Lemma 4.8. The assertion b) reduces to  the case where $P$ is  a minimal $\si$-parabolic subgroup of $G$ defined over $k$. In that case it follows from 
[HW],  Definition 13.1 and  Proposition 13.4.

\begin{lem} \label{hpouvert}
Let $P$ be a $\si$-parabolic subgroup of $G$ defined over k.\\
Let $x\in G_k$.\\
The following conditions are equivalent:\\
(i) $H_kxP_k$ is open in $G_k$.\\
(ii) $HxP$ is open in $G$.\\
(iii) $x \in HP$.\\
(iv) $xPx^{-1}$ is a $\si$-parabolic subgroup of $G$ defined over $k$.
\end{lem}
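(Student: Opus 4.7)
The plan is to establish the four-way equivalence via the implications (iii) $\Rightarrow$ (iv) $\Rightarrow$ (ii), (ii) $\Rightarrow$ (iv), (iv) $\Rightarrow$ (iii), and (i) $\Leftrightarrow$ (ii); the remaining implications follow by composition. The direct implications are routine: for (iii) $\Rightarrow$ (iv), writing $x=hp$ with $h\in H$, $p\in P$ gives $xPx^{-1}=hPh^{-1}$, and conjugating the Levi identity $P\cap\si(P)=M$ by $h$ (which $\si$ fixes) shows $hPh^{-1}\cap\si(hPh^{-1})=hMh^{-1}$ is a Levi, yielding the $\si$-parabolic property, with definedness over $k$ automatic since $x\in G_k$; for (iv) $\Rightarrow$ (ii), setting $Q:=xPx^{-1}$ and using (\ref{kpk}) a), $HQ$ is open, hence $HxP=HQ\cdot x$ is open; similarly (iv) $\Rightarrow$ (i) uses (\ref{kpk}) b), and (iii) $\Rightarrow$ (ii) is immediate since $HxP=HP$.

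For (ii) $\Rightarrow$ (iv), I would exploit the openness of $Px^{-1}H$, which by (\ref{WG}) admits an $A_\vid$-good representative $x_0\in\mathcal{W}^G_M$; Lemma \ref{sixP} then gives that $x_0^{-1}.P$ is a $\si$-parabolic. Decomposing $x^{-1}=p x_0 h$ with $p\in P$ and $h\in H$ yields $xPx^{-1}=h^{-1}(x_0^{-1}.P)h$, an $H$-conjugate of a $\si$-parabolic and hence itself a $\si$-parabolic defined over $k$.

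The main obstacle will be (iv) $\Rightarrow$ (iii). Here I would use the full strength of the theory of $\si$-stable Levi subgroups and maximal $\si$-split tori. Let $L:=Q\cap\si(Q)$ be the $\si$-stable Levi of $Q:=xPx^{-1}$, and let $A_L$, $A_M$ denote maximal $\si$-split tori of $L$ and $M$ respectively. By the Helminck--Wang conjugacy theorem for maximal $\si$-split tori, there exists $h_1\in H$ with $h_1 A_M h_1^{-1}=A_L$; then both $h_1 P h_1^{-1}$ and $Q$ are $\si$-parabolic subgroups containing $A_L$. By the further Helminck--Wang fact that $N_H(A_L)$ acts transitively on the (finitely many) $\si$-parabolics containing $A_L$, there exists $h_2\in N_H(A_L)$ with $Q=(h_2 h_1)P(h_2 h_1)^{-1}$. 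Setting $h:=h_2 h_1\in H$, one concludes $h^{-1}x\in N_G(P)=P$, whence $x\in HP$. An alternative route would go through an explicit cocycle-vanishing argument for $x^{-1}\si(x)$ after replacing $x$ by $xu$ with $u\in U$ chosen so that $xuMu^{-1}x^{-1}$ is the $\si$-stable Levi of $Q$, but the Helminck--Wang input seems unavoidable.

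Finally, (i) $\Leftrightarrow$ (ii) reduces to the common infinitesimal criterion $\underline{h}+x\underline{p}x^{-1}=\underline{g}$, namely the surjectivity of the differential of the morphism $(h,p)\mapsto hxp$ at the identity: in the algebraic setting this is exactly the criterion for $HxP$ to be Zariski open in $G$, and via the $k$-analytic inverse function theorem the same condition guarantees openness of $H_kxP_k$ in $G_k$, so that both openness statements are controlled by the same Lie-algebraic equality.
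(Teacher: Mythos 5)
Your easy implications — (iii)$\Rightarrow$(iv) by conjugating the Levi identity by $h\in H$, and (iv)$\Rightarrow$(ii), (iv)$\Rightarrow$(i) via (\ref{kpk}) a) and b) after right-translating $HQ$ by $x$ — are correct and match what the paper does. But the two hard legs of your cycle both have genuine problems. The most serious is (iv)$\Rightarrow$(iii). The claim that ``$N_H(A_L)$ acts transitively on the (finitely many) $\si$-parabolics containing $A_L$'' is false as stated (such subgroups need not even be $G$-conjugate), and even the corrected version — transitivity on the $\si$-parabolics containing $A_L$ that are $G$-conjugate to $P$ — fails over $k$: the whole point of (\ref{WG}) and Lemma \ref{sixP} is that there exist $A_\vid$-good $x\notin P_kH_k$ for which $x^{-1}.P$ is a $\si$-parabolic, so neither the $H_k$-conjugacy of maximal $\si$-split tori nor the realization of the little Weyl group inside $N_{H_k}(A_L)$ holds rationally. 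Your argument could only be salvaged by working over $\bar k$ (where these are theorems of Vust/Richardson, not of [HW]), and you never make that distinction. You have missed the one-line argument the paper uses for (ii)$\Rightarrow$(iii): $G$ is connected, hence irreducible, so two nonempty Zariski-open sets meet, so there is exactly one open $(H,P)$-double coset; since $HP$ is open by (\ref{kpk}) a), openness of $HxP$ forces $HxP=HP$, i.e.\ $x\in HP$. This makes your detour through (iv) — and with it your (ii)$\Rightarrow$(iv) step, which in any case misapplies (\ref{WG}) and Lemma \ref{sixP} (statements about $p$-adically open double cosets in $G_k$) to a Zariski-open double coset in the algebraic group $G$ — unnecessary.

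The second gap is (i)$\Leftrightarrow$(ii). Reducing both to $\underline{h}+x.\underline{p}=\underline{g}$ is not justified in the direction you need. For (i)$\Rightarrow$(infinitesimal condition) the inverse function theorem gives the wrong implication, and for (infinitesimal condition)$\Leftarrow$(ii) one runs into inseparability: the paper allows $\F$ of positive characteristic, where Zariski-openness of an orbit does not imply surjectivity of the differential of the orbit map. The paper instead proves (i)$\Rightarrow$(ii) by a rationality argument: an open $(H_k,P_k)$-coset contains an open $(H_k,P'_k)$-coset for a minimal $k$-parabolic $P'\subset P$, whose representative conjugates $P'$ into a minimal $\si$-parabolic by [HH], Proposition 3.5, whence Zariski-openness by [HW], Lemma 4.8; the converse (iv)$\Rightarrow$(i) is exactly (\ref{kpk}) b). You should replace your infinitesimal argument by this, or at least supply a proof of the equivalence of the two openness notions that does not pass through surjectivity of a differential.
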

\begin{proof}
(i) implies (ii): Let $P'$ be a minimal $k$-parabolic subgroup of $G$ contained in $P$. There are finitely many $(H_k, P'_k)$-double cosets in $G_k$ (cf. [HW], Corollary 6.16). Hence $H_kxP_k$ contains an open $(H_k,P'_k)$-double coset, $H_k x'P_k$ with $ x'= x p$ and $p\in P_k$. From [HH], Proposition 3.5, one sees that $x'.P'$ is contained in  a minimal $\si$-parabolic subgroup of $G$, hence $H x'P '$ is open  in $G$ by [HW], Lemma 4.8. Hence $Hx'P= HxP $ is open in $G$. Hence (i) implies (ii).\\(ii) implies (iii): as the union  of the closures of the open  $(H,P)$-double coset in $G$ is  equal to $G$  and as $G$ is connected, hence  irreducible, Êthere is only one open $(H,P)$-double coset in $G$.  From (\ref{kpk}) a), one knows that $HP$ is open. Hence (ii) implies (iii). 
\\ (iii) implies (iv) because the  conjugation  by an element of $H$  preserves the set of  $\si$-parabolic subgroups of $G$. 
\\ (iv) implies (i) follows from (\ref{kpk}) b).
\end{proof}
\section{Compact open subgroups of Lie groups over $\F$} \label{Lie}
Let $G$ be a Lie group over $\F$ in the sense of [Bou] Ch.III.1, Definition 1..and let $\un{g}$ be its Lie algebra.
\\We will use an idea given by Deligne in [DeliBe], top of p.16.  We fix  an analytic bijective  map $\psi: V \to W $  between an  open neighborhood $V$ of 0 in $\un{g}$ to an open neighborhood, $W$,  of 1 in $G$. We assume that its differential at 0  is   the identity. Such a map will be called a good chart at 1 for $G$.
\begin{lem}  \label{DB} 
 Let $\mathfrak{P}$ be the the maximal ideal of the ring of integers, $\mathfrak {O}$ of $\F$. We fix a basis $(X_j)$ of $\un{g}$. \\ (i) Let $\LL_n\un{g}$ (resp. $\LL\un{g}$) be the $\mathfrak{P}^n$-module ( resp $\mathfrak{O}$-module) generated by the $X_i$.  It is a basis of neighborhoods  of $0$ in $\g$. Let $J_n$ be the image of $\LL_n\un{g}$ under $\psi$, which is defined for $n$ large enough.  
\\  Then for $n$ large enough, $J_n$ is a compact open  subgroup of $G$.\\ (ii)  If $J'_n$ is defined with another good chart  $\psi'$,  one has $J_n=J'_n$ for $n$ large enough.\\
(iii) Let $\Theta$ be a family  of automorphisms of the Lie group $G$ whose differential preserves $\LL\un{g}$.  Then it preserves $\LL_n\un{g}$ for all $n \in \N^*$. We assume moreover that $\Theta$ has the structure of compact analytic manifold over $\F$ and that the map $\Theta \times G \to G$, $(\theta, g)\mapsto \theta(g)$ is analytic.  Then for $n$ large enough, $J_n$ is invariant under  every $\theta\in \Theta$.\\
(iv) Let us assume that we are given three closed Lie subgroups $G_1$, $G_2$, $G_3$ of $G$  and  vector subspaces $\un{g'}_i \subset \un{g}_i$, $i=1,2,3$ of $\un{g}$  such  that $\LL \un{g} =(\LL \un{g}\cap \un{g}'_1) \oplus( \LL \un{g}\cap \un{g}'_2) \oplus ( \LL \un{g}\cap \un{g}'_3)$. Then for $n$ large enough one has:
$$J_n= (J_n \cap G_1)(J_n \cap G_2) (J_n \cap G_3). $$
\end{lem}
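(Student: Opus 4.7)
The plan is to translate each part, via the chart $\psi$, into a statement about convergent power series on $\un{g}$ with coefficients of bounded valuation, and then exploit the non-archimedean inequality: higher-order terms evaluated on $\LL_n\un{g}$ become arbitrarily small relative to linear terms once $n$ is large enough to absorb the denominators that appear. Parts (i)--(iii) are then fairly routine; the real work is in (iv), which will require a fixed-point argument.

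For (i), multiplication and inversion in $G$, pulled back via $\psi$, are analytic and have the form $F(X,Y)=X+Y+Q(X,Y)$ and $I(X)=-X+S(X)$ with $Q,S$ of order $\ge 2$. Expanding in the basis $(X_j)$, the coefficients have bounded valuations, so for $n$ large $F$ and $I$ send $\LL_n\un{g}$ into itself, whence $J_n$ is a subgroup; it is compact open since $\psi$ is a homeomorphism between the compact open lattice $\LL_n\un{g}$ and its image. For (ii), the transition $\psi'^{-1}\circ\psi$ has the form $X+R(X)$ with $R$ of order $\ge 2$, so it preserves $\LL_n\un{g}$ for $n$ large, and the reverse inclusion gives $J_n=J'_n$. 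For (iii), the analytic map $(\theta,X)\mapsto\psi^{-1}(\theta(\psi(X)))$ reads $(d_1\theta)(X)+R_\theta(X)$, with $d_1\theta$ preserving $\LL\un{g}$ by hypothesis; compactness of $\Theta$ yields uniform coefficient bounds on $R_\theta$, giving $\theta(J_n)\subset J_n$ uniformly, and the same applied to $\theta^{-1}$ gives the reverse inclusion.

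For (iv), let $\psi_i$ be a good chart for $G_i$ at $1$. Define
\[
\Phi\colon \un{g}'_1\times \un{g}'_2\times \un{g}'_3\to G,\qquad \Phi(X_1,X_2,X_3)=\psi_1(X_1)\psi_2(X_2)\psi_3(X_3).
\]
Using that the inclusions $G_i\hookrightarrow G$ are analytic with differential the inclusion $\un{g}_i\hookrightarrow \un{g}$, the differential of $\Phi$ at the origin is the sum map $\un{g}'_1\oplus\un{g}'_2\oplus\un{g}'_3\to \un{g}$, which is a linear isomorphism by the direct-sum hypothesis. Thus $\tilde\Phi=\psi^{-1}\circ\Phi$ has the form $X_1+X_2+X_3+R(X_1,X_2,X_3)$ with $R$ of order $\ge 2$. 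Setting $\LL_n\un{g}'_i=\mathfrak{P}^n(\LL\un{g}\cap \un{g}'_i)$, the hypothesis on $\LL\un{g}$ gives $\LL_n\un{g}=\LL_n\un{g}'_1\oplus\LL_n\un{g}'_2\oplus\LL_n\un{g}'_3$, and the sum map $S$ is a bijection between $\prod_i\LL_n\un{g}'_i$ and $\LL_n\un{g}$.

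For $X\in \LL_n\un{g}$, the equation $\tilde\Phi(Y)=X$ becomes the fixed-point equation $Y=S^{-1}(X-R(Y))$ on the complete ultrametric space $\prod_i\LL_n\un{g}'_i$; for $n$ large the quadratic nature of $R$ makes this a strict contraction, which yields a unique solution by the non-archimedean Banach fixed-point principle. Every $g\in J_n$ then has the form $\psi_1(Y_1)\psi_2(Y_2)\psi_3(Y_3)$ with $Y_i\in\LL_n\un{g}'_i$; since $\psi_i(Y_i)=\Phi(0,\dots,Y_i,\dots,0)$ lies in $\Phi(\prod_i\LL_n\un{g}'_i)=J_n$ and obviously in $G_i$, this gives the required factorization. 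The main obstacle is this fixed-point step: one must verify uniform contraction of $R$ on $\prod_i\LL_n\un{g}'_i$ and check that $S^{-1}$ preserves the individual lattices, which is guaranteed by choosing the basis $(X_j)$ compatibly with the decomposition of $\LL\un{g}$ into the $\LL\un{g}\cap\un{g}'_i$.
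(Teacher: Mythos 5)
Your proof is correct, and for parts (i)--(iii) it follows the same route as the paper: pull everything back through the chart, isolate the linear term, and use the ultrametric inequality to absorb the order-$\geq 2$ remainder once $n$ is large. The genuine difference is in (iv). The paper observes that $\Phi(Y_1+Y_2+Y_3)=\psi_1(Y_1)\psi_2(Y_2)\psi_3(Y_3)$ is itself a good chart at $1$ (its differential at $0$ is the identity, by the direct-sum hypothesis) and then simply invokes (ii): the group $J_n$ computed with $\Phi$ coincides, for $n$ large, with the one computed with $\psi$, so $J_n=\Phi(\LL_n\un{g})\subset(J_n\cap G_1)(J_n\cap G_2)(J_n\cap G_3)$, the reverse inclusion being clear since $J_n$ is a group. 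You instead prove surjectivity of $\Phi$ onto $J_n$ by hand via the contraction $Y=S^{-1}(X-R(Y))$. The two arguments are essentially equivalent in content --- the paper's proof of (ii) is itself a successive-approximation argument of exactly this kind --- but the reduction to (ii) is shorter and spares you the verification that $S^{-1}$ and the contraction respect the product of lattices (which, as you note, forces the basis $(X_j)$ to be adapted to the decomposition). Two small points to tidy: state the reverse inclusion $(J_n\cap G_1)(J_n\cap G_2)(J_n\cap G_3)\subset J_n$ explicitly (it holds because $J_n$ is a group by (i)); and in (iii) the step ``the same applied to $\theta^{-1}$'' is not automatic, since $\theta^{-1}$ need not belong to $\Theta$ and $(d_1\theta)^{-1}$ need not preserve $\LL\un{g}$ --- but only the inclusion $\theta(J_n)\subset J_n$ is actually needed, and that is all the paper proves as well.
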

\begin{proof}
(i) The fact that $J_n$ is compact and open follows from the fact that  $\LL_n\un{g}$ is compact and open. We choose $n$ large enough so that $J_nJ_n\subset W$. \\ 
Let us enote by $x_j$ the $j$-th coordinate map on $\un{g}$. 
We define:  $$\vert X \vert = Sup_j \vert x_j( X) \vert _\F $$
Let $X, X' \in \LL_n\un{g}$, and let us  study  $x_j ( \psi^{-1}( \psi (X) \psi(X') ))$. 
By our hypothesis on $\psi$ and from the formula of the differential of the product in $G$,  one sees that the differential  of this map of $(X, X')$ is simply $(X, X') \mapsto x_j(X)+ x_j(X')$.
The definition of the differential shows that:
$$ x_j ( \psi^{-1}( \psi (X) \psi(X'))) = x_j(X)+ x_j(X')+Sup( \vert X\vert, \vert X'\vert ) \varepsilon( X,X').$$
where $ \varepsilon (X, X')$ tends to zero if $(X, X') $ tends to $(0,0)$. Let $n_0 $  be large enough such that $\vert \varepsilon( X,X')\vert  <1$ for $X, X'$ in $\LL_{n_0}\un{g} $. One deduces from the above equality that for $n \in \N$ larger than $n_0$,  $ jj' \in J_n$ for all $j, j'\in J_n$.
Similarly one sees  that $j^{-1} \in J_n $ if $j\in J_n$ and $n$ is larger than $n_0$. 
Then  (i) follows. \\
(ii) One proceeds as in (i), by considering the map $X \mapsto x_j( \psi^{-1}(\psi( X)(\psi'(X))^{-1}))$, whose differential at $0$ is equal to zero. Arguing as in (i), for $n$ large one sees that $\psi(X)(\psi'(X))^{-1} \in J_{n+1}$.   if $X\in \LL_n \un{g}$.  Hence if $j \in J_n$, we have found $j'\in J'_n$ such that $j(j')^{-1} \in J_{n+1}$.  Using that $J'_n$ is a group, and proceeding inductively, we find a sequence $(j'_p)$ in $J'_n$ such that  $j( j'_p)^{-1} \in J'_{n+p}$. Hence $j'_p$ converges to $j$. But $J'_n$ is compact, Hence $j \in J'_n$ and $J_n \subset J'_n $. The reverse inclusion is proved similarly. This proves (ii).
\\ (iii) By compactness, one is reduced to prove that for every $\theta_0\in \Theta$, there exist a neighborhood $V(\theta_0)$ of $\theta_0$ in $\Theta$  and $n_0\in \N$ such that for all $n \in \N$ greater than $n_0$ and for all $\theta \in V(\theta_0)$  one has $\theta (J_n)\subset J_n$.
 We fix $\theta_0$. 
 We denote again  the differential at 1 of $\theta\in \Theta$ by $\theta$. 
Shrinking $V$ and $\Theta$ if necessary if necessary,  let us consider the analytic map from $(\Theta,V)$ to $\un{g}$, $ ( \theta, X)\mapsto \psi^{-1} [(\theta(\psi(X)) (\psi (\theta(X))^{-1} ]$. By looking to partial derivatives, one sees that  its  differential  at $(\theta_0,0)$ is equal to zero.  One sees as above that it implies that for 
$\theta$ is in a small neighborhood $V(\theta_0)$ of $\theta_0$ and for $n$  large enough one has  
$$(\theta(\psi(X)) (\psi (\theta(X))^{-1} \in J_n, X \in \LL_n \un{g}$$
As $\theta (X)  \in \LL_n\un{g}$, this implies $\theta(\psi(X)\in J_n$. This proves (iii).  
\\(iv)  Let $\psi_i$ be a good chart at zero of $G_i$, $i=1,2,3$. From the first part of the lemma, one can use the following map  $\psi$ to study $J_n$. 
$$\psi(Y_1+Y_2+Y_3)= \psi_{\un{g}_1} (Y_1)\psi_{\un{g}_2} (Y_2)\psi_{\un{g}_3} (Y_3), Y_i \in \un{g}'_i, i=1,2,3.$$
One see easily that  $\LL_n \un{g} =(\LL_n \un{g}\cap \un{g}'_1) \oplus( \LL_n \un{g}\cap \un{g}'_2) \oplus ( \LL_n \un{g}\cap \un{g}'_3)$.
From the definition of $J_n$ it is clear that $$J_n\subset  (J_n \cap G_1) (J_n \cap G_2) (J_n \cap G_3).$$
The reverse inclusion being clear this proves the lemma.  \end{proof}
\begin{lem} \label{LieG} If $\un{G}\subset GL(n)$ is a linear algebraic group define over $\F$,  the group $G= \un{G}(\F)$ has a structure of analytic Lie group, whose Lie algebra $\un{g}$ is the Lie algebra, $\un{g}(\F)$,   of $\F$-points of the Lie algebra of $\un{G}$.  \end{lem}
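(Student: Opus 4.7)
The plan is to realize $G=\un{G}(\F)$ as a closed analytic submanifold of the Lie group $GL(n,\F)$ (which itself is analytic because it is the Zariski open subset $\{\det\neq 0\}$ of the affine space $M_n(\F)$, with polynomial, hence analytic, multiplication and inversion via Cramer's rule). The group operations on $G$ are then simply the restrictions of analytic maps, so the only work is to produce the analytic manifold structure on $G$ and to identify its tangent space at $1$.

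First I would argue that every $\F$-point of $\un{G}$ is a smooth point of the $\F$-variety $\un{G}$. This is standard: a linear algebraic group is smooth over any field, because the translations are $\F$-automorphisms of $\un{G}$ and act transitively on the set of $\F$-points, so if there is one smooth $\F$-point then every $\F$-point is smooth; existence of a smooth point follows from $\un{G}$ being geometrically reduced (it is an algebraic group in characteristic $0$ or over a perfect field, and one can also invoke that $\un{G}$ is generically smooth). Let $d=\dim_{\F}\un{g}$, where $\un{g}=\mathrm{Lie}(\un{G})(\F)$ is the $\F$-points of the Lie algebra of $\un{G}$; equivalently, $\un{g}$ is the Zariski tangent space at $1$ computed by $\F[\varepsilon]/(\varepsilon^2)$-points of $\un{G}$.

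Next, choose a finite set of polynomials $f_1,\dots,f_r\in \F[x_{ij},\det^{-1}]$ cutting out $\un{G}$ inside $GL(n)$. Smoothness of $\un{G}$ at $1$ means that the differentials $df_1(1),\dots,df_r(1)$ span a subspace of $(M_n(\F))^{\ast}$ of codimension exactly $d$. The analytic implicit function theorem over the nonarchimedean local field $\F$ (Bourbaki, Vari\'et\'es diff\'erentielles et analytiques) then produces an open neighborhood $W$ of $1$ in $GL(n,\F)$ and an analytic chart $W\cap G\simeq V\subset \un{g}$, whose differential at $0$ identifies $\un{g}$ with the tangent space at $1$ to $W\cap G$. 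Translating this chart by elements of $G$ (which is legitimate since left translation on $GL(n,\F)$ is an analytic automorphism preserving $G$) gives compatible charts at every point, hence the desired analytic manifold structure on $G$. Multiplication and inversion, being polynomial maps on $GL(n,\F)$, restrict to analytic maps $G\times G\to G$ and $G\to G$, so $G$ is an analytic Lie group over $\F$. Finally, the Lie algebra of $G$ as an analytic group is the tangent space at $1$, which by construction of the chart is canonically $\un{g}$, and the bracket on both sides is the one induced from $M_n(\F)$, so they coincide.

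The only delicate point is the analytic implicit function step, which requires that the Jacobian matrix $(\partial f_i/\partial x_{jk}(1))$ have rank $n^2-d$ over $\F$; this is exactly the smoothness of $\un{G}$ at the $\F$-rational point $1$, and is the one place where one uses the hypothesis that $\un{G}$ is a (necessarily smooth) algebraic \emph{group}, not merely an algebraic subvariety. Everything else is a routine transport of structure.
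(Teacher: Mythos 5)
Your proposal is correct and follows essentially the same route as the paper, which simply applies the constant rank theorem to the map $g\mapsto (f_1(g),\dots,f_p(g))$ on $GL(n,\F)$; your version spells out why the rank condition holds (smoothness of the group at the rational point $1$, with the rank being constant by translation) and then invokes the implicit function theorem, which is exactly what the constant rank argument amounts to. The identification of the Lie algebra with the $\F$-points of $\mathrm{Lie}(\un{G})$ via the kernel of the Jacobian is likewise the intended content of the paper's ``it has the required property.''
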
 \begin{proof}  

The following fact seems to be well known but  by lack of reference, we give a proof.
We thank 
Joseph Bernstein, Bertrand Lemaire and  Jean-Pierre Labesse for discussions on this fact.
\ber \label{Be}Let $X$  be an affine irreducible variety. Let us assume that $X$ is non singular and defined over $\F$ and let $\Omega$ be the algebraic closure of $\F$. Let us assume that $X$ is a Zariski closed set  in $\Omega^n$. Let $x \in X(\F)$.\\
(a) There exists $f_1, \dots , f_p$ polynomial functions vanishing on $X$ with coefficients in $\F$   such that the rank of their differentials at $x$  is equal to the codimension of $X$ in $\Omega^n$.\\
(b) If $f_1, \dots , f_p$ are as above, let  $V(f_1, \dots , f_p)$ be the set of  their common zeroes. There exists an open subset $O$   in $\Omega^n$ such that $x\in O$ and such that  $X\cap O$ is is equal to $V(f_1, \dots, f_p)\cap O$. 
\eer  
(a) follows from the Definition of nonsingular points  in [Ha] I.5 and from the fact that the ideal f functions vanishing on $X$ is generated by polynomial with coefficients in $\F$.
\\ (b) Let $Y=V(f_1, \dots , f_p)$ and let $O' $ be the open  set of elements of $\Omega^n$ such that the rank of the differential of $f_1, \dots, f_p$ is of rank $p$. It is open in $\Omega^n$ and it contains $x$.
By [Ha], proof of Theorem 5.3, one knows that 
\ber \label{rang}The rank $r$  of the Jacobian matrix, at a point of $y\in Y$,  associated to polynomial functions vanishing  on $Y$ is at most $n-{\rm dim}( Y)$ i.e ${ \rm codim} (Y) \geq r$.
 \eer 
Applying this to $f_1,\dots,f_p$ one has $ { \rm codim} (Y) \geq p $, hence the codimension of any irreducible component of $Y$ is of codimension at greater or equal to  $p$.  
On the other hand, from the definition of $Y$ by $p$ equations  and [Hu] 3.4 Corollary B, every irreducible component of $Y$ has codimension at most $p$. Hence every irreducible component of $Y$ is of dimension $n-p$. 
  Then from the definition of  a non singular point  in [Ha] I.5 and (\ref{rang}), on sees that $x$ is non singular  in $Y$. 
 
 A simple point  in the sense of [Bo]  17. 1 is  exactly a non singular point in the sense  of [Ha], due to [Ha] Theorem 5.1.  Hence, by [Bo] Theorem 17.1, one sees that $x$ belongs to only one irreducible component of $Y$. Let $Z $ be the irreducible component of $Y$ which contains $x$ and let $O''$ the complement in $\Omega^n$ of the (finite) union of the other irreducible components of $Y$. It is open in $\Omega^n$ and contains $x$.

Let us show that $O:=O' \cap O''$ satisfies (b). In fact $X'=X\cap O$ is contained in $Z'= Z \cap O $ and have the same dimension. It  contains $x$. Moreover as $X$ ( resp. $Z$ ) is irreducible, $X'$ and $Z'$ are irreducible. Hence from [Hu], Proposition 3.2, one has $X'=Z'$. This proves (\ref{Be}).

 
  Let $f_1$, ..., $f_p$  be a set of polynomials, with coefficients in $\F$,   as in (\ref{Be}) for $X=\un{G}$.  Then we  consider the map
  $h:\F^n  \to \F^n$, $x \mapsto (f_1(x), \dots, f_p(x), x_1(x), \dots x_{n-p}(x))$, where the affine  coordinates $ (x_1, \dots, x_n)$ in $\F^n$   are equal to zero at the neutral element 1 of $G$ and are choosen   such that the   differential of $ h$ at 1 is bijective.  The  Inverse Function Theorem (cf. [Se], LG.2.10) gives a local analytic inverse to this map, $\phi$. 
  
Let $\psi$ be the map defined on an open  neighborhood $V$ of $ (0, , \dots,0)$  in $\F^{n-p}$ by   $\psi(y)= \phi((0, \dots, 0), y)$.   From (\ref{Be}) and from the fact that $\phi$ is a homeomorphism,  one sees that if $V$ is small enough, $\psi $ is a homeomorphism  from   $V$  onto an open neighborhood of $1$ in $G$. The differential at 0 of $\psi$ is injective and  its image is a subspace of the space of common zeroes in $\F^n$ of the differential at 1of $f_1, \dots, f_p$.  The space  of common zeroes in  of the differential at 1 of $f_1, \dots, f_p$ is the  tangent space at 1 of $\un{G}$, hence is equal to $\un{g}$. For reasons of dimension this implies that  the image of the differential at 0 of $\psi$ is equal to $\un{g}(\F)$. 

 The map $\psi$  gives one chart of $G$ at 1. Then by translation this gives a chart at every point of $G$. The analyticity of the change of charts follows from the analyticity of $\phi$ and its inverse.  Similarly one proves that $G$ has a structure of Lie group. It is even an analytic Lie subgroup of $GL(n, \F)$.  The Lie algebra of $G$ is the image  of the differential at 0 of $\psi$. Hence the Lie algebra of $G$ is  $\un{g}(\F)$. 
 \end{proof} 
 
\section{References}
\noindentÊ[Ba], van den Ban E.P., The principal series for a reductive symmetric space I. $H$-fixed distribution vectors. Ann. Sci. Ecole Norm. Sup. (4) 21 (1988), 359--412. 

\noindent [Be] Bernstein J.,  Second Adjointness Theorem for representations of reductive p-adic groups, unpublished
manuscript, web page of Joseph Bernstein.

\noindent [BD] Blanc P., Delorme P.,  Vecteurs distributions H-invariants de
reprŽsentations induites pour un espace symŽtrique rŽductif $p$-adique
G/H, Ann. Inst. Fourier, 58 (2008),  213--261.

\noindent [Bo] Borel A.  Linear algebraic groups. Second edition. Graduate Texts in Mathematics,
126. Springer-Verlag, New York, 1991.

\noindent [BoTi] Borel A., Tits J., Groupes rŽductifs. Inst. Hautes Etudes Sci. Publ. Math.  27 (1965) 55--150.

\noindent [Bou] Bourbaki, Groupes et algbres de Lie. Chapitres 2 et 3,  ElŽments de MathŽmatique  XXXVII, Herman, Paris 1972.

\noindent [Bu] Bushnell C., Representations of reductive p-adic groups: localization of Hecke algebras and applications. J. London Math. Soc. (2) 63 (2001)364--386.

\noindent [C] Casselman W.,  Introduction  to the theory of admissible representations of $p$-adic reductive groups,
http://www.math.ubc.ca/$\tilde{}$ cass/research.html. 

\noindent [DeliBe] Deligne P.,  Le ``centre'' de Bernstein rŽdigŽ par Pierre Deligne. Travaux en Cours,  Representations of reductive groups over a local field,  1--32, Hermann, Paris, 1984. 

\noindentÊ[D1] Delorme P., IntŽgrales d'Eisenstein pour les espaces symŽtriques rŽductifs: tempŽrance, majorations. Petite matrice $B$, J. Funct. Anal. 136 (1996), 422--509. 

\noindent [D2] Delorme P., The Plancherel formula on reductive symmetric spaces from the point of view of the Schwartz space.   in Lie theory,   135--175, Progr. Math., 230, Birkhauser Boston, Boston, MA, 2005. 

\noindent[D3] Delorme P., Constant term of smooth $H_\psi$-invariant functions,  Trans. Amer. Math. Soc.  362  (2010),  933--955. 

 \noindent[D4] Delorme P., ThŽorme de Paley-Wiener pour les fonctions de Whittaker sur un groupe rŽductif $p$-adique,   to appear in J. Inst. Math. Jussieu.

\noindent[D5] Delorme P., Formule de Plancherel pour les fonctions de Whittaker sur un groupe rŽductif $p$-adique,  to appear in Ann. Inst. Fourier. 

\noindent [Ha] Hartshorne R., Algebraic Geometry, Graduate Texts in Math. 52, Spinger Verlag, New York,Heidelberg, Berlin,  1977.

  \noindent [Hei] Heiermann, V.,  Une formule de Plancherel pour l'algbre de Hecke d'un groupe rŽductif $p$-adique. Comment. Math. Helv. 76 (2001),  388--415.
  
  \noindent [HH] Helminck A.G., Helminck G.F., A class of parabolic $k$-subgroups
associated with symmetric
$k$-varieties.  Trans. Amer. Math. Soc.  350  (1998)
4669--4691.

\noindent[HW] Helminck A. G.,  Wang  S. P., 
On rationality properties of involutions of reductive groups.
Adv. Math. 99 (1993) 26--96.

\noindent [Hu] Humphreys J. E, Linear algebraic groups, Graduate Text In Math. 21, Springer, 1981.
 
\noindent [KT1] Kato S., Takano K., Subrepresentation theorem for $p$-adic symmetric spaces,  Int. Math. Res. Not. IMRN 2008, no. 11

\noindent [KT2] Kato S., Takano K., Square integrability of representations on $p$-adic symmetric spaces. J. Funct. Anal. 258 (2010)1427Ð-1451.
  
  \noindentÊ[KnSt] Knapp A., Stein E., Intertwining operators for semisimple groups. II. Invent. Math. 60 (1980) 9--84.
 
 \noindent [L], Lagier N., Terme constant de fonctions sur un espace symŽtrique rŽductif p-adique,
J. of Funct. An., 254 (2008) 1088--1145. 

\noindent [M] Matsuki, Closure relations for orbits on affine symmetric spaces under the action of minimal parabolic subgroups. Representations of Lie groups, Kyoto, Hiroshima, 1986, 541--559, Adv. Stud. Pure Math., 14, Academic Press, Boston, MA, 1988.

\noindent [R] Renard, ReprŽsentations des groupes rŽductifs $p$-adiques. Cours SpŽcialisŽs, 17. SociŽtŽ MathŽmatique de France, Paris, 2010. 
 
 \noindent [Ri] Richardson, R. W. Orbits, invariants, and representations associated to involutions of reductive groups. Invent. Math. 66 (1982),  287Ð-312.
 
 \noindent [S] Sauvageot, Principe de densitŽ pour les groupes rŽductifs. Compositio Math. 108 (1997) 151--184.

\noindent [Se] Lie algebras and Lie groups, Lectures given at Harvard University,  Benjamin 1965.

 \noindent [Wal] Waldspurger J.-L.,  La formule de Plancherel pour les groupes
$p$-adiques (d'aprs Harish-Chandra), J. Inst. Math. Jussieu  2  (2003),  235--333.

\printindex

\end{document}